\def\coralreport{1}
\def\useAlgorithmic{0}
\titlespacing*{\subsection}{0pt}{2.25ex plus 1ex minus .2ex}{1ex plus .2ex}
\titlespacing*{\paragraph}{0pt}{2.25ex plus 1ex minus .2ex}{1em}
\newtheorem{theorem}{Theorem}
\newtheorem{corollary}{Corollary}
\newcounter{def}
\newtheorem{definition}[def]{Definition}
\newcommand{\mycitet}[2]{\citet{#2}}
\newcommand{\mycitep}[1]{\citep{#1}}
\newcommand{\mymanageline}[1]{#1}
\newcommand{\mymanagetable}[2]{#2}
\newcommand{\argmin}{\operatorname{argmin}}
\newcommand{\argmax}{\operatorname{argmax}}
\newcommand{\proj}{\operatorname{proj}}
\newcommand{\conv}{\operatorname{conv}}
\renewcommand{\Re}{\mathbb{R}}
\newcommand{\Z}{\mathbb{Z}}
\newcommand{\Q}{\mathbb{Q}}
\newcommand{\B}{\mathbb{B}}
\newcommand{\F}{\mathcal{F}}
\newcommand{\X}{\mathcal{X}}
\newcommand{\FLP}{\mathcal{F_{\text{LP}}}}
\renewcommand{\S}{\mathcal{S}}
\newcommand{\E}{\mathcal{L}}
\newcommand{\J}{L}
\newcommand{\RR}{\mathcal{R}}
\newcommand{\RRLP}{\mathcal{R_{\text{LP}}}}
\renewcommand{\P}{\mathcal{P}}
\newcommand{\noprint}[1]{}
\newcommand{\code}[1]{\texttt{#1}}
\renewcommand{\t}[1]{\texttt{#1}}
\newcommand{\MIBS}{\texttt{MibS}}
\newcommand{\nonl}{\renewcommand{\nl}{\let\nl\oldnl}}
\newcommand{\midd}{\;\middle|\;}
\newtheorem{feascon}{Feasibility Condition}
\newtheorem{assumption}{Assumption}
\let\OldStatex\Statex
\renewcommand{\Statex}[1][3]{%
  \setlength\@tempdima{\algorithmicindent}%
  \OldStatex\hskip\dimexpr#1\@tempdima\relax}
\algnewcommand{\An}{\textbf{and}\xspace}
\algnewcommand{\Or}{\textbf{or}\xspace}
\def\input@path{{figures/}}
\begin{document}

\title{A Branch-and-Cut Algorithm for Mixed Integer Bilevel Linear Optimization
  Problems and Its Implementation
}

\author{Sahar Tahernejad\thanks{\texttt{sat214@lehigh.edu}}}
\author{Ted K. Ralphs\thanks{\texttt{ted@lehigh.edu}}}
\affil{Department of Industrial and System Engineering, Lehigh University,
Bethlehem, PA}
\author{Scott T. DeNegre\thanks{\texttt{DeNegreS@hss.edu}}}
\affil{Hospital for Special Surgery, New York, New York}

\date{January 7, 2020}

\maketitle

\begin{abstract}
In this paper, we describe a comprehensive algorithmic framework for solving
mixed integer bilevel linear optimization problems (MIBLPs) using a generalized
branch-and-cut approach. The framework presented merges features from
existing algorithms (for both traditional mixed integer linear optimization
and MIBLPs) with new techniques to produce a flexible and robust framework
capable of solving a wide range of bilevel optimization problems. The
framework has been fully implemented in the open-source solver \MIBS{}. The
paper describes the algorithmic options offered by \MIBS{} and presents
computational results evaluating the effectiveness of the various options for
the solution of a number of classes of bilevel optimization problems from the
literature.

\if\coralreport0
\keywords{Bilevel optimization \and Mixed integer optimization \and
  Branch-and-cut algorithm  \and Open-source solver}
\fi

\end{abstract}

\section{Introduction}
This paper describes an algorithmic framework for the solution of \emph{mixed
integer bilevel linear optimization problems} (MIBLPs) and \MIBS, its
open-source software implementation. MIBLPs comprise a difficult class of
optimization problems that arise in applications in which multiple, possibly
competing decision-makers (DMs), make a sequence of decisions over time.
For an ever-increasing array of such applications, the traditional framework
of mathematical optimization, which assumes a single DM with a single
objective function making a decision at a single point in time, is inadequate.

The motivation for the development of \MIBS{}, which was begun a decade ago,
is both to serve as an open test bed for new algorithmic ideas and to provide
a platform for solution of the wide variety of practical problems of this type
currently coming under scientific study. The modeling framework underlying
\MIBS{} is that of \emph{multilevel optimization}. Multilevel optimization
problems model applications in which decisions are made in a sequence, with
decisions made earlier in the sequence affecting the options available later
in the sequence. Under the assumption that all DMs are rational and have
complete information about their own and each other's models and input data
(there is no stochasticity), we can describe such problems formally using the
language of mathematical optimization. To do so, we consider a set of decision
variables, partitioned into subsets associated with individual DMs.
Conceptually, these sets of decision variables are ordered according to the
sequence in which decisions are to be made. We use the term \emph{level} (or
\emph{stage}, in some contexts) to denote each DM's position in the sequence.
The term is intended to conjure up a hierarchy in which decisions flow from
top to bottom, so decisions earlier in the sequence are said to be ``at a
higher level.'' The decisions made by higher-level DMs influence those of
lower-level DMs through a parametric dependence of the lower-level decision
problems on higher-level decisions. Thus, higher-level decisions must be made
taking into account the effect those decisions will have on lower-level
decisions, which in turn impact the objective value of the higher-level
solution.

The decision hierarchy of a national government provides an archetypal
example. The national government makes decisions about tax rates or subsidies
that in turn affect the decisions of state and local governments, which
finally affect the decisions of individual taxpayers. Since later decisions 
affect the degree to which the national government achieves its original
objective, the decisions at that higher level must be made in light of the
reactions of lower-level DMs to those decisions~\mycitep{bard13}.

Thanks to the steady improvement in solution methodologies for traditional
optimization problems and the availability of open-source
solvers~\mycitep{COIN-OR}, the development of practical solution methods for
MIBLPs has become more realistic. The availability of practical solution
methods has in turn driven an accompanying increase in demand for such
methodologies. The literature is now replete with applications requiring the
solution of such models
~\mycitep{salmeronetal04,bienVerma10,ZhaSnyRalXue16,gaoYou17}. 
In the remainder of this section, we introduce the
basic framework of bilevel optimization.

\subsection{Mixed Integer Bilevel Optimization \label{sec:miblp}}

In this section, we formally introduce MIBLPs, the class of multilevel
optimization problem in which we have only two levels and two associated DMs.
The decision variables of an MIBLP are partitioned into two subsets, each
conceptually controlled by one of the DMs. We generally denote by $x$ the
variables controlled by the \emph{first-level DM} or \emph{leader} and require
that the values of these variables be contained in the set $X=\Z^{r_1}_+
\times \Re^{n_1-r_1}_+$ representing integrality constraints. We denote by $y$
the variables controlled by the \emph{second-level DM} or \emph{follower} and
require the values of these variables to be in the set $Y=\Z^{r_2}_+ \times
\Re^{n_2-r_2}_+$. Throughout the paper, we refer to a sub-vector of
$x\in\Re^{n_1}$ indexed by a set $K \subseteq \{1, \dots, n_1\}$ as $x_K$ (and
similarly for sub-vectors of $y\in\Re^{n_2}$).

The general form of an MIBLP is 
\begin{equation}\label{eqn:miblp}\tag{MIBLP}
  \min_{x \in X} \left\{cx + \Xi(x)\right\},
\end{equation}
where the function $\Xi$ is a \emph{risk function} that encodes the part of
the objective value of $x$ that depends on the response to $x$ in the second
level. This function may have different forms, depending on the precise
variant of the bilevel problem being solved. In this paper, we focus on the
so-called \emph{optimistic} case~\mycitep{loridan96}, in which
\begin{equation}\label{eqn:Xi} \tag{RF-OPT}
  \Xi(x) = \min \left\{ d^1y \midd y \in \P_1(x), y \in \argmin
  \{d^2y \midd y \in \P_2(x) \cap Y\} \right\},
\end{equation}
where
\begin{equation*}
\P_1(x) = \left\{y \in \Re_+^{n_2} \midd G^1 y \geq b^1 - A^1 x\right\}
\end{equation*}
is a parametric family of polyhedra containing points satisfying the linear
constraints of the first-level problem with respect to a given
$x \in \Re^{n_1}$ and
\begin{equation*}\label{eqn:lowerProblem}
\P_2(x) = \left\{y\in \Re_+^{n_2}\midd G^2y\geq A^2 x \right\}
\end{equation*}
is a second parametric family of polyhedra containing points satisfying the
linear constraints of the second-level problem with respect to a given $x \in
\Re^{n_1}$. The input data is $A^1\in\Q^{m_1\times n_1}$, $G^1\in\Q^{m_1\times
  n_2}$, $b^1\in\Q^{m_1}$, $A^2\in\Q^{m_2\times n_1}$ and $G^2\in\Q^{m_2\times
  n_2}$. As is customary, we define $\Xi(x) = \infty$ in the case of either
$x\notin X$ or infeasibility of the problem on the right-hand side
of~\eqref{eqn:Xi}.  

Note that it is typical in the literature on bilevel optimization for the
parametric right-hand side of the second-level problem to include a fixed
component, i.e., to be of the form $b^2 - A^2x$ for some $b^2 \in \Q^{m^2}$.
The form introduced here is more general, since adding a constraint $x_1 = 1$
to the upper level problem results in a problem equivalent to the usual one.
The advantage of the form here is that it results in a risk function that is
subadditive in certain important cases (though not in general), a desirable
property for reasons that are beyond the scope of this paper. 

As we mentioned above, other variants of the risk function are possible and
the algorithm we present can be adapted to these cases (see
Section~\ref{sec:outline}). A pessimistic risk function can be obtained simply
by considering
\begin{equation}\label{eqn:XiPessimistic}\tag{RF-PES}
  \Xi(x) = \max\left\{d^1y \midd y \in \P_1(x), y\in\argmin
  \{d^2y \midd y \in \P_2(x) \cap Y\}\right\}.
\end{equation}
Note that according to this definition, the second-level solution $y$ is
required to be a member of $\P_1(x)$ (i.e., be feasible for the upper-level
constraints), though there could exist 
$y\in\argmin\{d^2y \mid y \in \P_2(x) \cap Y\}\setminus\P_1(x)$.
Allowing such second-level solutions to be chosen is also possible
and can be accommodated within the framework of \eqref{eqn:XiPessimistic} by
the addition of dummy variables at the first level. The details are beyond the
scope of this paper.  

It should be pointed out that we explicitly allow the second-level variables
to be present in the first-level constraints. This allowance is rather
non-intuitive and leads to many subtle algorithmic complications.
Nevertheless, there are applications in which it is necessary to allow this
and since \MIBS{} does allow this possibility, we felt it appropriate to
express the results in this full generality. The reader should keep in mind,
however, that many of the ideas discussed herein can be simplified in the more
usual case that $G^1 = 0$ and we endeavor to point this out in particular
cases.

Note that the formulation~\eqref{eqn:miblp} does not explicitly involve the
second-level variables. The reason for expressing the formulation in this way
is to emphasize two things. First, it emphasizes that the goal of
solving~\eqref{eqn:miblp} is to determine the optimal values of the
first-level variables only. Thus, we would ideally like to ``project out'' the
second-level variables. In contrast to the Benders method for traditional
optimization problems, however, the second-level variables are re-introduced
in solving the relaxation that we employ in our branch-and-cut algorithm (see
Section~\ref{sec:bounding}). Second, it is necessary at certain points in the
algorithm to evaluate $\Xi(x)$, and it is convenient to give it an
explicit form here to make this step clearer. 

MIBLPs also have an alternative formulation that employs the \emph{value
  function} of the second-level problem and explicitly includes the
second-level variables. This formulation is given by
\begin{equation}\label{eqn:miblp-vf}\tag{MIBLP-VF}
\min \left\{cx+d^1y\midd x\in X, y\in \P_1(x) \cap \P_2(x)\cap Y, 
d^2y\leq \phi(A^2x)\right\},
\end{equation}
where $\phi$ is the so-called \emph{value function} of the second-level
problem, which is a standard mixed integer linear optimization problem (MILP).
The value function returns the optimal value of second-level problem for
a given right-hand side, defined by 
\begin{equation}\label{eqn:phi} \tag{VF}
  \phi(\beta) = \min \left\{d^2y\midd G^2y\geq \beta, y\in Y \right\} \quad
  \forall \beta \in \Re^{m_2}. 
\end{equation}  

From this alternative formulation, it is evident that if the values of the
first-level variables are fixed in~\eqref{eqn:miblp-vf} (i.e., the constraints
imply that their values must be constant), then the problem of minimizing over
the remaining variables is an MILP. In fact, it is not difficult to observe
that only the first-level variables having non-zero coefficients in the
second-level constraints need be fixed in order for $\phi(A^2x)$ to be
rendered a constant and for~\eqref{eqn:miblp-vf} to reduce to an MILP. This
result is stated formally in Section~\ref{sec:pruning}. Because of their
central importance in what follows, we formally define the concept of
\emph{linking variables}.
\begin{definition}[Linking Variables]
Let
  \begin{equation*}
 \J = \left\{i \in \{1, \dots, n_1\}\midd A^2_i \neq 0 \right\},
 \end{equation*}
be the set of indices of first-level variables with non-zero coefficients
in the second-level problem, where $A^2_i$ denotes the $i^{th}$ column of matrix
$A^2$. We refer to such variables as \emph{linking variables}.
\end{definition}
Per our earlier notation, $x_{\J}$ is the sub-vector of $x\in\Re^{n_1}$
corresponding to the linking variables. By assuming all linking variables are
integer variables, we assure the existence of an optimal
solution~\mycitep{vicente96}.
\begin{assumption} \label{as:setJ}
$\J = \{1,\dots,k_1\}$ for $k_1 \leq r_1$.
\end{assumption}
We note here that it can in fact be assumed without loss of generality that
$k_1=r_1$ by simply moving the non-linking variables to the second level.
While this is conceptually inconsistent with the intent of the original model,
it is not difficult to see that the resulting model is \emph{mathematically}
equivalent. 

\noindent In what follows, it will be convenient to refer to the set
\begin{equation*}
  \P = \left\{(x,y) \in \Re_+^{n_1 \times n_2} \midd y \in \P_1(x) \cap \P_2(x)
  \right\}
\end{equation*} 
of all points satisfying the non-negativity and linear inequality
constraints at both levels and the set
\begin{equation*}
\S = \P \cap (X \times Y)
\end{equation*}
of points in $\P$ that also satisfy integrality restrictions.
\begin{assumption} \label{as:boundedness}
$\P$ is bounded.
\end{assumption}
This assumption is made to simplify the exposition, but is easy to relax in
practice.
Corresponding to each $x \in\Re^{n_1}$ with $x_{\J}\in\Z^{\J}$, 
we have the \emph{rational reaction set}, which is defined by
\begin{equation*}
\RR(x) = \argmin \left\{d^2y \midd y \in \P_2(x)\cap Y\right\}.
\end{equation*}
This set may be empty either because $\P_2(x)\cap Y$ is itself empty
or because there exists $r \in \Re_+^{n_2}$ such that $G^2 r \geq 0$ and $d^2r <
0$ (in which case the second-level problem is unbounded no matter what
first-level solution is chosen). The latter case can be easily detected in a
pre-processing step, so we assume w.l.o.g. that this does not occur (note
that this case cannot occur when $G^1 = 0$, since
Assumption~\ref{as:boundedness} would then imply that 
$\left\{r \in \Re_+^{n_2} \setminus 0\midd G^2 r \geq 0 \right\}=\emptyset$).
\begin{assumption}
$\left\{r \in \Re_+^{n_2} \midd G^2 r \geq 0, d^2 r < 0 \right\}=\emptyset$.
\end{assumption}
Under our assumptions, the \emph{bilevel feasible region} (with respect to the
first- and second-level variables in~\eqref{eqn:miblp-vf}) is
\begin{equation*}
\F = \left\{(x,y) \in X \times Y \midd y \in \P_1(x)\cap\RR(x) \right\}
\end{equation*}
and members of $\F$ are called \emph{bilevel feasible solutions}. Although the
ostensible goal of solving~\eqref{eqn:miblp} is to determine $x \in X$
minimizing $cx + \Xi(x)$, this is equivalent to finding a member of
$\F$ that optimizes the first-level objective function $cx + d^1y$. Observe,
however, that by this definition of feasibility, we may have $x^* \in X$ that
is optimal for~\eqref{eqn:miblp}, while for some $\hat{y}\in Y$, $(x^*,
\hat{y}) \in \F$ but $\Xi(x^*) < d^1 \hat{y}$.

Because we consider the problem to be that of determining the optimal
first-level solution, it is also useful to denote the feasible set with
respect to first-level variables only as
\begin{equation*}
\F_1 = \proj_x(\F).
\end{equation*}
For $x \in X$, we have that
\begin{equation*}
  x \in \F_1 \Leftrightarrow x \in \proj_x(\F) \Leftrightarrow
  \RR(x) \cap \P_1(x) \not= \emptyset
  \Leftrightarrow \Xi(x) < \infty
\end{equation*}
and we say that $x \in \Re^{n_1}$ is feasible if $x \in \F_1$.
We can then interpret the conditions for bilevel
feasibility of $(x, y)$ as consisting of the following two properties of the
first- and second-level parts of the solution independently.
\begin{feascon} \label{fc:first-level}
  $x \in \F_1$.
\end{feascon}
\begin{feascon} \label{fc:second-level}
  $y \in \P_1(x) \cap \RR(x)$.
\end{feascon}
We exploit this notion of feasibility
later in our algorithm. 

A related set (see Figure~\ref{fig:mooreexample}) is the set of feasible
solutions to the \emph{bilevel linear optimization problem} (BLP) that results
from discarding the integrality restrictions, defined as
\begin{equation*}
  \FLP = \left\{(x,y) \in \Re_+^{n_1\times n_2} \midd y \in \P_1(x) \cap
  \RRLP(x) \right\},
\end{equation*}
where
\begin{equation*}
\RRLP(x) = \argmin \left\{d^2y \midd y \in \P_2(x) \right\}.
\end{equation*}
Note that we do \emph{not} have in general that $\F \subseteq \FLP$, as discussed
later in Section~\ref{sec:challenges}.

\subsection{Special Cases}

There are a number of cases for which \MIBS{} has specialized methods.
One of the most important special cases is the \emph{zero sum} case in which
$d^1 = -d^2$.
The \emph{mixed integer interdiction problem} (MIPINT) is a
specific subclass of zero sum problem in which the first-level variables are
binary and are in one-to-one correspondence with the second-level variables 
($n=n_1=n_2$).
When a first-level variable is fixed to one, this prevents the associated
variable in the second-level problem from taking a non-zero value. The
interdiction problem can be formulated as~\mycitep{israeli99,DeNegre2011}
\begin{equation}\label{eqn:MIPINT}\tag{MIPINT}
  \min \left\{d^1y\midd x \in \P^{INT}_1\cap \B^{n},
  y\in\argmax\{d^1y\midd y \in \P^{INT}_2(x) \cap Y\}\right\},
\end{equation}
where
\begin{equation*}
\P^{INT}_1 = \left\{x\in \Re_+^{n}\midd A^1x\geq b^1 \right\},
\end{equation*}
\begin{equation*}
\P^{INT}_2(x) = \left\{y\in \Re_+^{n}\midd G^2y\geq b^2, y\leq
\text{diag}(u)(e-x) \right\},
\end{equation*}
$e$ represents an n-dimensional vector of ones and $u_i\in\Re$
denotes the upper bound of $y_i$ for $i = 1, ..., n$.
The special structure of MIPINTs can be employed to develop specialized
methods for these problems.

\subsection{Computational Challenges \label{sec:challenges}}

From the point of view of both theory and practice, bilevel problems are
difficult to solve. From the perspective of complexity theory, the problem is
NP-hard even for the case in which all variables are continuous ($r_1 = r_2 =
0$). The general case is in the class $\Sigma^p_2$-hard~\mycitep{J85},
which is the class of problems that can be solved in non-deterministic
polynomial time, given an NP oracle. To put it in terms that are a little less
formal, these are problems for which even the problem of checking feasibility
of a given point in $X \times Y$ is complete for NP in general. Alternatively,
simply computing $\Xi(x)$ for $x \in X$ (determining its objective function
value), is also NP-hard.

Because determining whether a given solution is feasible is an NP-complete
problem, solution of MIBLPs is inherently more difficult than solution of the
more familiar case of MILP (equivalent to the case of $\J = \emptyset$).
Search algorithms for MILPs rely on a certain amount of algorithmically guided
``luck'' to find high-quality solutions quickly. This works reasonably well in
this simpler case because checking feasibility of any given solution is
efficient. In the case of MIBLPs, we cannot rely on this property and even if
we are lucky enough to get a high-quality first-level solution, checking its
feasibility is still a difficult problem. Furthermore, some of the properties
whose exploitation we depend on in the case of MILP do not straightforwardly
generalize to the MIBLP case. For example, removing the integrality
requirement for all variables does not result in a relaxation, since relaxing
the second-level problem may make solutions that were previously feasible
infeasible. In fact, as we mentioned earlier, the feasible region $\FLP$ of
this BLP does not necessarily even contain the feasible region $\F$
of~\eqref{eqn:miblp}. Thus, even if the solution to this BLP is in 
$X \times Y$, it is not necessarily optimal
for~\eqref{eqn:miblp}. These properties can be seen in
Figure~\ref{fig:mooreexample}, which displays a well-known example originally
from~\mycitet{Moore and Bard}{moorebard90} that is well-suited for illustrating
these concepts.
\begin{figure}
\begin{center}
\begin{minipage}{.45\textwidth}
\scalebox{0.6}{\input{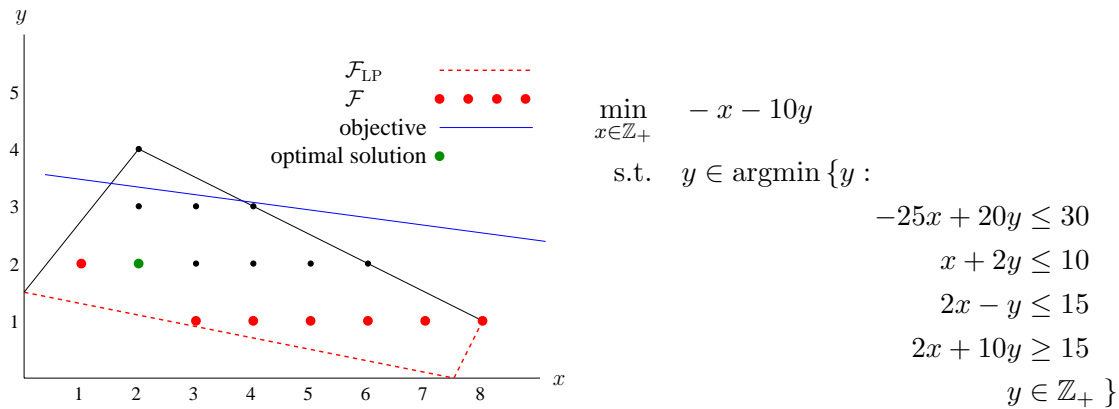}}
 \end{minipage}
\begin{minipage}{.45\textwidth}
\vskip .3in
\begin{alignat*}{3}
\min_{x\in \mathbb{Z}_+} & \quad -x - 10y\notag\\
\textrm{s.t.} &\quad y\in \operatorname{argmin}\left\{ y:\right.\notag\\
 & & - 25 x + 20 y &\leq 30\notag\\
 & & x + 2y &\leq 10\label{eqn:moorece}\\
 & &  2x - y & \leq 15\notag\\
 & & 2x + 10y & \geq 15\notag\\
 & & y &\in \mathbb{Z}_+\left.\right\}\notag
\end{alignat*}
\end{minipage}
\caption{The feasible region of IBLP \mycitep{moorebard90}.\label{fig:mooreexample}}
\end{center}
\end{figure}

\subsection{Previous Work}

Bilevel optimization has its roots in game theory and the general class of
problems considered in this paper are a type of \emph{Stackelberg game}. The
Stackelberg game was studied in a seminal work by~\mycitet{Von
  Stackelberg}{stackelberg34}. The first bilevel optimization formulations in
the form presented here were introduced and the term was coined in the 1970s
by~\mycitet{Bracken and McGill}{bracken-mcgill73}, but computational aspects
of related optimization problems have been studied since at least the 1960s
(see, e.g.,~\mycitet{}{wollmer64}). Study of algorithms for the case in which
integer variables appear is generally acknowledged to have been launched
by~\mycitet{Moore and Bard}{moorebard90}, who initiated working on general
MIBLPs and discussed the computational challenges of solving them. They
proposed a branch-and-bound algorithm for solving MIBLPs which is guaranteed
to converge if all first-level variables are integer or all second-level
variables are continuous.

Until recently, most computational work focused on special cases with
exploitable structure. \mycitet{Bard and Moore}{bard92}, \mycitet{Wen and
  Huang}{wen-huang96} and \mycitet{Fa{\'\i}sca et al.}{faisca-etal07} studied
the bilevel problems with binary variables. \mycitet{Bard and Moore}{bard92}
proposed an exact algorithm for \emph{integer bilevel optimization problems}
(IBLPs) in which all variables are binary. \mycitet{Wen and
  Huang}{wen-huang96}, on the other hand, considered MIBLPs with binary
first-level variables and continuous second-level ones and suggested a
\emph{tabu search heuristic} for generating solutions to these problems.
\mycitet{Fa{\'\i}sca et al.}{faisca-etal07} concentrated on MIBLPs in which
all discrete variables are constrained to be binary. They reformulated the
second-level problem as a multi-parametric problem with the first-level
variables as parameters.

\mycitet{DeNegre and Ralphs}{DeNRal09} and~\mycitet{DeNegre}{DeNegre2011}
generalized the existing framework of branch and cut, which is the standard
approach for solving MILPs, to the case of IBLPs by introducing the
\emph{integer no-good cut} to separate bilevel infeasible solutions from the
convex hull of bilevel feasible solutions (see Section~\ref{sec:cutting}).
They also initiated development of the open-source solver \MIBS{}. \mycitet{Xu
  and Wang}{xuwang14} focused on problems in which all first-level variables
are discrete and suggested a multi-way branch-and-bound algorithm in which
branching is done on the slack variables of the second-level problem. A
decomposition algorithm based on column-and-constraint generation was employed
by~\mycitet{Zeng and An}{zengan14} for solving general MIBLPs. Their method
finds the optimal solution if it is attainable, otherwise it finds an
$\epsilon$-optimal solution. \mycitet{Caramia and Mari}{caramiamari15}
introduced a non-linear cut for IBLPs and suggested a method of linearizing
this cut by the addition of auxiliary binary variables. Furthermore, they
introduced a branch-and-cut algorithm for IBLPs which employs the integer
no-good cut from \mycitep{DeNRal09}. Knapsack interdiction problems were
studied by~\mycitet{Caprara et al.}{capraraetal16} and they proposed an exact
algorithm for these problems. \mycitet{Hemmati and Smith}{hemmatiSmith16} formulated
competitive prioritized set covering problem as an MIBLP and proposed a
cutting plane algorithm for solving it. \mycitet{Wang and Xu}{wangxu17}
proposed the \emph{watermelon algorithm} for solving IBLPs, which removes
bilevel infeasible solutions that satisfy integrality constraints by a
non-binary branching disjunction. \mycitet{ Lozano and Smith}{lozanosmith17}
employed a single-level value function reformulation for solving the MIBLPs in
which all first-level variables are integer. \mycitet{Fischetti et
  al.}{fischettietal17a} suggested a branch-and-cut algorithm for MIBLPs
employing a class of \emph{intersection cuts} valid for MIBLPs under mild
assumptions and developed a new family of cuts for the MIBLPs with binary
first-level variables. In \mycitep{fischettietal17b}, they extended their
algorithm by suggesting new types of intersection cuts and
introduced the so-called \emph{hypercube intersection cut}, valid for MIBLPs
in which the linking variables are discrete. Finally,~\mycitet{Mitsos}{Mit10}
and~\mycitet{Kleniati and Adjiman}{KleAdj14-1,KleAdj14-2} considered the more
general case of mixed integer bilevel non-linear optimization.

\subsection{Overview of Branch and Cut \label{sec:BandC-general}}

Branch and cut (a term coined by~\mycitet{Padberg and
  Rinaldi}{padbergRinaldi87,padberg91a}) is a variant of the well-known
branch-and-bound algorithm of~\mycitet{Land and Doig}{LanDoi60}, the most
widely-used algorithm for solving many kinds of non-convex optimization
problems. For purposes of illustration, we consider here the solution of the
general optimization problem
\begin{equation} \label{eqn:generic-optimization} \tag{GO}
  \min_{x \in \X} f(x),
\end{equation}
where $\X \subseteq \Re^n$ is the \emph{feasible region}
and $f: \Re^n \rightarrow \Re$ is the \emph{objective function}.

The approach of both the branch-and-bound and the branch-and-cut algorithms is
to search the feasible region by partitioning it and then recursively solving
the resulting subproblems. Implemented naively, this results in an inefficient
complete enumeration. This potential inefficiency is avoided by utilizing
upper and lower bounds computed for each subproblem to intelligently ``prune''
the search. The recursive partitioning process can be envisioned as a process
of searching a rooted tree, each of whose nodes corresponds to a subproblem.
The root of the tree is the node corresponding to the original problem
(sometimes called the \emph{root problem}) and the nodes adjacent to it are
its \emph{children}. The parent-child relationship applies to other nodes
connected along paths in the tree in the obvious way. Nodes with no children
are called \emph{leaf nodes}. Henceforth, we denote the set of all leaf nodes
in the current search tree by $T$.

Although it is not usually described this way, we consider the algorithm as a
method for iteratively removing parts of the feasible region of the given relaxation 
that can be proven not to contain any
\emph{improving} feasible solutions (feasible solutions that are better
than the best solution found so far) until no more such regions can be found.
Although most sophisticated branch-and-bound algorithms for optimization do
implicitly discard parts of the feasible region that can be shown to contain
only suboptimal solutions, the algorithm we present does this more
\emph{explicitly}. The removal is done either by \emph{branching}
(partitioning the remaining set of improving solutions to define smaller
subproblems) or \emph{cutting} (adding inequalities satisfied by all improving
solutions).

A crucial element of both branching and cutting is the identification of
\emph{valid disjunctions}. The notion of valid disjunction defined here, which
we refer to as an \emph{improving} valid disjunction in order to distinguish it
from the more standard definition, refers to a collection of sets that contain
all \emph{improving} feasible solutions (while the standard valid disjunction
should include all feasible solutions). In the remainder of the paper, we drop
the word ``improving'' when referring to such disjunctions.
\begin{definition}[Improving Valid Disjunction] \label{def:valid-disjunction}
An \emph{(improving) valid disjunction} for~\eqref{eqn:generic-optimization}
with respect to a given $x^* \in \X$ is a disjoint collection 
\begin{equation*}
  X_1, X_2, \dots, X_k
\end{equation*}
of subsets of $\Re^n$ such that
\begin{equation*} 
  \left\{x \in \X \midd f(x) < f(x^*) \right\} \subseteq
  \bigcup_{1 \leq i \leq k} X_i .
\end{equation*}
\end{definition}
Imposing such disjunctions is our primary method of eliminating
suboptimal solutions and improving the strength of the relaxations used to
produce bounds on the optimal value. They play a crucial role in defining
methods of both branching and cutting, as we discuss in what follows. We now
briefly describe the individual components of branch and cut that we refer to
in Section~\ref{sec:BandC}.

\paragraph{Bounding.} The most important factor needed to improve the
efficiency of the algorithm is a tractable method of producing strong upper
and lower bounds on the optimal solution value of each subproblem. Typically,
lower bounds (assuming minimization) are obtained by solving a relaxation of
the original (sub)problem. In most cases, a convex relaxation is chosen in
order to ensure tractability, though with problems as difficult as MIBLPs,
even a non-convex relaxation may be tractable enough to serve the desired
purpose. Upper bounds are obtained by producing a solution feasible to the
subproblem, either by heuristic methods or by showing that the solution of the
relaxation is feasible to the original problem. We denote the lower and upper
bounds associated with node $t$ by $L^t$ and $U^t$, respectively.
Whenever we cannot obtain a feasible solution to a given subproblem $t$, we
set $U^t=\infty$. Similarly, if the relaxation at node $t$ is infeasible, we
have $L^t=\infty$.

The bounds on the individual subproblems can be aggregated to obtain global
bounds on the optimal solution value to the root problem. Upper bounds of all
leaf nodes are aggregated to obtain the global upper bound
\begin{equation*} 
U = \min_{t \in T} U^t, 
\end{equation*}
which represents the objective value of the best solution found so far (known
as the \emph{incumbent}). Note that this formula is a bit simplified for the
purposes of presentation, since it is possible for bilevel feasible solutions
that are \emph{not} feasible for the current subproblem to be produced during
the bounding step. While these do not technically update the upper bound for
the subproblem, they do contribute to improvement of the global upper bound.
Similarly, lower bounds for the leaf nodes are aggregated to form the global
lower bound
\begin{equation*}
L = \min_{t \in T} L^t.
\end{equation*}
These global upper and lower bounds are updated as the algorithm progresses
and when they become equal, the algorithm terminates.

\paragraph{Pruning.} Any node $t$ for which $L^t \geq U$ can be discarded
(\emph{pruned}), since the feasible region of such a node cannot contain a
solution with objective value lower than the current incumbent. This pruning
rule implicitly subsumes two special cases. The first is when the feasible
region of subproblem $t$ is empty in which case $L^t = \infty$. The second is
when $L^t = U^t\geq U$, which typically arises when solving the relaxation of the
subproblem associated with node $t$ produces a solution feasible for the
original problem. 

Since the global bounds are updated dynamically, this means that whether a node
can be pruned or not is not a fixed property---a node that previously could
not have been pruned may be pruned later when a better incumbent is found. We
discuss bounding techniques in more detail in Sections~\ref{sec:bounding}
and~\ref{sec:heuristics}.

\paragraph{Branching.} When bounds have been computed for a node and it cannot
be pruned, we then partition the feasible region by identifying and imposing a
valid disjunction of the form specified in
Definition~\ref{def:valid-disjunction}. The subproblems resulting from
partitioning the feasible region $\X^t$ of node $t$ are optimization problems of the form
\begin{equation*}
  \min_{x \in \X^t \cap X_i} f(x),\; 1 \leq i \leq k,
\end{equation*}
which can then be solved recursively using the same algorithm, provided that the
collection $\{X_i\}_{1\leq i \leq k}$ is chosen so as not to change the form of
the optimization problem.

Typically, the disjunction is chosen so that $\bigcup_{1 \leq i \leq k}X_i$ does
not contain the solution to the current relaxation, but we will see that this
is not always possible in the case of MIBLP. When $\X \subseteq \Z^r \times
\Re^{n-r}$, the disjunction
\begin{equation} \tag{GD} \label{eqn:GD}
  \begin{aligned}
    & X_1 = \left\{x \in \Re^{n} \midd \pi x \leq \pi_0  \right\}
    & X_2 = \left\{x \in \Re^{n} \midd \pi x \geq \pi_0 + 1\right\}
  \end{aligned}
\end{equation}
is always valid when $(\pi, \pi_0) \in \Z^{n+1}$ and $\pi_i = 0$ for $i > r$,
since we must then have $\pi x \in \Z$ for all $x \in \X$. If the solution
$\hat{x}$ to the relaxation is such that $\pi \hat{x} \not\in \Z$, then we
have $\hat{x} \not \in X_1 \cup X_2$. When $\pi$ is the $i^{\rm th}$ unit
vector and $\pi_0 = \lfloor \hat{x}_i \rfloor$ (assuming $\hat{x}_i \not \in
\Z$), then the disjunction~\eqref{eqn:GD} is called a \emph{variable
  disjunction}. In the case of a standard MILP, at least one such disjunction
must be violated whenever the solution to the relaxation is not feasible to
the original problem (assuming the relaxation is a linear optimization
problem (LP)). Thus, such disjunctions are all that is needed for a convergent
algorithm. The situation is slightly different in the case of MIBLPs.

An important question for achieving good performance is how to choose the
branching disjunctions effectively, as it is typically easy to identify many
such disjunctions. A measure often used to judge effectiveness is the
resulting increase in the lower bound observed after imposing the disjunction.
We discuss branching strategies in more detail in Section~\ref{sec:branching}.

\paragraph{Searching.} The order in which the subproblems are considered
is critically important, since, as we have already noted, the global bounds
are evolving and the order in which the subproblems are considered affects 
their evolution. Typical options are \emph{depth-first} (prioritize the
``deepest'' nodes in the tree, which tends to emphasize improvement of the
upper bound by locating better incumbents) and \emph{best-first} (prioritize
those with the smallest lower bound, which tends to emphasize improvement of
the lower bound). In practice, one usually employs hybrids that balance these
two goals.

\paragraph{Cutting.} The main way in which the branch-and-cut algorithm
differs in its basic strategy from the branch-and-bound algorithm is that it
dynamically strengthens the relaxation by adding \emph{valid inequalities}. As
with our notion of valid disjunction, we allow here for a definition slightly
different than the standard one in order to allow for inequalities that remove
feasible, non-improving solutions. We drop the term ``improving'' when
discussing such inequalities in the remainder of the paper.
  \begin{definition}[Improving Valid Inequality] \label{def:valid-inequality}
The pair $(\alpha, \beta) \in \Re^{n+1}$ is an \emph{(improving) valid 
  inequality} for~\eqref{eqn:generic-optimization} with respect to $x^* \in
\X$ if
\begin{equation*}
  \left\{x \in \X \midd f(x) < f(x^*) \right\} \subseteq
  \left\{x \in \Re^n \midd \alpha x \geq \beta \right\}.
\end{equation*}
  \end{definition}
As with branching, we typically aim to add inequalities that are violated by
the solution to the current relaxation, thus removing the solution from the
feasible region of the relaxation, along with some surrounding polyhedral
region that contains no (improving) solutions to the original problem (or
subproblem), from consideration.

Furthermore, as with branching, it is generally possible to derive a large
number of valid inequalities and a choice must be made as to which ones to add
to the current relaxation (adding too many can negatively impact the
effectiveness of the relaxation). Also as with branching, the goal of cutting
is to improve the lower bound, although selecting directly for this measure is
problematic for a number of reasons. We discuss strategies for generating
valid inequalities in more detail in Section~\ref{sec:cutting}. We refer the
reader to~\mycitep{T:achterberg:thesis} for more detailed description of the
branch-and-cut algorithm.

\subsection{Outline}

The remainder of the paper is organized as follows. In
Section~\ref{sec:BandC}, we discuss different components of  the 
basic branch-and-cut
algorithm implemented in \MIBS{}. Section~\ref{sec:framework} describes 
the overall algorithm in \MIBS{} and  the implementation of this software. 
In Section~\ref{sec:computationalResultsSection}, we discuss computational
results. Finally, in Section~\ref{sec:conclusions}, we conclude the paper with
some final thoughts.

\section{A Branch-and-cut Algorithm \label{sec:BandC}}

The algorithm implemented in \MIBS{} is based on the algorithmic framework
originally described by~\mycitet{DeNegre and Ralphs}{DeNRal09}, but with many
additional enhancements. The framework utilizes a variant of the
branch-and-cut algorithm and is similar in basic outline to state-of-the-art
algorithms currently used for solving MILPs in practice. The case of MIBLP is
different from the MILP case in a few important ways that we discuss here at a
high level before we discuss various specific components.

As we have described earlier in Section~\ref{sec:BandC-general}, one way of
viewing the general approach taken by the branch-and-cut algorithm is as a
method for iteratively removing parts of the feasible region of the given relaxation
that can be proven not to contain any improving feasible solutions.
In MILP, this is done primarily by either removing \emph{lattice-point free}
polyhedral regions (usually by imposing valid disjunctions or by
generating associated valid inequalities) or by maintaining an \emph{objective
cutoff} that removes all non-improving solutions through the usual pruning process.
In the MILP case, we do not generally need to explicitly track or remove
solutions that are feasible to the original MILP. Such a solution can only be
feasible for the relaxation at one leaf node in the search tree and is either
suboptimal for the corresponding subproblem (in which case the solution must
have been generated by an auxiliary heuristic) or optimal to that subproblem
(in which case the node will be immediately pruned). In either case, it is
unlikely the solution will arise again and there is no computational advantage
to tracking it explicitly.

In MIBLP, in contrast, we may need to track and remove discrete sets of
solutions. This is mainly to avoid the duplication of effort in evaluating the
value function $\phi$ for a specific value of the linking variables. It is
possible that the same computation will arise in more than one node in the
search tree and re-computation should be avoided. In particular, for $x^1, x^2
\in X$, we have
\begin{equation*}
x^1_L = x^2_L \Rightarrow \phi(A^2 x^1) = \phi(A^2 x^2).
\end{equation*}
Thus, tracking which sub-vectors of values for
linking variables have been seen before in a pool (called the \emph{linking
  solution pool}) can be computationally advantageous. We discuss the
mechanism for doing this in Section~\ref{sec:framework}.

\subsection{Bounding \label{sec:bounding}}

\paragraph{Lower Bound.} Perhaps the most important step in the
branch-and-bound algorithm is that of deriving a lower bound on the optimal
solution value. As we have already described, relaxing integrality
restrictions does not provide an overall relaxation. However, since $\F
\subseteq \S \subseteq \P$, relaxing the optimality condition $d^2 y \leq
\phi(A^2 x)$ in the formulation~\eqref{eqn:miblp-vf} results in two
alternative relaxations to the original problem. The first relaxation 
(known as \emph{high-point problem}~\mycitep{moorebard90}) is
\begin{equation}\label{eqn:LR}\tag{R}
\min_{(x, y) \in \S} cx+d^1y,
\end{equation}
in which only the optimality condition is relaxed, while the second is
\begin{equation}\label{eqn:LRR}\tag{LR}
\min_{(x, y) \in \P} cx+d^1y,
\end{equation}
in which the integrality constraints are also relaxed. Although both are
rather weak bounds, they can be strengthened by the addition of the valid
inequalities described in Section~\ref{sec:cutting}. Because of the enhanced
tractability of~\eqref{eqn:LRR} and because of the advantages in warm-starting
the computation during iterative computation of the bound, we
use~\eqref{eqn:LRR} as our relaxation of choice. While it
is clear that this is a relaxation of~\eqref{eqn:miblp-vf}, to see that it is
a relaxation of~\eqref{eqn:miblp}, note that we have
\begin{equation*}
d^1y^* \leq \Xi(x^*),
\end{equation*}
where $(x^*, y^*)$ is a solution to~\eqref{eqn:LRR}.

At nodes other than the root node, we can use a similar bounding strategy.
Since the branching strategy we describe shortly employs only changes to
the bounds of variables, the subproblems that arise have feasible regions of
the form
\begin{equation*}
  \F^t = \left\{(x, y) \in \F \midd l_{x}^t \leq x \leq u_{x}^t, l_{y}^t \leq
  y \leq u_{y}^t \right\}, 
\end{equation*}
where $t$ is the index of the node/subproblem, and $(l_x^t, u_x^t)$ and
$(l_y^t, u_y^t)$ represent vectors of upper and lower bounds for the first-
and second-level variables, respectively. The subproblem at node $t$ is thus
the optimization problem
\begin{equation}\label{eqn:MIBLPt}\tag{$\text{MIBLP}^t$}
\min_{(x,y)\in\F^t}\quad cx+d^1y,
\end{equation}
which one can easily show is itself an MIBLP. Thus, we can apply a similar
relaxation to obtain the bound
\begin{equation}\label{eqn:LRRt}\tag{$\text{LR}^t$}
L^t = \min_{(x,y)\in\P^t} cx+d^1y,
\end{equation}
where $\P^t = \{(x, y) \in \P \mid l_{x}^t \leq x \leq u_{x}^t, l_{y}^t \leq y
\leq u_{y}^t\}$. If it can be verified that conditions for
pruning are met (see Section~\ref{sec:pruning}), then node $t$ should be
discarded. Otherwise, the next step is to check feasibility of
\begin{equation*}
  (x^t,y^t) \in \underset{(x,y)\in\P^t}{\argmin}\;\;cx+d^1y,
\end{equation*}
the solution obtained when solving the relaxation.

\paragraph{Feasibility Check.} Recall that the upper bound for a given node
is derived by exhibiting a feasible solution. Unlike in the case of MILP,
checking whether a solution to~\eqref{eqn:LRRt} is feasible
for~\eqref{eqn:miblp-vf} may itself be a
difficult computational problem. Such check involves verifying that
$(x^t,y^t)$ satisfies the constraints that were
relaxed: integrality conditions and second-level optimality conditions.
In other words, $(x^t,y^t)$ is bilevel feasible if and only if
the following conditions are satisfied.
\begin{feascon} \label{fc:integrality}
  $x^t \in X$.
\end{feascon}
\begin{feascon} \label{fc:optimality}
  $y^t \in \RR(x^t)$.
\end{feascon}
Condition~\ref{fc:integrality} ensures that the integrality constraints for
the first-level variables are satisfied, while Condition~\ref{fc:optimality}
guarantees that $(x^t,y^t)$ satisfies the optimality constraint of
second-level problem.
Satisfaction of these two conditions (as opposed to the more general
Conditions~\ref{fc:first-level} and~\ref{fc:second-level}) is enough to ensure
$(x^t, y^t) \in \F$, given that $(x^t, y^t)$ is also a solution
to~\eqref{eqn:LRRt}.

Verifying Condition~\ref{fc:integrality} is straightforward, so this is done
first. If Condition~\ref{fc:integrality} is satisfied, then we consider
verification of Condition~\ref{fc:optimality}. This involves checking both
whether $y^t
\in Y$ and whether $d^2y^t = \phi(A^2x^t)$. Checking whether $y^t \in Y$
is inexpensive so we do this first. The latter check is more expensive and is
only required under conditions detailed later in Section~\ref{sec:framework}.
We may thus defer it until later, depending on the values of parameters
described in Section~\ref{sec:parameters}.

If we decide to undertake this latter check, we first evaluate
$\phi(A^2x^t)$ to either obtain
\begin{equation} \label{eqn:second-level-milp} \tag{SL-MILP}
\hat{y}^t \in \argmin \left\{d^2y \midd y\in\P_2(x^t) \cap Y\right\},
\end{equation}
or determine $\P_2(x^t) \cap Y = \emptyset$. The
MILP~\eqref{eqn:second-level-milp} is solved using an auxiliary MILP solver
(more details on this in Section~\ref{sec:solving-second-level}).
If~\eqref{eqn:second-level-milp} has a solution (as it must when $y^t \in Y$),
we check whether $d^2 y^t = d^2 \hat{y}^t$. If so, $(x^t, y^t)$ is bilevel
feasible and must furthermore be an optimal solution to the subproblem at node
$t$. In this case, $U^t = L^t$ and the current global upper bound $U$ can be
set to $U^t$ if $U^t < U$. If~\eqref{eqn:second-level-milp} has no solution,
we have that $x^t \not\in \F_1$ and $x^t$ can be eliminated from further
consideration either by branching or cutting. In fact, in this case, we can
eliminate not only $x^t$, but any first-level solution for which $x_{\J} = 
x_{\J}^t$. We thus add $x_{\J}^t$ to the linking solution pool in this case.

Although it is not necessary for correctness,~\eqref{eqn:second-level-milp}
may be solved even when $y^t \not\in Y$ (and even if $x_i^t \not\in \Z$ for 
some $k_1 < i \leq r_1$), since this may still lead either to the discovery of a
new bilevel feasible solution or a proof that $x^t \not\in \F_1$. In
Section~\ref{sec:framework}, we discuss when the
problem~\eqref{eqn:second-level-milp} should necessarily be solved 
and when solving it is optional. Even in the case of infeasibility of
$(x^t, y^t)$, $(x^t, \hat{y}^t)$ is bilevel feasible (though not necessarily
optimal to~\eqref{eqn:MIBLPt}) whenever $x^t\in X$ and $\hat{y}^t\in
\P_1(x^t)$. The second condition is satisfied vacuously in the usual case of $G^1 =
0$. If $(x^t, \hat{y}^t)$ is feasible, then 
we can update the global upper bound if $cx^t+d^1\hat{y}^t < U$.

Observe that solving~\eqref{eqn:second-level-milp} reveals
more information than the simple value of $\phi(A^2x^t)$.
Note that we have
\begin{equation} \label{eqn:VFBound}\tag{VFB}
  \phi(A^2x) \leq  \phi(A^2x^t) \; \forall x \in X \textrm{ such
    that } \P_2(x) \ni  \hat{y}^t,
\end{equation}
which means that $\phi(A^2x^t)$ reveals an upper bound on the second-level problem for
any other first-level solutions that admits $\hat{y}^t$ as a feasible
reaction. This upper bound can be exploited in the generation of certain valid
inequalities (see~\mycitet{}{DeNegre2011,fischettietal17a}) and is also the basis for an
algorithm by~\mycitet{Mitsos}{Mit10}. Furthermore, we have
\begin{equation*}
  \phi(A^2x) =  \phi(A^2x^t) \; \forall x \in \Re^{n_1} \textrm{
    such that } x_{\J} = x_{\J}^t,
\end{equation*}
which means we get the \emph{exact} value of the second-level problem with
respect to certain other first-level solutions ``for free''. Because (i) it may
improve the global upper bound, (ii) can reveal that $x^t
\not\in \F_1$ (and, along with other solutions sharing the same values for the
linking variables, should no longer be considered), and (iii) may
also provide bounds on the second-level value function for other first-level
solutions, \eqref{eqn:second-level-milp} may be solved in \MIBS{} even 
when $(x^t,y^t)$ does not satisfy integrality requirements
(see Section~\ref{sec:framework} for details).

\paragraph{Upper Bound.} As we have just highlighted,
the feasibility check may produce a bilevel feasible solution $(x^t,
\hat{y}^t)$, but $\RR(x^t)\cap\P_1(x^t)$ may not be a singleton and there may
exist a rational reaction $\bar{y} \in \RR(x^t)\cap \P_1(x^t)$ to $x^t$ with
$d^1\bar{y} < d^1\hat{y}^t$, which results $d^1\hat{y}^t > \Xi(x^t)$ (note
that the selected policy is the optimistic one). To compute the ``true''
objective function value $cx^t + \Xi(x^t)$ associated with $x^t$ and produce
the best possible upper bound, we may explicitly compute $\Xi(x^t)$. Once this
computation is done and the associated solution and bound are stored, we can
safely eliminate $x^t$ from the feasible region by either branching or
cutting, as we describe below, in order to force consideration of alternative
solutions. This computation is required under certain conditions and may be
optionally undertaken in others, depending on the parameter settings described
in detail in Section~\ref{sec:framework}.
$\Xi(x^t)$ is obtained by solving
\begin{equation*}\label{eqn:refinementXi}
\Xi(x^t) = \min \left\{d^1y \midd y \in \P_1(x^t) \cap \P_2(x^t)\cap Y,
d^2y\leq \phi(A^{2}x^t)\right\},
\end{equation*} 
which is an MILP since we have already computed $\phi(A^{2}x^t)$ in the
feasibility check. Solving this MILP to evaluate $\Xi(x^t)$ yields the
globally valid upper bound $cx^t + \Xi(x^t)$.

Observe that even the problem of minimizing $\Xi(x)$ over $\{x\in X \mid x_\J
= x_\J^t\}$ is still an MILP and can reveal an upper 
bound across a range of first-level solutions. This result is formalized in
Theorem~\ref{thm:computeBestUB} below.  

\subsection{Pruning \label{sec:pruning}}

As is the case with all branch-and-bound algorithms, pruning of node $t$
occurs whenever $L^t\geq U$. This again subsumes two special cases. First, if
checking Conditions~\ref{fc:integrality} and~\ref{fc:optimality} verifies that
$(x^t,y^t)$ is bilevel feasible, then we must have $L^t=U^t\geq U$. Second, when
the relaxation is infeasible, we have $L^t = \infty$ and we can again prune
the node. 

There is one additional case that is particular to MIBLPs and that is when all
linking variables are fixed. In this case, we utilize the property of MIBLPs
illustrated in the next theorem.

\begin{theorem}[\mycitep{fischettietal17a}] \label{thm:computeBestUB}
For $\gamma\in\Z^\J$, we have 
\begin{equation*}
\F \cap \left\{(x, y) \in X \times Y \midd x_{\J} = \gamma \right\} = 
\S \cap \left\{(x, y) \in X \times Y \midd d^2y \leq \phi(A^{2}x),
x_{\J} = \gamma \right\}.
\end{equation*}
\end{theorem}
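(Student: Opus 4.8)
The plan is to prove the set equality by double inclusion, after first observing that the slice $\{(x,y) \in X \times Y \mid x_{\J} = \gamma\}$ appearing on both sides plays no active role: it is carried along passively, so I would reduce to proving the unsliced identity $\F = \S \cap \left\{(x, y) \in X \times Y \mid d^2y \leq \phi(A^{2}x)\right\}$ and then intersect both sides with $\{x_{\J} = \gamma\}$. The key preliminary fact I would establish is that for any $(x,y) \in \S$, the condition $y \in \RR(x)$ is equivalent to the single inequality $d^2 y \leq \phi(A^2 x)$. Indeed, if $(x,y) \in \S = \P \cap (X \times Y)$, then $y \in \P_1(x) \cap \P_2(x) \cap Y$, so $y$ is feasible for the minimization defining $\phi(A^2 x)$; in particular $\P_2(x) \cap Y \neq \emptyset$, and by Assumption~3 (no unbounded second-level ray) the value $\phi(A^2 x)$ is finite and attained, with $d^2 y \geq \phi(A^2 x)$ holding automatically. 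Hence $d^2 y \leq \phi(A^2 x)$ forces $d^2 y = \phi(A^2 x)$, i.e., $y$ attains the minimum, which together with $y \in \P_2(x) \cap Y$ is exactly $y \in \RR(x)$; conversely, $\RR(x) \subseteq \P_2(x) \cap Y$ and the definition of $\phi$ give $d^2 y = \phi(A^2 x)$ for $y \in \RR(x)$.

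Given this equivalence, the inclusion ``$\subseteq$'' goes as follows: take $(x,y) \in \F$ with $x_{\J} = \gamma$. Then $x \in X$, $y \in Y$, and $y \in \P_1(x) \cap \RR(x)$; since $\RR(x) \subseteq \P_2(x) \cap Y$, we get $y \in \P_1(x) \cap \P_2(x)$, hence $(x,y) \in \P$ and so $(x,y) \in \S$. Because $y \in \RR(x)$ we have $d^2 y = \phi(A^2 x) \leq \phi(A^2 x)$, so $(x,y)$ lies in the right-hand set (and still satisfies $x_{\J} = \gamma$). For ``$\supseteq$'': take $(x,y) \in \S$ with $d^2 y \leq \phi(A^2 x)$ and $x_{\J} = \gamma$. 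Then $(x,y) \in X \times Y$ and $y \in \P_1(x) \cap \P_2(x)$; by the equivalence above, $d^2 y \leq \phi(A^2 x)$ yields $y \in \RR(x)$, so $y \in \P_1(x) \cap \RR(x)$ and hence $(x,y) \in \F$, again with $x_{\J} = \gamma$. This establishes both inclusions.

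I do not expect a serious obstacle: the statement is essentially a repackaging of the definitions of $\F$, $\S$, $\RR$, and $\phi$. The only point requiring care is the treatment of the degenerate cases for $\phi(A^2 x)$ --- ruling out $\phi(A^2 x) = -\infty$ (excluded by Assumption~3 once $\P_2(x) \cap Y$ is known nonempty, which it is for points of $\S$) and noting that $\phi(A^2 x) = +\infty$ cannot occur for $(x,y) \in \S$, since such a $y$ itself witnesses $\P_2(x) \cap Y \neq \emptyset$. Making the equivalence ``$(x,y) \in \S$ with $d^2 y \leq \phi(A^2 x)$'' $\Leftrightarrow$ ``$y \in \P_1(x) \cap \RR(x)$'' precise is the crux of the argument; everything else is bookkeeping.
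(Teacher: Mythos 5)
Your proposal is correct and follows the same route as the paper: the paper's proof simply asserts the unsliced identity $\F=\S\cap\{(x,y)\in X\times Y \mid d^2y\leq\phi(A^2x)\}$ as an immediate consequence of the definitions and then intersects with the slice $\{x_\J=\gamma\}$, which is exactly your reduction. You merely fill in the details (the equivalence of $y\in\RR(x)$ with $d^2y\leq\phi(A^2x)$ for $(x,y)\in\S$, via the automatic inequality $d^2y\geq\phi(A^2x)$) that the paper leaves implicit.
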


\begin{proof}
From the definitions of sets $\S$ and $\F$ provided in Section~\ref{sec:miblp}, 
it follows that
\begin{equation*}
\F=\S\cap \left\{(x,y)\in X\times Y \midd d^2 y\leq \phi(A^2x) \right\}.
\end{equation*}
The result follows.\qed
\end{proof}

\begin{corollary}
  For $\gamma \in \Z^\J$, we have
\begin{equation}\label{eqn:computeBestUB}\tag{UB}
  \begin{aligned}
  \min \{cx + d^1y \mid (x, y) \in \F, x_L = \gamma\} = 
  \min \{cx+d^1y \mid (x,y) \in \S, d^2y\leq \phi(A^{2}x)&,  \\ 
  x_{\J} = \gamma \}&.
  \end{aligned}
\end{equation}
\end{corollary}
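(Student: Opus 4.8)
The plan is to obtain~\eqref{eqn:computeBestUB} as an immediate consequence of Theorem~\ref{thm:computeBestUB}. The key observation is that the two minimization problems appearing in~\eqref{eqn:computeBestUB} share the same objective function $cx + d^1 y$, so it suffices to show that their feasible regions coincide; the equality of optimal values then follows at once (with the standard convention that the minimum over the empty set is $+\infty$, which applies consistently to both sides, and with attainment guaranteed when the sets are nonempty, since Assumption~\ref{as:boundedness} makes both feasible regions bounded).

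First I would rewrite each feasible region as an intersection of the form handled by the theorem. Since $\F \subseteq X \times Y$, the set $\{(x,y) \in \F \mid x_\J = \gamma\}$ equals $\F \cap \{(x,y) \in X \times Y \mid x_\J = \gamma\}$; likewise, since $\S \subseteq X \times Y$, the set $\{(x,y) \in \S \mid d^2 y \leq \phi(A^2 x),\ x_\J = \gamma\}$ equals $\S \cap \{(x,y) \in X \times Y \mid d^2 y \leq \phi(A^2 x),\ x_\J = \gamma\}$. Theorem~\ref{thm:computeBestUB} asserts precisely that these two intersections are equal for every $\gamma \in \Z^\J$.

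Combining these two steps gives the claim: minimizing $cx + d^1 y$ over two identical sets produces identical values. There is essentially no obstacle here---the corollary is a direct restatement of the theorem at the level of optimization problems rather than feasible sets---so the only point requiring any care is the purely bookkeeping one of matching the set-builder notation in~\eqref{eqn:computeBestUB} to the intersection notation of Theorem~\ref{thm:computeBestUB}, together with a brief remark on the empty-set and attainment conventions.
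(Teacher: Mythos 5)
Your proposal is correct and matches the paper's treatment: the corollary is stated as an immediate consequence of Theorem~\ref{thm:computeBestUB}, since by that theorem the two feasible regions coincide and the objective $cx+d^1y$ is the same on both sides. Your additional remarks on the empty-set convention and attainment under Assumption~\ref{as:boundedness} are harmless bookkeeping that the paper leaves implicit.
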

What Theorem~\ref{thm:computeBestUB} tells us is that when the values of all
linking variables are fixed at node $t$ ($l_{x_{\J}}^t=u_{x_{\J}}^t$, either
because of branching constraints or otherwise), then optimizing over $\F^t$ is
equivalent to optimizing over $\S^t$ while additionally imposing an upper
bound on the objective value of the second-level problem. In other words, we
have the following.

\begin{corollary}
Whenever $\F^t \subseteq \{(x, y) \in \F \mid x_{\J} = \gamma \}$ for some
$\gamma \in \Z^\J$, then 
\begin{equation}\label{eqn:computeBestUBForOneNode}\tag{UB$^t$}
  \begin{aligned}
  \min_{(x, y) \in \F^t} cx + d^1y  = \min \left\{cx+d^1y \midd (x,y) \in \S^t,
  d^2y\leq \phi(A^{2}x), x_{\J} = \gamma \right\},
  \end{aligned}
\end{equation}
where $\S^t = \P^t \cap (X\times Y)$.
\end{corollary}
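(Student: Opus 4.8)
The plan is to reduce the claim to the single set identity established inside the proof of Theorem~\ref{thm:computeBestUB}, namely
$\F = \S \cap \{(x,y)\in X\times Y \midd d^2 y \leq \phi(A^2 x)\}$,
and then to intersect both sides with the box of variable bounds that defines node $t$.

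First I would introduce the box $B^t = \{(x,y) \midd l_x^t \leq x \leq u_x^t,\ l_y^t \leq y \leq u_y^t\}$, so that directly from the definitions $\F^t = \F \cap B^t$ and $\S^t = \P^t \cap (X\times Y) = (\P \cap B^t) \cap (X\times Y) = \S \cap B^t$. Intersecting the identity above with $B^t$, and using $\S \subseteq X\times Y$ to drop the now-redundant membership in $X\times Y$, yields $\F^t = \S^t \cap \{(x,y) \midd d^2 y \leq \phi(A^2 x)\}$.

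Next I would invoke the hypothesis $\F^t \subseteq \{(x,y)\in X\times Y \midd x_{\J} = \gamma\}$: since $\F^t$ already lies in $\{x_{\J} = \gamma\}$, appending this equality to the right-hand side above leaves the set unchanged, so $\F^t = \S^t \cap \{(x,y) \midd d^2 y \leq \phi(A^2 x),\ x_{\J} = \gamma\}$. The two problems in~\eqref{eqn:computeBestUBForOneNode} are therefore minimizing the same objective $cx + d^1 y$ over one and the same feasible set, hence have equal optimal values (with the usual convention that both are $+\infty$ when $\F^t=\emptyset$), which is exactly the assertion. Equivalently, the statement is the node-restricted instance of~\eqref{eqn:computeBestUB}.

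The only point needing any attention — rather than a real obstacle — is verifying that the variable-bound restriction distributes over the value-function identity without interacting with $\phi$. This is immediate because $\phi(A^2 x)$ depends on $(x,y)$ only through $x$ and $B^t$ imposes no coupling constraints, so branching introduces no new subtlety. In fact, on $\F^t$ the linking variables are pinned to $\gamma$, so $\phi(A^2 x)$ is constant there, which is what makes the right-hand problem an MILP in practice; but this observation is not required for the equality of optimal values itself.
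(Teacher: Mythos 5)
Your argument is correct and is essentially the route the paper takes: the corollary is the node-restricted instance of the set identity $\F=\S\cap\{(x,y)\in X\times Y \mid d^2y\leq\phi(A^2x)\}$ established in the proof of Theorem~\ref{thm:computeBestUB}, obtained by intersecting with the node's bound box and using the hypothesis $\F^t\subseteq\{x_{\J}=\gamma\}$ to append the constraint $x_{\J}=\gamma$ harmlessly. The paper treats this as immediate and gives no separate proof; your write-up just makes the intersection step explicit.
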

Therefore, the optimal solution value for the subproblem at node $t$ can be
obtained by solving problem~\eqref{eqn:computeBestUBForOneNode} with
$\gamma=x_\J^t$. Furthermore, solving problem~\eqref{eqn:computeBestUB}
instead of~\eqref{eqn:computeBestUBForOneNode} provides a bilevel feasible
solution which is at least as good as the optimal solution of node $t$ since
it is a relaxation (it is explained in detail in 
Section~\ref{sec:impactOfParametersForLinkingPool} that there may be more 
than one node with $x_\J= \gamma$). Hence, node $t$ can be pruned after solving either
\eqref{eqn:computeBestUBForOneNode} or \eqref{eqn:computeBestUB}. Note that
these problems may be infeasible, in which case node $t$ is infeasible.
Although solving problem~\eqref{eqn:computeBestUB} may be more difficult than
solving problem~\eqref{eqn:computeBestUBForOneNode}, solving
problem~\eqref{eqn:computeBestUB} provides useful information about all
bilevel feasible solutions with $x_{\J}=x_{\J}^t$ (not only those feasible for 
node $t$), so it is generally recommended to solve~\eqref{eqn:computeBestUB} instead
of \eqref{eqn:computeBestUBForOneNode} (see Sections~\ref{sec:parameters}
and~\ref{sec:outline}).

Even if the values of linking variables are not fixed at node $t$
($l_{x_{\J}}^t \not= u_{x_{\J}}^t$), it may be advantageous to
solve~\eqref{eqn:computeBestUB} with $\gamma=x_{\J}^t$ whenever $x_\J^t \in
\Z^\J$ in order to improve the upper bound. In Section~\ref{sec:framework}, we
describe the parameters of \MIBS{} that control when
problem~\eqref{eqn:computeBestUB} is solved during the solution process. The
cases in which, this problem must be solved, regardless of the values of
parameters, are also discussed.

\subsection{Branching \label{sec:branching}}
As previously described, the role of the branching procedure is to remove
regions of the feasible set of the relaxation that contain no improving
bilevel feasible solutions. This is accomplished by imposing a valid
disjunction, which we typically choose such that it is violated by the
solution to the current relaxation (i.e., removes it from the feasible
region). When the branching procedure is invoked for node $t$, we have that
either
\begin{itemize}
  \item the solution $(x^t, y^t) \not\in \F$ because
    \begin{itemize}
    \item $(x^t, y^t) \not\in X \times Y$ or
    \item $d^2 y^t > \phi(A^2 x^t)$ (we
    have previously solved the corresponding~\eqref{eqn:second-level-milp});
    \end{itemize}
    or
  \item we have $(x^t, y^t) \in X \times Y$ and we are not sure of its
    feasibility status because we chose not to
    solve~\eqref{eqn:second-level-milp} (see Section~\ref{sec:framework} for
    an explanation of the conditions under which we may make this choice).
\end{itemize}
These alternatives make it clear that we may sometimes want to branch, even
though $(x^t, y^t) \in X \times Y$, necessitating a branching scheme that is
more general than the one traditionally used in MILP.

\paragraph{Branching on Linking Variables.} Due to
Theorem~\ref{thm:computeBestUB}, we know that once the values of  
linking variables are fixed completely at a given node $t$
($l_{x_{\J}}^t = u_{x_{\J}}^t$), the node can be pruned after
solving~\eqref{eqn:computeBestUB}.
One strategy for branching is therefore to think of the search as being over
the set of possible values of the linking variables and to prefer a branching
disjunction that partitions \emph{this} set of values. In such a scheme, we
only consider branching on linking variables as long as any such variables
remain unfixed. We show in Section~\ref{sec:computationalResultsSection} that
the apparent logic in such a scheme is effective empirically in some cases,
but this scheme also raises issues that do not generally arise in branching
on variables in MILP. In particular, situations may arise in which there are
no linking variables with fractional values ($x^t_\J \in \Z^\J$) and yet we
would still like to (or need to) impose a branching disjunction.

Naturally, when there \emph{does} exist $i\in \J$ such that $x_i^t\not\in \Z$,
we can impose a variable disjunction
\begin{align*} 
& X_1 = \left\{(x, y) \in \Re^{n_1 \times n_2} \midd x_i \leq \lfloor x^t_i
  \rfloor \right\}
& X_2 = \left\{(x, y) \in \Re^{n_1 \times n_2} \midd x_i \geq \lfloor x^t_i
  \rfloor + 1 \right\},
\end{align*}
as usual. When $x_\J^t \in \Z^\J$, however, it is less obvious what to do. If
we either have $x_i^t\not\in\Z$ for $k_1 < i \leq r_1$ or we have that $y^t \not\in
Y$, then there exists the option of breaking with the scheme and imposing a
standard variable disjunction on a non-linking variable (those not in set
${\J}$). Computational experience has shown that this may not always be a good
idea empirically.

When $x_\J^t \in \Z^\J$, there is no single variable
disjunction that can be imposed on linking variables that would eliminate
$x^t$ from (the projection of) the feasible region of both resulting
subproblems. Suppose we nevertheless branch on the variable disjunction
associated with some variable indexed $i \in \J$. Since $x^t_i \in \Z$ (and
assuming that $l_{x_{i}}^t\not= u_{x_{i}}^t$), we may branch, e.g., either on
the valid disjunction
\begin{align*} 
& X_1 = \left\{(x, y) \in \Re^{n_1 \times n_2} \midd x_i \leq x^t_i \right\},
& X_2 = \left\{(x, y) \in \Re^{n_1 \times n_2} \midd x_i \geq x^t_i + 1 \right\}
\end{align*}
or
\begin{align*} 
& X_1 = \left\{(x, y) \in \Re^{n_1 \times n_2} \midd x_i \leq x^t_i - 1
  \right\}, 
& X_2 = \left\{(x, y) \in \Re^{n_1 \times n_2} \midd x_i \geq x^t_i \right\},
\end{align*}
preferring the former when $x^t_i \leq u_{x_{i}}^t - 1$ (otherwise, one of the
resulting subproblem will be trivially infeasible and the other will be
equivalent to node $t$). This means $x^t$ satisfies either the first or second
term of this disjunction. Although imposing this valid disjunction seems
undesirable, since it does not remove $(x^t, y^t)$ from the union of the
feasible regions of the resulting subproblems, let us explore the logic of the
approach.

Suppose we continue to branch in this way and consider the indices of the set
of leaf nodes $T^t$ of the subtree of the search tree rooted at node $t$.
Since this set of leaf nodes represents a partition of $\S^t$, the projection
of the feasible region of exactly one of these nodes contains $x^t$. We have
that the values of all linking variables are fixed at this node, since we
would otherwise continue branching (we have already noted that such node can
be pruned after solving problem~\eqref{eqn:computeBestUB}).
The projection of the union of the feasible
regions of the remaining leaf nodes is thus equal to
\begin{equation*}
  \proj_x(\S^t) \setminus \left\{x \in X \midd x_\J = x^t_\J \right\}.
\end{equation*}
Hence, this branching rule can be seen as implicitly branching on the
disjunction
\begin{align*}
  & X_1 = \left\{(x, y) \in \Re^{n_1 \times n_2} \midd  x_\J = x^t_\J \right\},
  & X_2 = \left\{(x, y) \in \Re^{n_1 \times n_2} \midd  x_\J \not= x^t_\J
  \right\},
\end{align*}
which is obviously valid.

As a practical matter, if we implement the scheme of only branching on linking
variables using simple variable disjunctions, we inevitably create a
number of additional nodes for which $x^t$ remains in the projection of the
feasible set (one at each level of the tree down to the leaf node at which all
linking variables are fixed). This can easily be detected using the scheme for
maintaining a pool of linking solutions that have already been seen and will
not cause any significant computational issues.
For the remaining nodes, we are guaranteed
that $x^t$ is not contained in the projection (though these nodes may also be
the root of a subtree in which all linking variables are integer-valued). An
alternative would be to branch in a non-binary way using a more complicated
disjunction that would directly create the leaf nodes of the subtree rooted at
node $t$, but there seems to be no advantage to such a scheme.

In forming the list of variables that are candidates for branching, we add
only linking variables with fractional values if $x^t_{\J} \not\in \Z^{\J}$.
If $x^t_{\J} \in \Z^{\J}$, then we are forced to add integer-valued linking
variables that remain unfixed to the list. If all linking variables
are fixed, then we have several options, depending on how parameters are set
(see Section~\ref{sec:parameters}). 
\begin{itemize}
  \item We may prune the node after solving problem~\eqref{eqn:computeBestUB},
    as explained in Section~\ref{sec:pruning}.
  \item When $(x^t, y^t) \not\in X \times Y$, we may add non-linking variables 
  with fractional values to the candidate set.  
  \item $(x^t,y^t)$ may be removed by generating a cut 
  (after solving~\eqref{eqn:second-level-milp} if $(x^t,y^t)\in X\times Y$, see
    Section~\ref{sec:cutting}).
\end{itemize}  

\paragraph{Branching on Fractional Variables.} Naturally, we may also consider
a more traditional branching scheme in which we only branch on variables with
fractional values. In such a scheme, we must consider branching on all
variables (first- and second-level), since we may have no other option when
$x^t \in X$.

\paragraph{Selecting Candidates.} We have so far neglected to address the
question of \emph{which} variable to choose as the basis for our branching
disjunction when there is more than one candidate. The idea of limiting
branching only to the linking variables differs from the usual scheme employed
in algorithms for MILP in that we only consider a subset of the integer
variables for branching. Nevertheless, in both of the schemes described above,
we face a similar decision regarding which of the (possibly many) variables
available for branching should actually be chosen. It is well-known that the
choice of branching variable can make a substantial difference in practical
performance.

In branching procedures for MILP, it is typical to first choose a set of
candidates for branching (typically, all integer variables with fractional
values are considered). The selection of the ``best'' branching candidate is
then made from this set of candidates based on one of a number of schemes for
scoring/predicting the ``impact'' of choosing a particular branching variable.
The details of these scoring mechanisms can be found in any number of
references (see, e.g., \mycitet{}{AchKocMar05}). Although the method used
to define the set of branching candidates differs from the one used in MILP,
the method employed in \MIBS{} for selecting from the candidates is 
similar to the schemes that can be found in the literature. \MIBS{} offers the 
pseudocost, strong branching and reliability branching schemes.

\subsection{Cutting \label{sec:cutting}}

The generation of valid inequalities is an alternative to branching for
removing infeasible and/or non-improving solutions. We refer the reader
to~\mycitep{Marchand2002} and~\mycitep{Wolter2006} for theoretical background
on the separation problem and cutting plane methods in general. In the case of
MIBLP, the valid inequalities we aim to generate are those that separate the
solution to the current relaxation from the convex hull of the (improving)
bilevel feasible solutions. These inequalities are generated iteratively in
order to improve the relaxation, as in a standard cutting plane method.
\mycitet{DeNegre and Ralphs}{DeNRal09} showed that such a pure cutting plane
algorithm can be used to solve certain classes of MIBLPs.

In order to allow the separation of points that are in $\conv(\F)$ but have
already been identified (or can be shown not to be improving), we recall here
the notion of improving valid inequality that we introduced in
Definition~\ref{def:valid-inequality}, re-stating it in the notation
of~\eqref{eqn:miblp-vf}. A triple $(\alpha^x, \alpha^y, \beta) \in \Re^{n_1 +
  n_2 + 1}$ is said to constitute an \emph{improving valid inequality} for $\F$
if
\begin{equation*} 
  \alpha^x x + \alpha^y y \geq \beta \; \forall (x, y) \in
  \conv\left(\left\{(x, y) \in  \F \midd c x + d^1 y < U \right\}\right),
  \end{equation*}
where $U$ is the current global upper bound. As mentioned before, we shall
drop the word ``improving'' from the remaining discussion.

In Section~\ref{sec:bounding}, we stated that the difference between the
feasible region of a subproblem and that of the original problem is only in
changes to the bounds on variables. This is no longer strictly true when valid
inequalities are also being generated and added dynamically to the constraint
set of the relaxation. The feasible region $\P^t$ of the relaxation at node
$t$ from~\eqref{eqn:LRRt} then becomes
\begin{equation*}
  \P^t = \left\{(x, y) \in \P \cap \Pi^t \midd l_x^t \leq x \leq u_x^t, l_y^t
  \leq y \leq u_y^t \right\}, 
\end{equation*}
where $\Pi^t$ is a polyhedron representing the valid inequalities applied at
node $t$. The set of valid inequalities may include inequalities generated at
any of the ancestor nodes in the path to the root node of the search tree.

There are several distinct categories of valid inequality that can be
generated, depending on the feasibility conditions violated by the solution
$(x^t, y^t)$ that we are trying to separate at node $t$. Generally
speaking, all valid inequalities can be classified as
\begin{itemize}
\item Feasibility cuts: Inequalities valid for $\conv\left\{(x,y)\in \S\midd 
cx+d^1y < U\right\}$.
\item Optimality cuts: Inequalities valid for $\conv\left\{(x,y)\in \F\midd 
cx+d^1y < U\right\}$.
\item Projected optimality cuts: Inequalities valid for 
$\conv\left(\left\{(x,y)\in\Re^{n_1\times n_2} \midd x \in \F_1, cx + \Xi(x) <
  U \right\}\right)$.
\end{itemize}
The feasibility cuts include those classes generally employed in solution of
MILPs. Inequalities used to remove solutions $(x^t, y^t) \in \S$ at node $t$
may be referred to roughly as \emph{optimality cuts},
while \emph{projected optimality cuts} are those that may remove all solutions 
with specific first-level values. For example, inequalities that remove the solutions 
with the same linking component as $x^t$ once the problem~\eqref{eqn:computeBestUB} 
with $\gamma = x_L^t$ has been solved.  
Clearly, these three classes are not entirely distinct and the categorization 
is meant only to provide a rough categorization.

\MIBS{} includes separation routines for a variety of known classes of valid
inequalities in the current literature. We give a brief overview here. More
details are provided in a companion paper that analyzes the impact of various
classes of inequalities in greater detail.
\paragraph{Integer No-Good Cut.}

This class of inequalities are valid for problems in which $r_1=n_1$, $r_2=n_2$ 
and all problem data are integer (with the possible exception of the
objective function).
It was introduced by~\mycitet{DeNegre and Ralphs}{DeNRal09}
and is derived from a split disjunction obtained by taking combinations of
inequalities binding at $(x^t, y^t)$ in a fashion similar to that used in
deriving the Chv\'atal cuts valid for the feasible regions of MILPs.
The cut eliminates a single extremal solution
that is integral yet not bilevel feasible from the feasible region of the
relaxation. It is easy to derive such a cut under the given assumptions. 

\paragraph{Generalized No-good Cut.}
This class of inequalities are valid for problems in which all linking variables
are binary.
It is a
generalization of the \emph{no-good cut} introduced
by~\mycitet{DeNegre}{DeNegre2011} in the context of MIBLPs. Similar cuts have
been used in many other contexts. The idea of a ``no-good cut'' seems to have
first been suggested by~\mycitet{Balas and Jeroslow}{BalasJeroslow72}. The
purpose of this cut is to remove all solutions for which the linking variables
have certain fixed values.
If at node $t$, we have $x_\J^t \in \Z^\J$ and
either (i) we choose to solve~\eqref{eqn:second-level-milp} and it is
infeasible or (ii) we choose to solve~\eqref{eqn:computeBestUB}, then we can
store $x_\J^t$ in the linking solution pool (if applicable) and add a
generalized no-good cut with $\gamma = x_\J^t$ in order
to avoid generating the same linking solution again. This inequality can also
be added in other subproblems if/when this linking solution arises again.

\paragraph{Intersection Cut. \label{sec:sep1IC}}

\mycitet{Fischetti et al.}{fischettietal17a,fischettietal17b}
generalized the well-known concept of an intersection cut~\mycitep{balas71}, used
extensively in the MILP setting, to MIBLPs. The various classes introduced
differ from each other in the way the underlying ``bilevel-free'' convex set
used for generating the cuts is defined.
Three classes of introduced cuts are working for the problems with 
$G^2y-A^2x\in\Z^{m_2}$ for all $(x,y)\in\S$ and $d^2\in\Z^{n_2}$. These cuts 
can be employed for removing the infeasible solution $(x^t, y^t)\notin\F$ 
from the feasible region, but it is not guaranteed when $(x^t,y^t)\notin\S$.
The other class known as \emph{hypercube intersection cut} is valid for the 
MIBLPs with $x_{\J}\subseteq \Z^{\J}$ and can be added when $x_\J^t \in \Z^\J$.   
We solve~\eqref{eqn:computeBestUB} and store
the solution in the linking solution pool (if applicable) and after doing so, the
hypercube intersection cut can be added. This inequality is guaranteed to be
violated by $(x^t, y^t)$ and \emph{may} also eliminate \emph{some} other
solutions for which $x_{\J} = x_{\J}^t$, but is not guaranteed to eliminate
all such solutions.

\paragraph{Increasing Objective Cut.}
This class of inequalities are valid for problems in which linking
variables are binary and $A^2 \geq 0$. 
Proposed by~\mycitet{DeNegre}{DeNegre2011}, it is a disjunctive cut derived from
the following valid disjunction based on the value function bound~\eqref{eqn:VFBound}.
\begin{align*}
&  X_1 = \biggl\{(x, y) \in \Re^{n_1 \times n_2} \mathrel{\Big|} \sum_{i\in\J:
    x_i^t = 0} x_i = 
  0, d^2 y \leq d^2 \hat{y} \biggl\},
  & X_2 = \biggl\{(x, y) \in \Re^{n_1 \times n_2} \mathrel{\Big|} \sum_{i\in\J:
    x_i^t = 0} x_i \geq 1 \biggl\},
  \end{align*}
where $\hat{y}\in\RR(x^t)$. 
In addition to the way in which this disjunction was exploited
by~\mycitet{DeNegre}{DeNegre2011}, there are other inequalities that could
also be derived from this disjunction.
This cut can be applied to eliminate $(x^t,
y^t)\in X\times Y$ from the feasible region at node $t$ when 
$(x^t, y^t)$ is found to be
infeasible. 
Note
that by solving~\eqref{eqn:computeBestUB} following the feasibility check, we
could also apply the stronger generalized no-good cut in this situation.

\paragraph{Benders Cut.}
This class of inequalities are valid for problems in which linking
variables are binary and the second-level variables coefficients are 
not greater than 0 in the second-level constraints in which the linking 
variables do not participate. Furthermore, corresponding to each linking 
variable $x_i$, there exists $y_i$ so that $x_i = 1$ results $y_i = 0$ 
and this is the only restriction from the second-level constraints in which 
the linking variables participate.
 This cut was originally mentioned by~\mycitet{Caprara et al.}{capraraetal16} in the
context of knapsack interdiction problems, but is valid for a broader class of
problems.
This cut utilizes the value function bound~\eqref{eqn:VFBound} and 
a new such cut can be imposed anytime we find
a new feasible solution. 

\subsection{Primal Heuristics \label{sec:heuristics}}

Just as in the solution of MILPs, the primal heuristics can be employed within
a branch-and-cut algorithm for solving MIBLPs in order to enhance the
discovery of bilevel feasible solutions. Three different heuristics introduced
in~\mycitep{DeNegre2011} are implemented in \MIBS{}, as follows.

\paragraph{Improving Objective Cut Heuristic.} Let $(\bar{x},\bar{y})\in\S$
and $\hat{y}\in \RR(\bar{x})$. As we discussed in Section~\ref{sec:bounding},
$(\bar{x},\hat{y})$ is a bilevel feasible solution if it satisfies the
first-level constraints. However, $(\bar{x},\hat{y})$ is not necessarily a
good solution with respect to the first-level objective function. The idea of
this heuristic is to exploit the information from both $(\bar{x},\bar{y})$
(which is a good solution with respect to the first-level objective) and
$(\bar{x},\hat{y})$ (which is a likely bilevel feasible solution). To do so,
we first determine
\begin{equation*}
(\tilde{x}, \tilde{y}) \in \argmin \left\{ cx+d^1y \midd (x,y)\in \S, d^2y\leq
  d^2\hat{y}\right\}. 
\end{equation*}
When $(\bar{x},\hat{y}) \in \F$, such $(\tilde{x}, \tilde{y})$ must exist.
Further, we must have $(\tilde{x}, \tilde{y})\in\F$. A separate feasibility
check is thus required and this check is what is expected to finally produce
the (potentially) improved solution.

\paragraph{Second-level Priority Heuristic.}
This heuristic is similar to the improving objective cut heuristic in that it
also tries to balance bilevel feasibility with the quality of obtained
solution. The MILP solved with this heuristic, however, is
\begin{equation*} \label{eqn:priority-heur} \tag{PH}
\min \left\{ d^2y \midd (x,y)\in\S, cx+d^1y\leq U \right\},
\end{equation*}
where $U$ is the current upper bound. In this problem, the goal of the added
constraint is to improve the quality of the solution, while the objective
function of this problem increases the chance of generating a bilevel feasible
solution (however, it does not guarantee the bilevel feasibility of the
produced solution). Note that the upper bound $U$ could be replaced by any
chosen ``target'' to try to ensure improvement on the current incumbent, but
then we would not be assured feasibility of~\eqref{eqn:priority-heur}.

\paragraph{Weighted Sums Heuristic.}

This heuristic employs techniques from multi-objective optimization to
generate bilevel feasible solutions. The main idea of this heuristic is
finding a subset of \emph{efficient} solutions (those for which we cannot
improve one of the objectives without degrading the other while maintaining
feasibility~\mycitep{ehrgott04})
of the following problem by using
a weighted-sum subproblem~\mycitep{geoffrion68}.
\begin{equation*}
\underset{(x, y) \in \S}{\operatorname{argvmin}}\{cx+d^1y, d^2y\},
\end{equation*}
where the operator $\operatorname{argvmin}$ means finding a solution (or more
than one) that is efficient. For more details, see~\mycitet{}{DeNegre2011}.

\section{Software Framework \label{sec:framework}}

In this section, we describe the implementation of the \MIBS{}
software~\mycitep{MIBS}. This paper refers to version \code{1.1.2}, the latest
released version at the time of this writing.
The overall algorithm employed in \MIBS{} combines the procedures described in
Section~\ref{sec:BandC} in a fashion typical of existing branch-and-cut
algorithms for other problems. In fact, \MIBS{} is built on top of BLIS,
a parallel implementation of branch and cut for the solution of
MILPs~\mycitep{XuRalLadSal09}. In Section~\ref{sec:design}, we describe the class
structure of the C++ code that encapsulates the implementation. Furthermore, there 
are a number of important parameters,
described in Section~\ref{sec:parameters}, that determine how the
various components described in Section~\ref{sec:BandC} are coordinated.
Although there are defaults that are set automatically after analyzing the
structure of a particular instance, these parameters can and should be tuned
for use in particular applications, Understanding their role in the
algorithm is crucial to understanding the overall strategy, described in
Section~\ref{sec:outline}.
Finally, in Section~\ref{sec:solving-second-level}, we
describe important implementational issues surrounding how the second-level
problem is solved.

\subsection{Design of \MIBS{} \label{sec:design}}

We first briefly describe the overall design of the software.
\MIBS{} is an open-source implementation in C++ of the branch-and-cut
algorithm described in Section~\ref{sec:outline}. In addition to a core
library, \MIBS{} utilizes a number of other open-source packages available
from the Computational Infrastructure for Operations Research (COIN-OR)
repository~\mycitep{COIN-OR}. These packages include the following.

\begin{itemize}

\item The COIN-OR High Performance Parallel Search (CHiPPS)
    Framework~\mycitep{RalLadSal04}, which includes a hierarchy of three
    libraries.

\begin{itemize}
\item Abstract Library for Parallel Search 
(ALPS)~\mycitep{ALPS,XuRalLadSal05}: This project is utilized 
for managing the global branch and bound.

\item Branch, Constrain, and Price Software (BiCePS)~\mycitep{RalLadSal04,BICEPS}:
  This project provides base classes for objects used in \MIBS{}.

\item BiCePS Linear Integer Solver (BLIS)~\mycitep{XuRalLadSal09,BLIS}: This
  project is the parallel MILP Solver framework from which \MIBS{} is derived.

\end{itemize}
  
\item COIN-OR Linear Programming Solver (CLP)~\mycitep{Clp}:
\MIBS{} employs this software for 
solving the LPs arising in the branch-and-cut algorithm.

\item SYMPHONY~\mycitep{SYMPHONY,RalGuz05}: This 
software is used for solving the MILPs required to be 
solved in the branch-and-cut algorithm.

\item COIN-OR Cut Generation Library (CGL)~\mycitep{Cgl}:
  Both \MIBS{} and SYMPHONY utilize this library to generate valid
  inequalities valid for MILPs.

\item COIN-OR Open Solver Interface (OSI)~\mycitep{Osi}:
  This project is used for interfacing with solvers, such as SYMPHONY, Cbc 
  and CPLEX. 

\end{itemize}
\MIBS{} is comprised of a number of classes that are either
specific to \MIBS{}, or are classes derived from a class in one of the
libraries of CHiPPS. The main classes of \MIBS{} are as follows.  
\begin{itemize}

\item \code{MibSModel}: This class is derived from the \code{BlisModel} 
class. It extracts and stores the model from the input files, 
which consist of an MPS file and an auxiliary information file. We 
refer the reader to the \code{README} file of \MIBS{} for a comprehensive 
description of the input file format. 

\item \code{MibSBilevel}: This class is specific to \MIBS{} and is utilized for
  checking the bilevel feasibility of solutions of the relaxation problem.
  Furthermore, this class finds the bilevel feasible solutions, as described in
  lines~\ref{alg-bandc:step-18},~\ref{alg-bandc:step-23} 
  and~\ref{alg-bandc:step-28} of Algorithm~\ref{alg:branchandCutMain}.

\item \code{MibSTreeNode}: This class is derived from the 
class \code{BlisTreeNode} and contains the methods for processing the
branch-and-bound nodes. 

\item \code{MibSCutGenerator}: This class is specific to \MIBS{} 
  and contains the methods for generating the valid inequalities described in
  Section~\ref{sec:cutting}. 

\item \code{MibSBranchStrategyXyz}: These classes (one for each strategy
  \MIBS{} can use for selecting the final branching candidate: \t{Pseudo},
  \t{Strong}, and \t{Reliability}) are derived from the parent classes
  \code{BlisBranchStrategyXyz} and contain the implementation used for
  selecting the final branching candidate.

\item \code{MibSHeuristic}: This class is specific to \MIBS{} and 
contains the methods for generating heuristic solutions by employing the primal 
heuristics illustrated in Section~\ref{sec:heuristics}. 

\item \code{MibSSolution}: This is a class derived from 
\code{BlisSolution} class and is utilized for storing the 
bilevel feasible solutions.

\end{itemize}
Since one important feature of a practical solver is ease of use, we have
tried to make \MIBS{} as user-friendly as possible. Prior to the solution
process, \MIBS{} analyzes the problem to determine its properties, e.g., the
type of instance (interdiction or general), the type of variables present at
each level (continuous, discrete, or binary) and signs of
the coefficients in the constraint matrices. It then checks the parameters set
by the user and modifies them if it determines that those values of parameters
are not valid for this problem, informing the user of any change in the
parameters. For example, \MIBS{} turns off any cuts selected by the user that
are not valid for the instance to be solved.

\subsection{Parameters \label{sec:parameters}}
A wide range of parameters are available for controlling the algorithm.
\paragraph{Branching Strategy.} There are several parameters that control the 
branching strategy. The main one is \t{branchStrategy}, which controls
  what variables we allow as branching candidates. The options are
  \begin{itemize}
    \item \t{linking}: Branch only on linking variables, as long as any such 
    variable remains unfixed, with priority given to such variables with
    fractional values. 
    \item \t{fractional}: Branch on any integer first- or second-level variable that has
      fractional value, as is traditional in solving MILPs. 
\end{itemize}
  We also have parameters for
  controlling the strategy by which the final branching candidate
  is selected (\t{strong, pseudocost, reliability}). The default is to use
  \t{pseudocost} scoring.
  
  \paragraph{Search Strategy.}
  The search strategy used by \MIBS{} is controlled by ALPS, the underlying
  tree search framework. The default is the best-first strategy.
  
\paragraph{Cutting Strategy.} There are parameters for the types of valid
  inequalities to generate and the strategy for when to generate them (only in
  the root node, periodically, etc.). Note that we are forced to generate cuts
  whenever there are no available branching candidates and the current node
  cannot be pruned. With \t{branchStrategy} set to \t{linking}, the parameters
  for solving problems~\eqref{eqn:second-level-milp}
  and~\eqref{eqn:computeBestUB} (which are described later) can be set so that
  a pure branch and bound is possible, but this is not possible for the
  \t{fractional} case. The default settings for cuts depends on the instance.
  \MIBS{} includes an automatic analyzer that determines which classes of cuts
  are applicable and likely to be effective for a given instance. The
  frequency of generation is selected automatically by default, based on
  statistics gathered during early stages, as is standard in many solvers.
  
\paragraph{Primal Heuristics.} Types of primal heuristics to employ and the
  strategy for how often to employ them (see Section~\ref{sec:heuristics}).
  Only BLIS (generic MILP) heuristics are turned on by default.

\paragraph{Linking Solution Pool.} There is a parameter
\t{useLinkingSolutionPool} that determines whether to maintain a pool $\E$ of
linking solutions seen so far, as described earlier, in order to avoid solving
identical instances of~\eqref{eqn:second-level-milp}
and~\eqref{eqn:computeBestUB}. When the parameter is set to \t{True}, we check
$\E$ before solving either of~\eqref{eqn:second-level-milp}
or~\eqref{eqn:computeBestUB}.

Each linking solution stored in $\E$ is stored along with both a status tag
and, if appropriate, an associated solution. The status tag
is one of the following:
\begin{itemize}
\item\t{secondLevelIsInfeasible}: If the corresponding 
problem~\eqref{eqn:second-level-milp} is solved and it is infeasible.  

\item\t{secondLevelIsFeasible}: If the corresponding 
problem~\eqref{eqn:second-level-milp} is solved and it has an optimal 
solution, but problem~\eqref{eqn:computeBestUB} is not solved.

\item\t{UBIsSolved}: If the corresponding problem~\eqref{eqn:computeBestUB} 
is solved.
\end{itemize}
All linking solutions are stored in a hash table in
order to enable an efficient membership check.
  
\paragraph{Feasibility Check.} The following binary parameters determine the
strategy for when to solve the second-level
  problem~\eqref{eqn:second-level-milp} (for details on solution of the
  second-level problem, see Section~\ref{sec:solving-second-level}). 
  \begin{itemize}
  	\item \t{solveSecondLevelWhenLVarsFixed}: Whether to solve when 
          $l^t_{x_\J} = u^t_{x_\J} = x^t_{\J}$ (linking variables fixed).
	\item \t{solveSecondLevelWhenLVarsInt}: Whether to solve when 
	$x^t_{\J}\in \Z^{\J}$. 
	\item \t{solveSecondLevelWhenXVarsInt}: Whether to solve when $x^t \in
          X$. 
	\item \t{solveSecondLevelWhenXYVarsInt}: Whether to solve when $(x^t,
          y^t) \in X \times Y$. 	
  \end{itemize}
When problem~\eqref{eqn:second-level-milp} is solved, we have the following
implications, depending on the result.
\begin{itemize}
\item If~\eqref{eqn:second-level-milp} is infeasible, then all solutions $(x,
  y)$ with $x_{\J} = x_{\J}^t$ are infeasible and can be removed from the
  feasible region of the relaxation either by generation of a cut 
  or by branching. To avoid solving problems~\eqref{eqn:second-level-milp} 
  for a different solution with the same linking
  part, the tuple $[x_{\J}^t,\t{secondLevelIsInfeasible}]$ can be added to the
  linking solution pool $\E$.

\item If~\eqref{eqn:second-level-milp} has an optimal solution, we can avoid
  solving problem~\eqref{eqn:second-level-milp} for any $(x, y)$ for which
  $x_{\J} = x_{\J}^t$ arising in the future by adding the tuple $[x_{\J}^t,
    \t{secondLevelIsFeasible}]$ and the associated solution
  to~\eqref{eqn:second-level-milp} to the linking solution pool $\E$.
\end{itemize}
Regardless of parameter settings, we must always
  solve~\eqref{eqn:second-level-milp} whenever $(x^t, y^t) \in X \times Y$,
  $x^t_\J \not\in \E$ (we have not previously
  solved~\eqref{eqn:second-level-milp} for $x^t_\J$), and there are no
  branching candidates. Clearly, if we have branching candidates, then we can
  avoid solution of~\eqref{eqn:second-level-milp} by branching. Similarly, if
  $(x^t, y^t) \not\in X \times Y$, we can generate standard MILP cuts.
  Otherwise, we must have either
    \begin{itemize}
  \item \t{branchStrategy} is \t{fractional}, in which case we must
    solve~\eqref{eqn:second-level-milp} and then may either remove 
    $(x^t, y^t)$ by generating a valid inequality (if infeasible) or prune 
    the node (if feasible).  
  
  \item \t{branchStrategy} is set to \t{linking} and
    all linking variables are fixed, in which case we must also
    solve~\eqref{eqn:computeBestUB} (if~\eqref{eqn:second-level-milp} 
    is feasible) and prune the node. 
  \end{itemize}
  
    \paragraph{Computing Best UB.} The following binary parameters determine
  when the problem~\eqref{eqn:computeBestUB} should be solved in order to
  compute the best bilevel feasible solution $(x, y)$ with $x_{\J}=x_{\J}^t$
  (if such solution exists).

  \begin{itemize}
     \item \t{computeBestUBWhenLVarsFixed}: 
     Whether to solve when $l^t_{x_\J} = u^t_{x_\J} = x^t_{\J}$.
     \item \t{computeBestUBWhenLVarsInt}: 
     Whether to solve when $x_\J^t \in \Z^\J$.
     \item \t{computeBestUBWhenXVarsInt}: Whether to solve when $x^t \in X$.
  \end{itemize}
  After solving~\eqref{eqn:computeBestUB}, we know that no solution $(x, y)$
  with $x_{\J} = x_{\J}^t$ can be improving and thus, all such solutions can
  be removed either by generation of a cut or by branching. To avoid solving
  problem~\eqref{eqn:computeBestUB} for any solutions $(x, y)$ arising in the
  future for which $x_{\J}=x_{\J}^t$, the tuple $[x_{\J}^t,
    \t{secondLevelIsFeasible}]$ should be replaced with the tuple
  $[x_{\J}^t,\t{UBIsSolved}]$ and the associated solution stored in the
  linking solution pool $\E$.

Note that it would be possible to solve~\eqref{eqn:computeBestUBForOneNode}
rather than~\eqref{eqn:computeBestUB} if the goal were only to prune the node.
Solving~\eqref{eqn:computeBestUB} (which may not be much more difficult)
allows us to exploit the solution information globally through the linking
solution pool. 

Regardless of parameter setting, we must always solve
problem~\eqref{eqn:computeBestUB} (if it has not been previously solved for
$x^t_\J$) when (i) \t{branchStrategy} is \t{linking}, (ii) $(x^t,y^t)\in X\times Y$ 
and is bilevel infeasible and (iii) all linking variables are fixed. 
This does not mean 
that in this case, solving problem~\eqref{eqn:computeBestUB} 
(and further fathoming node $t$) is the only algorithmic option, 
as explained in Section~\ref{sec:branching}.

\subsection{Outline of the Algorithm \label{sec:outline}}
    
Algorithm~\ref{alg:branchandCutMain} gives the general outline of the node
processing loop in this branch-and-cut algorithm. Note that for simplicity,
the algorithm as stated assumes that the linking pool is used, though the
option of not using this pool is also provided. In almost all
cases, use of the linking pool is advantageous 
(see Section~\ref{sec:impactOfParametersForLinkingPool}). At a high level, the
procedure consists of the following steps. 
\begin{algorithm}
  \small
\caption{Node processing loop in \MIBS{}}\label{alg:branchandCutMain}
\ifthenelse{\useAlgorithmic = 1}{ 
\begin{algorithmic}[1]
  \Require Set $\E$
  \Ensure $L^t,U^t$,\t{finalStatus}
\State \t{branch $\leftarrow$
  False},$\,L^t\leftarrow-\infty,\,U^t\leftarrow\infty$ 
\While{\t{branch} is \t{False}}
   \State Solve~\eqref{eqn:LRRt} \label{alg-bandc:step-4}
   \State $L^t\leftarrow$ The optimal value of~\eqref{eqn:LRRt}
   \If {\eqref{eqn:LRRt} is infeasible \Or $L^t \geq U$ \Or 
     ($x_{\J}$ is fixed \An ([$x_{\J}^t,\t{secondLevelIsInfeasible}]\in\E$ 
     \Or [$x_{\J}^t,\t{UBIsSolved}]\in\E$))}
     \label{alg-bandc:step-5} 
      \State Fathom node $t$ \label{alg-bandc:step-6}
   \EndIf
   \If{\upshape$[x_{\J}^t,\cdot]\not\in\E$ 
   	\Statex[2]((\t{branchStrategy} is \t{linking} \An
        $(x^t,y^t)\in X\times Y$ \An
        $x_{\J}$ is fixed) \Or 
        \Statex[2] (\t{branchStrategy} is \t{fractional} \An
        $(x^t,y^t) \in X \times Y$) \Or
        \Statex[2] (\t{solveSecondLevelWhenXYVarsInt} \An
        $(x^t,y^t) \in X \times Y$) \Or
        \Statex[2] (\t{solveSecondLevelWhenXVarsInt} \An
        $x^t \in X$) \Or 
        \Statex[2] (\t{solveSecondLevelWhenLVarsInt} \An
        $x_{\J} \in \Z^{\J}$)\Or
        \Statex[2](\t{solveSecondLevelWhenLVarsFixed} \An
        $x_{\J}$ is fixed))} \label{alg-bandc:step-7}
      \State Solve~\eqref{eqn:second-level-milp} with $x_{\J} = x_{\J}^t$
       \If {\eqref{eqn:second-level-milp} is infeasible}
           \State $\E\leftarrow \E\cup [x_{\J}^t,\t{secondLevelIsInfeasible}]$
      \EndIf
   \EndIf
   \If{(\eqref{eqn:second-level-milp} was solved and has optimal solution) \Or 
       $[x_{\J}^t,\t{secondLevelIsFeasible}]\in\E$ \Or
     $[x_{\J}^t,\t{UBIsSolved}]\in\E$ } 
      \State $\hat{y}^t\leftarrow$ The optimal solution of
      \eqref{eqn:second-level-milp} 
       \If{$(x^t, y^t) \in \F$}
          \State $U^t\leftarrow c^1x^t + d^1 y^t, \,U \leftarrow \min\{U, U^t\}$
          \State Fathom node $t$
       \ElsIf{$(x^t, \hat{y}^t) \in \F$}
            \State $U^t\leftarrow c^1x^t + d^1\hat{y}^t,\,U\leftarrow \min\{U,
            U^t\}$ 
        \EndIf
        \If {$[x^t,\t{secondLevelIsInfeasible}]\not\in\E$ 
               \An $[x^t,\t{UBIsSolved}]\not\in\E$  \An
               \Statex[3]((\t{branchStrategy} is \t{linking} \An
               $(x^t,y^t)\in X\times Y$ \An
               $x_{\J}$ is fixed) \Or
              \label{alg-bandc:step-15}
              \Statex[3] (\t{computeBestUBWhenXVarsInt} \An
              $x^t \in X$) \Or
              \Statex[3](\t{computeBestUBWhenLVarsFixed} \An
              $x_{\J}$ is fixed)\Or
              \Statex[3] (\t{computeBestUBWhenLVarsInt}))}
              \State Solve \eqref{eqn:computeBestUB}
              \If{\eqref{eqn:computeBestUB} is feasible}
                 \State $U^t \leftarrow$ Optimal value
                 of\eqref{eqn:computeBestUB}, $\,U 
                  \leftarrow \min\{U, U^t\}$
              \EndIf  
              \State \E \leftarrow \E \cup [x_{\J}^t,\t{UBIsSolved}]$$ 
              \If{$x_{\J}$ is fixed}
                 \State Fathom node $t$ 
               \EndIf  
          \EndIf
   \EndIf
    \If {((\t{branchStrategy} is \t{linking} \An
        $x_{\J}$ is fixed) \Or \label{alg-bandc:step-25}
      \Statex[2] \t{branchStrategy} is \t{fractional}) \An
       $(x^t, y^t) \in X \times Y$}
      \State Remove $(x^t,y^t)$ by generating a cut \label{alg-bandc:step-26}
   \ElsIf {problem~\eqref{eqn:second-level-milp} was not solved \An
           $(x^t,y^t) \in X\times Y$ \An\\ 
           $[x_{\J}^t,\t{secondLevelIsInfeasible}]\not\in\E$ \An
          $[x_{\J}^t,\t{secondLevelIsFeasible}]\not\in\E$ \An 
          $[x_{\J}^t,\t{UBIsSolved}]\not\in\E$} \label{alg-bandc:step-27}
      \State \T{branch $\leftarrow$ True} \label{alg-bandc:step-28}
   \Else \label{alg-bandc:step-29}
      \State Remove $(x^t,y^t)$ by generating a cut or \t{branch $\leftarrow$
        True} \label{alg-bandc:step-30}
   \EndIf
\EndWhile
\end{algorithmic}
}
{ 
\LinesNotNumbered{
  \SetKwInOut{Input}{Input}\SetKwInOut{Output}{Output}
  \Input{$[$Set $\E,U]$}
  \Output{$[$Set $\E,U,L^t,U^t]$}
  \nl\t{branch $\leftarrow$
    False},$\,L^t\leftarrow-\infty,\,U^t\leftarrow\infty$\\ 
  \nl\While{\upshape\t{branch} is \t{False}}{
     \nl Solve~\eqref{eqn:LRRt} \label{alg-bandc:step-3}\\
     \nl$L^t\leftarrow$ The optimal value of~\eqref{eqn:LRRt}\\
     \nl\If{\upshape\eqref{eqn:LRRt} is infeasible \Or $L^t \geq U$ \Or\\
     ($l_{x_{\J}}^t=u_{x_{\J}}^t$ \An 
     ([$x_{\J}^t,\t{secondLevelIsInfeasible}]\in\E$ \Or
       [$x_{\J}^t,\t{UBIsSolved}]\in\E$))}{ 
     \label{alg-bandc:step-5}
       \nl Fathom node $t$\\\label{alg-bandc:step-6}
        }
    \nl\If{\upshape$[x_{\J}^t,\cdot]\not\in\E$ \An\\
   	\quad((\t{branchStrategy} is \t{linking} \An
        $(x^t,y^t)\in X\times Y$ \An
        $l_{x_{\J}}^t=u_{x_{\J}}^t$) \Or\\ 
        \quad\:\,(\t{branchStrategy} is \t{fractional} \An
        $(x^t,y^t) \in X \times Y$) \Or\\
       \quad\:\,(\t{solveSecondLevelWhenXYVarsInt} \An
        $(x^t,y^t) \in X \times Y$) \Or\\
        \quad\:\,(\t{solveSecondLevelWhenXVarsInt} \An
        $x^t \in X$) \Or\\
       \quad\:\,(\t{solveSecondLevelWhenLVarsInt} \An
        $x_{\J} \in \Z^{\J}$)\Or\\
       \quad\:\,(\t{solveSecondLevelWhenLVarsFixed} \An
        $l_{x_{\J}}^t=u_{x_{\J}}^t$))} {\label{alg-bandc:step-7}
        \nl Solve~\eqref{eqn:second-level-milp} to find $\phi(A^2x^t)$ 
        \label{alg-bandc:step-8} \\
         \nl\If{\upshape\eqref{eqn:second-level-milp} is infeasible}{
         \label{alg-bandc:step-9}
             \nl$\E\leftarrow \E\cup [x_{\J}^t,\t{secondLevelIsInfeasible}]$
             \label{alg-bandc:step-10}\\
             \nl\If{\upshape
               $l_{x_{\J}}^t=u_{x_{\J}}^t$}{\label{alg-bandc:step-11} 
                 \nl Fathom node $t$\label{alg-bandc:step-12}
                 }
             }
          \nl\Else{ \label{alg-bandc:step-13}
             \nl$\E\leftarrow \E\cup [x_{\J}^t,\t{secondLevelIsFeasible}]$
              \label{alg-bandc:step-14}
	} 
    }
     \nl\If{\upshape$[x_{\J}^t,\t{secondLevelIsFeasible}]\in\E$ \Or
       $[x_{\J}^t,\t{UBIsSolved}]\in\E$ }{ 
     \label{alg-bandc:step-15}
       \nl$\hat{y}^t\leftarrow$ The optimal solution of
       \eqref{eqn:second-level-milp} 
       \label{alg-bandc:step-16}\\
       \nl\If{\upshape$(x^t, y^t) \in \F$}{\label{alg-bandc:step-17}
          \nl$U^t\leftarrow c x^t + d^1 y^t, \,U \leftarrow \min\{U, U^t\}$ \\
          \label{alg-bandc:step-18}
          \nl Fathom node $t$\label{alg-bandc:step-19}
          }
        \nl\If {\upshape $[x_{\J}^t,\t{UBIsSolved}]\not\in\E$ \An\\
           \quad((\t{branchStrategy} is \t{linking} \An
           $(x^t,y^t)\in X\times Y$ \An
           $l_{x_{\J}}^t=u_{x_{\J}}^t$) \Or\\
            \quad\:\,(\t{computeBestUBWhenXVarsInt} \An
              $x^t \in X$) \Or\\
             \quad\:\,(\t{computeBestUBWhenLVarsFixed} \An
              $l_{x_{\J}}^t=u_{x_{\J}}^t$) \Or\\
             \quad\:\,(\t{computeBestUBWhenLVarsInt}))}{\label{alg-bandc:step-20}
              \nl Solve \eqref{eqn:computeBestUB}\\\label{alg-bandc:step-21}
              \nl\If{\upshape\eqref{eqn:computeBestUB} is feasible}{
              \label{alg-bandc:step-22}
                 \nl
                 $\,U \leftarrow \min\{U, \text{optimal value of~\eqref{eqn:computeBestUB}}
                 \}$\\\label{alg-bandc:step-23}
              }
              \nl$\E \leftarrow \E \setminus
                 [x_{\J}^t,\t{secondLevelIsFeasible}]$,  
              $\E \leftarrow \E \cup [x_{\J}^t,\t{UBIsSolved}]$
                 \\\label{alg-bandc:step-24} 
              \nl\If{\upshape
                $l_{x_{\J}}^t=u_{x_{\J}}^t$}{\label{alg-bandc:step-25} 
                 \nl Fathom node $t$ \\\label{alg-bandc:step-26}
               }
         }
      \nl\ElseIf{\upshape$(x^t, \hat{y}^t) \in \F$}{\label{alg-bandc:step-27}
            \nl
            $U\leftarrow \min\{U, c x^t + d^1\hat{y}^t\}$
            \label{alg-bandc:step-28}
     	   }
     }
      \nl\If{\upshape\t{branchStrategy} is \t{fractional} \An $(x^t, y^t) \in
        X \times Y$}{ 
      \label{alg-bandc:step-29}
          \nl Remove $(x^t,y^t)$ by generating a cut
          \label{alg-bandc:step-30}
          }
     \nl\ElseIf{\upshape $[x_{\J}^t,\cdot]\not\in\E$ \An
             $(x^t,y^t) \in X\times Y$
             }
             {\label{alg-bandc:step-31}
          \nl\t{branch $\leftarrow$ True}\label{alg-bandc:step-32}
          }
   \nl\Else{\label{alg-bandc:step-33}
          \nl Remove $(x^t,y^t)$ by generating a cut or \t{branch $\leftarrow$
            True} \label{alg-bandc:step-34}
      }
  }
  }
}
\end{algorithm}
\begin{enumerate}
\item Solve the relaxation~\eqref{eqn:LRRt} (line \ref{alg-bandc:step-3}) and
  prune the node (lines~\ref{alg-bandc:step-5}--\ref{alg-bandc:step-6}) if
  either  
\begin{itemize}
\item \eqref{eqn:LRRt} is infeasible;
\item $L^t \geq U$; or
\item $x_{\J}$ is fixed and it has been stored in set $\E$ with 
either \t{secondLevelIsInfeasible} or \t{UBIsSolved} tags.
\end{itemize}
\item If~\eqref{eqn:second-level-milp} was not previously solved with respect
  to $x_{\J}^t$, 
  then depending on $(x^t, y^t)$ and the parameter settings,     
  we may next solve it (lines~\ref{alg-bandc:step-7}--\ref{alg-bandc:step-8}).
  \begin{itemize}
  \item \eqref{eqn:second-level-milp} is infeasible $\Rightarrow x^t
    \not\in \F_1$ 
  (line~\ref{alg-bandc:step-9}) 
    \begin{itemize}
    \item Add $[x_{\J}^t,\t{secondLevelIsInfeasible}]$ to $\E$
      (line~\ref{alg-bandc:step-10}). 
    \item If $x_{\J}$ is fixed, fathom node $t$ 
    (lines~\ref{alg-bandc:step-11}--\ref{alg-bandc:step-12}).
    \end{itemize}
  \item \eqref{eqn:second-level-milp} is feasible $\Rightarrow$ add
    $[x_{\J}^t,\t{secondLevelIsFeasible}]$ to $\E$ 
    (lines~\ref{alg-bandc:step-13}--\ref{alg-bandc:step-14}).
  \end{itemize}
  
 \item If~\eqref{eqn:second-level-milp} was solved now or previously 
 and is feasible (line~\ref{alg-bandc:step-15}), 
 we have either
   \begin{itemize}
   \item $(x^t, y^t) \in \F \Rightarrow$ update $U$ and fathom node $t$ 
   (lines~\ref{alg-bandc:step-17}--\ref{alg-bandc:step-19}).
   \item  $(x^t, y^t) \not\in \F $ 
     \begin{itemize}
           \item Update $U$ if $(x^t, \hat{y}^t) \in \F$ (in case of not
             solving~\eqref{eqn:computeBestUB})  
           (lines~\ref{alg-bandc:step-27}--\ref{alg-bandc:step-28}) 
           and eliminate $(x^t,y^t)$
           (lines~\ref{alg-bandc:step-29}--\ref{alg-bandc:step-34}).
            
     \item If~\eqref{eqn:computeBestUB} was not previously solved with respect
     to $x^t_\J$, then depending on $(x^t, y^t)$ and the parameter settings,
     we may solve it
     (lines~\ref{alg-bandc:step-20}--\ref{alg-bandc:step-21}).
       \begin{itemize}
           \item \eqref{eqn:computeBestUB} is feasible $\Rightarrow$ update $U$ 
              (lines~\ref{alg-bandc:step-22}--\ref{alg-bandc:step-23}).
            \item Remove $[x_{\J}^t,\t{secondLevelIsFeasible}]$ from set $\E$ and 
            add $[x_{\J}^t,\t{UBIsSolved}]$ 
               (line~\ref{alg-bandc:step-24}).
             \item If $x_{\J}$ is fixed, fathom node $t$ 
                (lines~\ref{alg-bandc:step-25}--\ref{alg-bandc:step-26}).
        \end{itemize}
    \end{itemize}
   \end{itemize}

\item Finally, we must either branch or remove $(x^t, y^t)$ by adding valid
  inequalities (lines~\ref{alg-bandc:step-29}--\ref{alg-bandc:step-34}). 

\begin{itemize}
    \item If there are no branching candidates, then we must remove $(x^t,
      y^t)$ by adding valid inequalities
      (lines~\ref{alg-bandc:step-29}--\ref{alg-bandc:step-30}). 
    \item If~\eqref{eqn:second-level-milp} was not solved now or previously,
      we must branch 
      if $(x^t,y^t)\in X\times Y$ 
      (lines~\ref{alg-bandc:step-31}--\ref{alg-bandc:step-32}).
    \item Otherwise, we have the choice of either adding valid inequalities or
      branching (lines~\ref{alg-bandc:step-33}--\ref{alg-bandc:step-34}).
\end{itemize}

\end{enumerate}
In the case of not using the linking solution pool, the structure of algorithm is
similar to Algorithm~\ref{alg:branchandCutMain}, but differs in the steps where
information from the linking solution pool is exploited or new information is
added to this pool. For example, the
lines~\ref{alg-bandc:step-10},~\ref{alg-bandc:step-14} and~\ref{alg-bandc:step-24} 
are eliminated and the
lines~\ref{alg-bandc:step-5},~\ref{alg-bandc:step-7},~\ref{alg-bandc:step-15} 
and~\ref{alg-bandc:step-20} are modified.

As mentioned earlier, this algorithm can be adapted for other risk functions.
For example, the pessimistic risk function~\eqref{eqn:XiPessimistic} can be
accommodated with a few modifications as follows. 
The same relaxation is used, but the feasibility check is slightly different.
At node $t$, if $(x^t, y^t)\notin X\times Y$, it is infeasible and should be
removed, as usual. Otherwise, we compute $\phi(A^2x^t)$ and check 
whether $d^2y^t = \phi(A^2x^t)$. If $d^2y^t > \phi(A^2x^t)$, $(x^t,y^t)$ is infeasible, 
as in the optimistic case, but $d^2y^t = \phi(A^2x^t)$ does not guarantee its feasibility. 
Feasibility of $(x^t, y^t)$ must be verified by also ensuring that $d^1y^t$ is
equal to the optimal value of the MILP  
\begin{equation*}
\max\left\{d^1y\midd y\in\P_1(x^t)\cap \P_2(x^t)\cap Y,
d^2y\leq\phi(A^2x^t)\right\}. 
\end{equation*}
Finding the best feasible solution with $x_\J=\gamma\in\Z^\J$ requires solving
a new bilevel problem and its details are beyond the scope of this paper. To
avoid this, the \t{fractional} branching strategy can be used instead.

\subsection{Solving the Second-level Problem \label{sec:solving-second-level}}

It is evident that in most cases, much (if not most) of the computational
effort in executing the algorithm arises from the time
required to solve~\eqref{eqn:second-level-milp} and~\eqref{eqn:computeBestUB} 
(See Table~\ref{tab:numberTimeSolveMIPs}).
A major focus of ongoing work, therefore, is the development of methodology to
reduce the time spent solving these problems.

In closely related work on solving two-stage stochastic mixed integer optimization 
problems, methodology for warm-starting the solution process of an MILP using
information derived from previously solved instances has been developed.
\mycitet{Hassanzadeh and Ralphs}{HasRal14} described a method for solving a
sequence of MILPs differing only in the right-hand side. It is shown that a
sequence of such solves can be performed within a single branch-and-bound
tree, with each solve starting where the previous one left off. Under mild
conditions, this method can be used to construct a complete description of the
value function $\phi$. This method of solving such a sequence of MILPs has
been implemented within the SYMPHONY MILP
solver~\mycitep{SYMPHONY,RalGuz06,RalGuz05}, which is one of several supported
solvers that can be used for solution of~\eqref{eqn:second-level-milp}
and~\eqref{eqn:computeBestUB} within \MIBS{}. Other supported solvers include
CPLEX~\mycitep{CPLEX} and Cbc~\mycitep{Cbc}. 
The parameter \t{feasCheckSolver} determines which MILP solver to be employed.   

\section{Computational Results \label{sec:computationalResultsSection}}
A number of experiments were conducted to evaluate the impacts of the various
algorithmic options provided by \MIBS{}. The parameters investigated in this
section are
\begin{itemize} 
\item The parameters that determine when the
  problems~\eqref{eqn:second-level-milp} and \eqref{eqn:computeBestUB} should
  be solved. 

\item The parameter that determines the branching strategy.
\item The parameter that determines whether to use the linking solution 
pool or not.
\item The parameters that determine whether to use the primal heuristics or
  not. 
\end{itemize}
Because of space constraints and to allow more in-depth analysis, testing of
the effectiveness of various classes of valid inequalities and parameters for
controlling them is not included here, but will be the subject of a future
study. Three different data sets (171 instances in total) were employed in our
experiments as follows.

\begin{itemize}

\item \t{IBLP-DEN}: This set was generated by~\mycitet{DeNegre}{DeNegre2011}, and
  contains 50 instances with 15--20 integer variables and 20 constraints, all
  at the second level.

\item \t{IBLP-FIS}: This is a selected set of 21 of the instances generated 
by~\mycitet{Fischetti et al.}{fischettietal17a}. These instances originate from
\t{MILPLIB 3.0}~\mycitep{bixbyceria98}
and all variables are binary and all constraints are at the second level.

\item \t{MIBLP-XU}: This set was introduced by~\mycitet{Xu and Wang}{xuwang14} and includes
  100 randomly-generated instances. In these problems, all first-level
  variables are integer with upper bound 10, while the second-level variables
  are continuous with probability 0.5. The number of first- and second-level
  variables are equal and $n_1$ is in the range of $10-460$ with an increments
  of 50. Furthermore, the number of first-level and second-level constraints
  are equal to $0.4n_1$. To have specific bounds on all integer variables, we
  added the very loose upper bound 1500 for all integer second-level variables
  in converting these instances to the \t{MibS} format.
\end{itemize}
Table~\ref{tab:dataSetSummary} summarizes the properties of the data sets.
Note that in the described data sets, all first-level variables are linking.

\begin{table}[h!]
\caption{The summary of data sets}
\label{tab:dataSetSummary}       
\begin{tabular}{c c c c c c c c}
\hline\noalign{\smallskip}
Data Set & \begin{tabular}[c]{@{}c@{}}First-level \\ Vars Num\end{tabular} & \begin{tabular}[c]{@{}c@{}}Second-level \\ Vars Num\end{tabular} & \begin{tabular}[c]{@{}c@{}}First-level \\ Cons Num\end{tabular} & \begin{tabular}[c]{@{}c@{}}Second-level \\ Cons Num\end{tabular} & \begin{tabular}[c]{@{}c@{}}First-level \\ Vars Type\end{tabular} & \begin{tabular}[c]{@{}c@{}}Second-level \\ Vars Type\end{tabular} & Size \\
\noalign{\smallskip}\hline\noalign{\smallskip}
IBLP-DEN                       & 5-15                                                            & 5-15                                                             & 0                                                               & 20                                                               & discrete                                                         & discrete                                                          & 50 \\[0.14cm]
IBLP-FIS                       & 4-2481                                                          & 2-2480                                                           & 0                                                               & 16-4944                                                          & binary                                                           & binary                                                            & 21   \\ 
MIBLP-XU                       & 10-460                                                          & 10-460                                                           & 4-184                                                           & 4-184                                                            & discrete                                                         & \begin{tabular}[c]{@{}c@{}}continuous, \\ discrete\end{tabular}   & 100  \\ 
\noalign{\smallskip}\hline
\end{tabular}
\end{table}
All computational results we report were generated on compute nodes running 
the Linux (Debian 8.7) operating system with dual AMD Opteron 6128 processors 
and 32 GB RAM and all experiments were run sequentially.
The time limit was 3600 seconds and the pseudocost
branching strategy was used to choose the best variable among the branching
candidates for all experiments. In all numerical experiments, the generation
of generic MILP cuts by the Cut Generation Library~\mycitep{Cgl} was disabled,
since these cuts seem unlikely to be effective in addition to the classes
specific to MIBLPs and their integration can cause other algorithmic issues
that would first need to be addressed. SYMPHONY was employed as the MILP
solver
(preprocessing and primal heuristics were turned off) in all experiments, 
unless otherwise noted.
In all experiments, the integer no-good cut was employed for solving the
instances of \t{IBLP-DEN} and \t{IBLP-FIS} sets, and the problems belonging to
the \t{MIBLP-XU} set were solved by using the hypercube intersection cut.
Furthermore, all primal heuristics of \t{MibS} were disabled in the numerical
experiments except as otherwise noted, since these have in general also not
proven to be very effective.

All instances were initially solved by all methods described in
Sections~\ref{sec:impactOfParametersForSLandUBSection}--\ref{sec:impactOfParametersForLinkingPool} 
below, but in plotting performance profiles, we chose the 125 problems that
could be solved by at least one method in 3600 seconds and whose solution time
exceeds 5 seconds for at least one method. This test set was used for all
plots and tables shown in Section~\ref{sec:computationalResultsSection}.
The details of the results employed for plotting all of these figures and tables, are shown in the appendix 
(the reported running times do not include the required time for reading the instances).
\subsection{Impact of Strategy for Solving~\eqref{eqn:second-level-milp}
  and~\eqref{eqn:computeBestUB}}\label{sec:impactOfParametersForSLandUBSection}
In order to evaluate the impacts of the parameters for solving 
problems~\eqref{eqn:second-level-milp} and~\eqref{eqn:computeBestUB}, 
we employed five different methods:

\begin{itemize}
\item \t{whenLInt-LInt}: Problems~\eqref{eqn:second-level-milp} 
and~\eqref{eqn:computeBestUB} were both solved only
when $x^t_\J \in \Z^\J$, i.e., the parameters \t{solveSecondLevelWhenLVarsInt} 
and \t{computeBestUBWhenLVarsInt} were set to \t{true}.

\item \t{whenLInt-LFixed}: Problem~\eqref{eqn:second-level-milp} was solved
  when $x^t_\J \in \Z^\J$ and problem~\eqref{eqn:computeBestUB} was solved when
  all linking variables are fixed, i.e., \mymanageline{the parameters }
  \t{solveSecondLevelWhenLVarsInt} 
  and~\t{computeBestUBWhenLVarsFixed} were set to \t{true}.

\item\t{whenLFixed-LFixed}: Problems~\eqref{eqn:second-level-milp}
  and~\eqref{eqn:computeBestUB} were both solved only when linking variables
  were fixed, i.e., \t{solveSecondLevelWhenLVarsFixed} and
  \t{computeBestUBWhenLVarsFixed} were set to \t{true}.

\item\t{whenXYInt-LFixed}: Problem~\eqref{eqn:second-level-milp} was solved
  only when $(x^t, y^t) \in X \times Y$ and problem~\eqref{eqn:computeBestUB}
  was solved only when, in addition, all linking variables were fixed, i.e., 
  \mymanageline{the parameters }\t{solveSecondLevelWhenXYVarsInt} 
  and \t{computeBestUBWhenLVarsFixed} were set to \t{true}.

\item\t{whenXYIntOrLFixed-LFixed}: Problem~\eqref{eqn:second-level-milp} was
  only solved when $(x^t, y^t) \in X \times Y$ or all linking 
 variables were fixed and problem~\eqref{eqn:computeBestUB} was solved only
 whenever all linking variables are fixed, i.e.,
 \t{solveSecondLevelWhenXYVarsInt},  
 \t{solveSecondLevelWhenLVarsFixed} and \t{computeBestUBWhenLVarsFixed} were
 set to \t{true}. 
\end{itemize}
In this first set of experiments, the \t{branchStrategy} was set to
\t{linking} and \t{useLinkingSolutionPool} was set to \t{true}. The
performance profiles shown in Figure~\ref{fig:SSUBParTimeMore5Sec} compare the
solution time of the five described methods.

Figure~\ref{fig:SSUBParTimeMore5SecNonXu} shows the results for the
\t{IBLP-DEN} and \t{IBLP-FIS} sets and Figure~\ref{fig:SSUBParTimeMore5SecXu}
shows the results for the \t{MIBLP-XU} set. These figures show the superiority
of \t{whenLFixed-LFixed} and \t{whenXYIntOrLFixed-LFixed} over the other three
methods, with roughly the same performance for each. Based on these results,
the settings for \t{whenXYIntOrLFixed-LFixed} have been chosen as the default
setting for \MIBS{} and in the remainder of these experiments unless otherwise
noted.
\begin{figure}[h!]
\begin{subfigure}{0.5\textwidth}
\includegraphics[height=2in]{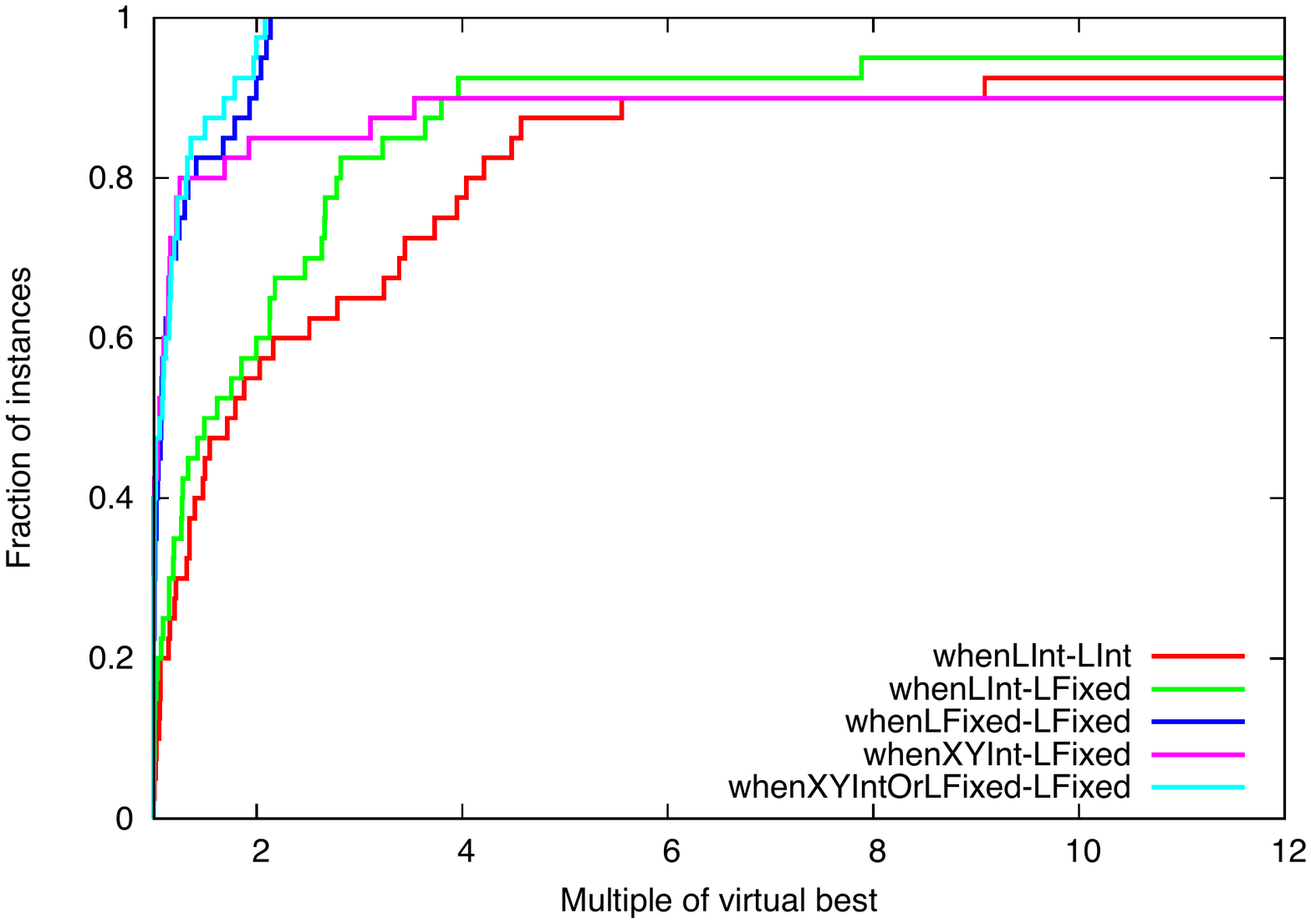}
\caption{\t{IBLP-DEN} and \t{IBLP-FIS} sets
  \label{fig:SSUBParTimeMore5SecNonXu}}
\end{subfigure}
\begin{subfigure}{0.5\textwidth}
\centering
\includegraphics[height=2in]{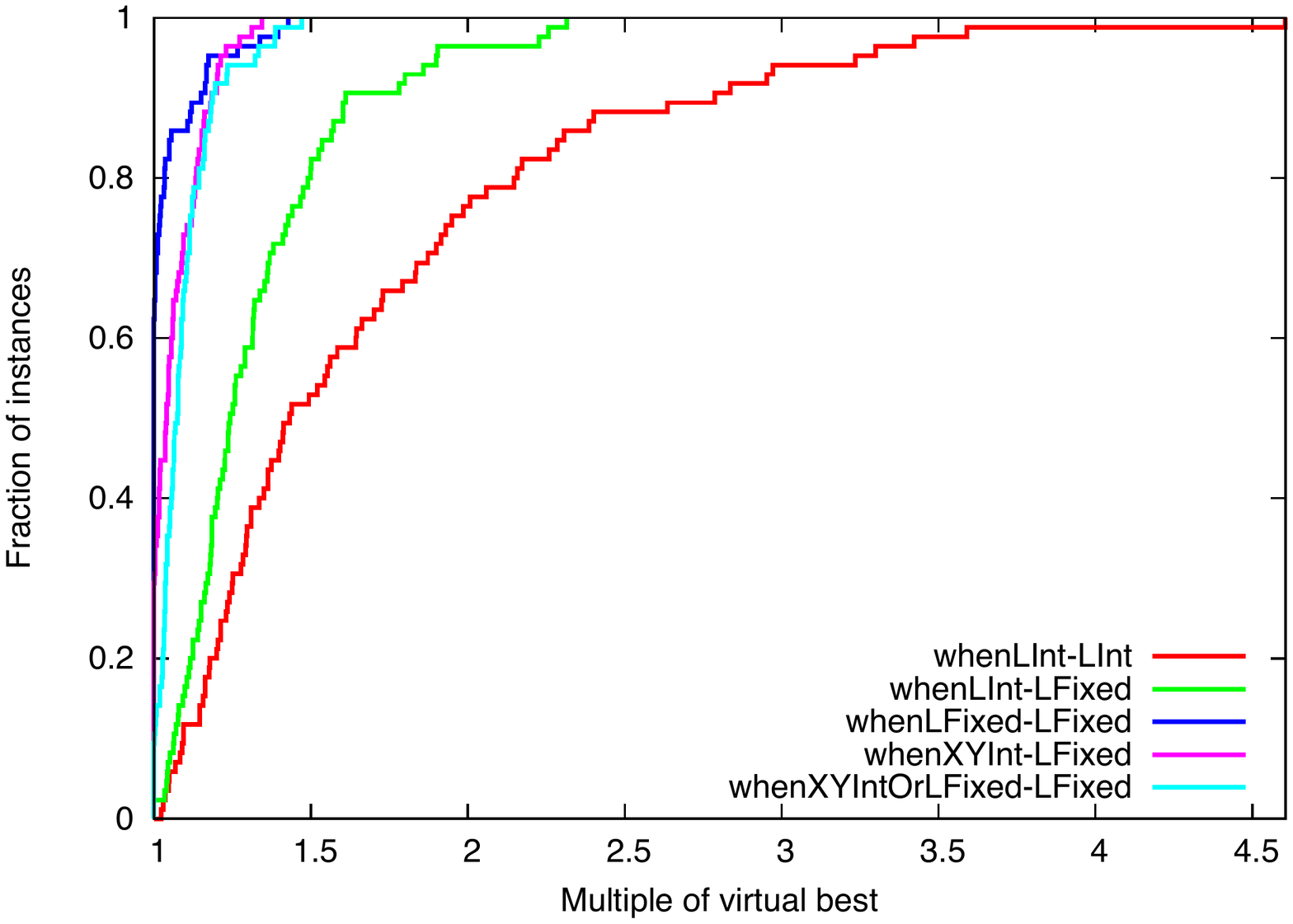}
\caption{\t{MIBLP-XU} set
  \label{fig:SSUBParTimeMore5SecXu}}
\end{subfigure}
\caption{Impact of the parameters for solving 
problems~\eqref{eqn:second-level-milp} and~\eqref{eqn:computeBestUB}.
\label{fig:SSUBParTimeMore5Sec}}
\end{figure}
\subsection{Impact of Branching Strategy}
\label{sec:impactOfParametersForBranchingStrategy}
As mentioned earlier, the \t{branchStrategy} parameter controls 
which variables are considered branching candidates and can be 
set to \t{fractional} and \t{linking}. In order to evaluate the effect of 
the parameter, we compared the performance of these two methods. The linking
solution pool was used in both cases. 

In initial testing, we observed a possible relationship between the number of
integer first- and second-level variables and the performance of branching
strategies. Hence, we further analyzed the results by dividing them into two
separate sets with $r_1\leq r_2$ (27 instances) and $r_1 > r_2$ (98
instances). Figure~\ref{fig:branchStrategyParTimeMore5Sec} shows the
performance profiles for these two sets with the solution time as the
performance measure.

\begin{figure}[h!]
\begin{subfigure}{0.5\textwidth}
\centering
\includegraphics[height=2in]{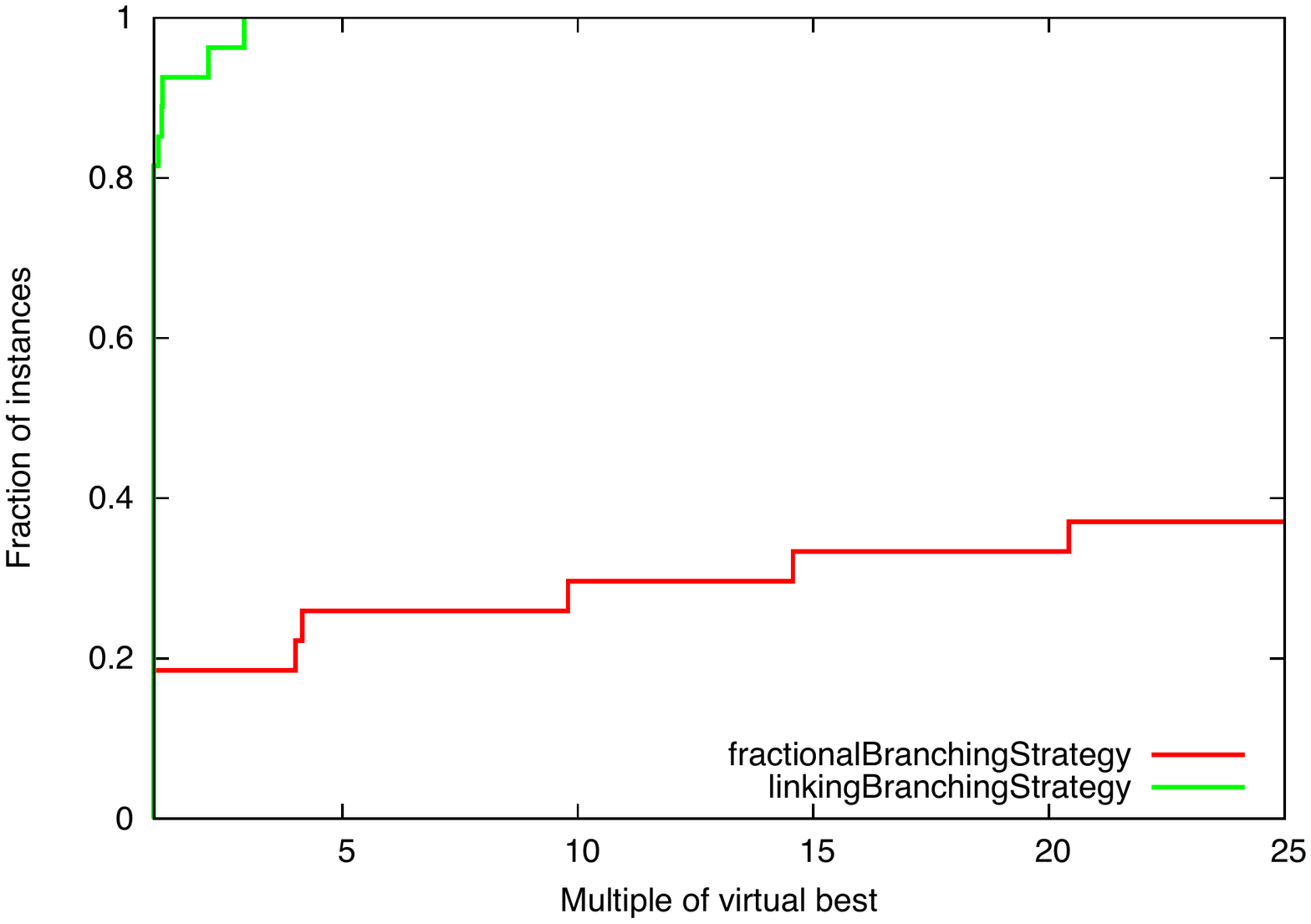} 
\caption{$r_1\leq r_2$
  \label{fig:branchStrategyParTimeMore5Secr1Leqr2}}
\end{subfigure}
\begin{subfigure}{0.5\textwidth}
\centering
\includegraphics[height=2in]{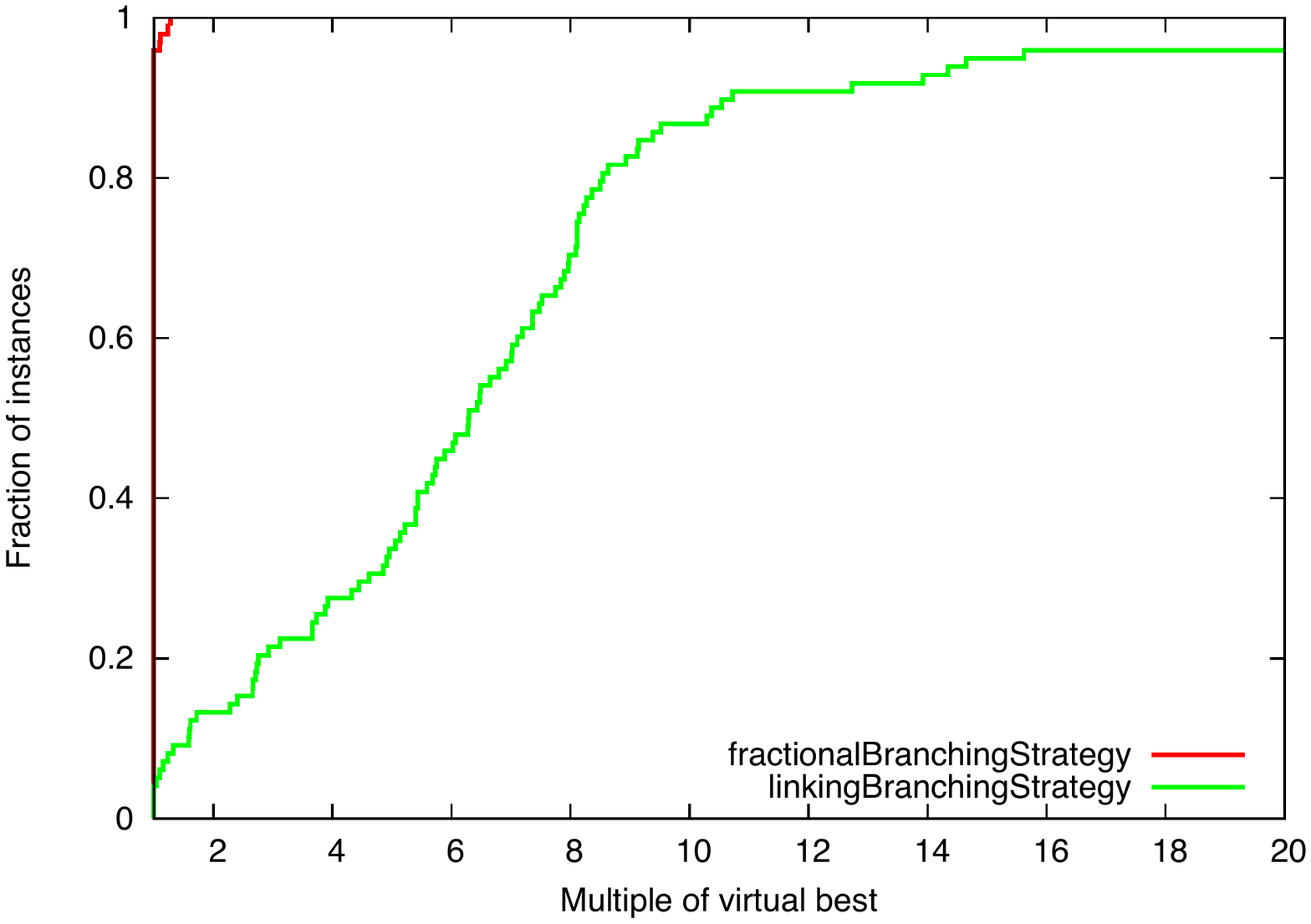}
\caption{$r_1 > r_2$
  \label{fig:branchStrategyParTimeMore5Secr1grer2}}
\end{subfigure}
\caption{Impact of the 
  \t{branchStrategy} parameter.
\label{fig:branchStrategyParTimeMore5Sec}}
\end{figure}
Figure~\ref{fig:branchStrategyParTimeMore5Secr1Leqr2} shows that \t{linking}
generally performed better when $r_1\leq r_2$, while
Figure~\ref{fig:branchStrategyParTimeMore5Secr1grer2} indicates that the
\t{fractional} branching strategy performed better when $r_1 > r_2$. Based on
these results, the default value of \t{branchStrategy} parameter in \MIBS{}
has been set to \t{linking} and \t{fractional} for the instances with $r_1\leq
r_2$ and $r_1 > r_2$, respectively. In general, it would not be easy to predict
which branching strategy will behave better for a particular given instance, 
but it is intuitive that when $r_1 << r_2$, the \t{linking}
strategy would be better.

\subsection{Impact of Linking Solution Pool}
\label{sec:impactOfParametersForLinkingPool}
A set of eight experiments were evaluated to assess the impact of the
linking solution pool. The chosen parameters for these experiments were: 

\begin{itemize}
\item\t{withoutPoolWhenXYInt-LFixed}: The linking solution pool is not used 
and  \t{whenXYInt-LFixed} strategy is used for solving 
problems~\eqref{eqn:second-level-milp} and~\eqref{eqn:computeBestUB}. 

\item \t{withPoolWhenXYInt-LFixed}: The same as 
\t{withoutPoolWhenXYInt-LFixed}, but with the use of linking solution pool. 

\item\t{withoutPoolWhenXYIntOrLFixed-LFixed}: The linking solution pool is 
not used and the strategy for solving problems~\eqref{eqn:second-level-milp} 
and~\eqref{eqn:computeBestUB} is \t{whenXYIntOrLFixed-LFixed}.

\item\t{withPoolWhenXYIntOrLFixed-LFixed}: The same as
  \t{withoutPoolWhenXYIntOrLFixed-LFixed}, but with the use of the linking
  solution pool. 
\end{itemize}
Each of the above four settings was tested with both \t{fractional} and
\t{linking} branching strategies, although the linking pool was only expected
to have a large impact when we allow branching on non-linking variables. This
is because when branching only on linking variables, linking solutions can
only arise again within the same subtree as they first arose and this can only
happen if the solution was not already removed with a valid inequality.

The performance profiles shown in
Figure~\ref{fig:linkingSolutionPooParWithTimeMore5Sec} compare the
solution time of the described methods.  
Figure~\ref{fig:linkingSolutionPooParWithFracParWithTimeMore5Sec} 
shows the methods which use \t{fractional} branching strategy, and 
Figure~\ref{fig:linkingSolutionPooParWithLinkParWithTimeMore5Sec} 
shows the methods with \t{linking} branching strategy. 

\begin{figure}[h!]
\begin{subfigure}{0.5\textwidth}
\centering
\includegraphics[height=2in]{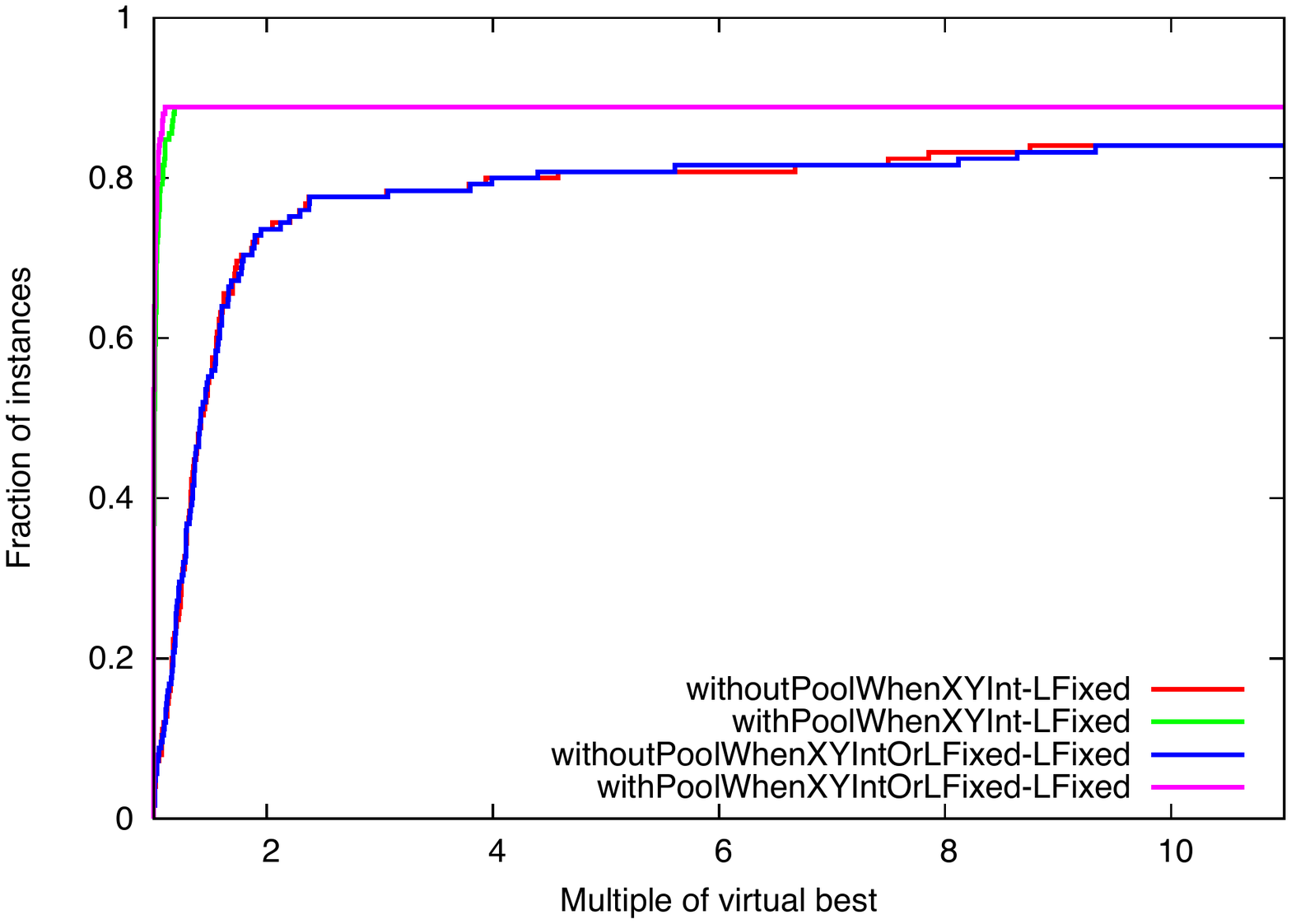}
\caption{\t{fractional} branching strategy}
\label{fig:linkingSolutionPooParWithFracParWithTimeMore5Sec}
\end{subfigure}
\begin{subfigure}{0.5\textwidth}
\centering
\includegraphics[height=2in]{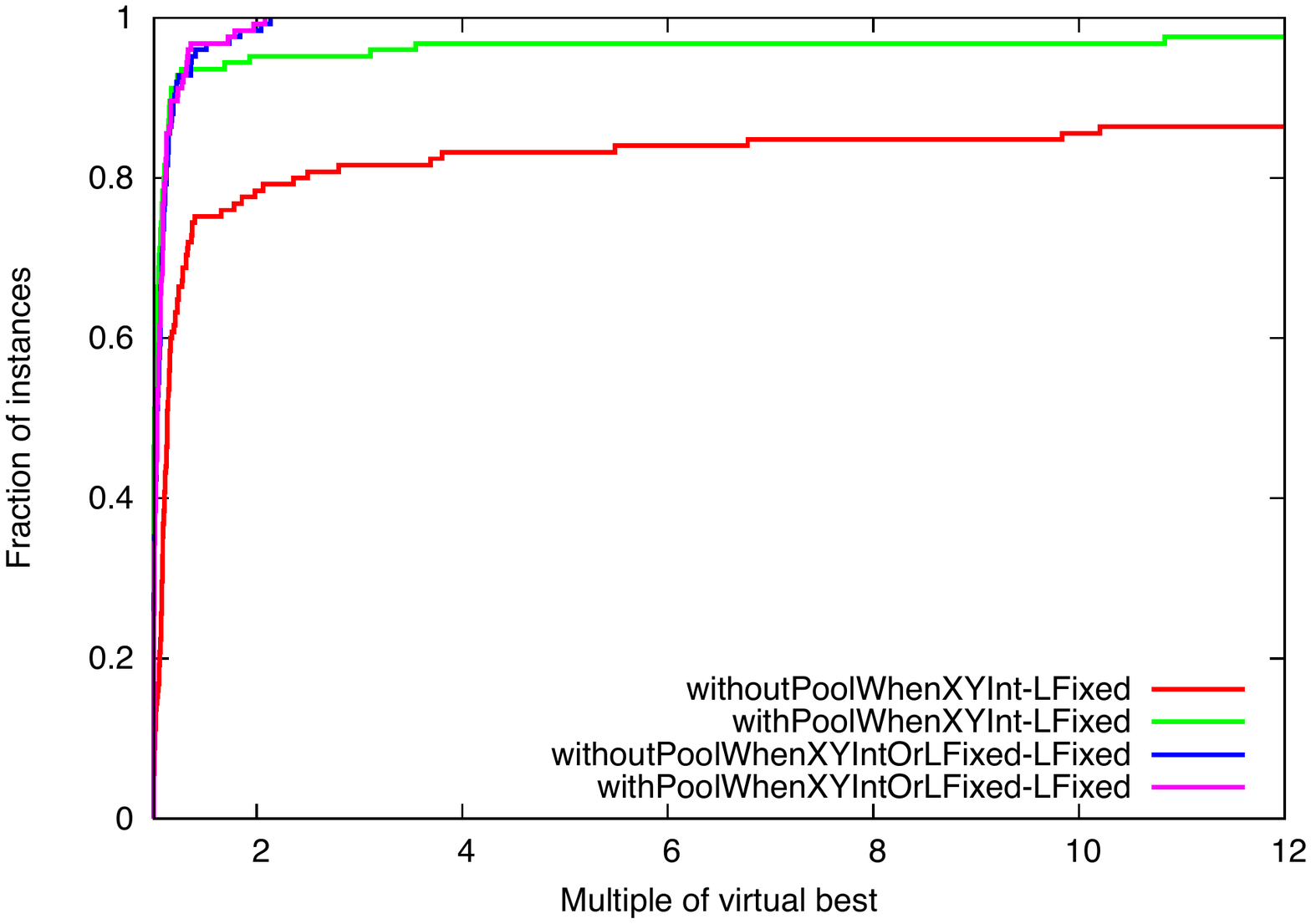}
\caption{\t{linking} branching strategy}
\label{fig:linkingSolutionPooParWithLinkParWithTimeMore5Sec} 
\end{subfigure}
\caption{Impact of the linking solution pool.}
\label{fig:linkingSolutionPooParWithTimeMore5Sec}
\end{figure}
As expected, when branching was not limited to linking variables, the 
linking solution pool was effective in decreasing the solution time. This can
be observed by comparing

\begin{itemize}
\item \t{withPoolWhenXYInt-LFixed} 
and \t{withoutPoolWhenXYInt-LFixed} for both the \t{fractional} 
and \t{linking} branching strategies; and

\item \t{withPoolWhenXYIntOrLFixed-LFixed} 
and \t{withoutPoolWhenXYIntOrLFixed-LFixed} for the \t{fractional} branching
strategy.

\end{itemize}
In these cases, branching can also be done on non-linking variables. However,
use of the solution pool for \t{withoutPoolWhenXYIntOrLFixed-LFixed} with \t{linking}
branching strategy does not improve performance because the branching was
only done on linking variables. Based on the results achieved in this section,
the linking solution pool is used by default in \MIBS{}.

Table~\ref{tab:numberTimeSolveMIPs} shows the total number of instances of
problems~\eqref{eqn:second-level-milp} and~\eqref{eqn:computeBestUB} solved
when using the methods \t{withPoolWhenXYIntOrLFixed-LFixed} and
\t{withoutPoolWhenXYIntOrLFixed-LFixed} with both \t{linking} and
\t{fractional} branching strategies, as well as the percent of total solution
time required. Comparing the columns for the \t{linking} branching strategy
verifies the results shown in
Figure~\ref{fig:linkingSolutionPooParWithLinkParWithTimeMore5Sec}. These
columns show that using the linking solution pool does not decrease the number
of~\eqref{eqn:second-level-milp} and~\eqref{eqn:computeBestUB} instances
solved, so we do not expect to improve the solution time either. Similarly,
the columns for the \t{fractional} branching strategy verify the results shown
in Figure~\ref{fig:linkingSolutionPooParWithFracParWithTimeMore5Sec} and show
that the number of instances of~\eqref{eqn:second-level-milp}
and~\eqref{eqn:computeBestUB} can be reduced significantly by using the pool
in this case, resulting in decreased solution time.

The results with both the \t{linking} and \t{fractional} strategies with the
linking pool (the first and third columns) show that fewer instances
of~\eqref{eqn:second-level-milp} and~\eqref{eqn:computeBestUB} are solved in
general (and a smaller percentage of time required) with the \t{fractional}
strategy, regardless of whether the overall solution time is smaller with the
\t{linking} strategy or not.
This does not mean, however, that using the \t{fractional} strategy
always results in the need to solve fewer problems~\eqref{eqn:second-level-milp}
and~\eqref{eqn:computeBestUB}.
It is generally only by taking advantage of the linking solution pool that
this is avoided.
Comparing \t{linking} strategy without the
pool to the \t{fractional} strategy without the pool illustrates that the
\t{fractional} strategy sometimes requires solving more
problems~\eqref{eqn:second-level-milp}. Moreover, 
Table~\ref{tab:numberTimeSolveMIPs} shows that different data sets do not 
have the same behavior from the point of required time for solving MILPs. 
\begin{table}[h!]
\caption{Analysis of total time and frequency of solving 
problems~\eqref{eqn:second-level-milp} and~\eqref{eqn:computeBestUB}}
\label{tab:numberTimeSolveMIPs}
\resizebox{\columnwidth}{!}{
\begin{tabular}{l@{\hspace{0.1cm}}cc@{\hspace{0.15cm}}c@{\hspace{0cm}}c@{\hspace{0cm}}cc@{\hspace{0.15cm}}c@{\hspace{0cm}}c@{\hspace{0cm}}cc@{\hspace{0.15cm}}c@{\hspace{0cm}}c@{\hspace{0cm}}cc@{\hspace{0.15cm}}c}
\hline
\multicolumn{1}{c}{}&
\multicolumn{3}{c}{\begin{tabular}[c]{@{}c@{}}\scriptsize{withPoolWhenXYIntOr}\\\scriptsize{LFixed-LFixed(Linking)}\end{tabular}}&
\multicolumn{1}{c}{}&
\multicolumn{3}{c}{\begin{tabular}[c]{@{}c@{}}\scriptsize{withoutPoolWhenXYIntOr}\\\scriptsize{LFixed-LFixed(Linking)}\end{tabular}}&
\multicolumn{1}{c}{}&
\multicolumn{3}{c}{\begin{tabular}[c]{@{}c@{}}\scriptsize{withPoolWhenXYIntOr}\\\scriptsize{LFixed-LFixed(Fractional)}\end{tabular}}&
\multicolumn{1}{c}{}&
\multicolumn{3}{c}{\begin{tabular}[c]{@{}c@{}}\scriptsize{withoutPoolWhenXYIntOr}\\\scriptsize{LFixed-LFixed(Fractional)}\end{tabular}}\\
\cline{2-4}
\cline{6-8}
\cline{10-12}
\cline{14-16}
Data Set& \begin{tabular}[c]{@{}c@{}}SL\\Count\end{tabular} &\begin{tabular}[c]{@{}c@{}}UB\\Count\end{tabular} &\begin{tabular}[c]{@{}c@{}}Time\\($\%$)\end{tabular} & & \begin{tabular}[c]{@{}c@{}}SL\\Count\end{tabular} &\begin{tabular}[c]{@{}c@{}}UB\\Count\end{tabular} &\begin{tabular}[c]{@{}c@{}}Time\\($\%$)\end{tabular} & & \begin{tabular}[c]{@{}c@{}}SL\\Count\end{tabular} &\begin{tabular}[c]{@{}c@{}}UB\\Count\end{tabular} &\begin{tabular}[c]{@{}c@{}}Time\\($\%$)\end{tabular} & & \begin{tabular}[c]{@{}c@{}}SL\\Count\end{tabular} &\begin{tabular}[c]{@{}c@{}}UB\\Count\end{tabular} &\begin{tabular}[c]{@{}c@{}}Time\\($\%$)\end{tabular}\\
\hline
\begin{tabular}[c]{@{}c@{}}IBLP-DEN \\ ($r_1\leq r_2$)\end{tabular} & 1,027,234 & 1,023,867 & 89 &  & 1,029,644 & 1,023,867 & 89 &  & 203,711 & 48,190 & 4  &  & 5,570,810 & 316,227 & 91 \\[0.3cm]
\begin{tabular}[c]{@{}c@{}}IBLP-DEN \\ ($r_1> r_2$)\end{tabular}  & 1,300,436 & 1,184,094 & 53 &  & 1,317,993 & 1,184,094 & 55 &  & 820,832 & 2,333  & 17 &  & 2,571,199 & 2,355   & 39 \\[0.3cm]
\begin{tabular}[c]{@{}c@{}}IBLP-FIS \\ ($r_1\leq r_2$)\end{tabular} & 699       & 695       & 84 &  & 699       & 695       & 84 &  & 201     & 29     & 0  &  & 1,336,226 & 31,058  & 43 \\[0.3cm]
\begin{tabular}[c]{@{}c@{}}IBLP-FIS \\ ($r_1> r_2$)\end{tabular}   & 91,280    & 56,014    & 13 &  & 93,013    & 56,014    & 13 &  & 19,103  & 0      & 2  &  & 27,980    & 0       & 3  \\[0.3cm]
MIBLP-XU                & 26,593    & 8,791     & 4  &  & 26,597    & 8,795     & 4  &  & 5,972   & 5,618  & 18 &  & 10,654    & 10,239  & 38\\
\hline
\end{tabular}
}
\end{table}
\subsection{Impact of Primal Heuristics}
In order to evaluate the impact of employing primal heuristics implemented in 
\t{MibS} (see Section~\ref{sec:heuristics}), we compared four different methods:
\begin{itemize}
\item \t{noHeuristics}: Parameters are set to the default values obtained from
  the results of previous sections, i.e.,
\begin{itemize}
\item the strategy for solving problems~\eqref{eqn:second-level-milp} 
and~\eqref{eqn:computeBestUB} is \t{whenXYIntOrLFixed-LFixed}.
\item \t{branchStrategy} is set to \t{linking} and \t{fractional} for the
  instances with $r_1\leq r_2$ and $r_1 > r_2$, respectively.
\item the linking solution pool is used.
\end{itemize}
\item\t{impObjectiveCut}: The same as \t{noHeuristics}, but the improving
  objective cut heuristic is turned-on.
\item\t{secondLevelPriority}: The same as \t{noHeuristics}, but the
  second-level priority heuristic is turned-on.
\item\t{weightedSums}: The same as \t{noHeuristics}, but the weighted sums
  heuristic is turned-on.
\end{itemize}
Figure~\ref{fig:heuristicsParTimeMore5Sec} shows the
performance profiles for these four methods with the solution time as the
performance measure. The frequency of using heuristics was set to 100.
\begin{figure}[h!]
\begin{center}
\includegraphics[height=2in]{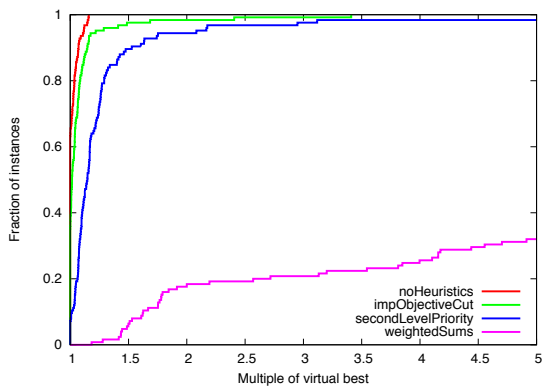}
\end{center}
\caption{Impact of the primal heuristics.
  \label{fig:heuristicsParTimeMore5Sec}}
\end{figure}

Based on Figure~\ref{fig:heuristicsParTimeMore5Sec}, the primal heuristics
implemented so far are not effective in improving the solution time. However,
it may be possible to improve their performance by parameter tuning. No
serious effort has been made so far to do this.
\subsection{Impact of MILP Solver}
SYMPHONY was employed as the MILP solver in all experiments described in
previous sections. In order to investigate the impact of the employed MILP
solver in solving MIBLPs, we repeated some of these experiments using CPLEX
(with its default setting) as the MILP solver. We observed that when the total
time for solving the required MILPs was not large and the MILPs were easy to
solve, the impact of the change in MILP solver was not considerable. In cases
where solution of the MILPs required more effort, CPLEX did reduce the time for
solving the MILPs by roughly half on average (but the exact amount of reduction
depends highly on the instance).

\section{Conclusions \label{sec:conclusions}} 
We have presented a generalized branch-and-cut algorithm, which is able to
solve a wide range of MIBLPs. The components of this algorithm have been
explained in detail and we have discussed precisely how the specific features
of MIBLPs can be utilized to solve various classes of problems that arise in
practical applications. Moreover, we have introduced \MIBS{}, which is an
open-source solver for MIBLPs. This solver has been implemented based on the
branch-and-cut algorithm described in the paper and provides a comprehensive
and flexible framework in which a variety of algorithmic options are
available. We have demonstrated the performance of \MIBS{} and shown that it
is a robust solver capable of solving generic MIBLPs of modest size. In future
papers, we will describe additional details of the methodology in \MIBS{},
including the cut generation, which we have not discussed here. Our future
plans for \MIBS{} include the implementation of additional techniques for
generating valid inequalities and the possible addition of alternative
algorithms.

\section*{Acknowledgements}

This research was made possible with support from National Science
Foundation Grants CMMI-1435453, CMMI-0728011, and ACI-0102687, as well
as Office of Naval Research Grant N000141912330.

\bibliography{paper}
\newpage

\section*{Appendices}

Tables~\ref{tab:fig2a}--\ref{tab:fig5Continue} present detailed 
results applied for plotting Figures~\ref{fig:SSUBParTimeMore5Sec}
--\ref{fig:heuristicsParTimeMore5Sec}.
\mymanagetable{
\begin{table}[h!]
\caption{Detailed results of Figure~\ref{fig:SSUBParTimeMore5SecNonXu}}
\label{tab:fig2a}
\resizebox{\columnwidth}{!}{
\begin{tabular}{llllllllllllllllllll}
\hline
\multicolumn{1}{c}{}&
\multicolumn{3}{c}{whenLInt-LInt}&
\multicolumn{1}{c}{}&
\multicolumn{3}{c}{whenLInt-LFixed}&
\multicolumn{1}{c}{}&
\multicolumn{3}{c}{whenLFixed-LFixed}&
\multicolumn{1}{c}{}&
\multicolumn{3}{c}{whenXYInt-LFixed}&
\multicolumn{1}{c}{}&
\multicolumn{3}{c}{\begin{tabular}[c]{@{}c@{}}\scriptsize{whenXYIntOr}\\\scriptsize{LFixed-LFixed}\end{tabular}} \\
\cline{2-4}
\cline{6-8}
\cline{10-12}
\cline{14-16}
\cline{18-20}
Instance&   BestSol & Time(s) &Nodes  & &BestSol & Time(s) &Nodes & &BestSol & Time(s) &Nodes& &BestSol & Time(s) &Nodes& &BestSol & Time(s) &Nodes\\
\hline
miblp-20-15-50-0110-10-10 & -206.00  & 1.82               & 423     &  & -206.00  & 1.08    & 423     &  & -206.00  & 0.97    & 1414     &  & -206.00  & 1.06    & 423     &  & -206.00  & 1.70               & 423     \\
miblp-20-15-50-0110-10-2  & -398.00  & 12.25              & 2590    &  & -398.00  & 9.18    & 2450    &  & -398.00  & 11.12   & 27625    &  & -398.00  & 9.09    & 2450    &  & -398.00  & 11.52              & 2590    \\
miblp-20-15-50-0110-10-3  & -42.00   & 0.74               & 267     &  & -42.00   & 0.61    & 267     &  & -42.00   & 0.71    & 1343     &  & -42.00   & 0.62    & 267     &  & -42.00   & 0.70               & 267     \\
miblp-20-15-50-0110-10-6  & -246.00  & 12.16              & 340     &  & -246.00  & 4.47    & 340     &  & -246.00  & 2.19    & 853      &  & -246.00  & 4.56    & 340     &  & -246.00  & 8.67               & 340     \\
miblp-20-15-50-0110-10-9  & -635.00  & 26.62              & 2387    &  & -635.00  & 14.06   & 2380    &  & -635.00  & 7.86    & 6457     &  & -635.00  & 14.05   & 2380    &  & -635.00  & 25.35              & 2387    \\
miblp-20-20-50-0110-10-10 & -441.00  & 1322.97            & 134901  &  & -441.00  & 684.67  & 134585  &  & -441.00  & 526.56  & 639146   &  & -441.00  & 692.00  & 134585  &  & -441.00  & 1121.84            & 134901  \\
miblp-20-20-50-0110-10-1  & -359.00  & 264.46             & 96136   &  & -359.00  & 244.20  & 94019   &  & -359.00  & 260.31  & 423149   &  & -359.00  & 228.67  & 96141   &  & -359.00  & 248.95             & 96136   \\
miblp-20-20-50-0110-10-2  & -659.00  & 12.72              & 3261    &  & -659.00  & 3.93    & 3191    &  & -659.00  & 2.78    & 7547     &  & -659.00  & 3.69    & 3191    &  & -659.00  & 10.56              & 3261    \\
miblp-20-20-50-0110-10-3  & -618.00  & 40.06              & 21622   &  & -618.00  & 13.04   & 20780   &  & -618.00  & 10.74   & 38188    &  & -618.00  & 13.18   & 20788   &  & -618.00  & 29.85              & 21622   \\
miblp-20-20-50-0110-10-4  & -604.00  & \textgreater{}3600 & 654402  &  & -604.00  & 3127.12 & 830908  &  & -604.00  & 3405.57 & 6452671  &  & -604.00  & 3145.16 & 830808  &  & -604.00  & \textgreater{}3600 & 714877  \\
miblp-20-20-50-0110-10-7  & -683.00  & 3233.20            & 3069661 &  & -683.00  & 2046.70 & 2978073 &  & -683.00  & 3184.99 & 11502091 &  & -683.00  & 1887.37 & 3003967 &  & -683.00  & 2511.36            & 3069661 \\
miblp-20-20-50-0110-10-8  & -667.00  & 182.80             & 12868   &  & -667.00  & 87.14   & 12856   &  & -667.00  & 40.78   & 30873    &  & -667.00  & 80.45   & 12857   &  & -667.00  & 148.40             & 12868   \\
miblp-20-20-50-0110-10-9  & -256.00  & 45.39              & 35295   &  & -256.00  & 21.06   & 31244   &  & -256.00  & 23.97   & 76055    &  & -256.00  & 20.99   & 31245   &  & -256.00  & 33.92              & 35295   \\
miblp-20-20-50-0110-15-1  & -450.00  & 60.74              & 3516    &  & -450.00  & 60.84   & 3506    &  & -450.00  & 65.16   & 49137    &  & -450.00  & 59.38   & 3506    &  & -450.00  & 60.89              & 3516    \\
miblp-20-20-50-0110-15-2  & -645.00  & 73.76              & 17251   &  & -645.00  & 50.01   & 17251   &  & -645.00  & 96.13   & 346065   &  & -645.00  & 50.14   & 17251   &  & -645.00  & 63.85              & 17251   \\
miblp-20-20-50-0110-15-3  & -593.00  & 100.53             & 3083    &  & -593.00  & 70.36   & 3081    &  & -593.00  & 65.25   & 42877    &  & -593.00  & 71.15   & 3081    &  & -593.00  & 92.89              & 3083    \\
miblp-20-20-50-0110-15-4  & -441.00  & 66.29              & 1625    &  & -441.00  & 42.88   & 1625    &  & -441.00  & 36.99   & 29904    &  & -441.00  & 43.00   & 1625    &  & -441.00  & 55.23              & 1625    \\
miblp-20-20-50-0110-15-5  & -379.00  & 860.42             & 16715   &  & -379.00  & 632.14  & 16715   &  & -379.00  & 615.16  & 205025   &  & -379.00  & 651.28  & 16715   &  & -379.00  & 730.86             & 16715   \\
miblp-20-20-50-0110-15-6  & -596.00  & 23.24              & 1657    &  & -596.00  & 17.26   & 1657    &  & -596.00  & 17.87   & 29923    &  & -596.00  & 17.31   & 1657    &  & -596.00  & 22.10              & 1657    \\
miblp-20-20-50-0110-15-7  & -471.00  & 133.26             & 13405   &  & -471.00  & 110.80  & 13405   &  & -471.00  & 125.24  & 241285   &  & -471.00  & 111.02  & 13405   &  & -471.00  & 127.42             & 13405   \\
miblp-20-20-50-0110-15-8  & -370.00  & 41.05              & 21589   &  & -370.00  & 39.51   & 21589   &  & -370.00  & 138.34  & 579309   &  & -370.00  & 39.12   & 21589   &  & -370.00  & 40.56              & 21589   \\
miblp-20-20-50-0110-15-9  & -584.00  & 2.58               & 588     &  & -584.00  & 2.03    & 582     &  & -584.00  & 1.96    & 4072     &  & -584.00  & 2.00    & 582     &  & -584.00  & 2.34               & 588     \\
miblp-20-20-50-0110-5-13  & -519.00  & \textgreater{}3600 & 4221171 &  & -519.00  & 2002.06 & 4392628 &  & -519.00  & 2307.90 & 10196998 &  & -519.00  & 2464.25 & 7128138 &  & -519.00  & \textgreater{}3600 & 5830754 \\
miblp-20-20-50-0110-5-15  & -617.00  & \textgreater{}3600 & 3325012 &  & -617.00  & 1122.66 & 2749914 &  & -617.00  & 1219.33 & 6630921  &  & -617.00  & 1310.67 & 4018058 &  & -617.00  & 2953.78            & 4018081 \\
miblp-20-20-50-0110-5-16  & -833.00  & 44.15              & 18278   &  & -833.00  & 6.48    & 17680   &  & -833.00  & 4.86    & 19013    &  & -833.00  & 6.60    & 17680   &  & -833.00  & 38.30              & 18278   \\
miblp-20-20-50-0110-5-17  & -944.00  & 16.80              & 18200   &  & -944.00  & 4.62    & 17712   &  & -944.00  & 3.99    & 21541    &  & -944.00  & 4.78    & 17679   &  & -944.00  & 10.64              & 18200   \\
miblp-20-20-50-0110-5-19  & -431.00  & 104.93             & 79279   &  & -431.00  & 26.75   & 77256   &  & -431.00  & 27.52   & 147268   &  & -431.00  & 26.56   & 77256   &  & -431.00  & 65.55              & 79279   \\
miblp-20-20-50-0110-5-1   & -548.00  & 52.43              & 42629   &  & -548.00  & 14.31   & 42364   &  & -548.00  & 12.98   & 64067    &  & -548.00  & 14.50   & 42364   &  & -548.00  & 36.55              & 42629   \\
miblp-20-20-50-0110-5-20  & -438.00  & 48.38              & 60990   &  & -438.00  & 14.95   & 50531   &  & -438.00  & 15.81   & 76091    &  & -438.00  & 17.30   & 60944   &  & -438.00  & 31.77              & 60990   \\
miblp-20-20-50-0110-5-6   & -1061.00 & 202.25             & 224425  &  & -1061.00 & 62.75   & 223494  &  & -1061.00 & 58.74   & 284550   &  & -1061.00 & 63.70   & 222566  &  & -1061.00 & 127.84             & 224425  \\
lseu-0.100000             & 1120.00  & 657.06             & 1132617 &  & 1120.00  & 235.84  & 734286  &  & 1120.00  & 248.54  & 1071409  &  & 1120.00  & 270.78  & 1003976 &  & 1120.00  & 626.90             & 1132617 \\
lseu-0.900000             & 5838.00  & 13.91              & 1023    &  & 5838.00  & 14.41   & 1023    &  & 5838.00  & 1063.02 & 4718749  &  & 5838.00  & 14.10   & 1023    &  & 5838.00  & 13.83              & 1023    \\
p0033-0.500000            & 3095.00  & 12.65              & 20855   &  & 3095.00  & 10.44   & 20695   &  & 3095.00  & 6.24    & 33614    &  & 3095.00  & 10.48   & 20695   &  & 3095.00  & 11.55              & 20855   \\
p0033-0.900000            & 4679.00  & 0.06               & 27      &  & 4679.00  & 0.05    & 27      &  & 4679.00  & 0.65    & 3455     &  & 4679.00  & 0.06    & 27      &  & 4679.00  & 0.04               & 27      \\
p0201-0.900000            & 15025.00 & 7.05               & 2481    &  & 15025.00 & 6.66    & 2481    &  & 15025.00 & 20.58   & 18801    &  & 15025.00 & 6.62    & 2481    &  & 15025.00 & 6.68               & 2481    \\
stein27-0.500000          & 19.00    & 6.21               & 12115   &  & 19.00    & 6.24    & 14362   &  & 19.00    & 7.36    & 21515    &  & 19.00    & 5.88    & 12115   &  & 19.00    & 5.91               & 12115   \\
stein27-0.900000          & 24.00    & 0.01               & 15      &  & 24.00    & 0.02    & 15      &  & 24.00    & 1.25    & 4445     &  & 24.00    & 0.02    & 15      &  & 24.00    & 0.02               & 15      \\
stein45-0.100000          & 30.00    & 51.06              & 89035   &  & 30.00    & 96.92   & 61518   &  & 30.00    & 50.47   & 89035    &  & 30.00    & 50.19   & 89035   &  & 30.00    & 50.17              & 89035   \\
stein45-0.500000          & 32.00    & 554.15             & 640308  &  & 32.00    & 1088.96 & 1014908 &  & 32.00    & 635.22  & 952123   &  & 32.00    & 519.59  & 640308  &  & 32.00    & 520.73             & 640308  \\
stein45-0.900000          & 40.00    & 0.16               & 63      &  & 40.00    & 0.16    & 63      &  & 40.00    & 85.92   & 103661   &  & 40.00    & 0.14    & 63      &  & 40.00    & 0.15               & 63     \\       
\hline
\end{tabular}
}
\end{table}
\begin{table}[p]
\begin{center}
\caption{Detailed results of Figure~\ref{fig:SSUBParTimeMore5SecXu}}
\label{tab:fig2b}
\resizebox{\columnwidth}{!}{
\begin{tabular}{llllllllllllllllllll}
\hline
\multicolumn{1}{c}{}&
\multicolumn{3}{c}{whenLInt-LInt}&
\multicolumn{1}{c}{}&
\multicolumn{3}{c}{whenLInt-LFixed}&
\multicolumn{1}{c}{}&
\multicolumn{3}{c}{whenLFixed-LFixed}&
\multicolumn{1}{c}{}&
\multicolumn{3}{c}{whenXYInt-LFixed}&
\multicolumn{1}{c}{}&
\multicolumn{3}{c}{\begin{tabular}[c]{@{}c@{}}\scriptsize{whenXYIntOr}\\\scriptsize{LFixed-LFixed}\end{tabular}} \\
\cline{2-4}
\cline{6-8}
\cline{10-12}
\cline{14-16}
\cline{18-20}
Instance&   BestSol & Time(s) &Nodes  & &BestSol & Time(s) &Nodes & &BestSol & Time(s) &Nodes& &BestSol & Time(s) &Nodes& &BestSol & Time(s) &Nodes\\
\hline
bmilplib-110-10 & -177.67 & 336.74  & 75952  &  & -177.67 & 150.88  & 75141  &  & -177.67 & 149.10  & 93801  &  & -177.67 & 153.82  & 75499  &  & -177.67 & 223.95  & 75952  \\
bmilplib-110-1  & -181.67 & 9.23    & 2806   &  & -181.67 & 5.35    & 3629   &  & -181.67 & 5.42    & 2878   &  & -181.67 & 5.35    & 2806   &  & -181.67 & 6.34    & 2806   \\
bmilplib-110-2  & -110.67 & 9.16    & 4183   &  & -110.67 & 6.53    & 4439   &  & -110.67 & 6.50    & 4272   &  & -110.67 & 6.48    & 4174   &  & -110.67 & 7.46    & 4183   \\
bmilplib-110-3  & -215.16 & 10.15   & 3484   &  & -215.16 & 6.07    & 3529   &  & -215.16 & 5.42    & 3556   &  & -215.16 & 5.58    & 3471   &  & -215.16 & 7.13    & 3484   \\
bmilplib-110-4  & -197.29 & 16.30   & 1917   &  & -197.29 & 3.66    & 1740   &  & -197.29 & 3.54    & 1801   &  & -197.29 & 3.69    & 1751   &  & -197.29 & 8.20    & 1917   \\
bmilplib-110-6  & -148.25 & 25.96   & 8857   &  & -148.25 & 14.57   & 8735   &  & -148.25 & 14.14   & 9392   &  & -148.25 & 15.06   & 8770   &  & -148.25 & 19.35   & 8857   \\
bmilplib-110-7  & -160.86 & 11.92   & 2221   &  & -160.86 & 4.66    & 2167   &  & -160.86 & 4.01    & 2281   &  & -160.86 & 4.94    & 2189   &  & -160.86 & 7.45    & 2221   \\
bmilplib-110-8  & -155.00 & 39.85   & 11835  &  & -155.00 & 20.66   & 11916  &  & -155.00 & 20.65   & 12498  &  & -155.00 & 20.78   & 11663  &  & -155.00 & 27.93   & 11835  \\
bmilplib-110-9  & -192.92 & 9.14    & 2155   &  & -192.92 & 3.46    & 2099   &  & -192.92 & 3.28    & 2171   &  & -192.92 & 3.66    & 2151   &  & -192.92 & 5.84    & 2155   \\
bmilplib-160-10 & -189.82 & 87.08   & 9130   &  & -189.82 & 30.70   & 8676   &  & -189.82 & 31.24   & 9344   &  & -189.82 & 33.42   & 9071   &  & -189.82 & 55.24   & 9130   \\
bmilplib-160-1  & -165.00 & 63.76   & 7908   &  & -165.00 & 25.00   & 8028   &  & -165.00 & 24.18   & 8145   &  & -165.00 & 25.70   & 7901   &  & -165.00 & 38.00   & 7908   \\
bmilplib-160-2  & -178.24 & 87.14   & 8680   &  & -178.24 & 29.51   & 8612   &  & -178.24 & 29.64   & 8710   &  & -178.24 & 30.59   & 8635   &  & -178.24 & 47.52   & 8680   \\
bmilplib-160-3  & -174.94 & 196.12  & 15164  &  & -174.94 & 62.78   & 14998  &  & -174.94 & 54.62   & 15427  &  & -174.94 & 63.37   & 15002  &  & -174.94 & 104.06  & 15164  \\
bmilplib-160-4  & -135.83 & 77.00   & 15005  &  & -135.83 & 48.60   & 14708  &  & -135.83 & 50.86   & 15354  &  & -135.83 & 51.02   & 14772  &  & -135.83 & 60.24   & 15005  \\
bmilplib-160-5  & -140.78 & 62.61   & 4981   &  & -140.78 & 19.78   & 4842   &  & -140.78 & 19.35   & 5178   &  & -140.78 & 20.52   & 4948   &  & -140.78 & 36.78   & 4981   \\
bmilplib-160-6  & -111.00 & 32.32   & 8685   &  & -111.00 & 27.50   & 10149  &  & -111.00 & 25.08   & 8864   &  & -111.00 & 24.66   & 8676   &  & -111.00 & 27.50   & 8685   \\
bmilplib-160-7  & -96.00  & 65.43   & 16589  &  & -96.00  & 51.07   & 16804  &  & -96.00  & 50.97   & 17430  &  & -96.00  & 51.20   & 16560  &  & -96.00  & 58.05   & 16589  \\
bmilplib-160-8  & -181.40 & 22.24   & 3464   &  & -181.40 & 12.47   & 3417   &  & -181.40 & 9.32    & 3540   &  & -181.40 & 10.49   & 3444   &  & -181.40 & 14.93   & 3464   \\
bmilplib-160-9  & -207.50 & 34.05   & 4715   &  & -207.50 & 16.51   & 5169   &  & -207.50 & 14.90   & 4728   &  & -207.50 & 15.85   & 4607   &  & -207.50 & 23.34   & 4715   \\
bmilplib-210-10 & -130.59 & 84.71   & 12226  &  & -130.59 & 58.89   & 12010  &  & -130.59 & 61.00   & 12497  &  & -130.59 & 61.81   & 12176  &  & -130.59 & 72.22   & 12226  \\
bmilplib-210-1  & -136.80 & 77.67   & 7429   &  & -136.80 & 37.71   & 7182   &  & -136.80 & 40.04   & 7598   &  & -136.80 & 41.96   & 7429   &  & -136.80 & 56.23   & 7429   \\
bmilplib-210-2  & -117.80 & 153.16  & 22441  &  & -117.80 & 111.87  & 22597  &  & -117.80 & 109.56  & 23096  &  & -117.80 & 113.38  & 22294  &  & -117.80 & 131.02  & 22441  \\
bmilplib-210-3  & -130.80 & 74.50   & 8897   &  & -130.80 & 44.80   & 8523   &  & -130.80 & 47.44   & 9165   &  & -130.80 & 48.19   & 8848   &  & -130.80 & 59.95   & 8897   \\
bmilplib-210-4  & -162.20 & 62.23   & 4738   &  & -162.20 & 36.19   & 5094   &  & -162.20 & 25.92   & 4806   &  & -162.20 & 26.81   & 4687   &  & -162.20 & 41.52   & 4738   \\
bmilplib-210-5  & -134.00 & 156.90  & 21193  &  & -134.00 & 106.25  & 21560  &  & -134.00 & 105.03  & 21552  &  & -134.00 & 108.22  & 21058  &  & -134.00 & 130.51  & 21193  \\
bmilplib-210-6  & -125.43 & 269.65  & 38538  &  & -125.43 & 198.77  & 38367  &  & -125.43 & 197.88  & 39941  &  & -125.43 & 201.40  & 38443  &  & -125.43 & 227.81  & 38538  \\
bmilplib-210-7  & -169.73 & 146.39  & 13960  &  & -169.73 & 76.42   & 13960  &  & -169.73 & 77.62   & 14305  &  & -169.73 & 77.84   & 13960  &  & -169.73 & 104.02  & 13960  \\
bmilplib-210-8  & -101.46 & 70.64   & 11105  &  & -101.46 & 56.97   & 11324  &  & -101.46 & 58.14   & 11347  &  & -101.46 & 61.79   & 11105  &  & -101.46 & 62.56   & 11105  \\
bmilplib-210-9  & -184.00 & 1822.79 & 143571 &  & -184.00 & 790.46  & 144603 &  & -184.00 & 859.46  & 143665 &  & -184.00 & 879.61  & 142294 &  & -184.00 & 1184.18 & 143571 \\
bmilplib-260-10 & -151.73 & 785.34  & 62063  &  & -151.73 & 503.08  & 62143  &  & -151.73 & 502.94  & 63026  &  & -151.73 & 546.55  & 61767  &  & -151.73 & 633.15  & 62063  \\
bmilplib-260-1  & -139.00 & 155.24  & 10994  &  & -139.00 & 83.01   & 11369  &  & -139.00 & 81.74   & 11242  &  & -139.00 & 85.81   & 10980  &  & -139.00 & 119.94  & 10994  \\
bmilplib-260-2  & -82.62  & 117.18  & 15300  &  & -82.62  & 156.75  & 15292  &  & -82.62  & 109.75  & 15712  &  & -82.62  & 109.75  & 15287  &  & -82.62  & 114.50  & 15300  \\
bmilplib-260-3  & -144.25 & 148.68  & 8777   &  & -144.25 & 72.67   & 9563   &  & -144.25 & 69.26   & 8859   &  & -144.25 & 71.54   & 8769   &  & -144.25 & 94.43   & 8777   \\
bmilplib-260-4  & -117.33 & 294.39  & 32964  &  & -117.33 & 245.58  & 32696  &  & -117.33 & 260.01  & 33931  &  & -117.33 & 259.48  & 32787  &  & -117.33 & 272.70  & 32964  \\
bmilplib-260-5  & -121.00 & 201.31  & 21557  &  & -121.00 & 167.30  & 22480  &  & -121.00 & 173.32  & 22121  &  & -121.00 & 166.03  & 21542  &  & -121.00 & 183.46  & 21557  \\
bmilplib-260-6  & -124.00 & 258.94  & 25420  &  & -124.00 & 191.68  & 25826  &  & -124.00 & 209.79  & 26023  &  & -124.00 & 197.04  & 25362  &  & -124.00 & 223.31  & 25420  \\
bmilplib-260-7  & -137.80 & 472.66  & 40528  &  & -137.80 & 304.36  & 40400  &  & -137.80 & 318.76  & 41180  &  & -137.80 & 312.12  & 40274  &  & -137.80 & 383.16  & 40528  \\
bmilplib-260-8  & -119.89 & 112.17  & 10094  &  & -119.89 & 73.72   & 9961   &  & -119.89 & 76.70   & 10332  &  & -119.89 & 87.01   & 10025  &  & -119.89 & 96.85   & 10094  \\
bmilplib-260-9  & -160.00 & 510.37  & 33496  &  & -160.00 & 257.08  & 33928  &  & -160.00 & 271.18  & 34236  &  & -160.00 & 273.66  & 33468  &  & -160.00 & 394.98  & 33496  \\
bmilplib-310-10 & -141.86 & 129.77  & 9904   &  & -141.86 & 110.26  & 9896   &  & -141.86 & 114.38  & 10033  &  & -141.86 & 121.07  & 9900   &  & -141.86 & 127.87  & 9904   \\
bmilplib-310-1  & -117.00 & 521.12  & 28360  &  & -117.00 & 301.34  & 28373  &  & -117.00 & 329.80  & 29055  &  & -117.00 & 329.71  & 28330  &  & -117.00 & 372.68  & 28360  \\
bmilplib-310-2  & -105.00 & 523.84  & 43448  &  & -105.00 & 457.17  & 44736  &  & -105.00 & 457.69  & 44841  &  & -105.00 & 497.66  & 43399  &  & -105.00 & 539.20  & 43448  \\
bmilplib-310-3  & -127.52 & 943.53  & 66426  &  & -127.52 & 907.46  & 72139  &  & -127.52 & 783.89  & 67441  &  & -127.52 & 777.74  & 66410  &  & -127.52 & 916.69  & 66426  \\
bmilplib-310-4  & -147.78 & 701.76  & 47718  &  & -147.78 & 510.81  & 48131  &  & -147.78 & 572.76  & 48950  &  & -147.78 & 569.43  & 47711  &  & -147.78 & 652.54  & 47718  \\
bmilplib-310-5  & -161.45 & 557.76  & 31838  &  & -161.45 & 339.05  & 31553  &  & -161.45 & 382.13  & 32454  &  & -161.45 & 366.90  & 31782  &  & -161.45 & 488.65  & 31838  \\
bmilplib-310-6  & -141.18 & 1448.71 & 102047 &  & -141.18 & 1159.42 & 110836 &  & -141.18 & 1264.60 & 103214 &  & -141.18 & 1191.51 & 101904 &  & -141.18 & 1324.84 & 102047 \\
bmilplib-310-7  & -142.00 & 1443.70 & 102030 &  & -142.00 & 1057.98 & 104876 &  & -142.00 & 1199.47 & 104166 &  & -142.00 & 1129.24 & 102000 &  & -142.00 & 1396.34 & 102030 \\
bmilplib-310-8  & -115.34 & 143.13  & 11375  &  & -115.34 & 110.74  & 11067  &  & -115.34 & 114.80  & 11293  &  & -115.34 & 127.94  & 11109  &  & -115.34 & 142.93  & 11375  \\
bmilplib-310-9  & -115.65 & 423.77  & 20552  &  & -115.65 & 231.20  & 21846  &  & -115.65 & 248.24  & 20838  &  & -115.65 & 255.27  & 20490  &  & -115.65 & 281.90  & 20552  \\
bmilplib-360-10 & -108.59 & 257.34  & 13727  &  & -108.59 & 209.20  & 13671  &  & -108.59 & 242.94  & 14064  &  & -108.59 & 234.92  & 13697  &  & -108.59 & 258.65  & 13727  \\
bmilplib-360-1  & -133.00 & 2416.81 & 75780  &  & -133.00 & 1239.50 & 77206  &  & -133.00 & 1297.32 & 77066  &  & -133.00 & 1353.62 & 75421  &  & -133.00 & 1500.96 & 75780  \\
bmilplib-360-2  & -138.44 & 1187.29 & 53004  &  & -138.44 & 829.40  & 52868  &  & -138.44 & 999.50  & 54354  &  & -138.44 & 965.52  & 52919  &  & -138.44 & 1169.80 & 53004  \\
bmilplib-360-3  & -131.00 & 834.51  & 40671  &  & -131.00 & 728.80  & 40353  &  & -131.00 & 624.78  & 41302  &  & -131.00 & 832.88  & 40487  &  & -131.00 & 731.63  & 40671  \\
bmilplib-360-4  & -119.00 & 371.93  & 18350  &  & -119.00 & 286.84  & 18870  &  & -119.00 & 338.49  & 18813  &  & -119.00 & 332.90  & 18293  &  & -119.00 & 361.79  & 18350  \\
bmilplib-360-5  & -164.26 & 593.68  & 30001  &  & -164.26 & 420.56  & 29790  &  & -164.26 & 484.66  & 30352  &  & -164.26 & 618.86  & 29947  &  & -164.26 & 600.63  & 30001  \\
bmilplib-360-6  & -110.12 & 1169.00 & 68863  &  & -110.12 & 1005.68 & 69453  &  & -110.12 & 1018.82 & 70283  &  & -110.12 & 1181.02 & 68863  &  & -110.12 & 1210.13 & 68863  \\
bmilplib-360-7  & -105.00 & 538.80  & 31092  &  & -105.00 & 457.98  & 32124  &  & -105.00 & 517.46  & 31884  &  & -105.00 & 634.17  & 30900  &  & -105.00 & 542.81  & 31092  \\
bmilplib-360-8  & -98.25  & 399.32  & 22995  &  & -98.25  & 343.65  & 22787  &  & -98.25  & 416.97  & 23337  &  & -98.25  & 362.89  & 22857  &  & -98.25  & 386.45  & 22995  \\
bmilplib-360-9  & -127.22 & 815.44  & 40383  &  & -127.22 & 622.13  & 40733  &  & -127.22 & 746.66  & 41235  &  & -127.22 & 736.16  & 40329  &  & -127.22 & 819.68  & 40383  \\
bmilplib-410-10 & -153.37 & 3527.64 & 101673 &  & -153.37 & 2725.83 & 103359 &  & -153.37 & 2879.03 & 103400 &  & -153.37 & 2729.77 & 101447 &  & -153.37 & 2976.52 & 101673 \\
bmilplib-410-1  & -103.50 & 582.60  & 20790  &  & -103.50 & 535.58  & 20612  &  & -103.50 & 589.62  & 21088  &  & -103.50 & 553.76  & 20634  &  & -103.50 & 534.12  & 20790  \\
bmilplib-410-2  & -108.59 & 803.44  & 31603  &  & -108.59 & 734.82  & 31602  &  & -108.59 & 748.06  & 32251  &  & -108.59 & 840.56  & 31603  &  & -108.59 & 882.69  & 31603  \\
bmilplib-410-3  & -96.24  & 1275.98 & 33041  &  & -96.24  & 791.92  & 33963  &  & -96.24  & 777.04  & 33473  &  & -96.24  & 871.58  & 32882  &  & -96.24  & 973.12  & 33041  \\
bmilplib-410-4  & -119.50 & 1582.86 & 38489  &  & -119.50 & 930.82  & 39270  &  & -119.50 & 1041.97 & 39371  &  & -119.50 & 1031.12 & 38489  &  & -119.50 & 1101.03 & 38489  \\
bmilplib-410-5  & -119.22 & 700.45  & 23865  &  & -119.22 & 567.85  & 23747  &  & -119.22 & 681.99  & 24163  &  & -119.22 & 678.49  & 23852  &  & -119.22 & 696.17  & 23865  \\
bmilplib-410-6  & -151.31 & 404.52  & 11130  &  & -151.31 & 261.83  & 10795  &  & -151.31 & 333.36  & 11193  &  & -151.31 & 322.88  & 11130  &  & -151.31 & 337.71  & 11130  \\
bmilplib-410-7  & -123.00 & 624.95  & 22208  &  & -123.00 & 519.43  & 22843  &  & -123.00 & 601.78  & 22628  &  & -123.00 & 560.34  & 22146  &  & -123.00 & 560.38  & 22208  \\
bmilplib-410-8  & -125.78 & 1717.64 & 62336  &  & -125.78 & 1484.95 & 61784  &  & -125.78 & 1692.35 & 63421  &  & -125.78 & 1547.88 & 62218  &  & -125.78 & 1579.12 & 62336  \\
bmilplib-410-9  & -100.77 & 505.02  & 20424  &  & -100.77 & 579.73  & 20095  &  & -100.77 & 607.44  & 20920  &  & -100.77 & 531.88  & 20400  &  & -100.77 & 494.06  & 20424  \\
bmilplib-460-10 & -102.51 & 1949.07 & 55030  &  & -102.51 & 1782.80 & 55882  &  & -102.51 & 1701.70 & 56265  &  & -102.51 & 1857.94 & 55024  &  & -102.51 & 1822.52 & 55030  \\
bmilplib-460-1  & -97.59  & 2961.39 & 86689  &  & -97.59  & 3575.71 & 93301  &  & -97.59  & 2938.02 & 87709  &  & -97.59  & 2822.97 & 86632  &  & -97.59  & 2915.62 & 86689  \\
bmilplib-460-2  & -139.00 & 739.75  & 16720  &  & -139.00 & 527.76  & 16721  &  & -139.00 & 608.40  & 16863  &  & -139.00 & 625.95  & 16623  &  & -139.00 & 694.03  & 16720  \\
bmilplib-460-3  & -86.50  & 1921.88 & 57462  &  & -86.50  & 1853.40 & 59065  &  & -86.50  & 2102.77 & 58229  &  & -86.50  & 1995.92 & 57395  &  & -86.50  & 1946.46 & 57462  \\
bmilplib-460-4  & -107.03 & 3321.16 & 95231  &  & -107.03 & 3173.50 & 94899  &  & -107.03 & 3328.91 & 97160  &  & -107.03 & 3291.28 & 95035  &  & -107.03 & 3301.27 & 95231  \\
bmilplib-460-5  & -100.50 & 1407.70 & 41312  &  & -100.50 & 1366.52 & 40903  &  & -100.50 & 1619.91 & 41796  &  & -100.50 & 1424.54 & 41109  &  & -100.50 & 1450.14 & 41312  \\
bmilplib-460-6  & -107.00 & 1623.54 & 46158  &  & -107.00 & 1537.79 & 48966  &  & -107.00 & 1483.17 & 46732  &  & -107.00 & 1635.42 & 46076  &  & -107.00 & 1598.68 & 46158  \\
bmilplib-460-7  & -83.75  & 1735.67 & 48973  &  & -83.75  & 1606.84 & 49516  &  & -83.75  & 1728.92 & 49717  &  & -83.75  & 1601.34 & 48897  &  & -83.75  & 1671.84 & 48973  \\
bmilplib-460-8  & -115.39 & 1074.21 & 27663  &  & -115.39 & 858.20  & 27275  &  & -115.39 & 911.51  & 28075  &  & -115.39 & 982.30  & 27572  &  & -115.39 & 1017.28 & 27663  \\
bmilplib-460-9  & -128.70 & 3410.18 & 86426  &  & -128.70 & 2671.50 & 86656  &  & -128.70 & 2715.98 & 87796  &  & -128.70 & 2830.59 & 85977  &  & -128.70 & 3004.49 & 86426  \\
bmilplib-60-10  & -186.21 & 14.85   & 5929   &  & -186.21 & 6.88    & 9593   &  & -186.21 & 6.91    & 6134   &  & -186.21 & 6.93    & 5922   &  & -186.21 & 9.50    & 5929   \\
bmilplib-60-1   & -153.20 & 16.56   & 4896   &  & -153.20 & 4.84    & 5366   &  & -153.20 & 6.50    & 5041   &  & -153.20 & 6.71    & 4877   &  & -153.20 & 10.92   & 4896   \\
bmilplib-60-5   & -116.40 & 15.19   & 11289  &  & -116.40 & 7.57    & 11308  &  & -116.40 & 8.65    & 13984  &  & -116.40 & 8.15    & 11202  &  & -116.40 & 11.54   & 11289  \\
bmilplib-60-6   & -187.31 & 15.38   & 7120   &  & -187.31 & 7.08    & 9241   &  & -187.31 & 7.34    & 7304   &  & -187.31 & 7.62    & 7016   &  & -187.31 & 10.44   & 7120   \\
bmilplib-60-8   & -232.12 & 8.38    & 3653   &  & -232.12 & 2.54    & 4052   &  & -232.12 & 3.33    & 3683   &  & -232.12 & 3.36    & 3570   &  & -232.12 & 5.66    & 3653   \\
bmilplib-60-9   & -136.50 & 33.85   & 27036  &  & -136.50 & 18.90   & 31312  &  & -136.50 & 20.22   & 28603  &  & -136.50 & 19.61   & 26792  &  & -136.50 & 26.84   & 27036 \\
\hline
\end{tabular}
}
\end{center}
\end{table}
\begin{table}[h!]
\caption{Detailed results of Figure~\ref{fig:branchStrategyParTimeMore5Secr1Leqr2}}
\label{tab:fig3a}
\scriptsize
\centering
\begin{tabular}{llllllllll}
\hline
\multicolumn{1}{c}{}&
\multicolumn{1}{c}{}&
\multicolumn{1}{c}{}&
\multicolumn{3}{c}{linkingBranching}&
\multicolumn{1}{c}{}&
\multicolumn{3}{c}{fractionalBranching} \\
\cline{4-6}
\cline{8-10}
Instance&    $r_1$& $r_2$& BestSol & Time(s) &Nodes& &BestSol & Time(s) &Nodes\\
\hline
miblp-20-15-50-0110-10-10 & 5  & 10  & -206.00  & 1.06    & 423     &  & -206.00  & 4.25               & 15741    \\
miblp-20-15-50-0110-10-2  & 5  & 10  & -398.00  & 9.09    & 2450    &  & -398.00  & 947.81             & 2929420  \\
miblp-20-15-50-0110-10-3  & 5  & 10  & -42.00   & 0.62    & 267     &  & -42.00   & 12.66              & 46349    \\
miblp-20-15-50-0110-10-6  & 5  & 10  & -246.00  & 4.56    & 340     &  & -246.00  & 1.56               & 1367     \\
miblp-20-15-50-0110-10-9  & 5  & 10  & -635.00  & 14.05   & 2380    &  & -635.00  & 6.52               & 7667     \\
miblp-20-20-50-0110-10-10 & 10 & 10  & -441.00  & 692.00  & 134585  &  & -441.00  & 2867.85            & 7261247  \\
miblp-20-20-50-0110-10-1  & 10 & 10  & -359.00  & 228.67  & 96141   &  & -357.00  & \textgreater{}3600 & 9234364  \\
miblp-20-20-50-0110-10-2  & 10 & 10  & -659.00  & 3.69    & 3191    &  & -659.00  & 3.16               & 5100     \\
miblp-20-20-50-0110-10-3  & 10 & 10  & -618.00  & 13.18   & 20788   &  & -618.00  & 12.05              & 51970    \\
miblp-20-20-50-0110-10-4  & 10 & 10  & -604.00  & 3145.16 & 830808  &  & -604.00  & \textgreater{}3600 & 7988914  \\
miblp-20-20-50-0110-10-7  & 10 & 10  & -683.00  & 1887.37 & 3003967 &  & -629.00  & \textgreater{}3600 & 9709672  \\
miblp-20-20-50-0110-10-8  & 10 & 10  & -667.00  & 80.45   & 12857   &  & -667.00  & 68.08              & 75661    \\
miblp-20-20-50-0110-10-9  & 10 & 10  & -256.00  & 20.99   & 31245   &  & -256.00  & 305.78             & 757349   \\
miblp-20-20-50-0110-15-1  & 5  & 15  & -450.00  & 59.38   & 3506    &  & -317.00  & \textgreater{}3600 & 9813005  \\
miblp-20-20-50-0110-15-2  & 5  & 15  & -645.00  & 50.14   & 17251   &  & -645.00  & \textgreater{}3600 & 10839884 \\
miblp-20-20-50-0110-15-3  & 5  & 15  & -593.00  & 71.15   & 3081    &  & -593.00  & \textgreater{}3600 & 13042109 \\
miblp-20-20-50-0110-15-4  & 5  & 15  & -441.00  & 43.00   & 1625    &  & -398.00  & \textgreater{}3600 & 8692428  \\
miblp-20-20-50-0110-15-5  & 5  & 15  & -379.00  & 651.28  & 16715   &  & -320.00  & \textgreater{}3600 & 7284040  \\
miblp-20-20-50-0110-15-6  & 5  & 15  & -596.00  & 17.31   & 1657    &  & -596.00  & \textgreater{}3600 & 7851818  \\
miblp-20-20-50-0110-15-7  & 5  & 15  & -471.00  & 111.02  & 13405   &  & -471.00  & \textgreater{}3600 & 9675451  \\
miblp-20-20-50-0110-15-8  & 5  & 15  & -370.00  & 39.12   & 21589   &  & -290.00  & \textgreater{}3600 & 10350188 \\
miblp-20-20-50-0110-15-9  & 5  & 15  & -584.00  & 2.00    & 582     &  & -584.00  & 19.58              & 56459    \\
lseu-0.900000             & 9  & 80  & 5838.00  & 14.10   & 1023    &  & 5838.00  & \textgreater{}3600 & 8743754  \\
p0033-0.900000            & 4  & 29  & 4679.00  & 0.06    & 27      &  & 4679.00  & 5.56               & 28241    \\
p0201-0.900000            & 21 & 180 & 15025.00 & 6.62    & 2481    &  & 15025.00 & \textgreater{}3600 & 1310278  \\
stein27-0.900000          & 3  & 24  & 24.00    & 0.02    & 15      &  & 24.00    & 419.87             & 702055   \\
stein45-0.900000          & 5  & 40  & 40.00    & 0.14    & 63      &  & 40.00    & \textgreater{}3600 & 1499583 \\
\hline
\end{tabular}
\end{table}
\begin{table}[h!]
\caption{Detailed results of Figure~\ref{fig:branchStrategyParTimeMore5Secr1grer2}}
\label{tab:fig3b}
\scriptsize
\centering
\begin{tabular}{llllllllll}
\hline
\multicolumn{1}{c}{}&
\multicolumn{1}{c}{}&
\multicolumn{1}{c}{}&
\multicolumn{3}{c}{linkingBranching}&
\multicolumn{1}{c}{}&
\multicolumn{3}{c}{fractionalBranching} \\
\cline{4-6}
\cline{8-10}
Instance&    $r_1$& $r_2$& BestSol & Time(s) &Nodes& &BestSol & Time(s) &Nodes\\
\hline
bmilplib-110-10          & 110 & 63  & -177.67  & 153.82  & 75499   &  & -177.67  & 116.32  & 55177   \\
bmilplib-110-1           & 110 & 50  & -181.67  & 5.35    & 2806    &  & -181.67  & 0.93    & 306     \\
bmilplib-110-2           & 110 & 45  & -110.67  & 6.48    & 4174    &  & -110.67  & 1.16    & 303     \\
bmilplib-110-3           & 110 & 55  & -215.16  & 5.58    & 3471    &  & -215.16  & 0.89    & 360     \\
bmilplib-110-4           & 110 & 50  & -197.29  & 3.69    & 1751    &  & -197.29  & 1.34    & 148     \\
bmilplib-110-6           & 110 & 55  & -148.25  & 15.06   & 8770    &  & -148.25  & 3.84    & 1448    \\
bmilplib-110-7           & 110 & 61  & -160.86  & 4.94    & 2189    &  & -160.86  & 0.91    & 205     \\
bmilplib-110-8           & 110 & 54  & -155.00  & 20.78   & 11663   &  & -155.00  & 7.66    & 2274    \\
bmilplib-110-9           & 110 & 58  & -192.92  & 3.66    & 2151    &  & -192.92  & 0.39    & 146     \\
bmilplib-160-10          & 160 & 83  & -189.82  & 33.42   & 9071    &  & -189.82  & 7.73    & 728     \\
bmilplib-160-1           & 160 & 80  & -165.00  & 25.70   & 7901    &  & -165.00  & 4.76    & 881     \\
bmilplib-160-2           & 160 & 76  & -178.24  & 30.59   & 8635    &  & -178.24  & 5.95    & 507     \\
bmilplib-160-3           & 160 & 86  & -174.94  & 63.37   & 15002   &  & -174.94  & 13.71   & 1102    \\
bmilplib-160-4           & 160 & 81  & -135.83  & 51.02   & 14772   &  & -135.83  & 19.13   & 2447    \\
bmilplib-160-5           & 160 & 83  & -140.78  & 20.52   & 4948    &  & -140.78  & 7.72    & 668     \\
bmilplib-160-6           & 160 & 81  & -111.00  & 24.66   & 8676    &  & -111.00  & 5.02    & 627     \\
bmilplib-160-7           & 160 & 79  & -96.00   & 51.20   & 16560   &  & -96.00   & 18.72   & 2855    \\
bmilplib-160-8           & 160 & 85  & -181.40  & 10.49   & 3444    &  & -181.40  & 1.74    & 311     \\
bmilplib-160-9           & 160 & 77  & -207.50  & 15.85   & 4607    &  & -207.50  & 2.02    & 268     \\
bmilplib-210-10          & 210 & 99  & -130.59  & 61.81   & 12176   &  & -130.59  & 8.59    & 1100    \\
bmilplib-210-1           & 210 & 101 & -136.80  & 41.96   & 7429    &  & -136.80  & 6.67    & 550     \\
bmilplib-210-2           & 210 & 96  & -117.80  & 113.38  & 22294   &  & -117.80  & 19.78   & 2306    \\
bmilplib-210-3           & 210 & 119 & -130.80  & 48.19   & 8848    &  & -130.80  & 12.93   & 1380    \\
bmilplib-210-4           & 210 & 115 & -162.20  & 26.81   & 4687    &  & -162.20  & 3.64    & 309     \\
bmilplib-210-5           & 210 & 110 & -134.00  & 108.22  & 21058   &  & -134.00  & 20.03   & 2079    \\
bmilplib-210-6           & 210 & 115 & -125.43  & 201.40  & 38443   &  & -125.43  & 45.28   & 4875    \\
bmilplib-210-7           & 210 & 102 & -169.73  & 77.84   & 13960   &  & -169.73  & 11.46   & 1181    \\
bmilplib-210-8           & 210 & 116 & -101.46  & 61.79   & 11105   &  & -101.46  & 9.60    & 942     \\
bmilplib-210-9           & 210 & 103 & -184.00  & 879.61  & 142294  &  & -184.00  & 240.14  & 9466    \\
bmilplib-260-10          & 260 & 117 & -151.73  & 546.55  & 61767   &  & -151.73  & 73.10   & 4716    \\
bmilplib-260-1           & 260 & 135 & -139.00  & 85.81   & 10980   &  & -139.00  & 10.10   & 887     \\
bmilplib-260-2           & 260 & 126 & -82.62   & 109.75  & 15287   &  & -82.62   & 18.07   & 1607    \\
bmilplib-260-3           & 260 & 120 & -144.25  & 71.54   & 8769    &  & -144.25  & 8.70    & 518     \\
bmilplib-260-4           & 260 & 125 & -117.33  & 259.48  & 32787   &  & -117.33  & 66.89   & 4426    \\
bmilplib-260-5           & 260 & 132 & -121.00  & 166.03  & 21542   &  & -121.00  & 29.19   & 2165    \\
bmilplib-260-6           & 260 & 146 & -124.00  & 197.04  & 25362   &  & -124.00  & 40.59   & 2420    \\
bmilplib-260-7           & 260 & 129 & -137.80  & 312.12  & 40274   &  & -137.80  & 44.45   & 3200    \\
bmilplib-260-8           & 260 & 143 & -119.89  & 87.01   & 10025   &  & -119.89  & 10.92   & 1025    \\
bmilplib-260-9           & 260 & 132 & -160.00  & 273.66  & 33468   &  & -160.00  & 36.36   & 2526    \\
bmilplib-310-10          & 310 & 157 & -141.86  & 121.07  & 9900    &  & -141.86  & 5.51    & 397     \\
bmilplib-310-1           & 310 & 169 & -117.00  & 329.71  & 28330   &  & -117.00  & 44.79   & 2624    \\
bmilplib-310-2           & 310 & 154 & -105.00  & 497.66  & 43399   &  & -105.00  & 98.39   & 5372    \\
bmilplib-310-3           & 310 & 157 & -127.52  & 777.74  & 66410   &  & -127.52  & 149.07  & 7067    \\
bmilplib-310-4           & 310 & 152 & -147.78  & 569.43  & 47711   &  & -147.78  & 70.22   & 4767    \\
bmilplib-310-5           & 310 & 164 & -161.45  & 366.90  & 31782   &  & -161.45  & 34.22   & 1993    \\
bmilplib-310-6           & 310 & 148 & -141.18  & 1191.51 & 101904  &  & -141.18  & 169.96  & 8300    \\
bmilplib-310-7           & 310 & 170 & -142.00  & 1129.24 & 102000  &  & -142.00  & 139.15  & 7263    \\
bmilplib-310-8           & 310 & 154 & -115.34  & 127.94  & 11109   &  & -115.34  & 19.23   & 921     \\
bmilplib-310-9           & 310 & 150 & -115.65  & 255.27  & 20490   &  & -115.65  & 32.31   & 1590    \\
bmilplib-360-10          & 360 & 172 & -108.59  & 234.92  & 13697   &  & -108.59  & 25.75   & 1106    \\
bmilplib-360-1           & 360 & 181 & -133.00  & 1353.62 & 75421   &  & -133.00  & 158.38  & 4923    \\
bmilplib-360-2           & 360 & 179 & -138.44  & 965.52  & 52919   &  & -138.44  & 148.90  & 4493    \\
bmilplib-360-3           & 360 & 195 & -131.00  & 832.88  & 40487   &  & -131.00  & 65.41   & 2654    \\
bmilplib-360-4           & 360 & 184 & -119.00  & 332.90  & 18293   &  & -119.00  & 42.95   & 1564    \\
bmilplib-360-5           & 360 & 194 & -164.26  & 618.86  & 29947   &  & -164.26  & 44.45   & 1713    \\
bmilplib-360-6           & 360 & 172 & -110.12  & 1181.02 & 68863   &  & -110.12  & 142.81  & 5520    \\
bmilplib-360-7           & 360 & 188 & -105.00  & 634.17  & 30900   &  & -105.00  & 89.22   & 3346    \\
bmilplib-360-8           & 360 & 170 & -98.25   & 362.89  & 22857   &  & -98.25   & 44.85   & 1686    \\
bmilplib-360-9           & 360 & 184 & -127.22  & 736.16  & 40329   &  & -127.22  & 50.24   & 2642    \\
bmilplib-410-10          & 410 & 201 & -153.37  & 2729.77 & 101447  &  & -153.37  & 258.89  & 7428    \\
bmilplib-410-1           & 410 & 196 & -103.50  & 553.76  & 20634   &  & -103.50  & 87.94   & 1944    \\
bmilplib-410-2           & 410 & 189 & -108.59  & 840.56  & 31603   &  & -108.59  & 103.57  & 2887    \\
bmilplib-410-3           & 410 & 212 & -96.24   & 871.58  & 32882   &  & -96.24   & 147.99  & 3781    \\
bmilplib-410-4           & 410 & 187 & -119.50  & 1031.12 & 38489   &  & -119.50  & 100.21  & 2995    \\
bmilplib-410-5           & 410 & 209 & -119.22  & 678.49  & 23852   &  & -119.22  & 71.23   & 1520    \\
bmilplib-410-6           & 410 & 206 & -151.31  & 322.88  & 11130   &  & -151.31  & 13.54   & 533     \\
bmilplib-410-7           & 410 & 225 & -123.00  & 560.34  & 22146   &  & -123.00  & 35.87   & 1177    \\
bmilplib-410-8           & 410 & 211 & -125.78  & 1547.88 & 62218   &  & -125.78  & 169.15  & 4480    \\
bmilplib-410-9           & 410 & 216 & -100.77  & 531.88  & 20400   &  & -100.77  & 82.17   & 2071    \\
bmilplib-460-10          & 460 & 217 & -102.51  & 1857.94 & 55024   &  & -102.51  & 228.12  & 4465    \\
bmilplib-460-1           & 460 & 227 & -97.59   & 2822.97 & 86632   &  & -97.59   & 569.22  & 10803   \\
bmilplib-460-2           & 460 & 249 & -139.00  & 625.95  & 16623   &  & -139.00  & 43.65   & 964     \\
bmilplib-460-3           & 460 & 222 & -86.50   & 1995.92 & 57395   &  & -86.50   & 223.58  & 3882    \\
bmilplib-460-4           & 460 & 218 & -107.03  & 3291.28 & 95035   &  & -107.03  & 412.76  & 7856    \\
bmilplib-460-5           & 460 & 216 & -100.50  & 1424.54 & 41109   &  & -100.50  & 170.30  & 3025    \\
bmilplib-460-6           & 460 & 222 & -107.00  & 1635.42 & 46076   &  & -107.00  & 236.30  & 4143    \\
bmilplib-460-7           & 460 & 254 & -83.75   & 1601.34 & 48897   &  & -83.75   & 294.68  & 5252    \\
bmilplib-460-8           & 460 & 256 & -115.39  & 982.30  & 27572   &  & -115.39  & 94.67   & 1903    \\
bmilplib-460-9           & 460 & 224 & -128.70  & 2830.59 & 85977   &  & -128.70  & 327.68  & 6185    \\
bmilplib-60-10           & 60  & 25  & -186.21  & 6.93    & 5922    &  & -186.21  & 4.29    & 2590    \\
\hline
\end{tabular}
\end{table}
\addtocounter{table}{-1}
\begin{table}[h!]
\caption{Detailed results of Figure~\ref{fig:branchStrategyParTimeMore5Secr1grer2} (continued)}
\label{tab:fig3bContinue}
\scriptsize
\centering
\begin{tabular}{llllllllll}
\hline
\multicolumn{1}{c}{}&
\multicolumn{1}{c}{}&
\multicolumn{1}{c}{}&
\multicolumn{3}{c}{linkingBranching}&
\multicolumn{1}{c}{}&
\multicolumn{3}{c}{fractionalBranching} \\
\cline{4-6}
\cline{8-10}
Instance&    $r_1$& $r_2$& BestSol & Time(s) &Nodes& &BestSol & Time(s) &Nodes\\
\hline
bmilplib-60-1            & 60  & 29  & -153.20  & 6.71    & 4877    &  & -153.20  & 2.79    & 1094    \\
bmilplib-60-5            & 60  & 33  & -116.40  & 8.15    & 11202   &  & -116.40  & 10.08   & 9996    \\
bmilplib-60-6            & 60  & 27  & -187.31  & 7.62    & 7016    &  & -187.31  & 3.34    & 1383    \\
bmilplib-60-8            & 60  & 30  & -232.12  & 3.36    & 3570    &  & -232.12  & 1.15    & 572     \\
bmilplib-60-9            & 60  & 26  & -136.50  & 19.61   & 26792   &  & -136.50  & 12.34   & 9888    \\
miblp-20-20-50-0110-5-13 & 15  & 5   & -519.00  & 2464.25 & 7128138 &  & -519.00  & 2709.29 & 6515097 \\
miblp-20-20-50-0110-5-15 & 15  & 5   & -617.00  & 1310.67 & 4018058 &  & -617.00  & 1130.92 & 4312915 \\
miblp-20-20-50-0110-5-16 & 15  & 5   & -833.00  & 6.60    & 17680   &  & -833.00  & 1.80    & 5913    \\
miblp-20-20-50-0110-5-17 & 15  & 5   & -944.00  & 4.78    & 17679   &  & -944.00  & 1.53    & 4038    \\
miblp-20-20-50-0110-5-19 & 15  & 5   & -431.00  & 26.56   & 77256   &  & -431.00  & 25.50   & 116041  \\
miblp-20-20-50-0110-5-1  & 15  & 5   & -548.00  & 14.50   & 42364   &  & -548.00  & 8.45    & 31298   \\
miblp-20-20-50-0110-5-20 & 15  & 5   & -438.00  & 17.30   & 60944   &  & -438.00  & 10.82   & 33315   \\
miblp-20-20-50-0110-5-6  & 15  & 5   & -1061.00 & 63.70   & 222566  &  & -1061.00 & 51.74   & 213928  \\
lseu-0.100000            & 81  & 8   & 1120.00  & 270.78  & 1003976 &  & 1120.00  & 3.15    & 8603    \\
p0033-0.500000           & 17  & 16  & 3095.00  & 10.48   & 20695   &  & 3095.00  & 0.25    & 1467    \\
stein27-0.500000         & 14  & 13  & 19.00    & 5.88    & 12115   &  & 19.00    & 6.51    & 17648   \\
stein45-0.100000         & 41  & 4   & 30.00    & 50.19   & 89035   &  & 30.00    & 64.27   & 90241   \\
stein45-0.500000         & 23  & 22  & 32.00    & 519.59  & 640308  &  & 32.00    & 471.57  & 753845 \\
\hline
\end{tabular}
\end{table}
\begin{table}[h!]
\caption{Detailed results of Figure~\ref{fig:linkingSolutionPooParWithFracParWithTimeMore5Sec}}
\label{tab:fig4a}
\resizebox{\columnwidth}{!}{
\begin{tabular}{llllllllllllllll}
\hline
\multicolumn{1}{c}{}&
\multicolumn{3}{c}{\begin{tabular}[c]{@{}c@{}}\scriptsize{withoutPoolWhen}\\\scriptsize{XYInt-LFixed}\end{tabular}}&
\multicolumn{1}{c}{}&
\multicolumn{3}{c}{\begin{tabular}[c]{@{}c@{}}\scriptsize{withPoolWhen}\\\scriptsize{XYInt-LFixed}\end{tabular}}&
\multicolumn{1}{c}{}&
\multicolumn{3}{c}{\begin{tabular}[c]{@{}c@{}}\scriptsize{withoutPoolWhen}\\\scriptsize{XYIntOrLFixed-LFixed}\end{tabular}}&
\multicolumn{1}{c}{}&
\multicolumn{3}{c}{\begin{tabular}[c]{@{}c@{}}\scriptsize{withPoolWhen}\\\scriptsize{XYIntOrLFixed-LFixed}\end{tabular}} \\
\cline{2-4}
\cline{6-8}
\cline{10-12}
\cline{14-16}
Instance&   BestSol & Time(s) &Nodes  & &BestSol & Time(s) &Nodes & &BestSol & Time(s) &Nodes& &BestSol & Time(s) &Nodes\\
\hline
bmilplib-110-10           & -177.67  & 355.72             & 55177   &  & -177.67  & 117.08             & 55177    &  & -177.67  & 357.09             & 55177   &  & -177.67  & 116.32             & 55177    \\
bmilplib-110-1            & -181.67  & 1.08               & 306     &  & -181.67  & 0.94               & 306      &  & -181.67  & 1.08               & 306     &  & -181.67  & 0.93               & 306      \\
bmilplib-110-2            & -110.67  & 1.56               & 303     &  & -110.67  & 1.17               & 303      &  & -110.67  & 1.56               & 303     &  & -110.67  & 1.16               & 303      \\
bmilplib-110-3            & -215.16  & 1.18               & 360     &  & -215.16  & 0.91               & 360      &  & -215.16  & 1.14               & 360     &  & -215.16  & 0.89               & 360      \\
bmilplib-110-4            & -197.29  & 2.38               & 148     &  & -197.29  & 1.34               & 148      &  & -197.29  & 2.38               & 148     &  & -197.29  & 1.34               & 148      \\
bmilplib-110-6            & -148.25  & 8.77               & 1448    &  & -148.25  & 3.83               & 1448     &  & -148.25  & 8.78               & 1448    &  & -148.25  & 3.84               & 1448     \\
bmilplib-110-7            & -160.86  & 1.16               & 205     &  & -160.86  & 0.93               & 205      &  & -160.86  & 1.17               & 205     &  & -160.86  & 0.91               & 205      \\
bmilplib-110-8            & -155.00  & 12.40              & 2274    &  & -155.00  & 7.74               & 2274     &  & -155.00  & 12.29              & 2274    &  & -155.00  & 7.66               & 2274     \\
bmilplib-110-9            & -192.92  & 0.39               & 146     &  & -192.92  & 0.38               & 146      &  & -192.92  & 0.38               & 146     &  & -192.92  & 0.39               & 146      \\
bmilplib-160-10           & -189.82  & 17.00              & 728     &  & -189.82  & 7.70               & 728      &  & -189.82  & 16.94              & 728     &  & -189.82  & 7.73               & 728      \\
bmilplib-160-1            & -165.00  & 6.52               & 881     &  & -165.00  & 4.72               & 881      &  & -165.00  & 6.48               & 881     &  & -165.00  & 4.76               & 881      \\
bmilplib-160-2            & -178.24  & 9.39               & 507     &  & -178.24  & 5.98               & 507      &  & -178.24  & 9.42               & 507     &  & -178.24  & 5.95               & 507      \\
bmilplib-160-3            & -174.94  & 32.47              & 1102    &  & -174.94  & 13.76              & 1102     &  & -174.94  & 32.60              & 1102    &  & -174.94  & 13.71              & 1102     \\
bmilplib-160-4            & -135.83  & 35.80              & 2447    &  & -135.83  & 19.06              & 2447     &  & -135.83  & 36.00              & 2447    &  & -135.83  & 19.13              & 2447     \\
bmilplib-160-5            & -140.78  & 29.26              & 668     &  & -140.78  & 7.73               & 668      &  & -140.78  & 29.33              & 668     &  & -140.78  & 7.72               & 668      \\
bmilplib-160-6            & -111.00  & 7.26               & 627     &  & -111.00  & 5.01               & 627      &  & -111.00  & 7.30               & 627     &  & -111.00  & 5.02               & 627      \\
bmilplib-160-7            & -96.00   & 35.18              & 2855    &  & -96.00   & 18.04              & 2855     &  & -96.00   & 35.13              & 2855    &  & -96.00   & 18.72              & 2855     \\
bmilplib-160-8            & -181.40  & 2.30               & 311     &  & -181.40  & 1.67               & 311      &  & -181.40  & 2.28               & 311     &  & -181.40  & 1.74               & 311      \\
bmilplib-160-9            & -207.50  & 2.02               & 268     &  & -207.50  & 2.01               & 268      &  & -207.50  & 2.00               & 268     &  & -207.50  & 2.02               & 268      \\
bmilplib-210-10           & -130.59  & 13.92              & 1100    &  & -130.59  & 8.78               & 1100     &  & -130.59  & 14.28              & 1100    &  & -130.59  & 8.59               & 1100     \\
bmilplib-210-1            & -136.80  & 6.72               & 550     &  & -136.80  & 6.71               & 550      &  & -136.80  & 6.74               & 550     &  & -136.80  & 6.67               & 550      \\
bmilplib-210-2            & -117.80  & 40.52              & 2306    &  & -117.80  & 20.60              & 2306     &  & -117.80  & 41.96              & 2306    &  & -117.80  & 19.78              & 2306     \\
bmilplib-210-3            & -130.80  & 24.72              & 1380    &  & -130.80  & 13.09              & 1380     &  & -130.80  & 24.48              & 1380    &  & -130.80  & 12.93              & 1380     \\
bmilplib-210-4            & -162.20  & 4.82               & 309     &  & -162.20  & 3.65               & 309      &  & -162.20  & 4.80               & 309     &  & -162.20  & 3.64               & 309      \\
bmilplib-210-5            & -134.00  & 29.57              & 2079    &  & -134.00  & 20.08              & 2079     &  & -134.00  & 30.96              & 2079    &  & -134.00  & 20.03              & 2079     \\
bmilplib-210-6            & -125.43  & 105.93             & 4875    &  & -125.43  & 46.30              & 4875     &  & -125.43  & 107.54             & 4875    &  & -125.43  & 45.28              & 4875     \\
bmilplib-210-7            & -169.73  & 18.21              & 1181    &  & -169.73  & 11.59              & 1181     &  & -169.73  & 18.14              & 1181    &  & -169.73  & 11.46              & 1181     \\
bmilplib-210-8            & -101.46  & 17.89              & 942     &  & -101.46  & 9.69               & 942      &  & -101.46  & 17.97              & 942     &  & -101.46  & 9.60               & 942      \\
bmilplib-210-9            & -184.00  & 334.58             & 9466    &  & -184.00  & 241.89             & 9466     &  & -184.00  & 336.35             & 9466    &  & -184.00  & 240.14             & 9466     \\
bmilplib-260-10           & -151.73  & 106.84             & 4716    &  & -151.73  & 75.99              & 4716     &  & -151.73  & 106.74             & 4716    &  & -151.73  & 73.10              & 4716     \\
bmilplib-260-1            & -139.00  & 10.55              & 887     &  & -139.00  & 9.72               & 887      &  & -139.00  & 10.17              & 887     &  & -139.00  & 10.10              & 887      \\
bmilplib-260-2            & -82.62   & 25.82              & 1607    &  & -82.62   & 17.95              & 1607     &  & -82.62   & 24.39              & 1607    &  & -82.62   & 18.07              & 1607     \\
bmilplib-260-3            & -144.25  & 12.22              & 518     &  & -144.25  & 8.70               & 518      &  & -144.25  & 12.20              & 518     &  & -144.25  & 8.70               & 518      \\
bmilplib-260-4            & -117.33  & 82.73              & 4426    &  & -117.33  & 62.90              & 4426     &  & -117.33  & 84.74              & 4426    &  & -117.33  & 66.89              & 4426     \\
bmilplib-260-5            & -121.00  & 36.18              & 2165    &  & -121.00  & 29.80              & 2165     &  & -121.00  & 37.57              & 2165    &  & -121.00  & 29.19              & 2165     \\
bmilplib-260-6            & -124.00  & 67.59              & 2420    &  & -124.00  & 39.43              & 2420     &  & -124.00  & 69.01              & 2420    &  & -124.00  & 40.59              & 2420     \\
bmilplib-260-7            & -137.80  & 76.55              & 3200    &  & -137.80  & 46.85              & 3200     &  & -137.80  & 79.79              & 3200    &  & -137.80  & 44.45              & 3200     \\
bmilplib-260-8            & -119.89  & 16.52              & 1025    &  & -119.89  & 10.90              & 1025     &  & -119.89  & 17.44              & 1025    &  & -119.89  & 10.92              & 1025     \\
bmilplib-260-9            & -160.00  & 61.66              & 2526    &  & -160.00  & 36.58              & 2526     &  & -160.00  & 64.72              & 2526    &  & -160.00  & 36.36              & 2526     \\
bmilplib-310-10           & -141.86  & 5.42               & 397     &  & -141.86  & 5.95               & 397      &  & -141.86  & 5.88               & 397     &  & -141.86  & 5.51               & 397      \\
bmilplib-310-1            & -117.00  & 58.20              & 2624    &  & -117.00  & 43.64              & 2624     &  & -117.00  & 62.56              & 2624    &  & -117.00  & 44.79              & 2624     \\
bmilplib-310-2            & -105.00  & 109.26             & 5372    &  & -105.00  & 96.63              & 5372     &  & -105.00  & 117.75             & 5372    &  & -105.00  & 98.39              & 5372     \\
bmilplib-310-3            & -127.52  & 171.44             & 7067    &  & -127.52  & 148.52             & 7067     &  & -127.52  & 174.17             & 7067    &  & -127.52  & 149.07             & 7067     \\
bmilplib-310-4            & -147.78  & 90.78              & 4767    &  & -147.78  & 77.28              & 4767     &  & -147.78  & 95.75              & 4767    &  & -147.78  & 70.22              & 4767     \\
bmilplib-310-5            & -161.45  & 48.58              & 1993    &  & -161.45  & 34.30              & 1993     &  & -161.45  & 45.88              & 1993    &  & -161.45  & 34.22              & 1993     \\
bmilplib-310-6            & -141.18  & 197.54             & 8300    &  & -141.18  & 179.02             & 8300     &  & -141.18  & 198.16             & 8300    &  & -141.18  & 169.96             & 8300     \\
bmilplib-310-7            & -142.00  & 179.47             & 7263    &  & -142.00  & 144.99             & 7263     &  & -142.00  & 169.98             & 7263    &  & -142.00  & 139.15             & 7263     \\
bmilplib-310-8            & -115.34  & 25.48              & 921     &  & -115.34  & 20.24              & 921      &  & -115.34  & 24.68              & 921     &  & -115.34  & 19.23              & 921      \\
bmilplib-310-9            & -115.65  & 43.85              & 1590    &  & -115.65  & 37.72              & 1590     &  & -115.65  & 45.18              & 1590    &  & -115.65  & 32.31              & 1590     \\
bmilplib-360-10           & -108.59  & 27.82              & 1106    &  & -108.59  & 25.17              & 1106     &  & -108.59  & 31.69              & 1106    &  & -108.59  & 25.75              & 1106     \\
bmilplib-360-1            & -133.00  & 207.62             & 4923    &  & -133.00  & 146.36             & 4923     &  & -133.00  & 194.58             & 4923    &  & -133.00  & 158.38             & 4923     \\
bmilplib-360-2            & -138.44  & 230.49             & 4493    &  & -138.44  & 135.61             & 4493     &  & -138.44  & 225.09             & 4493    &  & -138.44  & 148.90             & 4493     \\
bmilplib-360-3            & -131.00  & 68.30              & 2654    &  & -131.00  & 63.40              & 2654     &  & -131.00  & 75.82              & 2654    &  & -131.00  & 65.41              & 2654     \\
bmilplib-360-4            & -119.00  & 54.68              & 1564    &  & -119.00  & 47.06              & 1564     &  & -119.00  & 51.33              & 1564    &  & -119.00  & 42.95              & 1564     \\
bmilplib-360-5            & -164.26  & 58.30              & 1713    &  & -164.26  & 45.64              & 1713     &  & -164.26  & 56.12              & 1713    &  & -164.26  & 44.45              & 1713     \\
bmilplib-360-6            & -110.12  & 155.42             & 5520    &  & -110.12  & 132.79             & 5520     &  & -110.12  & 146.64             & 5520    &  & -110.12  & 142.81             & 5520     \\
bmilplib-360-7            & -105.00  & 135.16             & 3346    &  & -105.00  & 93.50              & 3346     &  & -105.00  & 125.91             & 3346    &  & -105.00  & 89.22              & 3346     \\
bmilplib-360-8            & -98.25   & 66.38              & 1686    &  & -98.25   & 50.86              & 1686     &  & -98.25   & 66.34              & 1686    &  & -98.25   & 44.85              & 1686     \\
bmilplib-360-9            & -127.22  & 67.05              & 2642    &  & -127.22  & 58.36              & 2642     &  & -127.22  & 68.26              & 2642    &  & -127.22  & 50.24              & 2642     \\
bmilplib-410-10           & -153.37  & 332.48             & 7428    &  & -153.37  & 265.98             & 7428     &  & -153.37  & 333.37             & 7428    &  & -153.37  & 258.89             & 7428     \\
bmilplib-410-1            & -103.50  & 103.54             & 1944    &  & -103.50  & 85.04              & 1944     &  & -103.50  & 109.43             & 1944    &  & -103.50  & 87.94              & 1944     \\
bmilplib-410-2            & -108.59  & 128.11             & 2887    &  & -108.59  & 107.05             & 2887     &  & -108.59  & 125.37             & 2887    &  & -108.59  & 103.57             & 2887     \\
bmilplib-410-3            & -96.24   & 176.89             & 3781    &  & -96.24   & 162.97             & 3781     &  & -96.24   & 167.58             & 3781    &  & -96.24   & 147.99             & 3781     \\
bmilplib-410-4            & -119.50  & 111.28             & 2995    &  & -119.50  & 96.04              & 2995     &  & -119.50  & 115.22             & 2995    &  & -119.50  & 100.21             & 2995     \\
bmilplib-410-5            & -119.22  & 72.48              & 1520    &  & -119.22  & 67.90              & 1520     &  & -119.22  & 73.09              & 1520    &  & -119.22  & 71.23              & 1520     \\
bmilplib-410-6            & -151.31  & 14.12              & 533     &  & -151.31  & 14.66              & 533      &  & -151.31  & 13.95              & 533     &  & -151.31  & 13.54              & 533      \\
bmilplib-410-7            & -123.00  & 43.96              & 1177    &  & -123.00  & 42.46              & 1177     &  & -123.00  & 42.43              & 1177    &  & -123.00  & 35.87              & 1177     \\
bmilplib-410-8            & -125.78  & 292.85             & 4480    &  & -125.78  & 168.87             & 4480     &  & -125.78  & 284.91             & 4480    &  & -125.78  & 169.15             & 4480     \\
bmilplib-410-9            & -100.77  & 95.70              & 2071    &  & -100.77  & 81.09              & 2071     &  & -100.77  & 89.94              & 2071    &  & -100.77  & 82.17              & 2071     \\
bmilplib-460-10           & -102.51  & 284.10             & 4465    &  & -102.51  & 232.16             & 4465     &  & -102.51  & 272.39             & 4465    &  & -102.51  & 228.12             & 4465     \\
bmilplib-460-1            & -97.59   & 638.06             & 10803   &  & -97.59   & 574.36             & 10803    &  & -97.59   & 636.80             & 10803   &  & -97.59   & 569.22             & 10803    \\
bmilplib-460-2            & -139.00  & 65.03              & 964     &  & -139.00  & 51.23              & 964      &  & -139.00  & 68.32              & 964     &  & -139.00  & 43.65              & 964      \\
bmilplib-460-3            & -86.50   & 226.69             & 3882    &  & -86.50   & 223.11             & 3882     &  & -86.50   & 229.43             & 3882    &  & -86.50   & 223.58             & 3882     \\
bmilplib-460-4            & -107.03  & 640.32             & 7856    &  & -107.03  & 413.15             & 7856     &  & -107.03  & 607.29             & 7856    &  & -107.03  & 412.76             & 7856     \\
bmilplib-460-5            & -100.50  & 229.94             & 3025    &  & -100.50  & 171.50             & 3025     &  & -100.50  & 196.94             & 3025    &  & -100.50  & 170.30             & 3025     \\
bmilplib-460-6            & -107.00  & 251.71             & 4143    &  & -107.00  & 219.39             & 4143     &  & -107.00  & 221.43             & 4143    &  & -107.00  & 236.30             & 4143     \\
bmilplib-460-7            & -83.75   & 334.13             & 5252    &  & -83.75   & 295.91             & 5252     &  & -83.75   & 331.02             & 5252    &  & -83.75   & 294.68             & 5252     \\
bmilplib-460-8            & -115.39  & 117.85             & 1903    &  & -115.39  & 102.20             & 1903     &  & -115.39  & 134.03             & 1903    &  & -115.39  & 94.67              & 1903     \\
bmilplib-460-9            & -128.70  & 409.66             & 6185    &  & -128.70  & 346.00             & 6185     &  & -128.70  & 388.74             & 6185    &  & -128.70  & 327.68             & 6185     \\
bmilplib-60-10            & -186.21  & 4.82               & 2590    &  & -186.21  & 4.12               & 2590     &  & -186.21  & 4.90               & 2590    &  & -186.21  & 4.29               & 2590     \\
bmilplib-60-1             & -153.20  & 3.59               & 1094    &  & -153.20  & 2.83               & 1094     &  & -153.20  & 3.82               & 1094    &  & -153.20  & 2.79               & 1094     \\
bmilplib-60-5             & -116.40  & 16.18              & 9996    &  & -116.40  & 10.33              & 9996     &  & -116.40  & 15.85              & 9996    &  & -116.40  & 10.08              & 9996     \\
bmilplib-60-6             & -187.31  & 3.58               & 1383    &  & -187.31  & 3.43               & 1383     &  & -187.31  & 3.65               & 1383    &  & -187.31  & 3.34               & 1383     \\
bmilplib-60-8             & -232.12  & 1.79               & 572     &  & -232.12  & 1.16               & 572      &  & -232.12  & 1.78               & 572     &  & -232.12  & 1.15               & 572      \\
bmilplib-60-9             & -136.50  & 14.85              & 9888    &  & -136.50  & 12.36              & 9888     &  & -136.50  & 15.14              & 9888    &  & -136.50  & 12.34              & 9888     \\
miblp-20-15-50-0110-10-10 & -206.00  & 36.93              & 21203   &  & -206.00  & 4.22               & 15741    &  & -206.00  & 39.37              & 15741   &  & -206.00  & 4.25               & 15741    \\
miblp-20-15-50-0110-10-2  & -354.00  & \textgreater{}3600 & 642894  &  & -398.00  & 970.06             & 2929420  &  & -354.00  & \textgreater{}3600 & 596423  &  & -398.00  & 947.81             & 2929420  \\
miblp-20-15-50-0110-10-3  & -42.00   & 188.57             & 48461   &  & -42.00   & 12.65              & 46349    &  & -42.00   & 203.74             & 46349   &  & -42.00   & 12.66              & 46349    \\
miblp-20-15-50-0110-10-6  & -246.00  & 12.02              & 1385    &  & -246.00  & 1.53               & 1367     &  & -246.00  & 13.22              & 1367    &  & -246.00  & 1.56               & 1367     \\
miblp-20-15-50-0110-10-9  & -635.00  & 25.65              & 7667    &  & -635.00  & 6.62               & 7667     &  & -635.00  & 26.03              & 7667    &  & -635.00  & 6.52               & 7667     \\
miblp-20-20-50-0110-10-10 & -405.00  & \textgreater{}3600 & 304054  &  & -441.00  & 2908.94            & 7261247  &  & -405.00  & \textgreater{}3600 & 297842  &  & -441.00  & 2867.85            & 7261247  \\
miblp-20-20-50-0110-10-1  & -315.00  & \textgreater{}3600 & 281846  &  & -357.00  & \textgreater{}3600 & 9176214  &  & -315.00  & \textgreater{}3600 & 268812  &  & -357.00  & \textgreater{}3600 & 9234364  \\
miblp-20-20-50-0110-10-2  & -659.00  & 14.46              & 5106    &  & -659.00  & 3.21               & 5100     &  & -659.00  & 13.90              & 5100    &  & -659.00  & 3.16               & 5100     \\
miblp-20-20-50-0110-10-3  & -618.00  & 16.74              & 52259   &  & -618.00  & 12.21              & 51970    &  & -618.00  & 17.08              & 51970   &  & -618.00  & 12.05              & 51970    \\
\hline
\end{tabular}
}
\end{table}
\addtocounter{table}{-1}
\begin{table}[h!]
\caption{Detailed results of Figure~\ref{fig:linkingSolutionPooParWithFracParWithTimeMore5Sec} (continued)}
\label{tab:fig4aContinue}
\resizebox{\columnwidth}{!}{
\begin{tabular}{llllllllllllllll}
\hline
\multicolumn{1}{c}{}&
\multicolumn{3}{c}{\begin{tabular}[c]{@{}c@{}}\scriptsize{withoutPoolWhen}\\\scriptsize{XYInt-LFixed}\end{tabular}}&
\multicolumn{1}{c}{}&
\multicolumn{3}{c}{\begin{tabular}[c]{@{}c@{}}\scriptsize{withPoolWhen}\\\scriptsize{XYInt-LFixed}\end{tabular}}&
\multicolumn{1}{c}{}&
\multicolumn{3}{c}{\begin{tabular}[c]{@{}c@{}}\scriptsize{withoutPoolWhen}\\\scriptsize{XYIntOrLFixed-LFixed}\end{tabular}}&
\multicolumn{1}{c}{}&
\multicolumn{3}{c}{\begin{tabular}[c]{@{}c@{}}\scriptsize{withPoolWhen}\\\scriptsize{XYIntOrLFixed-LFixed}\end{tabular}} \\
\cline{2-4}
\cline{6-8}
\cline{10-12}
\cline{14-16}
Instance&   BestSol & Time(s) &Nodes  & &BestSol & Time(s) &Nodes & &BestSol & Time(s) &Nodes& &BestSol & Time(s) &Nodes\\
\hline
miblp-20-20-50-0110-10-4  & -597.00  & \textgreater{}3600 & 509252  &  & -604.00  & \textgreater{}3600 & 8018349  &  & -597.00  & \textgreater{}3600 & 502089  &  & -604.00  & \textgreater{}3600 & 7988914  \\
miblp-20-20-50-0110-10-7  & -627.00  & \textgreater{}3600 & 4935849 &  & -629.00  & \textgreater{}3600 & 9735490  &  & -627.00  & \textgreater{}3600 & 4698965 &  & -629.00  & \textgreater{}3600 & 9709672  \\
miblp-20-20-50-0110-10-8  & -667.00  & 998.04             & 80603   &  & -667.00  & 68.15              & 75661    &  & -667.00  & 1085.58            & 75661   &  & -667.00  & 68.08              & 75661    \\
miblp-20-20-50-0110-10-9  & -256.00  & 2292.26            & 852328  &  & -256.00  & 306.60             & 757349   &  & -256.00  & 2483.24            & 757349  &  & -256.00  & 305.78             & 757349   \\
miblp-20-20-50-0110-15-1  & -246.00  & \textgreater{}3600 & 250531  &  & -317.00  & \textgreater{}3600 & 9807301  &  & -246.00  & \textgreater{}3600 & 238329  &  & -317.00  & \textgreater{}3600 & 9813005  \\
miblp-20-20-50-0110-15-2  & -645.00  & \textgreater{}3600 & 1416126 &  & -645.00  & \textgreater{}3600 & 10740865 &  & -645.00  & \textgreater{}3600 & 1395442 &  & -645.00  & \textgreater{}3600 & 10839884 \\
miblp-20-20-50-0110-15-3  & -476.00  & \textgreater{}3600 & 189624  &  & -593.00  & \textgreater{}3600 & 13332780 &  & -476.00  & \textgreater{}3600 & 183565  &  & -593.00  & \textgreater{}3600 & 13042109 \\
miblp-20-20-50-0110-15-4  & -310.00  & \textgreater{}3600 & 125392  &  & -398.00  & \textgreater{}3600 & 8670375  &  & -310.00  & \textgreater{}3600 & 125871  &  & -398.00  & \textgreater{}3600 & 8692428  \\
miblp-20-20-50-0110-15-5  & -60.00   & \textgreater{}3600 & 48876   &  & -320.00  & \textgreater{}3600 & 7275287  &  & -60.00   & \textgreater{}3600 & 47748   &  & -320.00  & \textgreater{}3600 & 7284040  \\
miblp-20-20-50-0110-15-6  & -596.00  & \textgreater{}3600 & 227767  &  & -596.00  & \textgreater{}3600 & 7842912  &  & -596.00  & \textgreater{}3600 & 197263  &  & -596.00  & \textgreater{}3600 & 7851818  \\
miblp-20-20-50-0110-15-7  & -471.00  & \textgreater{}3600 & 393906  &  & -471.00  & \textgreater{}3600 & 9577404  &  & -471.00  & \textgreater{}3600 & 362335  &  & -471.00  & \textgreater{}3600 & 9675451  \\
miblp-20-20-50-0110-15-8  & -290.00  & \textgreater{}3600 & 1772115 &  & -290.00  & \textgreater{}3600 & 10439854 &  & -290.00  & \textgreater{}3600 & 1614730 &  & -290.00  & \textgreater{}3600 & 10350188 \\
miblp-20-20-50-0110-15-9  & -584.00  & 269.87             & 56931   &  & -584.00  & 19.38              & 56459    &  & -584.00  & 271.42             & 56459   &  & -584.00  & 19.58              & 56459    \\
miblp-20-20-50-0110-5-13  & -507.00  & \textgreater{}3600 & 6006058 &  & -519.00  & 2723.02            & 6515097  &  & -507.00  & \textgreater{}3600 & 6189742 &  & -519.00  & 2709.29            & 6515097  \\
miblp-20-20-50-0110-5-15  & -617.00  & 1754.55            & 4312915 &  & -617.00  & 1113.82            & 4312915  &  & -617.00  & 1844.46            & 4312915 &  & -617.00  & 1130.92            & 4312915  \\
miblp-20-20-50-0110-5-16  & -833.00  & 2.06               & 5913    &  & -833.00  & 1.76               & 5913     &  & -833.00  & 2.06               & 5913    &  & -833.00  & 1.80               & 5913     \\
miblp-20-20-50-0110-5-17  & -944.00  & 1.61               & 4050    &  & -944.00  & 1.50               & 4038     &  & -944.00  & 1.66               & 4038    &  & -944.00  & 1.53               & 4038     \\
miblp-20-20-50-0110-5-19  & -431.00  & 38.74              & 116041  &  & -431.00  & 25.54              & 116041   &  & -431.00  & 38.55              & 116041  &  & -431.00  & 25.50              & 116041   \\
miblp-20-20-50-0110-5-1   & -548.00  & 13.07              & 31432   &  & -548.00  & 8.63               & 31298    &  & -548.00  & 13.53              & 31298   &  & -548.00  & 8.45               & 31298    \\
miblp-20-20-50-0110-5-20  & -438.00  & 13.53              & 33315   &  & -438.00  & 10.80              & 33315    &  & -438.00  & 13.51              & 33315   &  & -438.00  & 10.82              & 33315    \\
miblp-20-20-50-0110-5-6   & -1061.00 & 66.75              & 214009  &  & -1061.00 & 51.65              & 213928   &  & -1061.00 & 67.85              & 213928  &  & -1061.00 & 51.74              & 213928   \\
lseu-0.100000             & 1120.00  & 3.17               & 8603    &  & 1120.00  & 3.12               & 8603     &  & 1120.00  & 3.16               & 8603    &  & 1120.00  & 3.15               & 8603     \\
lseu-0.900000             & 5838.00  & \textgreater{}3600 & 2463002 &  & 5838.00  & \textgreater{}3600 & 8762978  &  & 5838.00  & \textgreater{}3600 & 2460978 &  & 5838.00  & \textgreater{}3600 & 8743754  \\
p0033-0.500000            & 3095.00  & 0.28               & 1467    &  & 3095.00  & 0.27               & 1467     &  & 3095.00  & 0.26               & 1467    &  & 3095.00  & 0.25               & 1467     \\
p0033-0.900000            & 4679.00  & 37.10              & 33451   &  & 4679.00  & 5.58               & 28241    &  & 4679.00  & 31.19              & 28241   &  & 4679.00  & 5.56               & 28241    \\
p0201-0.900000            & 15025.00 & \textgreater{}3600 & 690347  &  & 15025.00 & \textgreater{}3600 & 1309676  &  & 15025.00 & \textgreater{}3600 & 689225  &  & 15025.00 & \textgreater{}3600 & 1310278  \\
stein27-0.500000          & 19.00    & 6.60               & 17648   &  & 19.00    & 6.43               & 17648    &  & 19.00    & 6.84               & 17648   &  & 19.00    & 6.51               & 17648    \\
stein27-0.900000          & 24.00    & 644.53             & 713061  &  & 24.00    & 413.77             & 702055   &  & 24.00    & 638.54             & 702055  &  & 24.00    & 419.87             & 702055   \\
stein45-0.100000          & 30.00    & 64.50              & 90241   &  & 30.00    & 63.84              & 90241    &  & 30.00    & 64.46              & 90241   &  & 30.00    & 64.27              & 90241    \\
stein45-0.500000          & 32.00    & 465.23             & 753845  &  & 32.00    & 468.17             & 753845   &  & 32.00    & 472.32             & 753845  &  & 32.00    & 471.57             & 753845   \\
stein45-0.900000          & 40.00    & \textgreater{}3600 & 1379438 &  & 40.00    & \textgreater{}3600 & 1504193  &  & 40.00    & \textgreater{}3600 & 1376878 &  & 40.00    & \textgreater{}3600 & 1499583 \\
\hline
\end{tabular}
}
\end{table}
\begin{table}[h!]
\caption{Detailed results of Figure~\ref{fig:linkingSolutionPooParWithLinkParWithTimeMore5Sec}}
\label{tab:fig4b}
\resizebox{\columnwidth}{!}{
\begin{tabular}{llllllllllllllll}
\hline
\multicolumn{1}{c}{}&
\multicolumn{3}{c}{\begin{tabular}[c]{@{}c@{}}\scriptsize{withoutPoolWhen}\\\scriptsize{XYInt-LFixed}\end{tabular}}&
\multicolumn{1}{c}{}&
\multicolumn{3}{c}{\begin{tabular}[c]{@{}c@{}}\scriptsize{withPoolWhen}\\\scriptsize{XYInt-LFixed}\end{tabular}}&
\multicolumn{1}{c}{}&
\multicolumn{3}{c}{\begin{tabular}[c]{@{}c@{}}\scriptsize{withoutPoolWhen}\\\scriptsize{XYIntOrLFixed-LFixed}\end{tabular}}&
\multicolumn{1}{c}{}&
\multicolumn{3}{c}{\begin{tabular}[c]{@{}c@{}}\scriptsize{withPoolWhen}\\\scriptsize{XYIntOrLFixed-LFixed}\end{tabular}} \\
\cline{2-4}
\cline{6-8}
\cline{10-12}
\cline{14-16}
Instance&   BestSol & Time(s) &Nodes  & &BestSol & Time(s) &Nodes & &BestSol & Time(s) &Nodes& &BestSol & Time(s) &Nodes\\
\hline
bmilplib-110-10           & -177.67  & 416.71             & 121469   &  & -177.67  & 149.10  & 93801    &  & -177.67  & 155.24  & 75499   &  & -177.67  & 153.82  & 75499   \\
bmilplib-110-1            & -181.67  & 5.52               & 2892     &  & -181.67  & 5.42    & 2878     &  & -181.67  & 5.32    & 2806    &  & -181.67  & 5.35    & 2806    \\
bmilplib-110-2            & -110.67  & 7.04               & 4279     &  & -110.67  & 6.50    & 4272     &  & -110.67  & 6.49    & 4174    &  & -110.67  & 6.48    & 4174    \\
bmilplib-110-3            & -215.16  & 6.11               & 3599     &  & -215.16  & 5.42    & 3556     &  & -215.16  & 5.55    & 3471    &  & -215.16  & 5.58    & 3471    \\
bmilplib-110-4            & -197.29  & 4.64               & 1827     &  & -197.29  & 3.54    & 1801     &  & -197.29  & 3.70    & 1751    &  & -197.29  & 3.69    & 1751    \\
bmilplib-110-6            & -148.25  & 19.75              & 9578     &  & -148.25  & 14.14   & 9392     &  & -148.25  & 14.88   & 8770    &  & -148.25  & 15.06   & 8770    \\
bmilplib-110-7            & -160.86  & 4.30               & 2291     &  & -160.86  & 4.01    & 2281     &  & -160.86  & 4.86    & 2189    &  & -160.86  & 4.94    & 2189    \\
bmilplib-110-8            & -155.00  & 25.37              & 12659    &  & -155.00  & 20.65   & 12498    &  & -155.00  & 20.78   & 11663   &  & -155.00  & 20.78   & 11663   \\
bmilplib-110-9            & -192.92  & 3.33               & 2177     &  & -192.92  & 3.28    & 2171     &  & -192.92  & 3.63    & 2151    &  & -192.92  & 3.66    & 2151    \\
bmilplib-160-10           & -189.82  & 40.10              & 9425     &  & -189.82  & 31.24   & 9344     &  & -189.82  & 33.76   & 9071    &  & -189.82  & 33.42   & 9071    \\
bmilplib-160-1            & -165.00  & 26.18              & 8157     &  & -165.00  & 24.18   & 8145     &  & -165.00  & 25.67   & 7901    &  & -165.00  & 25.70   & 7901    \\
bmilplib-160-2            & -178.24  & 33.86              & 8719     &  & -178.24  & 29.64   & 8710     &  & -178.24  & 30.82   & 8635    &  & -178.24  & 30.59   & 8635    \\
bmilplib-160-3            & -174.94  & 74.85              & 15456    &  & -174.94  & 54.62   & 15427    &  & -174.94  & 65.13   & 15002   &  & -174.94  & 63.37   & 15002   \\
bmilplib-160-4            & -135.83  & 69.61              & 15513    &  & -135.83  & 50.86   & 15354    &  & -135.83  & 51.56   & 14772   &  & -135.83  & 51.02   & 14772   \\
bmilplib-160-5            & -140.78  & 45.58              & 5362     &  & -140.78  & 19.35   & 5178     &  & -140.78  & 20.58   & 4948    &  & -140.78  & 20.52   & 4948    \\
bmilplib-160-6            & -111.00  & 27.27              & 8878     &  & -111.00  & 25.08   & 8864     &  & -111.00  & 24.82   & 8676    &  & -111.00  & 24.66   & 8676    \\
bmilplib-160-7            & -96.00   & 67.91              & 17668    &  & -96.00   & 50.97   & 17430    &  & -96.00   & 52.58   & 16560   &  & -96.00   & 51.20   & 16560   \\
bmilplib-160-8            & -181.40  & 9.50               & 3564     &  & -181.40  & 9.32    & 3540     &  & -181.40  & 10.45   & 3444    &  & -181.40  & 10.49   & 3444    \\
bmilplib-160-9            & -207.50  & 15.01              & 4754     &  & -207.50  & 14.90   & 4728     &  & -207.50  & 15.87   & 4607    &  & -207.50  & 15.85   & 4607    \\
bmilplib-210-10           & -130.59  & 66.32              & 12596    &  & -130.59  & 61.00   & 12497    &  & -130.59  & 61.65   & 12176   &  & -130.59  & 61.81   & 12176   \\
bmilplib-210-1            & -136.80  & 43.30              & 7608     &  & -136.80  & 40.04   & 7598     &  & -136.80  & 41.01   & 7429    &  & -136.80  & 41.96   & 7429    \\
bmilplib-210-2            & -117.80  & 132.27             & 23482    &  & -117.80  & 109.56  & 23096    &  & -117.80  & 118.80  & 22294   &  & -117.80  & 113.38  & 22294   \\
bmilplib-210-3            & -130.80  & 60.91              & 9404     &  & -130.80  & 47.44   & 9165     &  & -130.80  & 48.09   & 8848    &  & -130.80  & 48.19   & 8848    \\
bmilplib-210-4            & -162.20  & 27.29              & 4812     &  & -162.20  & 25.92   & 4806     &  & -162.20  & 26.63   & 4687    &  & -162.20  & 26.81   & 4687    \\
bmilplib-210-5            & -134.00  & 117.77             & 21646    &  & -134.00  & 105.03  & 21552    &  & -134.00  & 106.92  & 21058   &  & -134.00  & 108.22  & 21058   \\
bmilplib-210-6            & -125.43  & 261.84             & 40736    &  & -125.43  & 197.88  & 39941    &  & -125.43  & 213.31  & 38443   &  & -125.43  & 201.40  & 38443   \\
bmilplib-210-7            & -169.73  & 82.57              & 14351    &  & -169.73  & 77.62   & 14305    &  & -169.73  & 77.43   & 13960   &  & -169.73  & 77.84   & 13960   \\
bmilplib-210-8            & -101.46  & 67.82              & 11406    &  & -101.46  & 58.14   & 11347    &  & -101.46  & 56.85   & 11105   &  & -101.46  & 61.79   & 11105   \\
bmilplib-210-9            & -184.00  & 912.62             & 143788   &  & -184.00  & 859.46  & 143665   &  & -184.00  & 849.38  & 142294  &  & -184.00  & 879.61  & 142294  \\
bmilplib-260-10           & -151.73  & 559.37             & 63243    &  & -151.73  & 502.94  & 63026    &  & -151.73  & 514.35  & 61767   &  & -151.73  & 546.55  & 61767   \\
bmilplib-260-1            & -139.00  & 82.50              & 11269    &  & -139.00  & 81.74   & 11242    &  & -139.00  & 81.82   & 10980   &  & -139.00  & 85.81   & 10980   \\
bmilplib-260-2            & -82.62   & 125.39             & 15896    &  & -82.62   & 109.75  & 15712    &  & -82.62   & 107.89  & 15287   &  & -82.62   & 109.75  & 15287   \\
bmilplib-260-3            & -144.25  & 74.50              & 8865     &  & -144.25  & 69.26   & 8859     &  & -144.25  & 74.50   & 8769    &  & -144.25  & 71.54   & 8769    \\
bmilplib-260-4            & -117.33  & 284.35             & 34237    &  & -117.33  & 260.01  & 33931    &  & -117.33  & 272.56  & 32787   &  & -117.33  & 259.48  & 32787   \\
bmilplib-260-5            & -121.00  & 181.75             & 22295    &  & -121.00  & 173.32  & 22121    &  & -121.00  & 167.03  & 21542   &  & -121.00  & 166.03  & 21542   \\
bmilplib-260-6            & -124.00  & 240.41             & 26166    &  & -124.00  & 209.79  & 26023    &  & -124.00  & 195.61  & 25362   &  & -124.00  & 197.04  & 25362   \\
bmilplib-260-7            & -137.80  & 357.63             & 41519    &  & -137.80  & 318.76  & 41180    &  & -137.80  & 342.97  & 40274   &  & -137.80  & 312.12  & 40274   \\
bmilplib-260-8            & -119.89  & 87.09              & 10445    &  & -119.89  & 76.70   & 10332    &  & -119.89  & 75.72   & 10025   &  & -119.89  & 87.01   & 10025   \\
bmilplib-260-9            & -160.00  & 318.76             & 34545    &  & -160.00  & 271.18  & 34236    &  & -160.00  & 273.16  & 33468   &  & -160.00  & 273.66  & 33468   \\
bmilplib-310-10           & -141.86  & 119.62             & 10032    &  & -141.86  & 114.38  & 10033    &  & -141.86  & 110.92  & 9900    &  & -141.86  & 121.07  & 9900    \\
bmilplib-310-1            & -117.00  & 363.13             & 29350    &  & -117.00  & 329.80  & 29055    &  & -117.00  & 346.52  & 28330   &  & -117.00  & 329.71  & 28330   \\
bmilplib-310-2            & -105.00  & 519.72             & 45235    &  & -105.00  & 457.69  & 44841    &  & -105.00  & 519.36  & 43399   &  & -105.00  & 497.66  & 43399   \\
bmilplib-310-3            & -127.52  & 781.32             & 67649    &  & -127.52  & 783.89  & 67441    &  & -127.52  & 803.04  & 66410   &  & -127.52  & 777.74  & 66410   \\
bmilplib-310-4            & -147.78  & 584.37             & 49620    &  & -147.78  & 572.76  & 48950    &  & -147.78  & 564.98  & 47711   &  & -147.78  & 569.43  & 47711   \\
bmilplib-310-5            & -161.45  & 392.22             & 32631    &  & -161.45  & 382.13  & 32454    &  & -161.45  & 396.10  & 31782   &  & -161.45  & 366.90  & 31782   \\
bmilplib-310-6            & -141.18  & 1191.16            & 103385   &  & -141.18  & 1264.60 & 103214   &  & -141.18  & 1229.32 & 101904  &  & -141.18  & 1191.51 & 101904  \\
bmilplib-310-7            & -142.00  & 1132.52            & 104378   &  & -142.00  & 1199.47 & 104166   &  & -142.00  & 1142.79 & 102000  &  & -142.00  & 1129.24 & 102000  \\
bmilplib-310-8            & -115.34  & 129.19             & 11336    &  & -115.34  & 114.80  & 11293    &  & -115.34  & 137.89  & 11109   &  & -115.34  & 127.94  & 11109   \\
bmilplib-310-9            & -115.65  & 262.22             & 20893    &  & -115.65  & 248.24  & 20838    &  & -115.65  & 254.18  & 20490   &  & -115.65  & 255.27  & 20490   \\
bmilplib-360-10           & -108.59  & 254.46             & 14127    &  & -108.59  & 242.94  & 14064    &  & -108.59  & 261.28  & 13697   &  & -108.59  & 234.92  & 13697   \\
bmilplib-360-1            & -133.00  & 1390.85            & 77371    &  & -133.00  & 1297.32 & 77066    &  & -133.00  & 1380.33 & 75421   &  & -133.00  & 1353.62 & 75421   \\
bmilplib-360-2            & -138.44  & 980.62             & 54874    &  & -138.44  & 999.50  & 54354    &  & -138.44  & 953.84  & 52919   &  & -138.44  & 965.52  & 52919   \\
bmilplib-360-3            & -131.00  & 689.69             & 41459    &  & -131.00  & 624.78  & 41302    &  & -131.00  & 643.52  & 40487   &  & -131.00  & 832.88  & 40487   \\
bmilplib-360-4            & -119.00  & 334.15             & 18870    &  & -119.00  & 338.49  & 18813    &  & -119.00  & 331.26  & 18293   &  & -119.00  & 332.90  & 18293   \\
bmilplib-360-5            & -164.26  & 527.22             & 30465    &  & -164.26  & 484.66  & 30352    &  & -164.26  & 526.38  & 29947   &  & -164.26  & 618.86  & 29947   \\
bmilplib-360-6            & -110.12  & 1147.18            & 70479    &  & -110.12  & 1018.82 & 70283    &  & -110.12  & 1131.67 & 68863   &  & -110.12  & 1181.02 & 68863   \\
bmilplib-360-7            & -105.00  & 554.89             & 32224    &  & -105.00  & 517.46  & 31884    &  & -105.00  & 479.93  & 30900   &  & -105.00  & 634.17  & 30900   \\
bmilplib-360-8            & -98.25   & 438.62             & 23402    &  & -98.25   & 416.97  & 23337    &  & -98.25   & 423.75  & 22857   &  & -98.25   & 362.89  & 22857   \\
bmilplib-360-9            & -127.22  & 708.99             & 41321    &  & -127.22  & 746.66  & 41235    &  & -127.22  & 656.26  & 40329   &  & -127.22  & 736.16  & 40329   \\
bmilplib-410-10           & -153.37  & 2756.08            & 103891   &  & -153.37  & 2879.03 & 103400   &  & -153.37  & 2624.30 & 101447  &  & -153.37  & 2729.77 & 101447  \\
bmilplib-410-1            & -103.50  & 614.03             & 21120    &  & -103.50  & 589.62  & 21088    &  & -103.50  & 590.37  & 20634   &  & -103.50  & 553.76  & 20634   \\
bmilplib-410-2            & -108.59  & 846.60             & 32377    &  & -108.59  & 748.06  & 32251    &  & -108.59  & 746.96  & 31603   &  & -108.59  & 840.56  & 31603   \\
bmilplib-410-3            & -96.24   & 877.76             & 33590    &  & -96.24   & 777.04  & 33473    &  & -96.24   & 874.84  & 32882   &  & -96.24   & 871.58  & 32882   \\
bmilplib-410-4            & -119.50  & 957.38             & 39535    &  & -119.50  & 1041.97 & 39371    &  & -119.50  & 944.66  & 38489   &  & -119.50  & 1031.12 & 38489   \\
bmilplib-410-5            & -119.22  & 677.99             & 24188    &  & -119.22  & 681.99  & 24163    &  & -119.22  & 583.64  & 23852   &  & -119.22  & 678.49  & 23852   \\
bmilplib-410-6            & -151.31  & 291.60             & 11213    &  & -151.31  & 333.36  & 11193    &  & -151.31  & 300.03  & 11130   &  & -151.31  & 322.88  & 11130   \\
bmilplib-410-7            & -123.00  & 514.64             & 22684    &  & -123.00  & 601.78  & 22628    &  & -123.00  & 588.83  & 22146   &  & -123.00  & 560.34  & 22146   \\
bmilplib-410-8            & -125.78  & 1679.79            & 64013    &  & -125.78  & 1692.35 & 63421    &  & -125.78  & 1462.52 & 62218   &  & -125.78  & 1547.88 & 62218   \\
bmilplib-410-9            & -100.77  & 561.87             & 20968    &  & -100.77  & 607.44  & 20920    &  & -100.77  & 539.37  & 20400   &  & -100.77  & 531.88  & 20400   \\
bmilplib-460-10           & -102.51  & 1837.01            & 56744    &  & -102.51  & 1701.70 & 56265    &  & -102.51  & 1964.44 & 55024   &  & -102.51  & 1857.94 & 55024   \\
bmilplib-460-1            & -97.59   & 2829.00            & 87769    &  & -97.59   & 2938.02 & 87709    &  & -97.59   & 2843.15 & 86632   &  & -97.59   & 2822.97 & 86632   \\
bmilplib-460-2            & -139.00  & 652.85             & 16943    &  & -139.00  & 608.40  & 16863    &  & -139.00  & 651.97  & 16623   &  & -139.00  & 625.95  & 16623   \\
bmilplib-460-3            & -86.50   & 1976.55            & 58270    &  & -86.50   & 2102.77 & 58229    &  & -86.50   & 1941.05 & 57395   &  & -86.50   & 1995.92 & 57395   \\
bmilplib-460-4            & -107.03  & 3577.74            & 97665    &  & -107.03  & 3328.91 & 97160    &  & -107.03  & 3160.13 & 95035   &  & -107.03  & 3291.28 & 95035   \\
bmilplib-460-5            & -100.50  & 1519.85            & 41863    &  & -100.50  & 1619.91 & 41796    &  & -100.50  & 1517.44 & 41109   &  & -100.50  & 1424.54 & 41109   \\
bmilplib-460-6            & -107.00  & 1603.92            & 46833    &  & -107.00  & 1483.17 & 46732    &  & -107.00  & 1528.18 & 46076   &  & -107.00  & 1635.42 & 46076   \\
bmilplib-460-7            & -83.75   & 1813.99            & 49915    &  & -83.75   & 1728.92 & 49717    &  & -83.75   & 1826.46 & 48897   &  & -83.75   & 1601.34 & 48897   \\
bmilplib-460-8            & -115.39  & 983.08             & 28147    &  & -115.39  & 911.51  & 28075    &  & -115.39  & 901.08  & 27572   &  & -115.39  & 982.30  & 27572   \\
bmilplib-460-9            & -128.70  & 3006.84            & 88287    &  & -128.70  & 2715.98 & 87796    &  & -128.70  & 3043.95 & 85977   &  & -128.70  & 2830.59 & 85977   \\
bmilplib-60-10            & -186.21  & 7.00               & 6136     &  & -186.21  & 6.91    & 6134     &  & -186.21  & 6.70    & 5922    &  & -186.21  & 6.93    & 5922    \\
bmilplib-60-1             & -153.20  & 7.31               & 5060     &  & -153.20  & 6.50    & 5041     &  & -153.20  & 6.48    & 4877    &  & -153.20  & 6.71    & 4877    \\
bmilplib-60-5             & -116.40  & 13.21              & 15368    &  & -116.40  & 8.65    & 13984    &  & -116.40  & 7.98    & 11202   &  & -116.40  & 8.15    & 11202   \\
bmilplib-60-6             & -187.31  & 7.57               & 7310     &  & -187.31  & 7.34    & 7304     &  & -187.31  & 7.22    & 7016    &  & -187.31  & 7.62    & 7016    \\
bmilplib-60-8             & -232.12  & 3.66               & 3700     &  & -232.12  & 3.33    & 3683     &  & -232.12  & 3.16    & 3570    &  & -232.12  & 3.36    & 3570    \\
bmilplib-60-9             & -136.50  & 22.60              & 28947    &  & -136.50  & 20.22   & 28603    &  & -136.50  & 19.79   & 26792   &  & -136.50  & 19.61   & 26792   \\
miblp-20-15-50-0110-10-10 & -206.00  & 5.32               & 3150     &  & -206.00  & 0.97    & 1414     &  & -206.00  & 1.03    & 423     &  & -206.00  & 1.06    & 423     \\
miblp-20-15-50-0110-10-2  & -398.00  & 427.10             & 229000   &  & -398.00  & 11.12   & 27625    &  & -398.00  & 9.11    & 2450    &  & -398.00  & 9.09    & 2450    \\
miblp-20-15-50-0110-10-3  & -42.00   & 6.10               & 3151     &  & -42.00   & 0.71    & 1343     &  & -42.00   & 0.62    & 267     &  & -42.00   & 0.62    & 267     \\
miblp-20-15-50-0110-10-6  & -246.00  & 5.47               & 1183     &  & -246.00  & 2.19    & 853      &  & -246.00  & 4.47    & 340     &  & -246.00  & 4.56    & 340     \\
miblp-20-15-50-0110-10-9  & -635.00  & 16.24              & 7425     &  & -635.00  & 7.86    & 6457     &  & -635.00  & 14.46   & 2380    &  & -635.00  & 14.05   & 2380    \\
miblp-20-20-50-0110-10-10 & -441.00  & 3568.40            & 1159333  &  & -441.00  & 526.56  & 639146   &  & -441.00  & 717.56  & 134585  &  & -441.00  & 692.00  & 134585  \\
miblp-20-20-50-0110-10-1  & -353.00  & \textgreater{}3600 & 686972   &  & -359.00  & 260.31  & 423149   &  & -359.00  & 231.63  & 96141   &  & -359.00  & 228.67  & 96141   \\
miblp-20-20-50-0110-10-2  & -659.00  & 5.16               & 7729     &  & -659.00  & 2.78    & 7547     &  & -659.00  & 3.80    & 3191    &  & -659.00  & 3.69    & 3191    \\
miblp-20-20-50-0110-10-3  & -618.00  & 14.11              & 42906    &  & -618.00  & 10.74   & 38188    &  & -618.00  & 13.14   & 20788   &  & -618.00  & 13.18   & 20788   \\
\hline
\end{tabular}
}
\end{table}
\addtocounter{table}{-1}
\begin{table}[h!]
\caption{Detailed results of Figure~\ref{fig:linkingSolutionPooParWithLinkParWithTimeMore5Sec} (continued)}
\label{tab:fig4bContinue}
\resizebox{\columnwidth}{!}{
\begin{tabular}{llllllllllllllll}
\hline
\multicolumn{1}{c}{}&
\multicolumn{3}{c}{\begin{tabular}[c]{@{}c@{}}\scriptsize{withoutPoolWhen}\\\scriptsize{XYInt-LFixed}\end{tabular}}&
\multicolumn{1}{c}{}&
\multicolumn{3}{c}{\begin{tabular}[c]{@{}c@{}}\scriptsize{withPoolWhen}\\\scriptsize{XYInt-LFixed}\end{tabular}}&
\multicolumn{1}{c}{}&
\multicolumn{3}{c}{\begin{tabular}[c]{@{}c@{}}\scriptsize{withoutPoolWhen}\\\scriptsize{XYIntOrLFixed-LFixed}\end{tabular}}&
\multicolumn{1}{c}{}&
\multicolumn{3}{c}{\begin{tabular}[c]{@{}c@{}}\scriptsize{withPoolWhen}\\\scriptsize{XYIntOrLFixed-LFixed}\end{tabular}} \\
\cline{2-4}
\cline{6-8}
\cline{10-12}
\cline{14-16}
Instance&   BestSol & Time(s) &Nodes  & &BestSol & Time(s) &Nodes & &BestSol & Time(s) &Nodes& &BestSol & Time(s) &Nodes\\
\hline
miblp-20-20-50-0110-10-4  & -604.00  & \textgreater{}3600 & 1351975  &  & -604.00  & 3405.57 & 6452671  &  & -604.00  & 3089.99 & 830808  &  & -604.00  & 3145.16 & 830808  \\
miblp-20-20-50-0110-10-7  & -650.00  & \textgreater{}3600 & 7509639  &  & -683.00  & 3184.99 & 11502091 &  & -683.00  & 2009.54 & 3003967 &  & -683.00  & 1887.37 & 3003967 \\
miblp-20-20-50-0110-10-8  & -667.00  & 155.00             & 39744    &  & -667.00  & 40.78   & 30873    &  & -667.00  & 87.06   & 12857   &  & -667.00  & 80.45   & 12857   \\
miblp-20-20-50-0110-10-9  & -256.00  & 77.53              & 127374   &  & -256.00  & 23.97   & 76055    &  & -256.00  & 21.78   & 31245   &  & -256.00  & 20.99   & 31245   \\
miblp-20-20-50-0110-15-1  & -289.00  & \textgreater{}3600 & 474713   &  & -450.00  & 65.16   & 49137    &  & -450.00  & 65.21   & 3506    &  & -450.00  & 59.38   & 3506    \\
miblp-20-20-50-0110-15-2  & -645.00  & \textgreater{}3600 & 1897076  &  & -645.00  & 96.13   & 346065   &  & -645.00  & 49.73   & 17251   &  & -645.00  & 50.14   & 17251   \\
miblp-20-20-50-0110-15-3  & -593.00  & \textgreater{}3600 & 231893   &  & -593.00  & 65.25   & 42877    &  & -593.00  & 72.10   & 3081    &  & -593.00  & 71.15   & 3081    \\
miblp-20-20-50-0110-15-4  & -323.00  & \textgreater{}3600 & 205392   &  & -441.00  & 36.99   & 29904    &  & -441.00  & 43.47   & 1625    &  & -441.00  & 43.00   & 1625    \\
miblp-20-20-50-0110-15-5  & -75.00   & \textgreater{}3600 & 103173   &  & -379.00  & 615.16  & 205025   &  & -379.00  & 644.67  & 16715   &  & -379.00  & 651.28  & 16715   \\
miblp-20-20-50-0110-15-6  & -596.00  & \textgreater{}3600 & 317849   &  & -596.00  & 17.87   & 29923    &  & -596.00  & 18.98   & 1657    &  & -596.00  & 17.31   & 1657    \\
miblp-20-20-50-0110-15-7  & -471.00  & \textgreater{}3600 & 1084548  &  & -471.00  & 125.24  & 241285   &  & -471.00  & 126.10  & 13405   &  & -471.00  & 111.02  & 13405   \\
miblp-20-20-50-0110-15-8  & -290.00  & \textgreater{}3600 & 4898557  &  & -370.00  & 138.34  & 579309   &  & -370.00  & 39.01   & 21589   &  & -370.00  & 39.12   & 21589   \\
miblp-20-20-50-0110-15-9  & -584.00  & 20.00              & 12193    &  & -584.00  & 1.96    & 4072     &  & -584.00  & 2.07    & 582     &  & -584.00  & 2.00    & 582     \\
miblp-20-20-50-0110-5-13  & -519.00  & 2582.26            & 10718411 &  & -519.00  & 2307.90 & 10196998 &  & -519.00  & 2516.55 & 7128138 &  & -519.00  & 2464.25 & 7128138 \\
miblp-20-20-50-0110-5-15  & -617.00  & 2170.42            & 8462491  &  & -617.00  & 1219.33 & 6630921  &  & -617.00  & 1297.16 & 4018058 &  & -617.00  & 1310.67 & 4018058 \\
miblp-20-20-50-0110-5-16  & -833.00  & 6.04               & 20129    &  & -833.00  & 4.86    & 19013    &  & -833.00  & 7.36    & 17680   &  & -833.00  & 6.60    & 17680   \\
miblp-20-20-50-0110-5-17  & -944.00  & 3.71               & 20049    &  & -944.00  & 3.99    & 21541    &  & -944.00  & 5.21    & 17679   &  & -944.00  & 4.78    & 17679   \\
miblp-20-20-50-0110-5-19  & -431.00  & 52.75              & 197157   &  & -431.00  & 27.52   & 147268   &  & -431.00  & 32.98   & 77256   &  & -431.00  & 26.56   & 77256   \\
miblp-20-20-50-0110-5-1   & -548.00  & 16.54              & 69127    &  & -548.00  & 12.98   & 64067    &  & -548.00  & 17.63   & 42364   &  & -548.00  & 14.50   & 42364   \\
miblp-20-20-50-0110-5-20  & -438.00  & 17.24              & 80526    &  & -438.00  & 15.81   & 76091    &  & -438.00  & 18.83   & 60944   &  & -438.00  & 17.30   & 60944   \\
miblp-20-20-50-0110-5-6   & -1061.00 & 66.85              & 301416   &  & -1061.00 & 58.74   & 284550   &  & -1061.00 & 63.82   & 222566  &  & -1061.00 & 63.70   & 222566  \\
lseu-0.100000             & 1120.00  & 248.50             & 1071409  &  & 1120.00  & 248.54  & 1071409  &  & 1120.00  & 262.55  & 1003976 &  & 1120.00  & 270.78  & 1003976 \\
lseu-0.900000             & 5838.00  & \textgreater{}3600 & 1631763  &  & 5838.00  & 1063.02 & 4718749  &  & 5838.00  & 14.12   & 1023    &  & 5838.00  & 14.10   & 1023    \\
p0033-0.500000            & 3095.00  & 6.08               & 33614    &  & 3095.00  & 6.24    & 33614    &  & 3095.00  & 10.55   & 20695   &  & 3095.00  & 10.48   & 20695   \\
p0033-0.900000            & 4679.00  & 14.66              & 23699    &  & 4679.00  & 0.65    & 3455     &  & 4679.00  & 0.06    & 27      &  & 4679.00  & 0.06    & 27      \\
p0201-0.900000            & 15025.00 & \textgreater{}3600 & 1037538  &  & 15025.00 & 20.58   & 18801    &  & 15025.00 & 6.66    & 2481    &  & 15025.00 & 6.62    & 2481    \\
stein27-0.500000          & 19.00    & 7.94               & 22537    &  & 19.00    & 7.36    & 21515    &  & 19.00    & 5.82    & 12115   &  & 19.00    & 5.88    & 12115   \\
stein27-0.900000          & 24.00    & 29.12              & 36927    &  & 24.00    & 1.25    & 4445     &  & 24.00    & 0.02    & 15      &  & 24.00    & 0.02    & 15      \\
stein45-0.100000          & 30.00    & 49.86              & 89035    &  & 30.00    & 50.47   & 89035    &  & 30.00    & 50.33   & 89035   &  & 30.00    & 50.19   & 89035   \\
stein45-0.500000          & 32.00    & 640.82             & 963098   &  & 32.00    & 635.22  & 952123   &  & 32.00    & 516.60  & 640308  &  & 32.00    & 519.59  & 640308  \\
stein45-0.900000          & 40.00    & \textgreater{}3600 & 2651427  &  & 40.00    & 85.92   & 103661   &  & 40.00    & 0.16    & 63      &  & 40.00    & 0.14    & 63     \\
\hline
\end{tabular}
}
\end{table}
\begin{table}[h!]
\caption{Detailed results of Figure~\ref{fig:heuristicsParTimeMore5Sec}}
\label{tab:fig5}
\centering
\resizebox{\columnwidth}{!}{
\begin{tabular}{llllllllllllllll}
\hline
\multicolumn{1}{c}{}&
\multicolumn{3}{c}{noHeuristics}&
\multicolumn{1}{c}{}&
\multicolumn{3}{c}{impObjectiveCut}&
\multicolumn{1}{c}{}&
\multicolumn{3}{c}{secondLevelPriority}&
\multicolumn{1}{c}{}&
\multicolumn{3}{c}{weightedSums} \\
\cline{2-4}
\cline{6-8}
\cline{10-12}
\cline{14-16}
Instance&   BestSol & Time(s) &Nodes  & &BestSol & Time(s) &Nodes & &BestSol & Time(s) &Nodes& &BestSol & Time(s) &Nodes\\
\hline
bmilplib-110-10           & -177.67  & 116.32  & 55177   &  & -177.67  & 117.32  & 55177   &  & -177.67  & 146.32  & 55177   &  & -177.67  & 1024.90            & 55177   \\
bmilplib-110-1            & -181.67  & 0.93    & 306     &  & -181.67  & 0.96    & 306     &  & -181.67  & 2.90    & 306     &  & -181.67  & 67.04              & 306     \\
bmilplib-110-2            & -110.67  & 1.16    & 303     &  & -110.67  & 1.26    & 303     &  & -110.67  & 1.64    & 303     &  & -110.67  & 25.89              & 303     \\
bmilplib-110-3            & -215.16  & 0.89    & 360     &  & -215.16  & 1.50    & 361     &  & -215.16  & 1.93    & 360     &  & -215.16  & 30.52              & 360     \\
bmilplib-110-4            & -197.29  & 1.34    & 148     &  & -197.29  & 1.54    & 148     &  & -197.29  & 2.79    & 148     &  & -197.29  & 68.13              & 148     \\
bmilplib-110-6            & -148.25  & 3.84    & 1448    &  & -148.25  & 4.24    & 1448    &  & -148.25  & 4.84    & 1446    &  & -148.25  & 59.02              & 1446    \\
bmilplib-110-7            & -160.86  & 0.91    & 205     &  & -160.86  & 2.19    & 205     &  & -160.86  & 1.98    & 205     &  & -160.86  & 52.88              & 207     \\
bmilplib-110-8            & -155.00  & 7.66    & 2274    &  & -155.00  & 7.68    & 2274    &  & -155.00  & 10.06   & 2274    &  & -155.00  & 73.62              & 2274    \\
bmilplib-110-9            & -192.92  & 0.39    & 146     &  & -192.92  & 1.33    & 146     &  & -192.92  & 1.15    & 146     &  & -192.92  & 16.48              & 154     \\
bmilplib-160-10           & -189.82  & 7.73    & 728     &  & -189.82  & 7.89    & 728     &  & -189.82  & 11.28   & 728     &  & -189.82  & 65.95              & 728     \\
bmilplib-160-1            & -165.00  & 4.76    & 881     &  & -165.00  & 4.76    & 881     &  & -165.00  & 5.82    & 893     &  & -165.00  & 51.48              & 893     \\
bmilplib-160-2            & -178.24  & 5.95    & 507     &  & -178.24  & 6.64    & 507     &  & -178.24  & 6.96    & 507     &  & -178.24  & 53.77              & 507     \\
bmilplib-160-3            & -174.94  & 13.71   & 1102    &  & -174.94  & 13.57   & 1102    &  & -174.94  & 15.78   & 1102    &  & -174.94  & 70.46              & 1102    \\
bmilplib-160-4            & -135.83  & 19.13   & 2447    &  & -135.83  & 18.91   & 2463    &  & -135.83  & 24.00   & 2447    &  & -135.83  & 116.48             & 2422    \\
bmilplib-160-5            & -140.78  & 7.72    & 668     &  & -140.78  & 8.94    & 668     &  & -140.78  & 10.34   & 668     &  & -140.78  & 56.74              & 668     \\
bmilplib-160-6            & -111.00  & 5.02    & 627     &  & -111.00  & 5.03    & 627     &  & -111.00  & 5.40    & 627     &  & -111.00  & 19.14              & 627     \\
bmilplib-160-7            & -96.00   & 18.72   & 2855    &  & -96.00   & 17.83   & 2855    &  & -96.00   & 19.22   & 2855    &  & -96.00   & 74.44              & 2855    \\
bmilplib-160-8            & -181.40  & 1.74    & 311     &  & -181.40  & 2.12    & 311     &  & -181.40  & 2.44    & 311     &  & -181.40  & 28.91              & 311     \\
bmilplib-160-9            & -207.50  & 2.02    & 268     &  & -207.50  & 2.85    & 268     &  & -207.50  & 2.98    & 268     &  & -207.50  & 40.33              & 268     \\
bmilplib-210-10           & -130.59  & 8.59    & 1100    &  & -130.59  & 8.66    & 1100    &  & -130.59  & 10.02   & 1100    &  & -130.59  & 73.44              & 1100    \\
bmilplib-210-1            & -136.80  & 6.67    & 550     &  & -136.80  & 6.68    & 550     &  & -136.80  & 7.59    & 550     &  & -136.80  & 80.52              & 550     \\
bmilplib-210-2            & -117.80  & 19.78   & 2306    &  & -117.80  & 20.49   & 2306    &  & -117.80  & 22.29   & 2298    &  & -117.80  & 149.96             & 2308    \\
bmilplib-210-3            & -130.80  & 12.93   & 1380    &  & -130.80  & 13.04   & 1386    &  & -130.80  & 16.16   & 1380    &  & -130.80  & 218.18             & 1380    \\
bmilplib-210-4            & -162.20  & 3.64    & 309     &  & -162.20  & 5.41    & 309     &  & -162.20  & 4.63    & 309     &  & -162.20  & 41.06              & 309     \\
bmilplib-210-5            & -134.00  & 20.03   & 2079    &  & -134.00  & 20.03   & 2079    &  & -134.00  & 22.31   & 2079    &  & -134.00  & 161.23             & 2079    \\
bmilplib-210-6            & -125.43  & 45.28   & 4875    &  & -125.43  & 46.46   & 4875    &  & -125.43  & 52.83   & 4878    &  & -125.43  & 230.84             & 4875    \\
bmilplib-210-7            & -169.73  & 11.46   & 1181    &  & -169.73  & 12.35   & 1153    &  & -169.73  & 14.19   & 1181    &  & -169.73  & 148.58             & 1152    \\
bmilplib-210-8            & -101.46  & 9.60    & 942     &  & -101.46  & 12.36   & 976     &  & -101.46  & 12.71   & 942     &  & -101.46  & 255.24             & 942     \\
bmilplib-210-9            & -184.00  & 240.14  & 9466    &  & -184.00  & 242.65  & 9466    &  & -184.00  & 267.90  & 9466    &  & -184.00  & 1639.04            & 9466    \\
bmilplib-260-10           & -151.73  & 73.10   & 4716    &  & -151.73  & 75.91   & 4716    &  & -151.73  & 119.40  & 4716    &  & -151.73  & 783.50             & 4716    \\
bmilplib-260-1            & -139.00  & 10.10   & 887     &  & -139.00  & 11.58   & 887     &  & -139.00  & 11.79   & 887     &  & -139.00  & 135.30             & 887     \\
bmilplib-260-2            & -82.62   & 18.07   & 1607    &  & -82.62   & 18.48   & 1607    &  & -82.62   & 29.56   & 1607    &  & -82.62   & 195.56             & 1607    \\
bmilplib-260-3            & -144.25  & 8.70    & 518     &  & -144.25  & 9.34    & 518     &  & -144.25  & 9.60    & 518     &  & -144.25  & 105.89             & 518     \\
bmilplib-260-4            & -117.33  & 66.89   & 4426    &  & -117.33  & 62.45   & 4426    &  & -117.33  & 80.66   & 4426    &  & -117.33  & 671.57             & 4426    \\
bmilplib-260-5            & -121.00  & 29.19   & 2165    &  & -121.00  & 27.30   & 2165    &  & -121.00  & 30.22   & 2165    &  & -121.00  & 212.90             & 2165    \\
bmilplib-260-6            & -124.00  & 40.59   & 2420    &  & -124.00  & 39.48   & 2420    &  & -124.00  & 42.70   & 2420    &  & -124.00  & 426.31             & 2420    \\
bmilplib-260-7            & -137.80  & 44.45   & 3200    &  & -137.80  & 45.80   & 3200    &  & -137.80  & 48.73   & 3200    &  & -137.80  & 267.10             & 3200    \\
bmilplib-260-8            & -119.89  & 10.92   & 1025    &  & -119.89  & 10.46   & 1025    &  & -119.89  & 11.77   & 1025    &  & -119.89  & 102.82             & 1025    \\
bmilplib-260-9            & -160.00  & 36.36   & 2526    &  & -160.00  & 38.08   & 2526    &  & -160.00  & 39.54   & 2526    &  & -160.00  & 231.78             & 2526    \\
bmilplib-310-10           & -141.86  & 5.51    & 397     &  & -141.86  & 6.24    & 397     &  & -141.86  & 6.86    & 397     &  & -141.86  & 95.24              & 397     \\
bmilplib-310-1            & -117.00  & 44.79   & 2624    &  & -117.00  & 41.07   & 2624    &  & -117.00  & 44.15   & 2624    &  & -117.00  & 381.97             & 2624    \\
bmilplib-310-2            & -105.00  & 98.39   & 5372    &  & -105.00  & 94.00   & 5372    &  & -105.00  & 97.58   & 5372    &  & -105.00  & 385.78             & 5372    \\
bmilplib-310-3            & -127.52  & 149.07  & 7067    &  & -127.52  & 141.25  & 7067    &  & -127.52  & 151.24  & 7067    &  & -127.52  & 694.04             & 7067    \\
bmilplib-310-4            & -147.78  & 70.22   & 4767    &  & -147.78  & 73.75   & 4908    &  & -147.78  & 74.72   & 4863    &  & -147.78  & 510.45             & 4892    \\
bmilplib-310-5            & -161.45  & 34.22   & 1993    &  & -161.45  & 34.42   & 1993    &  & -161.45  & 35.32   & 1993    &  & -161.45  & 205.57             & 1993    \\
bmilplib-310-6            & -141.18  & 169.96  & 8300    &  & -141.18  & 172.88  & 8300    &  & -141.18  & 214.66  & 8300    &  & -141.18  & \textgreater{}3600 & 4459    \\
bmilplib-310-7            & -142.00  & 139.15  & 7263    &  & -142.00  & 140.50  & 7263    &  & -142.00  & 164.46  & 7263    &  & -142.00  & 1416.36            & 7263    \\
bmilplib-310-8            & -115.34  & 19.23   & 921     &  & -115.34  & 19.53   & 952     &  & -115.34  & 23.06   & 921     &  & -115.34  & 425.10             & 921     \\
bmilplib-310-9            & -115.65  & 32.31   & 1590    &  & -115.65  & 32.39   & 1590    &  & -115.65  & 34.56   & 1590    &  & -115.65  & 151.96             & 1590    \\
bmilplib-360-10           & -108.59  & 25.75   & 1106    &  & -108.59  & 23.23   & 1106    &  & -108.59  & 26.41   & 1102    &  & -108.59  & 156.13             & 1102    \\
bmilplib-360-1            & -133.00  & 158.38  & 4923    &  & -133.00  & 148.68  & 4920    &  & -133.00  & 188.86  & 4923    &  & -120.00  & \textgreater{}3600 & 1611    \\
bmilplib-360-2            & -138.44  & 148.90  & 4493    &  & -138.44  & 138.96  & 4493    &  & -138.44  & 160.91  & 4493    &  & -138.44  & 1961.79            & 4493    \\
bmilplib-360-3            & -131.00  & 65.41   & 2654    &  & -131.00  & 56.59   & 2654    &  & -131.00  & 66.60   & 2654    &  & -131.00  & 609.13             & 2654    \\
bmilplib-360-4            & -119.00  & 42.95   & 1564    &  & -119.00  & 44.65   & 1564    &  & -119.00  & 61.20   & 1564    &  & -119.00  & 825.58             & 1564    \\
bmilplib-360-5            & -164.26  & 44.45   & 1713    &  & -164.26  & 45.39   & 1713    &  & -164.26  & 41.95   & 1713    &  & -164.26  & 221.40             & 1713    \\
bmilplib-360-6            & -110.12  & 142.81  & 5520    &  & -110.12  & 127.35  & 5520    &  & -110.12  & 148.17  & 5520    &  & -110.12  & 2890.94            & 5556    \\
bmilplib-360-7            & -105.00  & 89.22   & 3346    &  & -105.00  & 88.94   & 3346    &  & -105.00  & 86.08   & 3346    &  & -105.00  & 357.79             & 3346    \\
bmilplib-360-8            & -98.25   & 44.85   & 1686    &  & -98.25   & 48.77   & 1686    &  & -98.25   & 72.01   & 1686    &  & -98.25   & 2774.87            & 1686    \\
bmilplib-360-9            & -127.22  & 50.24   & 2642    &  & -127.22  & 52.33   & 2642    &  & -127.22  & 58.90   & 2642    &  & -127.22  & 1191.63            & 2642    \\
bmilplib-410-10           & -153.37  & 258.89  & 7428    &  & -153.37  & 254.18  & 7428    &  & -153.37  & 278.87  & 7428    &  & -153.37  & 2497.78            & 7428    \\
bmilplib-410-1            & -103.50  & 87.94   & 1944    &  & -103.50  & 76.78   & 1944    &  & -103.50  & 88.08   & 1944    &  & -103.50  & 2256.70            & 1944    \\
bmilplib-410-2            & -108.59  & 103.57  & 2887    &  & -108.59  & 92.86   & 2887    &  & -108.59  & 97.35   & 2887    &  & -108.59  & 947.06             & 2887    \\
bmilplib-410-3            & -96.24   & 147.99  & 3781    &  & -96.24   & 157.30  & 3781    &  & -96.24   & 257.83  & 3781    &  & -79.49   & \textgreater{}3600 & 878     \\
bmilplib-410-4            & -119.50  & 100.21  & 2995    &  & -119.50  & 100.04  & 2995    &  & -119.50  & 100.85  & 2995    &  & -119.50  & 1042.52            & 2995    \\
bmilplib-410-5            & -119.22  & 71.23   & 1520    &  & -119.22  & 61.46   & 1527    &  & -119.22  & 107.73  & 1520    &  & -119.22  & 426.60             & 1527    \\
bmilplib-410-6            & -151.31  & 13.54   & 533     &  & -151.31  & 13.74   & 533     &  & -151.31  & 17.48   & 533     &  & -151.31  & 727.04             & 533     \\
bmilplib-410-7            & -123.00  & 35.87   & 1177    &  & -123.00  & 35.79   & 1175    &  & -123.00  & 41.41   & 1177    &  & -123.00  & 406.53             & 1177    \\
bmilplib-410-8            & -125.78  & 169.15  & 4480    &  & -125.78  & 171.56  & 4480    &  & -125.78  & 190.33  & 4480    &  & -125.78  & 1408.43            & 4488    \\
bmilplib-410-9            & -100.77  & 82.17   & 2071    &  & -100.77  & 79.34   & 2071    &  & -100.77  & 85.38   & 2071    &  & -100.77  & 478.13             & 2071    \\
bmilplib-460-10           & -102.51  & 228.12  & 4465    &  & -102.51  & 204.04  & 4465    &  & -102.51  & 238.28  & 4465    &  & -102.51  & 1557.08            & 4465    \\
bmilplib-460-1            & -97.59   & 569.22  & 10803   &  & -97.59   & 610.72  & 10803   &  & -97.59   & 666.13  & 10803   &  & -93.40   & \textgreater{}3600 & 1240    \\
bmilplib-460-2            & -139.00  & 43.65   & 964     &  & -139.00  & 46.56   & 964     &  & -139.00  & 45.27   & 964     &  & -139.00  & 557.54             & 964     \\
bmilplib-460-3            & -86.50   & 223.58  & 3882    &  & -86.50   & 219.35  & 3882    &  & -86.50   & 251.89  & 3882    &  & -82.83   & \textgreater{}3600 & 2966    \\
bmilplib-460-4            & -107.03  & 412.76  & 7856    &  & -107.03  & 425.15  & 7856    &  & -107.03  & 399.79  & 7794    &  & -102.61  & \textgreater{}3600 & 4784    \\
bmilplib-460-5            & -100.50  & 170.30  & 3025    &  & -100.50  & 189.63  & 3025    &  & -100.50  & 177.04  & 3025    &  & -100.50  & 1788.18            & 3025    \\
bmilplib-460-6            & -107.00  & 236.30  & 4143    &  & -107.00  & 205.48  & 4143    &  & -107.00  & 266.17  & 4143    &  & -107.00  & 2457.96            & 4143    \\
bmilplib-460-7            & -83.75   & 294.68  & 5252    &  & -83.75   & 282.83  & 5252    &  & -83.75   & 341.16  & 5252    &  & -83.75   & \textgreater{}3600 & 1538    \\
bmilplib-460-8            & -115.39  & 94.67   & 1903    &  & -115.39  & 92.26   & 1908    &  & -115.39  & 140.92  & 1903    &  & -103.50  & \textgreater{}3600 & 611     \\
bmilplib-460-9            & -128.70  & 327.68  & 6185    &  & -128.70  & 316.77  & 6227    &  & -128.70  & 318.04  & 6185    &  & -128.70  & 3166.24            & 6185    \\
bmilplib-60-10            & -186.21  & 4.29    & 2590    &  & -186.21  & 4.32    & 2590    &  & -186.21  & 4.68    & 2590    &  & -186.21  & 22.24              & 2590    \\
bmilplib-60-1             & -153.20  & 2.79    & 1094    &  & -153.20  & 3.01    & 1094    &  & -153.20  & 3.47    & 1094    &  & -153.20  & 18.65              & 1094    \\
bmilplib-60-5             & -116.40  & 10.08   & 9996    &  & -116.40  & 10.41   & 9996    &  & -116.40  & 12.63   & 9996    &  & -116.40  & 35.73              & 9996    \\
bmilplib-60-6             & -187.31  & 3.34    & 1383    &  & -187.31  & 3.68    & 1383    &  & -187.31  & 4.07    & 1383    &  & -187.31  & 22.35              & 1383    \\
bmilplib-60-8             & -232.12  & 1.15    & 572     &  & -232.12  & 1.22    & 572     &  & -232.12  & 1.61    & 572     &  & -232.12  & 9.46               & 572     \\
bmilplib-60-9             & -136.50  & 12.34   & 9888    &  & -136.50  & 12.45   & 9888    &  & -136.50  & 13.56   & 9888    &  & -136.50  & 47.50              & 9888    \\
miblp-20-15-50-0110-10-10 & -206.00  & 1.06    & 423     &  & -206.00  & 1.07    & 423     &  & -206.00  & 1.01    & 423     &  & -206.00  & 1.81               & 423     \\
miblp-20-15-50-0110-10-2  & -398.00  & 9.09    & 2450    &  & -398.00  & 9.43    & 2450    &  & -398.00  & 8.85    & 2450    &  & -398.00  & 15.52              & 2450    \\
miblp-20-15-50-0110-10-3  & -42.00   & 0.62    & 267     &  & -42.00   & 0.72    & 267     &  & -42.00   & 0.68    & 267     &  & -42.00   & 1.36               & 267     \\
miblp-20-15-50-0110-10-6  & -246.00  & 4.56    & 340     &  & -246.00  & 4.43    & 340     &  & -246.00  & 4.24    & 340     &  & -246.00  & 6.40               & 340     \\
miblp-20-15-50-0110-10-9  & -635.00  & 14.05   & 2380    &  & -635.00  & 14.97   & 2382    &  & -635.00  & 15.48   & 2380    &  & -635.00  & 19.90              & 2380    \\
miblp-20-20-50-0110-10-10 & -441.00  & 692.00  & 134585  &  & -441.00  & 758.37  & 134585  &  & -441.00  & 743.82  & 134585  &  & -441.00  & 1113.34            & 134585  \\
miblp-20-20-50-0110-10-1  & -359.00  & 228.67  & 96141   &  & -359.00  & 232.30  & 96141   &  & -359.00  & 242.52  & 96141   &  & -359.00  & 407.03             & 96141   \\
miblp-20-20-50-0110-10-2  & -659.00  & 3.69    & 3191    &  & -659.00  & 4.15    & 3191    &  & -659.00  & 3.94    & 3191    &  & -659.00  & 9.48               & 3191    \\
miblp-20-20-50-0110-10-3  & -618.00  & 13.18   & 20788   &  & -618.00  & 13.68   & 20788   &  & -618.00  & 14.77   & 20788   &  & -618.00  & 54.78              & 20715   \\
miblp-20-20-50-0110-10-4  & -604.00  & 3145.16 & 830808  &  & -604.00  & 3286.17 & 830808  &  & -604.00  & 3223.61 & 830808  &  & -604.00  & \textgreater{}3600 & 666302  \\
miblp-20-20-50-0110-10-7  & -683.00  & 1887.37 & 3003967 &  & -683.00  & 1960.64 & 3014456 &  & -683.00  & 2049.54 & 3004274 &  & -683.00  & \textgreater{}3600 & 2542388 \\
\hline
\end{tabular}
}
\end{table}
\addtocounter{table}{-1}
\begin{table}[h!]
\caption{Detailed results of Figure~\ref{fig:heuristicsParTimeMore5Sec} (continued)}
\label{tab:fig5Continue}
\centering
\resizebox{\columnwidth}{!}{
\begin{tabular}{llllllllllllllll}
\hline
\multicolumn{1}{c}{}&
\multicolumn{3}{c}{noHeuristics}&
\multicolumn{1}{c}{}&
\multicolumn{3}{c}{impObjectiveCut}&
\multicolumn{1}{c}{}&
\multicolumn{3}{c}{secondLevelPriority}&
\multicolumn{1}{c}{}&
\multicolumn{3}{c}{weightedSums} \\
\cline{2-4}
\cline{6-8}
\cline{10-12}
\cline{14-16}
Instance&   BestSol & Time(s) &Nodes  & &BestSol & Time(s) &Nodes & &BestSol & Time(s) &Nodes& &BestSol & Time(s) &Nodes\\
\hline
miblp-20-20-50-0110-10-8  & -667.00  & 80.45   & 12857   &  & -667.00  & 94.47   & 12857   &  & -667.00  & 87.72   & 12858   &  & -667.00  & 134.18             & 12859   \\
miblp-20-20-50-0110-10-9  & -256.00  & 20.99   & 31245   &  & -256.00  & 21.84   & 31245   &  & -256.00  & 22.38   & 31245   &  & -256.00  & 38.71              & 31245   \\
miblp-20-20-50-0110-15-1  & -450.00  & 59.38   & 3506    &  & -450.00  & 59.36   & 3506    &  & -450.00  & 62.04   & 3506    &  & -450.00  & 85.24              & 3506    \\
miblp-20-20-50-0110-15-2  & -645.00  & 50.14   & 17251   &  & -645.00  & 52.11   & 17251   &  & -645.00  & 61.00   & 17251   &  & -645.00  & 222.62             & 17251   \\
miblp-20-20-50-0110-15-3  & -593.00  & 71.15   & 3081    &  & -593.00  & 66.66   & 3081    &  & -593.00  & 67.67   & 3081    &  & -593.00  & 95.58              & 3083    \\
miblp-20-20-50-0110-15-4  & -441.00  & 43.00   & 1625    &  & -441.00  & 43.74   & 1625    &  & -441.00  & 39.84   & 1625    &  & -441.00  & 57.56              & 1625    \\
miblp-20-20-50-0110-15-5  & -379.00  & 651.28  & 16715   &  & -379.00  & 634.58  & 16715   &  & -379.00  & 639.80  & 16715   &  & -379.00  & 750.81             & 16715   \\
miblp-20-20-50-0110-15-6  & -596.00  & 17.31   & 1657    &  & -596.00  & 17.40   & 1657    &  & -596.00  & 18.50   & 1657    &  & -596.00  & 25.94              & 1657    \\
miblp-20-20-50-0110-15-7  & -471.00  & 111.02  & 13405   &  & -471.00  & 109.14  & 13405   &  & -471.00  & 113.83  & 13405   &  & -471.00  & 191.29             & 13405   \\
miblp-20-20-50-0110-15-8  & -370.00  & 39.12   & 21589   &  & -370.00  & 39.12   & 21589   &  & -370.00  & 44.93   & 21589   &  & -370.00  & 122.50             & 21589   \\
miblp-20-20-50-0110-15-9  & -584.00  & 2.00    & 582     &  & -584.00  & 2.24    & 582     &  & -584.00  & 1.93    & 582     &  & -584.00  & 3.44               & 584     \\
miblp-20-20-50-0110-5-13  & -519.00  & 2709.29 & 6515097 &  & -519.00  & 2756.40 & 6514987 &  & -519.00  & 2828.20 & 6521367 &  & -519.00  & 3465.54            & 6515355 \\
miblp-20-20-50-0110-5-15  & -617.00  & 1130.92 & 4312915 &  & -617.00  & 1123.27 & 4312915 &  & -617.00  & 1202.16 & 4312915 &  & -617.00  & 1826.00            & 4309751 \\
miblp-20-20-50-0110-5-16  & -833.00  & 1.80    & 5913    &  & -833.00  & 1.76    & 5913    &  & -833.00  & 2.22    & 5925    &  & -833.00  & 9.74               & 5925    \\
miblp-20-20-50-0110-5-17  & -944.00  & 1.53    & 4038    &  & -944.00  & 1.48    & 4037    &  & -944.00  & 1.84    & 4038    &  & -944.00  & 8.31               & 4212    \\
miblp-20-20-50-0110-5-19  & -431.00  & 25.50   & 116041  &  & -431.00  & 26.02   & 116041  &  & -431.00  & 27.33   & 116065  &  & -431.00  & 45.05              & 116285  \\
miblp-20-20-50-0110-5-1   & -548.00  & 8.45    & 31298   &  & -548.00  & 9.08    & 31298   &  & -548.00  & 9.13    & 31298   &  & -548.00  & 22.96              & 31298   \\
miblp-20-20-50-0110-5-20  & -438.00  & 10.82   & 33315   &  & -438.00  & 10.92   & 33315   &  & -438.00  & 11.25   & 33315   &  & -438.00  & 17.54              & 33130   \\
miblp-20-20-50-0110-5-6   & -1061.00 & 51.74   & 213928  &  & -1061.00 & 52.57   & 213928  &  & -1061.00 & 56.75   & 214449  &  & -1061.00 & 235.63             & 234647  \\
lseu-0.100000             & 1120.00  & 3.15    & 8603    &  & 1120.00  & 3.11    & 8603    &  & 1120.00  & 3.82    & 13352   &  & 1120.00  & 45.32              & 22411   \\
lseu-0.900000             & 5838.00  & 14.10   & 1023    &  & 5838.00  & 14.11   & 1023    &  & 5838.00  & 251.51  & 1023    &  & 5838.00  & 21.57              & 1023    \\
p0033-0.500000            & 3095.00  & 0.25    & 1467    &  & 3095.00  & 0.28    & 1467    &  & 3095.00  & 0.29    & 1471    &  & 3095.00  & 0.80               & 2239    \\
p0033-0.900000            & 4679.00  & 0.06    & 27      &  & 4679.00  & 0.06    & 27      &  & 4679.00  & 0.07    & 27      &  & 4679.00  & 0.12               & 27      \\
p0201-0.900000            & 15025.00 & 6.62    & 2481    &  & 15025.00 & 7.09    & 2481    &  & 15025.00 & 359.66  & 2481    &  & 15025.00 & 34.52              & 2481    \\
stein27-0.500000          & 19.00    & 6.51    & 17648   &  & 19.00    & 6.44    & 17648   &  & 19.00    & 6.76    & 16969   &  & 19.00    & 9.57               & 16969   \\
stein27-0.900000          & 24.00    & 0.02    & 15      &  & 24.00    & 0.02    & 15      &  & 24.00    & 0.02    & 15      &  & 24.00    & 1.62               & 15      \\
stein45-0.100000          & 30.00    & 64.27   & 90241   &  & 30.00    & 64.12   & 90241   &  & 30.00    & 83.80   & 90002   &  & 30.00    & 256.23             & 58729   \\
stein45-0.500000          & 32.00    & 471.57  & 753845  &  & 32.00    & 459.73  & 753845  &  & 32.00    & 511.05  & 862249  &  & 32.00    & 880.47             & 565867  \\
stein45-0.900000          & 40.00    & 0.14    & 63      &  & 40.00    & 0.16    & 63      &  & 40.00    & 0.16    & 63      &  & 40.00    & 72.23              & 63 \\  
\hline
\end{tabular}
}
\end{table}
}
{
\begin{table}[h!]
\caption{Detailed results of Figure~\ref{fig:SSUBParTimeMore5SecNonXu}}
\label{tab:fig2a}
\resizebox{\columnwidth}{!}{
\begin{tabular}{llllllllllllllllllll}
\hline
\multicolumn{1}{c}{}&
\multicolumn{3}{c}{whenLInt-LInt}&
\multicolumn{1}{c}{}&
\multicolumn{3}{c}{whenLInt-LFixed}&
\multicolumn{1}{c}{}&
\multicolumn{3}{c}{whenLFixed-LFixed}&
\multicolumn{1}{c}{}&
\multicolumn{3}{c}{whenXYInt-LFixed}&
\multicolumn{1}{c}{}&
\multicolumn{3}{c}{\begin{tabular}[c]{@{}c@{}}\scriptsize{whenXYIntOr}\\\scriptsize{LFixed-LFixed}\end{tabular}} \\
\cline{2-4}
\cline{6-8}
\cline{10-12}
\cline{14-16}
\cline{18-20}
Instance&   BestSol & Time(s) &Nodes  & &BestSol & Time(s) &Nodes & &BestSol & Time(s) &Nodes& &BestSol & Time(s) &Nodes& &BestSol & Time(s) &Nodes\\
\hline
miblp-20-15-50-0110-10-10 & -206.00  & 1.82               & 423     &  & -206.00  & 1.08    & 423     &  & -206.00  & 0.97    & 1414     &  & -206.00  & 1.06    & 423     &  & -206.00  & 1.70               & 423     \\
miblp-20-15-50-0110-10-2  & -398.00  & 12.25              & 2590    &  & -398.00  & 9.18    & 2450    &  & -398.00  & 11.12   & 27625    &  & -398.00  & 9.09    & 2450    &  & -398.00  & 11.52              & 2590    \\
miblp-20-15-50-0110-10-3  & -42.00   & 0.74               & 267     &  & -42.00   & 0.61    & 267     &  & -42.00   & 0.71    & 1343     &  & -42.00   & 0.62    & 267     &  & -42.00   & 0.70               & 267     \\
miblp-20-15-50-0110-10-6  & -246.00  & 12.16              & 340     &  & -246.00  & 4.47    & 340     &  & -246.00  & 2.19    & 853      &  & -246.00  & 4.56    & 340     &  & -246.00  & 8.67               & 340     \\
miblp-20-15-50-0110-10-9  & -635.00  & 26.62              & 2387    &  & -635.00  & 14.06   & 2380    &  & -635.00  & 7.86    & 6457     &  & -635.00  & 14.05   & 2380    &  & -635.00  & 25.35              & 2387    \\
miblp-20-20-50-0110-10-10 & -441.00  & 1322.97            & 134901  &  & -441.00  & 684.67  & 134585  &  & -441.00  & 526.56  & 639146   &  & -441.00  & 692.00  & 134585  &  & -441.00  & 1121.84            & 134901  \\
miblp-20-20-50-0110-10-1  & -359.00  & 264.46             & 96136   &  & -359.00  & 244.20  & 94019   &  & -359.00  & 260.31  & 423149   &  & -359.00  & 228.67  & 96141   &  & -359.00  & 248.95             & 96136   \\
miblp-20-20-50-0110-10-2  & -659.00  & 12.72              & 3261    &  & -659.00  & 3.93    & 3191    &  & -659.00  & 2.78    & 7547     &  & -659.00  & 3.69    & 3191    &  & -659.00  & 10.56              & 3261    \\
miblp-20-20-50-0110-10-3  & -618.00  & 40.06              & 21622   &  & -618.00  & 13.04   & 20780   &  & -618.00  & 10.74   & 38188    &  & -618.00  & 13.18   & 20788   &  & -618.00  & 29.85              & 21622   \\
miblp-20-20-50-0110-10-4  & -604.00  & \textgreater{}3600 & 654402  &  & -604.00  & 3127.12 & 830908  &  & -604.00  & 3405.57 & 6452671  &  & -604.00  & 3145.16 & 830808  &  & -604.00  & \textgreater{}3600 & 714877  \\
miblp-20-20-50-0110-10-7  & -683.00  & 3233.20            & 3069661 &  & -683.00  & 2046.70 & 2978073 &  & -683.00  & 3184.99 & 11502091 &  & -683.00  & 1887.37 & 3003967 &  & -683.00  & 2511.36            & 3069661 \\
miblp-20-20-50-0110-10-8  & -667.00  & 182.80             & 12868   &  & -667.00  & 87.14   & 12856   &  & -667.00  & 40.78   & 30873    &  & -667.00  & 80.45   & 12857   &  & -667.00  & 148.40             & 12868   \\
miblp-20-20-50-0110-10-9  & -256.00  & 45.39              & 35295   &  & -256.00  & 21.06   & 31244   &  & -256.00  & 23.97   & 76055    &  & -256.00  & 20.99   & 31245   &  & -256.00  & 33.92              & 35295   \\
miblp-20-20-50-0110-15-1  & -450.00  & 60.74              & 3516    &  & -450.00  & 60.84   & 3506    &  & -450.00  & 65.16   & 49137    &  & -450.00  & 59.38   & 3506    &  & -450.00  & 60.89              & 3516    \\
miblp-20-20-50-0110-15-2  & -645.00  & 73.76              & 17251   &  & -645.00  & 50.01   & 17251   &  & -645.00  & 96.13   & 346065   &  & -645.00  & 50.14   & 17251   &  & -645.00  & 63.85              & 17251   \\
miblp-20-20-50-0110-15-3  & -593.00  & 100.53             & 3083    &  & -593.00  & 70.36   & 3081    &  & -593.00  & 65.25   & 42877    &  & -593.00  & 71.15   & 3081    &  & -593.00  & 92.89              & 3083    \\
miblp-20-20-50-0110-15-4  & -441.00  & 66.29              & 1625    &  & -441.00  & 42.88   & 1625    &  & -441.00  & 36.99   & 29904    &  & -441.00  & 43.00   & 1625    &  & -441.00  & 55.23              & 1625    \\
miblp-20-20-50-0110-15-5  & -379.00  & 860.42             & 16715   &  & -379.00  & 632.14  & 16715   &  & -379.00  & 615.16  & 205025   &  & -379.00  & 651.28  & 16715   &  & -379.00  & 730.86             & 16715   \\
miblp-20-20-50-0110-15-6  & -596.00  & 23.24              & 1657    &  & -596.00  & 17.26   & 1657    &  & -596.00  & 17.87   & 29923    &  & -596.00  & 17.31   & 1657    &  & -596.00  & 22.10              & 1657    \\
miblp-20-20-50-0110-15-7  & -471.00  & 133.26             & 13405   &  & -471.00  & 110.80  & 13405   &  & -471.00  & 125.24  & 241285   &  & -471.00  & 111.02  & 13405   &  & -471.00  & 127.42             & 13405   \\
miblp-20-20-50-0110-15-8  & -370.00  & 41.05              & 21589   &  & -370.00  & 39.51   & 21589   &  & -370.00  & 138.34  & 579309   &  & -370.00  & 39.12   & 21589   &  & -370.00  & 40.56              & 21589   \\
miblp-20-20-50-0110-15-9  & -584.00  & 2.58               & 588     &  & -584.00  & 2.03    & 582     &  & -584.00  & 1.96    & 4072     &  & -584.00  & 2.00    & 582     &  & -584.00  & 2.34               & 588     \\
miblp-20-20-50-0110-5-13  & -519.00  & \textgreater{}3600 & 4221171 &  & -519.00  & 2002.06 & 4392628 &  & -519.00  & 2307.90 & 10196998 &  & -519.00  & 2464.25 & 7128138 &  & -519.00  & \textgreater{}3600 & 5830754 \\
miblp-20-20-50-0110-5-15  & -617.00  & \textgreater{}3600 & 3325012 &  & -617.00  & 1122.66 & 2749914 &  & -617.00  & 1219.33 & 6630921  &  & -617.00  & 1310.67 & 4018058 &  & -617.00  & 2953.78            & 4018081 \\
miblp-20-20-50-0110-5-16  & -833.00  & 44.15              & 18278   &  & -833.00  & 6.48    & 17680   &  & -833.00  & 4.86    & 19013    &  & -833.00  & 6.60    & 17680   &  & -833.00  & 38.30              & 18278   \\
miblp-20-20-50-0110-5-17  & -944.00  & 16.80              & 18200   &  & -944.00  & 4.62    & 17712   &  & -944.00  & 3.99    & 21541    &  & -944.00  & 4.78    & 17679   &  & -944.00  & 10.64              & 18200   \\
miblp-20-20-50-0110-5-19  & -431.00  & 104.93             & 79279   &  & -431.00  & 26.75   & 77256   &  & -431.00  & 27.52   & 147268   &  & -431.00  & 26.56   & 77256   &  & -431.00  & 65.55              & 79279   \\
miblp-20-20-50-0110-5-1   & -548.00  & 52.43              & 42629   &  & -548.00  & 14.31   & 42364   &  & -548.00  & 12.98   & 64067    &  & -548.00  & 14.50   & 42364   &  & -548.00  & 36.55              & 42629   \\
miblp-20-20-50-0110-5-20  & -438.00  & 48.38              & 60990   &  & -438.00  & 14.95   & 50531   &  & -438.00  & 15.81   & 76091    &  & -438.00  & 17.30   & 60944   &  & -438.00  & 31.77              & 60990   \\
miblp-20-20-50-0110-5-6   & -1061.00 & 202.25             & 224425  &  & -1061.00 & 62.75   & 223494  &  & -1061.00 & 58.74   & 284550   &  & -1061.00 & 63.70   & 222566  &  & -1061.00 & 127.84             & 224425  \\
lseu-0.100000             & 1120.00  & 657.06             & 1132617 &  & 1120.00  & 235.84  & 734286  &  & 1120.00  & 248.54  & 1071409  &  & 1120.00  & 270.78  & 1003976 &  & 1120.00  & 626.90             & 1132617 \\
lseu-0.900000             & 5838.00  & 13.91              & 1023    &  & 5838.00  & 14.41   & 1023    &  & 5838.00  & 1063.02 & 4718749  &  & 5838.00  & 14.10   & 1023    &  & 5838.00  & 13.83              & 1023    \\
p0033-0.500000            & 3095.00  & 12.65              & 20855   &  & 3095.00  & 10.44   & 20695   &  & 3095.00  & 6.24    & 33614    &  & 3095.00  & 10.48   & 20695   &  & 3095.00  & 11.55              & 20855   \\
p0033-0.900000            & 4679.00  & 0.06               & 27      &  & 4679.00  & 0.05    & 27      &  & 4679.00  & 0.65    & 3455     &  & 4679.00  & 0.06    & 27      &  & 4679.00  & 0.04               & 27      \\
p0201-0.900000            & 15025.00 & 7.05               & 2481    &  & 15025.00 & 6.66    & 2481    &  & 15025.00 & 20.58   & 18801    &  & 15025.00 & 6.62    & 2481    &  & 15025.00 & 6.68               & 2481    \\
stein27-0.500000          & 19.00    & 6.21               & 12115   &  & 19.00    & 6.24    & 14362   &  & 19.00    & 7.36    & 21515    &  & 19.00    & 5.88    & 12115   &  & 19.00    & 5.91               & 12115   \\
stein27-0.900000          & 24.00    & 0.01               & 15      &  & 24.00    & 0.02    & 15      &  & 24.00    & 1.25    & 4445     &  & 24.00    & 0.02    & 15      &  & 24.00    & 0.02               & 15      \\
stein45-0.100000          & 30.00    & 51.06              & 89035   &  & 30.00    & 96.92   & 61518   &  & 30.00    & 50.47   & 89035    &  & 30.00    & 50.19   & 89035   &  & 30.00    & 50.17              & 89035   \\
stein45-0.500000          & 32.00    & 554.15             & 640308  &  & 32.00    & 1088.96 & 1014908 &  & 32.00    & 635.22  & 952123   &  & 32.00    & 519.59  & 640308  &  & 32.00    & 520.73             & 640308  \\
stein45-0.900000          & 40.00    & 0.16               & 63      &  & 40.00    & 0.16    & 63      &  & 40.00    & 85.92   & 103661   &  & 40.00    & 0.14    & 63      &  & 40.00    & 0.15               & 63     \\     
\hline
\end{tabular}
}
\end{table}
\begin{table}[p]
\begin{center}
\caption{Detailed results of Figure~\ref{fig:SSUBParTimeMore5SecXu}}
\label{tab:fig2b}
\resizebox{\columnwidth}{!}{
\begin{tabular}{llllllllllllllllllll}
\hline
\multicolumn{1}{c}{}&
\multicolumn{3}{c}{whenLInt-LInt}&
\multicolumn{1}{c}{}&
\multicolumn{3}{c}{whenLInt-LFixed}&
\multicolumn{1}{c}{}&
\multicolumn{3}{c}{whenLFixed-LFixed}&
\multicolumn{1}{c}{}&
\multicolumn{3}{c}{whenXYInt-LFixed}&
\multicolumn{1}{c}{}&
\multicolumn{3}{c}{\begin{tabular}[c]{@{}c@{}}\scriptsize{whenXYIntOr}\\\scriptsize{LFixed-LFixed}\end{tabular}} \\
\cline{2-4}
\cline{6-8}
\cline{10-12}
\cline{14-16}
\cline{18-20}
Instance&   BestSol & Time(s) &Nodes  & &BestSol & Time(s) &Nodes & &BestSol & Time(s) &Nodes& &BestSol & Time(s) &Nodes& &BestSol & Time(s) &Nodes\\
\hline
bmilplib-110-10 & -177.67 & 336.74  & 75952  &  & -177.67 & 150.88  & 75141  &  & -177.67 & 149.10  & 93801  &  & -177.67 & 153.82  & 75499  &  & -177.67 & 223.95  & 75952  \\
bmilplib-110-1  & -181.67 & 9.23    & 2806   &  & -181.67 & 5.35    & 3629   &  & -181.67 & 5.42    & 2878   &  & -181.67 & 5.35    & 2806   &  & -181.67 & 6.34    & 2806   \\
bmilplib-110-2  & -110.67 & 9.16    & 4183   &  & -110.67 & 6.53    & 4439   &  & -110.67 & 6.50    & 4272   &  & -110.67 & 6.48    & 4174   &  & -110.67 & 7.46    & 4183   \\
bmilplib-110-3  & -215.16 & 10.15   & 3484   &  & -215.16 & 6.07    & 3529   &  & -215.16 & 5.42    & 3556   &  & -215.16 & 5.58    & 3471   &  & -215.16 & 7.13    & 3484   \\
bmilplib-110-4  & -197.29 & 16.30   & 1917   &  & -197.29 & 3.66    & 1740   &  & -197.29 & 3.54    & 1801   &  & -197.29 & 3.69    & 1751   &  & -197.29 & 8.20    & 1917   \\
bmilplib-110-6  & -148.25 & 25.96   & 8857   &  & -148.25 & 14.57   & 8735   &  & -148.25 & 14.14   & 9392   &  & -148.25 & 15.06   & 8770   &  & -148.25 & 19.35   & 8857   \\
bmilplib-110-7  & -160.86 & 11.92   & 2221   &  & -160.86 & 4.66    & 2167   &  & -160.86 & 4.01    & 2281   &  & -160.86 & 4.94    & 2189   &  & -160.86 & 7.45    & 2221   \\
bmilplib-110-8  & -155.00 & 39.85   & 11835  &  & -155.00 & 20.66   & 11916  &  & -155.00 & 20.65   & 12498  &  & -155.00 & 20.78   & 11663  &  & -155.00 & 27.93   & 11835  \\
bmilplib-110-9  & -192.92 & 9.14    & 2155   &  & -192.92 & 3.46    & 2099   &  & -192.92 & 3.28    & 2171   &  & -192.92 & 3.66    & 2151   &  & -192.92 & 5.84    & 2155   \\
bmilplib-160-10 & -189.82 & 87.08   & 9130   &  & -189.82 & 30.70   & 8676   &  & -189.82 & 31.24   & 9344   &  & -189.82 & 33.42   & 9071   &  & -189.82 & 55.24   & 9130   \\
bmilplib-160-1  & -165.00 & 63.76   & 7908   &  & -165.00 & 25.00   & 8028   &  & -165.00 & 24.18   & 8145   &  & -165.00 & 25.70   & 7901   &  & -165.00 & 38.00   & 7908   \\
bmilplib-160-2  & -178.24 & 87.14   & 8680   &  & -178.24 & 29.51   & 8612   &  & -178.24 & 29.64   & 8710   &  & -178.24 & 30.59   & 8635   &  & -178.24 & 47.52   & 8680   \\
bmilplib-160-3  & -174.94 & 196.12  & 15164  &  & -174.94 & 62.78   & 14998  &  & -174.94 & 54.62   & 15427  &  & -174.94 & 63.37   & 15002  &  & -174.94 & 104.06  & 15164  \\
bmilplib-160-4  & -135.83 & 77.00   & 15005  &  & -135.83 & 48.60   & 14708  &  & -135.83 & 50.86   & 15354  &  & -135.83 & 51.02   & 14772  &  & -135.83 & 60.24   & 15005  \\
bmilplib-160-5  & -140.78 & 62.61   & 4981   &  & -140.78 & 19.78   & 4842   &  & -140.78 & 19.35   & 5178   &  & -140.78 & 20.52   & 4948   &  & -140.78 & 36.78   & 4981   \\
bmilplib-160-6  & -111.00 & 32.32   & 8685   &  & -111.00 & 27.50   & 10149  &  & -111.00 & 25.08   & 8864   &  & -111.00 & 24.66   & 8676   &  & -111.00 & 27.50   & 8685   \\
bmilplib-160-7  & -96.00  & 65.43   & 16589  &  & -96.00  & 51.07   & 16804  &  & -96.00  & 50.97   & 17430  &  & -96.00  & 51.20   & 16560  &  & -96.00  & 58.05   & 16589  \\
bmilplib-160-8  & -181.40 & 22.24   & 3464   &  & -181.40 & 12.47   & 3417   &  & -181.40 & 9.32    & 3540   &  & -181.40 & 10.49   & 3444   &  & -181.40 & 14.93   & 3464   \\
bmilplib-160-9  & -207.50 & 34.05   & 4715   &  & -207.50 & 16.51   & 5169   &  & -207.50 & 14.90   & 4728   &  & -207.50 & 15.85   & 4607   &  & -207.50 & 23.34   & 4715   \\
bmilplib-210-10 & -130.59 & 84.71   & 12226  &  & -130.59 & 58.89   & 12010  &  & -130.59 & 61.00   & 12497  &  & -130.59 & 61.81   & 12176  &  & -130.59 & 72.22   & 12226  \\
bmilplib-210-1  & -136.80 & 77.67   & 7429   &  & -136.80 & 37.71   & 7182   &  & -136.80 & 40.04   & 7598   &  & -136.80 & 41.96   & 7429   &  & -136.80 & 56.23   & 7429   \\
bmilplib-210-2  & -117.80 & 153.16  & 22441  &  & -117.80 & 111.87  & 22597  &  & -117.80 & 109.56  & 23096  &  & -117.80 & 113.38  & 22294  &  & -117.80 & 131.02  & 22441  \\
bmilplib-210-3  & -130.80 & 74.50   & 8897   &  & -130.80 & 44.80   & 8523   &  & -130.80 & 47.44   & 9165   &  & -130.80 & 48.19   & 8848   &  & -130.80 & 59.95   & 8897   \\
bmilplib-210-4  & -162.20 & 62.23   & 4738   &  & -162.20 & 36.19   & 5094   &  & -162.20 & 25.92   & 4806   &  & -162.20 & 26.81   & 4687   &  & -162.20 & 41.52   & 4738   \\
bmilplib-210-5  & -134.00 & 156.90  & 21193  &  & -134.00 & 106.25  & 21560  &  & -134.00 & 105.03  & 21552  &  & -134.00 & 108.22  & 21058  &  & -134.00 & 130.51  & 21193  \\
bmilplib-210-6  & -125.43 & 269.65  & 38538  &  & -125.43 & 198.77  & 38367  &  & -125.43 & 197.88  & 39941  &  & -125.43 & 201.40  & 38443  &  & -125.43 & 227.81  & 38538  \\
bmilplib-210-7  & -169.73 & 146.39  & 13960  &  & -169.73 & 76.42   & 13960  &  & -169.73 & 77.62   & 14305  &  & -169.73 & 77.84   & 13960  &  & -169.73 & 104.02  & 13960  \\
bmilplib-210-8  & -101.46 & 70.64   & 11105  &  & -101.46 & 56.97   & 11324  &  & -101.46 & 58.14   & 11347  &  & -101.46 & 61.79   & 11105  &  & -101.46 & 62.56   & 11105  \\
bmilplib-210-9  & -184.00 & 1822.79 & 143571 &  & -184.00 & 790.46  & 144603 &  & -184.00 & 859.46  & 143665 &  & -184.00 & 879.61  & 142294 &  & -184.00 & 1184.18 & 143571 \\
bmilplib-260-10 & -151.73 & 785.34  & 62063  &  & -151.73 & 503.08  & 62143  &  & -151.73 & 502.94  & 63026  &  & -151.73 & 546.55  & 61767  &  & -151.73 & 633.15  & 62063  \\
bmilplib-260-1  & -139.00 & 155.24  & 10994  &  & -139.00 & 83.01   & 11369  &  & -139.00 & 81.74   & 11242  &  & -139.00 & 85.81   & 10980  &  & -139.00 & 119.94  & 10994  \\
bmilplib-260-2  & -82.62  & 117.18  & 15300  &  & -82.62  & 156.75  & 15292  &  & -82.62  & 109.75  & 15712  &  & -82.62  & 109.75  & 15287  &  & -82.62  & 114.50  & 15300  \\
bmilplib-260-3  & -144.25 & 148.68  & 8777   &  & -144.25 & 72.67   & 9563   &  & -144.25 & 69.26   & 8859   &  & -144.25 & 71.54   & 8769   &  & -144.25 & 94.43   & 8777   \\
bmilplib-260-4  & -117.33 & 294.39  & 32964  &  & -117.33 & 245.58  & 32696  &  & -117.33 & 260.01  & 33931  &  & -117.33 & 259.48  & 32787  &  & -117.33 & 272.70  & 32964  \\
bmilplib-260-5  & -121.00 & 201.31  & 21557  &  & -121.00 & 167.30  & 22480  &  & -121.00 & 173.32  & 22121  &  & -121.00 & 166.03  & 21542  &  & -121.00 & 183.46  & 21557  \\
bmilplib-260-6  & -124.00 & 258.94  & 25420  &  & -124.00 & 191.68  & 25826  &  & -124.00 & 209.79  & 26023  &  & -124.00 & 197.04  & 25362  &  & -124.00 & 223.31  & 25420  \\
bmilplib-260-7  & -137.80 & 472.66  & 40528  &  & -137.80 & 304.36  & 40400  &  & -137.80 & 318.76  & 41180  &  & -137.80 & 312.12  & 40274  &  & -137.80 & 383.16  & 40528  \\
bmilplib-260-8  & -119.89 & 112.17  & 10094  &  & -119.89 & 73.72   & 9961   &  & -119.89 & 76.70   & 10332  &  & -119.89 & 87.01   & 10025  &  & -119.89 & 96.85   & 10094  \\
bmilplib-260-9  & -160.00 & 510.37  & 33496  &  & -160.00 & 257.08  & 33928  &  & -160.00 & 271.18  & 34236  &  & -160.00 & 273.66  & 33468  &  & -160.00 & 394.98  & 33496  \\
bmilplib-310-10 & -141.86 & 129.77  & 9904   &  & -141.86 & 110.26  & 9896   &  & -141.86 & 114.38  & 10033  &  & -141.86 & 121.07  & 9900   &  & -141.86 & 127.87  & 9904   \\
bmilplib-310-1  & -117.00 & 521.12  & 28360  &  & -117.00 & 301.34  & 28373  &  & -117.00 & 329.80  & 29055  &  & -117.00 & 329.71  & 28330  &  & -117.00 & 372.68  & 28360  \\
bmilplib-310-2  & -105.00 & 523.84  & 43448  &  & -105.00 & 457.17  & 44736  &  & -105.00 & 457.69  & 44841  &  & -105.00 & 497.66  & 43399  &  & -105.00 & 539.20  & 43448  \\
bmilplib-310-3  & -127.52 & 943.53  & 66426  &  & -127.52 & 907.46  & 72139  &  & -127.52 & 783.89  & 67441  &  & -127.52 & 777.74  & 66410  &  & -127.52 & 916.69  & 66426  \\
bmilplib-310-4  & -147.78 & 701.76  & 47718  &  & -147.78 & 510.81  & 48131  &  & -147.78 & 572.76  & 48950  &  & -147.78 & 569.43  & 47711  &  & -147.78 & 652.54  & 47718  \\
bmilplib-310-5  & -161.45 & 557.76  & 31838  &  & -161.45 & 339.05  & 31553  &  & -161.45 & 382.13  & 32454  &  & -161.45 & 366.90  & 31782  &  & -161.45 & 488.65  & 31838  \\
bmilplib-310-6  & -141.18 & 1448.71 & 102047 &  & -141.18 & 1159.42 & 110836 &  & -141.18 & 1264.60 & 103214 &  & -141.18 & 1191.51 & 101904 &  & -141.18 & 1324.84 & 102047 \\
bmilplib-310-7  & -142.00 & 1443.70 & 102030 &  & -142.00 & 1057.98 & 104876 &  & -142.00 & 1199.47 & 104166 &  & -142.00 & 1129.24 & 102000 &  & -142.00 & 1396.34 & 102030 \\
bmilplib-310-8  & -115.34 & 143.13  & 11375  &  & -115.34 & 110.74  & 11067  &  & -115.34 & 114.80  & 11293  &  & -115.34 & 127.94  & 11109  &  & -115.34 & 142.93  & 11375  \\
bmilplib-310-9  & -115.65 & 423.77  & 20552  &  & -115.65 & 231.20  & 21846  &  & -115.65 & 248.24  & 20838  &  & -115.65 & 255.27  & 20490  &  & -115.65 & 281.90  & 20552  \\
bmilplib-360-10 & -108.59 & 257.34  & 13727  &  & -108.59 & 209.20  & 13671  &  & -108.59 & 242.94  & 14064  &  & -108.59 & 234.92  & 13697  &  & -108.59 & 258.65  & 13727  \\
bmilplib-360-1  & -133.00 & 2416.81 & 75780  &  & -133.00 & 1239.50 & 77206  &  & -133.00 & 1297.32 & 77066  &  & -133.00 & 1353.62 & 75421  &  & -133.00 & 1500.96 & 75780  \\
bmilplib-360-2  & -138.44 & 1187.29 & 53004  &  & -138.44 & 829.40  & 52868  &  & -138.44 & 999.50  & 54354  &  & -138.44 & 965.52  & 52919  &  & -138.44 & 1169.80 & 53004  \\
bmilplib-360-3  & -131.00 & 834.51  & 40671  &  & -131.00 & 728.80  & 40353  &  & -131.00 & 624.78  & 41302  &  & -131.00 & 832.88  & 40487  &  & -131.00 & 731.63  & 40671  \\
bmilplib-360-4  & -119.00 & 371.93  & 18350  &  & -119.00 & 286.84  & 18870  &  & -119.00 & 338.49  & 18813  &  & -119.00 & 332.90  & 18293  &  & -119.00 & 361.79  & 18350  \\
bmilplib-360-5  & -164.26 & 593.68  & 30001  &  & -164.26 & 420.56  & 29790  &  & -164.26 & 484.66  & 30352  &  & -164.26 & 618.86  & 29947  &  & -164.26 & 600.63  & 30001  \\
bmilplib-360-6  & -110.12 & 1169.00 & 68863  &  & -110.12 & 1005.68 & 69453  &  & -110.12 & 1018.82 & 70283  &  & -110.12 & 1181.02 & 68863  &  & -110.12 & 1210.13 & 68863  \\
bmilplib-360-7  & -105.00 & 538.80  & 31092  &  & -105.00 & 457.98  & 32124  &  & -105.00 & 517.46  & 31884  &  & -105.00 & 634.17  & 30900  &  & -105.00 & 542.81  & 31092  \\
bmilplib-360-8  & -98.25  & 399.32  & 22995  &  & -98.25  & 343.65  & 22787  &  & -98.25  & 416.97  & 23337  &  & -98.25  & 362.89  & 22857  &  & -98.25  & 386.45  & 22995  \\
bmilplib-360-9  & -127.22 & 815.44  & 40383  &  & -127.22 & 622.13  & 40733  &  & -127.22 & 746.66  & 41235  &  & -127.22 & 736.16  & 40329  &  & -127.22 & 819.68  & 40383  \\
bmilplib-410-10 & -153.37 & 3527.64 & 101673 &  & -153.37 & 2725.83 & 103359 &  & -153.37 & 2879.03 & 103400 &  & -153.37 & 2729.77 & 101447 &  & -153.37 & 2976.52 & 101673 \\
bmilplib-410-1  & -103.50 & 582.60  & 20790  &  & -103.50 & 535.58  & 20612  &  & -103.50 & 589.62  & 21088  &  & -103.50 & 553.76  & 20634  &  & -103.50 & 534.12  & 20790  \\
bmilplib-410-2  & -108.59 & 803.44  & 31603  &  & -108.59 & 734.82  & 31602  &  & -108.59 & 748.06  & 32251  &  & -108.59 & 840.56  & 31603  &  & -108.59 & 882.69  & 31603  \\
bmilplib-410-3  & -96.24  & 1275.98 & 33041  &  & -96.24  & 791.92  & 33963  &  & -96.24  & 777.04  & 33473  &  & -96.24  & 871.58  & 32882  &  & -96.24  & 973.12  & 33041  \\
bmilplib-410-4  & -119.50 & 1582.86 & 38489  &  & -119.50 & 930.82  & 39270  &  & -119.50 & 1041.97 & 39371  &  & -119.50 & 1031.12 & 38489  &  & -119.50 & 1101.03 & 38489  \\
bmilplib-410-5  & -119.22 & 700.45  & 23865  &  & -119.22 & 567.85  & 23747  &  & -119.22 & 681.99  & 24163  &  & -119.22 & 678.49  & 23852  &  & -119.22 & 696.17  & 23865  \\
bmilplib-410-6  & -151.31 & 404.52  & 11130  &  & -151.31 & 261.83  & 10795  &  & -151.31 & 333.36  & 11193  &  & -151.31 & 322.88  & 11130  &  & -151.31 & 337.71  & 11130  \\
bmilplib-410-7  & -123.00 & 624.95  & 22208  &  & -123.00 & 519.43  & 22843  &  & -123.00 & 601.78  & 22628  &  & -123.00 & 560.34  & 22146  &  & -123.00 & 560.38  & 22208  \\
bmilplib-410-8  & -125.78 & 1717.64 & 62336  &  & -125.78 & 1484.95 & 61784  &  & -125.78 & 1692.35 & 63421  &  & -125.78 & 1547.88 & 62218  &  & -125.78 & 1579.12 & 62336  \\
bmilplib-410-9  & -100.77 & 505.02  & 20424  &  & -100.77 & 579.73  & 20095  &  & -100.77 & 607.44  & 20920  &  & -100.77 & 531.88  & 20400  &  & -100.77 & 494.06  & 20424  \\
bmilplib-460-10 & -102.51 & 1949.07 & 55030  &  & -102.51 & 1782.80 & 55882  &  & -102.51 & 1701.70 & 56265  &  & -102.51 & 1857.94 & 55024  &  & -102.51 & 1822.52 & 55030  \\
bmilplib-460-1  & -97.59  & 2961.39 & 86689  &  & -97.59  & 3575.71 & 93301  &  & -97.59  & 2938.02 & 87709  &  & -97.59  & 2822.97 & 86632  &  & -97.59  & 2915.62 & 86689  \\
bmilplib-460-2  & -139.00 & 739.75  & 16720  &  & -139.00 & 527.76  & 16721  &  & -139.00 & 608.40  & 16863  &  & -139.00 & 625.95  & 16623  &  & -139.00 & 694.03  & 16720  \\
bmilplib-460-3  & -86.50  & 1921.88 & 57462  &  & -86.50  & 1853.40 & 59065  &  & -86.50  & 2102.77 & 58229  &  & -86.50  & 1995.92 & 57395  &  & -86.50  & 1946.46 & 57462  \\
bmilplib-460-4  & -107.03 & 3321.16 & 95231  &  & -107.03 & 3173.50 & 94899  &  & -107.03 & 3328.91 & 97160  &  & -107.03 & 3291.28 & 95035  &  & -107.03 & 3301.27 & 95231  \\
bmilplib-460-5  & -100.50 & 1407.70 & 41312  &  & -100.50 & 1366.52 & 40903  &  & -100.50 & 1619.91 & 41796  &  & -100.50 & 1424.54 & 41109  &  & -100.50 & 1450.14 & 41312  \\
bmilplib-460-6  & -107.00 & 1623.54 & 46158  &  & -107.00 & 1537.79 & 48966  &  & -107.00 & 1483.17 & 46732  &  & -107.00 & 1635.42 & 46076  &  & -107.00 & 1598.68 & 46158  \\
\hline
\end{tabular}
}
\end{center}
\end{table}
\begin{table}[p]
\vskip -0.9in
\begin{center}
\caption{Detailed results of Figure~\ref{fig:SSUBParTimeMore5SecXu} (continued)}
\label{tab:fig2bContinue}
\resizebox{\columnwidth}{!}{
\begin{tabular}{llllllllllllllllllll}
\hline
\multicolumn{1}{c}{}&
\multicolumn{3}{c}{whenLInt-LInt}&
\multicolumn{1}{c}{}&
\multicolumn{3}{c}{whenLInt-LFixed}&
\multicolumn{1}{c}{}&
\multicolumn{3}{c}{whenLFixed-LFixed}&
\multicolumn{1}{c}{}&
\multicolumn{3}{c}{whenXYInt-LFixed}&
\multicolumn{1}{c}{}&
\multicolumn{3}{c}{\begin{tabular}[c]{@{}c@{}}\scriptsize{whenXYIntOr}\\\scriptsize{LFixed-LFixed}\end{tabular}} \\
\cline{2-4}
\cline{6-8}
\cline{10-12}
\cline{14-16}
\cline{18-20}
Instance&   BestSol & Time(s) &Nodes  & &BestSol & Time(s) &Nodes & &BestSol & Time(s) &Nodes& &BestSol & Time(s) &Nodes& &BestSol & Time(s) &Nodes\\
\hline
bmilplib-460-7  & -83.75  & 1735.67 & 48973  &  & -83.75  & 1606.84 & 49516  &  & -83.75  & 1728.92 & 49717  &  & -83.75  & 1601.34 & 48897  &  & -83.75  & 1671.84 & 48973  \\
bmilplib-460-8  & -115.39 & 1074.21 & 27663  &  & -115.39 & 858.20  & 27275  &  & -115.39 & 911.51  & 28075  &  & -115.39 & 982.30  & 27572  &  & -115.39 & 1017.28 & 27663  \\
bmilplib-460-9  & -128.70 & 3410.18 & 86426  &  & -128.70 & 2671.50 & 86656  &  & -128.70 & 2715.98 & 87796  &  & -128.70 & 2830.59 & 85977  &  & -128.70 & 3004.49 & 86426  \\
bmilplib-60-10  & -186.21 & 14.85   & 5929   &  & -186.21 & 6.88    & 9593   &  & -186.21 & 6.91    & 6134   &  & -186.21 & 6.93    & 5922   &  & -186.21 & 9.50    & 5929   \\
bmilplib-60-1   & -153.20 & 16.56   & 4896   &  & -153.20 & 4.84    & 5366   &  & -153.20 & 6.50    & 5041   &  & -153.20 & 6.71    & 4877   &  & -153.20 & 10.92   & 4896   \\
bmilplib-60-5   & -116.40 & 15.19   & 11289  &  & -116.40 & 7.57    & 11308  &  & -116.40 & 8.65    & 13984  &  & -116.40 & 8.15    & 11202  &  & -116.40 & 11.54   & 11289  \\
bmilplib-60-6   & -187.31 & 15.38   & 7120   &  & -187.31 & 7.08    & 9241   &  & -187.31 & 7.34    & 7304   &  & -187.31 & 7.62    & 7016   &  & -187.31 & 10.44   & 7120   \\
bmilplib-60-8   & -232.12 & 8.38    & 3653   &  & -232.12 & 2.54    & 4052   &  & -232.12 & 3.33    & 3683   &  & -232.12 & 3.36    & 3570   &  & -232.12 & 5.66    & 3653   \\
bmilplib-60-9   & -136.50 & 33.85   & 27036  &  & -136.50 & 18.90   & 31312  &  & -136.50 & 20.22   & 28603  &  & -136.50 & 19.61   & 26792  &  & -136.50 & 26.84   & 27036 \\ 
\hline
\end{tabular}
}
\end{center}
\end{table}
\begin{table}[h!]
\caption{Detailed results of Figure~\ref{fig:branchStrategyParTimeMore5Secr1Leqr2}}
\label{tab:fig3a}
\scriptsize
\centering
\begin{tabular}{llllllllll}
\hline
\multicolumn{1}{c}{}&
\multicolumn{1}{c}{}&
\multicolumn{1}{c}{}&
\multicolumn{3}{c}{linkingBranching}&
\multicolumn{1}{c}{}&
\multicolumn{3}{c}{fractionalBranching} \\
\cline{4-6}
\cline{8-10}
Instance&    $r_1$& $r_2$& BestSol & Time(s) &Nodes& &BestSol & Time(s) &Nodes\\
\hline
miblp-20-15-50-0110-10-10 & 5  & 10  & -206.00  & 1.06    & 423     &  & -206.00  & 4.25               & 15741    \\
miblp-20-15-50-0110-10-2  & 5  & 10  & -398.00  & 9.09    & 2450    &  & -398.00  & 947.81             & 2929420  \\
miblp-20-15-50-0110-10-3  & 5  & 10  & -42.00   & 0.62    & 267     &  & -42.00   & 12.66              & 46349    \\
miblp-20-15-50-0110-10-6  & 5  & 10  & -246.00  & 4.56    & 340     &  & -246.00  & 1.56               & 1367     \\
miblp-20-15-50-0110-10-9  & 5  & 10  & -635.00  & 14.05   & 2380    &  & -635.00  & 6.52               & 7667     \\
miblp-20-20-50-0110-10-10 & 10 & 10  & -441.00  & 692.00  & 134585  &  & -441.00  & 2867.85            & 7261247  \\
miblp-20-20-50-0110-10-1  & 10 & 10  & -359.00  & 228.67  & 96141   &  & -357.00  & \textgreater{}3600 & 9234364  \\
miblp-20-20-50-0110-10-2  & 10 & 10  & -659.00  & 3.69    & 3191    &  & -659.00  & 3.16               & 5100     \\
miblp-20-20-50-0110-10-3  & 10 & 10  & -618.00  & 13.18   & 20788   &  & -618.00  & 12.05              & 51970    \\
miblp-20-20-50-0110-10-4  & 10 & 10  & -604.00  & 3145.16 & 830808  &  & -604.00  & \textgreater{}3600 & 7988914  \\
miblp-20-20-50-0110-10-7  & 10 & 10  & -683.00  & 1887.37 & 3003967 &  & -629.00  & \textgreater{}3600 & 9709672  \\
miblp-20-20-50-0110-10-8  & 10 & 10  & -667.00  & 80.45   & 12857   &  & -667.00  & 68.08              & 75661    \\
miblp-20-20-50-0110-10-9  & 10 & 10  & -256.00  & 20.99   & 31245   &  & -256.00  & 305.78             & 757349   \\
miblp-20-20-50-0110-15-1  & 5  & 15  & -450.00  & 59.38   & 3506    &  & -317.00  & \textgreater{}3600 & 9813005  \\
miblp-20-20-50-0110-15-2  & 5  & 15  & -645.00  & 50.14   & 17251   &  & -645.00  & \textgreater{}3600 & 10839884 \\
miblp-20-20-50-0110-15-3  & 5  & 15  & -593.00  & 71.15   & 3081    &  & -593.00  & \textgreater{}3600 & 13042109 \\
miblp-20-20-50-0110-15-4  & 5  & 15  & -441.00  & 43.00   & 1625    &  & -398.00  & \textgreater{}3600 & 8692428  \\
miblp-20-20-50-0110-15-5  & 5  & 15  & -379.00  & 651.28  & 16715   &  & -320.00  & \textgreater{}3600 & 7284040  \\
miblp-20-20-50-0110-15-6  & 5  & 15  & -596.00  & 17.31   & 1657    &  & -596.00  & \textgreater{}3600 & 7851818  \\
miblp-20-20-50-0110-15-7  & 5  & 15  & -471.00  & 111.02  & 13405   &  & -471.00  & \textgreater{}3600 & 9675451  \\
miblp-20-20-50-0110-15-8  & 5  & 15  & -370.00  & 39.12   & 21589   &  & -290.00  & \textgreater{}3600 & 10350188 \\
miblp-20-20-50-0110-15-9  & 5  & 15  & -584.00  & 2.00    & 582     &  & -584.00  & 19.58              & 56459    \\
lseu-0.900000             & 9  & 80  & 5838.00  & 14.10   & 1023    &  & 5838.00  & \textgreater{}3600 & 8743754  \\
p0033-0.900000            & 4  & 29  & 4679.00  & 0.06    & 27      &  & 4679.00  & 5.56               & 28241    \\
p0201-0.900000            & 21 & 180 & 15025.00 & 6.62    & 2481    &  & 15025.00 & \textgreater{}3600 & 1310278  \\
stein27-0.900000          & 3  & 24  & 24.00    & 0.02    & 15      &  & 24.00    & 419.87             & 702055   \\
stein45-0.900000          & 5  & 40  & 40.00    & 0.14    & 63      &  & 40.00    & \textgreater{}3600 & 1499583 \\
\hline
\end{tabular}
\end{table}
\begin{table}[h!]
\caption{Detailed results of Figure~\ref{fig:branchStrategyParTimeMore5Secr1grer2}}
\label{tab:fig3b}
\scriptsize
\centering
\begin{tabular}{llllllllll}
\hline
\multicolumn{1}{c}{}&
\multicolumn{1}{c}{}&
\multicolumn{1}{c}{}&
\multicolumn{3}{c}{linkingBranching}&
\multicolumn{1}{c}{}&
\multicolumn{3}{c}{fractionalBranching} \\
\cline{4-6}
\cline{8-10}
Instance&    $r_1$& $r_2$& BestSol & Time(s) &Nodes& &BestSol & Time(s) &Nodes\\
\hline
bmilplib-110-10          & 110 & 63  & -177.67  & 153.82  & 75499   &  & -177.67  & 116.32  & 55177   \\
bmilplib-110-1           & 110 & 50  & -181.67  & 5.35    & 2806    &  & -181.67  & 0.93    & 306     \\
bmilplib-110-2           & 110 & 45  & -110.67  & 6.48    & 4174    &  & -110.67  & 1.16    & 303     \\
bmilplib-110-3           & 110 & 55  & -215.16  & 5.58    & 3471    &  & -215.16  & 0.89    & 360     \\
bmilplib-110-4           & 110 & 50  & -197.29  & 3.69    & 1751    &  & -197.29  & 1.34    & 148     \\
bmilplib-110-6           & 110 & 55  & -148.25  & 15.06   & 8770    &  & -148.25  & 3.84    & 1448    \\
bmilplib-110-7           & 110 & 61  & -160.86  & 4.94    & 2189    &  & -160.86  & 0.91    & 205     \\
bmilplib-110-8           & 110 & 54  & -155.00  & 20.78   & 11663   &  & -155.00  & 7.66    & 2274    \\
bmilplib-110-9           & 110 & 58  & -192.92  & 3.66    & 2151    &  & -192.92  & 0.39    & 146     \\
bmilplib-160-10          & 160 & 83  & -189.82  & 33.42   & 9071    &  & -189.82  & 7.73    & 728     \\
bmilplib-160-1           & 160 & 80  & -165.00  & 25.70   & 7901    &  & -165.00  & 4.76    & 881     \\
bmilplib-160-2           & 160 & 76  & -178.24  & 30.59   & 8635    &  & -178.24  & 5.95    & 507     \\
bmilplib-160-3           & 160 & 86  & -174.94  & 63.37   & 15002   &  & -174.94  & 13.71   & 1102    \\
bmilplib-160-4           & 160 & 81  & -135.83  & 51.02   & 14772   &  & -135.83  & 19.13   & 2447    \\
bmilplib-160-5           & 160 & 83  & -140.78  & 20.52   & 4948    &  & -140.78  & 7.72    & 668     \\
bmilplib-160-6           & 160 & 81  & -111.00  & 24.66   & 8676    &  & -111.00  & 5.02    & 627     \\
bmilplib-160-7           & 160 & 79  & -96.00   & 51.20   & 16560   &  & -96.00   & 18.72   & 2855    \\
bmilplib-160-8           & 160 & 85  & -181.40  & 10.49   & 3444    &  & -181.40  & 1.74    & 311     \\
bmilplib-160-9           & 160 & 77  & -207.50  & 15.85   & 4607    &  & -207.50  & 2.02    & 268     \\
bmilplib-210-10          & 210 & 99  & -130.59  & 61.81   & 12176   &  & -130.59  & 8.59    & 1100    \\
bmilplib-210-1           & 210 & 101 & -136.80  & 41.96   & 7429    &  & -136.80  & 6.67    & 550     \\
bmilplib-210-2           & 210 & 96  & -117.80  & 113.38  & 22294   &  & -117.80  & 19.78   & 2306    \\
bmilplib-210-3           & 210 & 119 & -130.80  & 48.19   & 8848    &  & -130.80  & 12.93   & 1380    \\
bmilplib-210-4           & 210 & 115 & -162.20  & 26.81   & 4687    &  & -162.20  & 3.64    & 309     \\
bmilplib-210-5           & 210 & 110 & -134.00  & 108.22  & 21058   &  & -134.00  & 20.03   & 2079    \\
bmilplib-210-6           & 210 & 115 & -125.43  & 201.40  & 38443   &  & -125.43  & 45.28   & 4875    \\
bmilplib-210-7           & 210 & 102 & -169.73  & 77.84   & 13960   &  & -169.73  & 11.46   & 1181    \\
bmilplib-210-8           & 210 & 116 & -101.46  & 61.79   & 11105   &  & -101.46  & 9.60    & 942     \\
bmilplib-210-9           & 210 & 103 & -184.00  & 879.61  & 142294  &  & -184.00  & 240.14  & 9466    \\
bmilplib-260-10          & 260 & 117 & -151.73  & 546.55  & 61767   &  & -151.73  & 73.10   & 4716    \\
bmilplib-260-1           & 260 & 135 & -139.00  & 85.81   & 10980   &  & -139.00  & 10.10   & 887     \\
bmilplib-260-2           & 260 & 126 & -82.62   & 109.75  & 15287   &  & -82.62   & 18.07   & 1607    \\
bmilplib-260-3           & 260 & 120 & -144.25  & 71.54   & 8769    &  & -144.25  & 8.70    & 518     \\
bmilplib-260-4           & 260 & 125 & -117.33  & 259.48  & 32787   &  & -117.33  & 66.89   & 4426    \\
bmilplib-260-5           & 260 & 132 & -121.00  & 166.03  & 21542   &  & -121.00  & 29.19   & 2165    \\
bmilplib-260-6           & 260 & 146 & -124.00  & 197.04  & 25362   &  & -124.00  & 40.59   & 2420    \\
bmilplib-260-7           & 260 & 129 & -137.80  & 312.12  & 40274   &  & -137.80  & 44.45   & 3200    \\
bmilplib-260-8           & 260 & 143 & -119.89  & 87.01   & 10025   &  & -119.89  & 10.92   & 1025    \\
bmilplib-260-9           & 260 & 132 & -160.00  & 273.66  & 33468   &  & -160.00  & 36.36   & 2526    \\
bmilplib-310-10          & 310 & 157 & -141.86  & 121.07  & 9900    &  & -141.86  & 5.51    & 397     \\
bmilplib-310-1           & 310 & 169 & -117.00  & 329.71  & 28330   &  & -117.00  & 44.79   & 2624    \\
bmilplib-310-2           & 310 & 154 & -105.00  & 497.66  & 43399   &  & -105.00  & 98.39   & 5372    \\
bmilplib-310-3           & 310 & 157 & -127.52  & 777.74  & 66410   &  & -127.52  & 149.07  & 7067    \\
bmilplib-310-4           & 310 & 152 & -147.78  & 569.43  & 47711   &  & -147.78  & 70.22   & 4767    \\
bmilplib-310-5           & 310 & 164 & -161.45  & 366.90  & 31782   &  & -161.45  & 34.22   & 1993    \\
bmilplib-310-6           & 310 & 148 & -141.18  & 1191.51 & 101904  &  & -141.18  & 169.96  & 8300    \\
bmilplib-310-7           & 310 & 170 & -142.00  & 1129.24 & 102000  &  & -142.00  & 139.15  & 7263    \\
bmilplib-310-8           & 310 & 154 & -115.34  & 127.94  & 11109   &  & -115.34  & 19.23   & 921     \\
bmilplib-310-9           & 310 & 150 & -115.65  & 255.27  & 20490   &  & -115.65  & 32.31   & 1590    \\
bmilplib-360-10          & 360 & 172 & -108.59  & 234.92  & 13697   &  & -108.59  & 25.75   & 1106    \\
bmilplib-360-1           & 360 & 181 & -133.00  & 1353.62 & 75421   &  & -133.00  & 158.38  & 4923    \\
bmilplib-360-2           & 360 & 179 & -138.44  & 965.52  & 52919   &  & -138.44  & 148.90  & 4493    \\
bmilplib-360-3           & 360 & 195 & -131.00  & 832.88  & 40487   &  & -131.00  & 65.41   & 2654    \\
bmilplib-360-4           & 360 & 184 & -119.00  & 332.90  & 18293   &  & -119.00  & 42.95   & 1564    \\
bmilplib-360-5           & 360 & 194 & -164.26  & 618.86  & 29947   &  & -164.26  & 44.45   & 1713    \\
bmilplib-360-6           & 360 & 172 & -110.12  & 1181.02 & 68863   &  & -110.12  & 142.81  & 5520    \\
bmilplib-360-7           & 360 & 188 & -105.00  & 634.17  & 30900   &  & -105.00  & 89.22   & 3346    \\
bmilplib-360-8           & 360 & 170 & -98.25   & 362.89  & 22857   &  & -98.25   & 44.85   & 1686    \\
bmilplib-360-9           & 360 & 184 & -127.22  & 736.16  & 40329   &  & -127.22  & 50.24   & 2642    \\
bmilplib-410-10          & 410 & 201 & -153.37  & 2729.77 & 101447  &  & -153.37  & 258.89  & 7428    \\
bmilplib-410-1           & 410 & 196 & -103.50  & 553.76  & 20634   &  & -103.50  & 87.94   & 1944    \\
\hline
\end{tabular}
\end{table}
\addtocounter{table}{-1}
\begin{table}[h!]
\vskip -3.1in
\caption{Detailed results of Figure~\ref{fig:branchStrategyParTimeMore5Secr1grer2} (continued)}
\label{tab:fig3bContinue}
\scriptsize
\centering
\begin{tabular}{llllllllll}
\hline
\multicolumn{1}{c}{}&
\multicolumn{1}{c}{}&
\multicolumn{1}{c}{}&
\multicolumn{3}{c}{linkingBranching}&
\multicolumn{1}{c}{}&
\multicolumn{3}{c}{fractionalBranching} \\
\cline{4-6}
\cline{8-10}
Instance&    $r_1$& $r_2$& BestSol & Time(s) &Nodes& &BestSol & Time(s) &Nodes\\
\hline
bmilplib-410-2           & 410 & 189 & -108.59  & 840.56  & 31603   &  & -108.59  & 103.57  & 2887    \\
bmilplib-410-3           & 410 & 212 & -96.24   & 871.58  & 32882   &  & -96.24   & 147.99  & 3781    \\
bmilplib-410-4           & 410 & 187 & -119.50  & 1031.12 & 38489   &  & -119.50  & 100.21  & 2995    \\
bmilplib-410-5           & 410 & 209 & -119.22  & 678.49  & 23852   &  & -119.22  & 71.23   & 1520    \\
bmilplib-410-6           & 410 & 206 & -151.31  & 322.88  & 11130   &  & -151.31  & 13.54   & 533     \\
bmilplib-410-7           & 410 & 225 & -123.00  & 560.34  & 22146   &  & -123.00  & 35.87   & 1177    \\
bmilplib-410-8           & 410 & 211 & -125.78  & 1547.88 & 62218   &  & -125.78  & 169.15  & 4480    \\
bmilplib-410-9           & 410 & 216 & -100.77  & 531.88  & 20400   &  & -100.77  & 82.17   & 2071    \\
bmilplib-460-10          & 460 & 217 & -102.51  & 1857.94 & 55024   &  & -102.51  & 228.12  & 4465    \\
bmilplib-460-1           & 460 & 227 & -97.59   & 2822.97 & 86632   &  & -97.59   & 569.22  & 10803   \\
bmilplib-460-2           & 460 & 249 & -139.00  & 625.95  & 16623   &  & -139.00  & 43.65   & 964     \\
bmilplib-460-3           & 460 & 222 & -86.50   & 1995.92 & 57395   &  & -86.50   & 223.58  & 3882    \\
bmilplib-460-4           & 460 & 218 & -107.03  & 3291.28 & 95035   &  & -107.03  & 412.76  & 7856    \\
bmilplib-460-5           & 460 & 216 & -100.50  & 1424.54 & 41109   &  & -100.50  & 170.30  & 3025    \\
bmilplib-460-6           & 460 & 222 & -107.00  & 1635.42 & 46076   &  & -107.00  & 236.30  & 4143    \\
bmilplib-460-7           & 460 & 254 & -83.75   & 1601.34 & 48897   &  & -83.75   & 294.68  & 5252    \\
bmilplib-460-8           & 460 & 256 & -115.39  & 982.30  & 27572   &  & -115.39  & 94.67   & 1903    \\
bmilplib-460-9           & 460 & 224 & -128.70  & 2830.59 & 85977   &  & -128.70  & 327.68  & 6185    \\
bmilplib-60-10           & 60  & 25  & -186.21  & 6.93    & 5922    &  & -186.21  & 4.29    & 2590    \\
bmilplib-60-1            & 60  & 29  & -153.20  & 6.71    & 4877    &  & -153.20  & 2.79    & 1094    \\
bmilplib-60-5            & 60  & 33  & -116.40  & 8.15    & 11202   &  & -116.40  & 10.08   & 9996    \\
bmilplib-60-6            & 60  & 27  & -187.31  & 7.62    & 7016    &  & -187.31  & 3.34    & 1383    \\
bmilplib-60-8            & 60  & 30  & -232.12  & 3.36    & 3570    &  & -232.12  & 1.15    & 572     \\
bmilplib-60-9            & 60  & 26  & -136.50  & 19.61   & 26792   &  & -136.50  & 12.34   & 9888    \\
miblp-20-20-50-0110-5-13 & 15  & 5   & -519.00  & 2464.25 & 7128138 &  & -519.00  & 2709.29 & 6515097 \\
miblp-20-20-50-0110-5-15 & 15  & 5   & -617.00  & 1310.67 & 4018058 &  & -617.00  & 1130.92 & 4312915 \\
miblp-20-20-50-0110-5-16 & 15  & 5   & -833.00  & 6.60    & 17680   &  & -833.00  & 1.80    & 5913    \\
miblp-20-20-50-0110-5-17 & 15  & 5   & -944.00  & 4.78    & 17679   &  & -944.00  & 1.53    & 4038    \\
miblp-20-20-50-0110-5-19 & 15  & 5   & -431.00  & 26.56   & 77256   &  & -431.00  & 25.50   & 116041  \\
miblp-20-20-50-0110-5-1  & 15  & 5   & -548.00  & 14.50   & 42364   &  & -548.00  & 8.45    & 31298   \\
miblp-20-20-50-0110-5-20 & 15  & 5   & -438.00  & 17.30   & 60944   &  & -438.00  & 10.82   & 33315   \\
miblp-20-20-50-0110-5-6  & 15  & 5   & -1061.00 & 63.70   & 222566  &  & -1061.00 & 51.74   & 213928  \\
lseu-0.100000            & 81  & 8   & 1120.00  & 270.78  & 1003976 &  & 1120.00  & 3.15    & 8603    \\
p0033-0.500000           & 17  & 16  & 3095.00  & 10.48   & 20695   &  & 3095.00  & 0.25    & 1467    \\
stein27-0.500000         & 14  & 13  & 19.00    & 5.88    & 12115   &  & 19.00    & 6.51    & 17648   \\
stein45-0.100000         & 41  & 4   & 30.00    & 50.19   & 89035   &  & 30.00    & 64.27   & 90241   \\
stein45-0.500000         & 23  & 22  & 32.00    & 519.59  & 640308  &  & 32.00    & 471.57  & 753845 \\
\hline
\end{tabular}
\end{table}
\begin{table}[h!]
\caption{Detailed results of Figure~\ref{fig:linkingSolutionPooParWithFracParWithTimeMore5Sec}}
\label{tab:fig4a}
\resizebox{\columnwidth}{!}{
\begin{tabular}{llllllllllllllll}
\hline
\multicolumn{1}{c}{}&
\multicolumn{3}{c}{\begin{tabular}[c]{@{}c@{}}\scriptsize{withoutPoolWhen}\\\scriptsize{XYInt-LFixed}\end{tabular}}&
\multicolumn{1}{c}{}&
\multicolumn{3}{c}{\begin{tabular}[c]{@{}c@{}}\scriptsize{withPoolWhen}\\\scriptsize{XYInt-LFixed}\end{tabular}}&
\multicolumn{1}{c}{}&
\multicolumn{3}{c}{\begin{tabular}[c]{@{}c@{}}\scriptsize{withoutPoolWhen}\\\scriptsize{XYIntOrLFixed-LFixed}\end{tabular}}&
\multicolumn{1}{c}{}&
\multicolumn{3}{c}{\begin{tabular}[c]{@{}c@{}}\scriptsize{withPoolWhen}\\\scriptsize{XYIntOrLFixed-LFixed}\end{tabular}} \\
\cline{2-4}
\cline{6-8}
\cline{10-12}
\cline{14-16}
Instance&   BestSol & Time(s) &Nodes  & &BestSol & Time(s) &Nodes & &BestSol & Time(s) &Nodes& &BestSol & Time(s) &Nodes\\
\hline
bmilplib-110-10           & -177.67  & 355.72             & 55177   &  & -177.67  & 117.08             & 55177    &  & -177.67  & 357.09             & 55177   &  & -177.67  & 116.32             & 55177    \\
bmilplib-110-1            & -181.67  & 1.08               & 306     &  & -181.67  & 0.94               & 306      &  & -181.67  & 1.08               & 306     &  & -181.67  & 0.93               & 306      \\
bmilplib-110-2            & -110.67  & 1.56               & 303     &  & -110.67  & 1.17               & 303      &  & -110.67  & 1.56               & 303     &  & -110.67  & 1.16               & 303      \\
bmilplib-110-3            & -215.16  & 1.18               & 360     &  & -215.16  & 0.91               & 360      &  & -215.16  & 1.14               & 360     &  & -215.16  & 0.89               & 360      \\
bmilplib-110-4            & -197.29  & 2.38               & 148     &  & -197.29  & 1.34               & 148      &  & -197.29  & 2.38               & 148     &  & -197.29  & 1.34               & 148      \\
bmilplib-110-6            & -148.25  & 8.77               & 1448    &  & -148.25  & 3.83               & 1448     &  & -148.25  & 8.78               & 1448    &  & -148.25  & 3.84               & 1448     \\
bmilplib-110-7            & -160.86  & 1.16               & 205     &  & -160.86  & 0.93               & 205      &  & -160.86  & 1.17               & 205     &  & -160.86  & 0.91               & 205      \\
bmilplib-110-8            & -155.00  & 12.40              & 2274    &  & -155.00  & 7.74               & 2274     &  & -155.00  & 12.29              & 2274    &  & -155.00  & 7.66               & 2274     \\
bmilplib-110-9            & -192.92  & 0.39               & 146     &  & -192.92  & 0.38               & 146      &  & -192.92  & 0.38               & 146     &  & -192.92  & 0.39               & 146      \\
bmilplib-160-10           & -189.82  & 17.00              & 728     &  & -189.82  & 7.70               & 728      &  & -189.82  & 16.94              & 728     &  & -189.82  & 7.73               & 728      \\
bmilplib-160-1            & -165.00  & 6.52               & 881     &  & -165.00  & 4.72               & 881      &  & -165.00  & 6.48               & 881     &  & -165.00  & 4.76               & 881      \\
bmilplib-160-2            & -178.24  & 9.39               & 507     &  & -178.24  & 5.98               & 507      &  & -178.24  & 9.42               & 507     &  & -178.24  & 5.95               & 507      \\
bmilplib-160-3            & -174.94  & 32.47              & 1102    &  & -174.94  & 13.76              & 1102     &  & -174.94  & 32.60              & 1102    &  & -174.94  & 13.71              & 1102     \\
bmilplib-160-4            & -135.83  & 35.80              & 2447    &  & -135.83  & 19.06              & 2447     &  & -135.83  & 36.00              & 2447    &  & -135.83  & 19.13              & 2447     \\
bmilplib-160-5            & -140.78  & 29.26              & 668     &  & -140.78  & 7.73               & 668      &  & -140.78  & 29.33              & 668     &  & -140.78  & 7.72               & 668      \\
bmilplib-160-6            & -111.00  & 7.26               & 627     &  & -111.00  & 5.01               & 627      &  & -111.00  & 7.30               & 627     &  & -111.00  & 5.02               & 627      \\
bmilplib-160-7            & -96.00   & 35.18              & 2855    &  & -96.00   & 18.04              & 2855     &  & -96.00   & 35.13              & 2855    &  & -96.00   & 18.72              & 2855     \\
bmilplib-160-8            & -181.40  & 2.30               & 311     &  & -181.40  & 1.67               & 311      &  & -181.40  & 2.28               & 311     &  & -181.40  & 1.74               & 311      \\
bmilplib-160-9            & -207.50  & 2.02               & 268     &  & -207.50  & 2.01               & 268      &  & -207.50  & 2.00               & 268     &  & -207.50  & 2.02               & 268      \\
bmilplib-210-10           & -130.59  & 13.92              & 1100    &  & -130.59  & 8.78               & 1100     &  & -130.59  & 14.28              & 1100    &  & -130.59  & 8.59               & 1100     \\
bmilplib-210-1            & -136.80  & 6.72               & 550     &  & -136.80  & 6.71               & 550      &  & -136.80  & 6.74               & 550     &  & -136.80  & 6.67               & 550      \\
bmilplib-210-2            & -117.80  & 40.52              & 2306    &  & -117.80  & 20.60              & 2306     &  & -117.80  & 41.96              & 2306    &  & -117.80  & 19.78              & 2306     \\
bmilplib-210-3            & -130.80  & 24.72              & 1380    &  & -130.80  & 13.09              & 1380     &  & -130.80  & 24.48              & 1380    &  & -130.80  & 12.93              & 1380     \\
bmilplib-210-4            & -162.20  & 4.82               & 309     &  & -162.20  & 3.65               & 309      &  & -162.20  & 4.80               & 309     &  & -162.20  & 3.64               & 309      \\
bmilplib-210-5            & -134.00  & 29.57              & 2079    &  & -134.00  & 20.08              & 2079     &  & -134.00  & 30.96              & 2079    &  & -134.00  & 20.03              & 2079     \\
bmilplib-210-6            & -125.43  & 105.93             & 4875    &  & -125.43  & 46.30              & 4875     &  & -125.43  & 107.54             & 4875    &  & -125.43  & 45.28              & 4875     \\
bmilplib-210-7            & -169.73  & 18.21              & 1181    &  & -169.73  & 11.59              & 1181     &  & -169.73  & 18.14              & 1181    &  & -169.73  & 11.46              & 1181     \\
bmilplib-210-8            & -101.46  & 17.89              & 942     &  & -101.46  & 9.69               & 942      &  & -101.46  & 17.97              & 942     &  & -101.46  & 9.60               & 942      \\
bmilplib-210-9            & -184.00  & 334.58             & 9466    &  & -184.00  & 241.89             & 9466     &  & -184.00  & 336.35             & 9466    &  & -184.00  & 240.14             & 9466     \\
bmilplib-260-10           & -151.73  & 106.84             & 4716    &  & -151.73  & 75.99              & 4716     &  & -151.73  & 106.74             & 4716    &  & -151.73  & 73.10              & 4716     \\
bmilplib-260-1            & -139.00  & 10.55              & 887     &  & -139.00  & 9.72               & 887      &  & -139.00  & 10.17              & 887     &  & -139.00  & 10.10              & 887      \\
bmilplib-260-2            & -82.62   & 25.82              & 1607    &  & -82.62   & 17.95              & 1607     &  & -82.62   & 24.39              & 1607    &  & -82.62   & 18.07              & 1607     \\
bmilplib-260-3            & -144.25  & 12.22              & 518     &  & -144.25  & 8.70               & 518      &  & -144.25  & 12.20              & 518     &  & -144.25  & 8.70               & 518      \\
bmilplib-260-4            & -117.33  & 82.73              & 4426    &  & -117.33  & 62.90              & 4426     &  & -117.33  & 84.74              & 4426    &  & -117.33  & 66.89              & 4426     \\
bmilplib-260-5            & -121.00  & 36.18              & 2165    &  & -121.00  & 29.80              & 2165     &  & -121.00  & 37.57              & 2165    &  & -121.00  & 29.19              & 2165     \\
bmilplib-260-6            & -124.00  & 67.59              & 2420    &  & -124.00  & 39.43              & 2420     &  & -124.00  & 69.01              & 2420    &  & -124.00  & 40.59              & 2420     \\
bmilplib-260-7            & -137.80  & 76.55              & 3200    &  & -137.80  & 46.85              & 3200     &  & -137.80  & 79.79              & 3200    &  & -137.80  & 44.45              & 3200     \\
bmilplib-260-8            & -119.89  & 16.52              & 1025    &  & -119.89  & 10.90              & 1025     &  & -119.89  & 17.44              & 1025    &  & -119.89  & 10.92              & 1025     \\
bmilplib-260-9            & -160.00  & 61.66              & 2526    &  & -160.00  & 36.58              & 2526     &  & -160.00  & 64.72              & 2526    &  & -160.00  & 36.36              & 2526     \\
bmilplib-310-10           & -141.86  & 5.42               & 397     &  & -141.86  & 5.95               & 397      &  & -141.86  & 5.88               & 397     &  & -141.86  & 5.51               & 397      \\
bmilplib-310-1            & -117.00  & 58.20              & 2624    &  & -117.00  & 43.64              & 2624     &  & -117.00  & 62.56              & 2624    &  & -117.00  & 44.79              & 2624     \\
bmilplib-310-2            & -105.00  & 109.26             & 5372    &  & -105.00  & 96.63              & 5372     &  & -105.00  & 117.75             & 5372    &  & -105.00  & 98.39              & 5372     \\
bmilplib-310-3            & -127.52  & 171.44             & 7067    &  & -127.52  & 148.52             & 7067     &  & -127.52  & 174.17             & 7067    &  & -127.52  & 149.07             & 7067     \\
bmilplib-310-4            & -147.78  & 90.78              & 4767    &  & -147.78  & 77.28              & 4767     &  & -147.78  & 95.75              & 4767    &  & -147.78  & 70.22              & 4767     \\
bmilplib-310-5            & -161.45  & 48.58              & 1993    &  & -161.45  & 34.30              & 1993     &  & -161.45  & 45.88              & 1993    &  & -161.45  & 34.22              & 1993     \\
bmilplib-310-6            & -141.18  & 197.54             & 8300    &  & -141.18  & 179.02             & 8300     &  & -141.18  & 198.16             & 8300    &  & -141.18  & 169.96             & 8300     \\
bmilplib-310-7            & -142.00  & 179.47             & 7263    &  & -142.00  & 144.99             & 7263     &  & -142.00  & 169.98             & 7263    &  & -142.00  & 139.15             & 7263     \\
bmilplib-310-8            & -115.34  & 25.48              & 921     &  & -115.34  & 20.24              & 921      &  & -115.34  & 24.68              & 921     &  & -115.34  & 19.23              & 921      \\
bmilplib-310-9            & -115.65  & 43.85              & 1590    &  & -115.65  & 37.72              & 1590     &  & -115.65  & 45.18              & 1590    &  & -115.65  & 32.31              & 1590     \\
bmilplib-360-10           & -108.59  & 27.82              & 1106    &  & -108.59  & 25.17              & 1106     &  & -108.59  & 31.69              & 1106    &  & -108.59  & 25.75              & 1106     \\
bmilplib-360-1            & -133.00  & 207.62             & 4923    &  & -133.00  & 146.36             & 4923     &  & -133.00  & 194.58             & 4923    &  & -133.00  & 158.38             & 4923     \\
bmilplib-360-2            & -138.44  & 230.49             & 4493    &  & -138.44  & 135.61             & 4493     &  & -138.44  & 225.09             & 4493    &  & -138.44  & 148.90             & 4493     \\
bmilplib-360-3            & -131.00  & 68.30              & 2654    &  & -131.00  & 63.40              & 2654     &  & -131.00  & 75.82              & 2654    &  & -131.00  & 65.41              & 2654     \\
bmilplib-360-4            & -119.00  & 54.68              & 1564    &  & -119.00  & 47.06              & 1564     &  & -119.00  & 51.33              & 1564    &  & -119.00  & 42.95              & 1564     \\
bmilplib-360-5            & -164.26  & 58.30              & 1713    &  & -164.26  & 45.64              & 1713     &  & -164.26  & 56.12              & 1713    &  & -164.26  & 44.45              & 1713     \\
bmilplib-360-6            & -110.12  & 155.42             & 5520    &  & -110.12  & 132.79             & 5520     &  & -110.12  & 146.64             & 5520    &  & -110.12  & 142.81             & 5520     \\
bmilplib-360-7            & -105.00  & 135.16             & 3346    &  & -105.00  & 93.50              & 3346     &  & -105.00  & 125.91             & 3346    &  & -105.00  & 89.22              & 3346     \\
bmilplib-360-8            & -98.25   & 66.38              & 1686    &  & -98.25   & 50.86              & 1686     &  & -98.25   & 66.34              & 1686    &  & -98.25   & 44.85              & 1686     \\
bmilplib-360-9            & -127.22  & 67.05              & 2642    &  & -127.22  & 58.36              & 2642     &  & -127.22  & 68.26              & 2642    &  & -127.22  & 50.24              & 2642     \\
bmilplib-410-10           & -153.37  & 332.48             & 7428    &  & -153.37  & 265.98             & 7428     &  & -153.37  & 333.37             & 7428    &  & -153.37  & 258.89             & 7428     \\
bmilplib-410-1            & -103.50  & 103.54             & 1944    &  & -103.50  & 85.04              & 1944     &  & -103.50  & 109.43             & 1944    &  & -103.50  & 87.94              & 1944     \\
\hline
\end{tabular}
}
\end{table}
\addtocounter{table}{-1}
\begin{table}[h!]
\vskip -0.7in
\caption{Detailed results of Figure~\ref{fig:linkingSolutionPooParWithFracParWithTimeMore5Sec} (continued)}
\label{tab:fig4aContinue}
\resizebox{\columnwidth}{!}{
\begin{tabular}{llllllllllllllll}
\hline
\multicolumn{1}{c}{}&
\multicolumn{3}{c}{\begin{tabular}[c]{@{}c@{}}\scriptsize{withoutPoolWhen}\\\scriptsize{XYInt-LFixed}\end{tabular}}&
\multicolumn{1}{c}{}&
\multicolumn{3}{c}{\begin{tabular}[c]{@{}c@{}}\scriptsize{withPoolWhen}\\\scriptsize{XYInt-LFixed}\end{tabular}}&
\multicolumn{1}{c}{}&
\multicolumn{3}{c}{\begin{tabular}[c]{@{}c@{}}\scriptsize{withoutPoolWhen}\\\scriptsize{XYIntOrLFixed-LFixed}\end{tabular}}&
\multicolumn{1}{c}{}&
\multicolumn{3}{c}{\begin{tabular}[c]{@{}c@{}}\scriptsize{withPoolWhen}\\\scriptsize{XYIntOrLFixed-LFixed}\end{tabular}} \\
\cline{2-4}
\cline{6-8}
\cline{10-12}
\cline{14-16}
Instance&   BestSol & Time(s) &Nodes  & &BestSol & Time(s) &Nodes & &BestSol & Time(s) &Nodes& &BestSol & Time(s) &Nodes\\
\hline
bmilplib-410-2            & -108.59  & 128.11             & 2887    &  & -108.59  & 107.05             & 2887     &  & -108.59  & 125.37             & 2887    &  & -108.59  & 103.57             & 2887     \\
bmilplib-410-3            & -96.24   & 176.89             & 3781    &  & -96.24   & 162.97             & 3781     &  & -96.24   & 167.58             & 3781    &  & -96.24   & 147.99             & 3781     \\
bmilplib-410-4            & -119.50  & 111.28             & 2995    &  & -119.50  & 96.04              & 2995     &  & -119.50  & 115.22             & 2995    &  & -119.50  & 100.21             & 2995     \\
bmilplib-410-5            & -119.22  & 72.48              & 1520    &  & -119.22  & 67.90              & 1520     &  & -119.22  & 73.09              & 1520    &  & -119.22  & 71.23              & 1520     \\
bmilplib-410-6            & -151.31  & 14.12              & 533     &  & -151.31  & 14.66              & 533      &  & -151.31  & 13.95              & 533     &  & -151.31  & 13.54              & 533      \\
bmilplib-410-7            & -123.00  & 43.96              & 1177    &  & -123.00  & 42.46              & 1177     &  & -123.00  & 42.43              & 1177    &  & -123.00  & 35.87              & 1177     \\
bmilplib-410-8            & -125.78  & 292.85             & 4480    &  & -125.78  & 168.87             & 4480     &  & -125.78  & 284.91             & 4480    &  & -125.78  & 169.15             & 4480     \\
bmilplib-410-9            & -100.77  & 95.70              & 2071    &  & -100.77  & 81.09              & 2071     &  & -100.77  & 89.94              & 2071    &  & -100.77  & 82.17              & 2071     \\
bmilplib-460-10           & -102.51  & 284.10             & 4465    &  & -102.51  & 232.16             & 4465     &  & -102.51  & 272.39             & 4465    &  & -102.51  & 228.12             & 4465     \\
bmilplib-460-1            & -97.59   & 638.06             & 10803   &  & -97.59   & 574.36             & 10803    &  & -97.59   & 636.80             & 10803   &  & -97.59   & 569.22             & 10803    \\
bmilplib-460-2            & -139.00  & 65.03              & 964     &  & -139.00  & 51.23              & 964      &  & -139.00  & 68.32              & 964     &  & -139.00  & 43.65              & 964      \\
bmilplib-460-3            & -86.50   & 226.69             & 3882    &  & -86.50   & 223.11             & 3882     &  & -86.50   & 229.43             & 3882    &  & -86.50   & 223.58             & 3882     \\
bmilplib-460-4            & -107.03  & 640.32             & 7856    &  & -107.03  & 413.15             & 7856     &  & -107.03  & 607.29             & 7856    &  & -107.03  & 412.76             & 7856     \\
bmilplib-460-5            & -100.50  & 229.94             & 3025    &  & -100.50  & 171.50             & 3025     &  & -100.50  & 196.94             & 3025    &  & -100.50  & 170.30             & 3025     \\
bmilplib-460-6            & -107.00  & 251.71             & 4143    &  & -107.00  & 219.39             & 4143     &  & -107.00  & 221.43             & 4143    &  & -107.00  & 236.30             & 4143     \\
bmilplib-460-7            & -83.75   & 334.13             & 5252    &  & -83.75   & 295.91             & 5252     &  & -83.75   & 331.02             & 5252    &  & -83.75   & 294.68             & 5252     \\
bmilplib-460-8            & -115.39  & 117.85             & 1903    &  & -115.39  & 102.20             & 1903     &  & -115.39  & 134.03             & 1903    &  & -115.39  & 94.67              & 1903     \\
bmilplib-460-9            & -128.70  & 409.66             & 6185    &  & -128.70  & 346.00             & 6185     &  & -128.70  & 388.74             & 6185    &  & -128.70  & 327.68             & 6185     \\
bmilplib-60-10            & -186.21  & 4.82               & 2590    &  & -186.21  & 4.12               & 2590     &  & -186.21  & 4.90               & 2590    &  & -186.21  & 4.29               & 2590     \\
bmilplib-60-1             & -153.20  & 3.59               & 1094    &  & -153.20  & 2.83               & 1094     &  & -153.20  & 3.82               & 1094    &  & -153.20  & 2.79               & 1094     \\
bmilplib-60-5             & -116.40  & 16.18              & 9996    &  & -116.40  & 10.33              & 9996     &  & -116.40  & 15.85              & 9996    &  & -116.40  & 10.08              & 9996     \\
bmilplib-60-6             & -187.31  & 3.58               & 1383    &  & -187.31  & 3.43               & 1383     &  & -187.31  & 3.65               & 1383    &  & -187.31  & 3.34               & 1383     \\
bmilplib-60-8             & -232.12  & 1.79               & 572     &  & -232.12  & 1.16               & 572      &  & -232.12  & 1.78               & 572     &  & -232.12  & 1.15               & 572      \\
bmilplib-60-9             & -136.50  & 14.85              & 9888    &  & -136.50  & 12.36              & 9888     &  & -136.50  & 15.14              & 9888    &  & -136.50  & 12.34              & 9888     \\
miblp-20-15-50-0110-10-10 & -206.00  & 36.93              & 21203   &  & -206.00  & 4.22               & 15741    &  & -206.00  & 39.37              & 15741   &  & -206.00  & 4.25               & 15741    \\
miblp-20-15-50-0110-10-2  & -354.00  & \textgreater{}3600 & 642894  &  & -398.00  & 970.06             & 2929420  &  & -354.00  & \textgreater{}3600 & 596423  &  & -398.00  & 947.81             & 2929420  \\
miblp-20-15-50-0110-10-3  & -42.00   & 188.57             & 48461   &  & -42.00   & 12.65              & 46349    &  & -42.00   & 203.74             & 46349   &  & -42.00   & 12.66              & 46349    \\
miblp-20-15-50-0110-10-6  & -246.00  & 12.02              & 1385    &  & -246.00  & 1.53               & 1367     &  & -246.00  & 13.22              & 1367    &  & -246.00  & 1.56               & 1367     \\
miblp-20-15-50-0110-10-9  & -635.00  & 25.65              & 7667    &  & -635.00  & 6.62               & 7667     &  & -635.00  & 26.03              & 7667    &  & -635.00  & 6.52               & 7667     \\
miblp-20-20-50-0110-10-10 & -405.00  & \textgreater{}3600 & 304054  &  & -441.00  & 2908.94            & 7261247  &  & -405.00  & \textgreater{}3600 & 297842  &  & -441.00  & 2867.85            & 7261247  \\
miblp-20-20-50-0110-10-1  & -315.00  & \textgreater{}3600 & 281846  &  & -357.00  & \textgreater{}3600 & 9176214  &  & -315.00  & \textgreater{}3600 & 268812  &  & -357.00  & \textgreater{}3600 & 9234364  \\
miblp-20-20-50-0110-10-2  & -659.00  & 14.46              & 5106    &  & -659.00  & 3.21               & 5100     &  & -659.00  & 13.90              & 5100    &  & -659.00  & 3.16               & 5100     \\
miblp-20-20-50-0110-10-3  & -618.00  & 16.74              & 52259   &  & -618.00  & 12.21              & 51970    &  & -618.00  & 17.08              & 51970   &  & -618.00  & 12.05              & 51970    \\
miblp-20-20-50-0110-10-4  & -597.00  & \textgreater{}3600 & 509252  &  & -604.00  & \textgreater{}3600 & 8018349  &  & -597.00  & \textgreater{}3600 & 502089  &  & -604.00  & \textgreater{}3600 & 7988914  \\
miblp-20-20-50-0110-10-7  & -627.00  & \textgreater{}3600 & 4935849 &  & -629.00  & \textgreater{}3600 & 9735490  &  & -627.00  & \textgreater{}3600 & 4698965 &  & -629.00  & \textgreater{}3600 & 9709672  \\
miblp-20-20-50-0110-10-8  & -667.00  & 998.04             & 80603   &  & -667.00  & 68.15              & 75661    &  & -667.00  & 1085.58            & 75661   &  & -667.00  & 68.08              & 75661    \\
miblp-20-20-50-0110-10-9  & -256.00  & 2292.26            & 852328  &  & -256.00  & 306.60             & 757349   &  & -256.00  & 2483.24            & 757349  &  & -256.00  & 305.78             & 757349   \\
miblp-20-20-50-0110-15-1  & -246.00  & \textgreater{}3600 & 250531  &  & -317.00  & \textgreater{}3600 & 9807301  &  & -246.00  & \textgreater{}3600 & 238329  &  & -317.00  & \textgreater{}3600 & 9813005  \\
miblp-20-20-50-0110-15-2  & -645.00  & \textgreater{}3600 & 1416126 &  & -645.00  & \textgreater{}3600 & 10740865 &  & -645.00  & \textgreater{}3600 & 1395442 &  & -645.00  & \textgreater{}3600 & 10839884 \\
miblp-20-20-50-0110-15-3  & -476.00  & \textgreater{}3600 & 189624  &  & -593.00  & \textgreater{}3600 & 13332780 &  & -476.00  & \textgreater{}3600 & 183565  &  & -593.00  & \textgreater{}3600 & 13042109 \\
miblp-20-20-50-0110-15-4  & -310.00  & \textgreater{}3600 & 125392  &  & -398.00  & \textgreater{}3600 & 8670375  &  & -310.00  & \textgreater{}3600 & 125871  &  & -398.00  & \textgreater{}3600 & 8692428  \\
miblp-20-20-50-0110-15-5  & -60.00   & \textgreater{}3600 & 48876   &  & -320.00  & \textgreater{}3600 & 7275287  &  & -60.00   & \textgreater{}3600 & 47748   &  & -320.00  & \textgreater{}3600 & 7284040  \\
miblp-20-20-50-0110-15-6  & -596.00  & \textgreater{}3600 & 227767  &  & -596.00  & \textgreater{}3600 & 7842912  &  & -596.00  & \textgreater{}3600 & 197263  &  & -596.00  & \textgreater{}3600 & 7851818  \\
miblp-20-20-50-0110-15-7  & -471.00  & \textgreater{}3600 & 393906  &  & -471.00  & \textgreater{}3600 & 9577404  &  & -471.00  & \textgreater{}3600 & 362335  &  & -471.00  & \textgreater{}3600 & 9675451  \\
miblp-20-20-50-0110-15-8  & -290.00  & \textgreater{}3600 & 1772115 &  & -290.00  & \textgreater{}3600 & 10439854 &  & -290.00  & \textgreater{}3600 & 1614730 &  & -290.00  & \textgreater{}3600 & 10350188 \\
miblp-20-20-50-0110-15-9  & -584.00  & 269.87             & 56931   &  & -584.00  & 19.38              & 56459    &  & -584.00  & 271.42             & 56459   &  & -584.00  & 19.58              & 56459    \\
miblp-20-20-50-0110-5-13  & -507.00  & \textgreater{}3600 & 6006058 &  & -519.00  & 2723.02            & 6515097  &  & -507.00  & \textgreater{}3600 & 6189742 &  & -519.00  & 2709.29            & 6515097  \\
miblp-20-20-50-0110-5-15  & -617.00  & 1754.55            & 4312915 &  & -617.00  & 1113.82            & 4312915  &  & -617.00  & 1844.46            & 4312915 &  & -617.00  & 1130.92            & 4312915  \\
miblp-20-20-50-0110-5-16  & -833.00  & 2.06               & 5913    &  & -833.00  & 1.76               & 5913     &  & -833.00  & 2.06               & 5913    &  & -833.00  & 1.80               & 5913     \\
miblp-20-20-50-0110-5-17  & -944.00  & 1.61               & 4050    &  & -944.00  & 1.50               & 4038     &  & -944.00  & 1.66               & 4038    &  & -944.00  & 1.53               & 4038     \\
miblp-20-20-50-0110-5-19  & -431.00  & 38.74              & 116041  &  & -431.00  & 25.54              & 116041   &  & -431.00  & 38.55              & 116041  &  & -431.00  & 25.50              & 116041   \\
miblp-20-20-50-0110-5-1   & -548.00  & 13.07              & 31432   &  & -548.00  & 8.63               & 31298    &  & -548.00  & 13.53              & 31298   &  & -548.00  & 8.45               & 31298    \\
miblp-20-20-50-0110-5-20  & -438.00  & 13.53              & 33315   &  & -438.00  & 10.80              & 33315    &  & -438.00  & 13.51              & 33315   &  & -438.00  & 10.82              & 33315    \\
miblp-20-20-50-0110-5-6   & -1061.00 & 66.75              & 214009  &  & -1061.00 & 51.65              & 213928   &  & -1061.00 & 67.85              & 213928  &  & -1061.00 & 51.74              & 213928   \\
lseu-0.100000             & 1120.00  & 3.17               & 8603    &  & 1120.00  & 3.12               & 8603     &  & 1120.00  & 3.16               & 8603    &  & 1120.00  & 3.15               & 8603     \\
lseu-0.900000             & 5838.00  & \textgreater{}3600 & 2463002 &  & 5838.00  & \textgreater{}3600 & 8762978  &  & 5838.00  & \textgreater{}3600 & 2460978 &  & 5838.00  & \textgreater{}3600 & 8743754  \\
p0033-0.500000            & 3095.00  & 0.28               & 1467    &  & 3095.00  & 0.27               & 1467     &  & 3095.00  & 0.26               & 1467    &  & 3095.00  & 0.25               & 1467     \\
p0033-0.900000            & 4679.00  & 37.10              & 33451   &  & 4679.00  & 5.58               & 28241    &  & 4679.00  & 31.19              & 28241   &  & 4679.00  & 5.56               & 28241    \\
p0201-0.900000            & 15025.00 & \textgreater{}3600 & 690347  &  & 15025.00 & \textgreater{}3600 & 1309676  &  & 15025.00 & \textgreater{}3600 & 689225  &  & 15025.00 & \textgreater{}3600 & 1310278  \\
stein27-0.500000          & 19.00    & 6.60               & 17648   &  & 19.00    & 6.43               & 17648    &  & 19.00    & 6.84               & 17648   &  & 19.00    & 6.51               & 17648    \\
stein27-0.900000          & 24.00    & 644.53             & 713061  &  & 24.00    & 413.77             & 702055   &  & 24.00    & 638.54             & 702055  &  & 24.00    & 419.87             & 702055   \\
stein45-0.100000          & 30.00    & 64.50              & 90241   &  & 30.00    & 63.84              & 90241    &  & 30.00    & 64.46              & 90241   &  & 30.00    & 64.27              & 90241    \\
stein45-0.500000          & 32.00    & 465.23             & 753845  &  & 32.00    & 468.17             & 753845   &  & 32.00    & 472.32             & 753845  &  & 32.00    & 471.57             & 753845   \\
stein45-0.900000          & 40.00    & \textgreater{}3600 & 1379438 &  & 40.00    & \textgreater{}3600 & 1504193  &  & 40.00    & \textgreater{}3600 & 1376878 &  & 40.00    & \textgreater{}3600 & 1499583 \\
\hline
\end{tabular}
}
\end{table}
\begin{table}[h!]
\caption{Detailed results of Figure~\ref{fig:linkingSolutionPooParWithLinkParWithTimeMore5Sec}}
\label{tab:fig4b}
\resizebox{\columnwidth}{!}{
\begin{tabular}{llllllllllllllll}
\hline
\multicolumn{1}{c}{}&
\multicolumn{3}{c}{\begin{tabular}[c]{@{}c@{}}\scriptsize{withoutPoolWhen}\\\scriptsize{XYInt-LFixed}\end{tabular}}&
\multicolumn{1}{c}{}&
\multicolumn{3}{c}{\begin{tabular}[c]{@{}c@{}}\scriptsize{withPoolWhen}\\\scriptsize{XYInt-LFixed}\end{tabular}}&
\multicolumn{1}{c}{}&
\multicolumn{3}{c}{\begin{tabular}[c]{@{}c@{}}\scriptsize{withoutPoolWhen}\\\scriptsize{XYIntOrLFixed-LFixed}\end{tabular}}&
\multicolumn{1}{c}{}&
\multicolumn{3}{c}{\begin{tabular}[c]{@{}c@{}}\scriptsize{withPoolWhen}\\\scriptsize{XYIntOrLFixed-LFixed}\end{tabular}} \\
\cline{2-4}
\cline{6-8}
\cline{10-12}
\cline{14-16}
Instance&   BestSol & Time(s) &Nodes  & &BestSol & Time(s) &Nodes & &BestSol & Time(s) &Nodes& &BestSol & Time(s) &Nodes\\
\hline
bmilplib-110-10           & -177.67  & 416.71             & 121469   &  & -177.67  & 149.10  & 93801    &  & -177.67  & 155.24  & 75499   &  & -177.67  & 153.82  & 75499   \\
bmilplib-110-1            & -181.67  & 5.52               & 2892     &  & -181.67  & 5.42    & 2878     &  & -181.67  & 5.32    & 2806    &  & -181.67  & 5.35    & 2806    \\
bmilplib-110-2            & -110.67  & 7.04               & 4279     &  & -110.67  & 6.50    & 4272     &  & -110.67  & 6.49    & 4174    &  & -110.67  & 6.48    & 4174    \\
bmilplib-110-3            & -215.16  & 6.11               & 3599     &  & -215.16  & 5.42    & 3556     &  & -215.16  & 5.55    & 3471    &  & -215.16  & 5.58    & 3471    \\
bmilplib-110-4            & -197.29  & 4.64               & 1827     &  & -197.29  & 3.54    & 1801     &  & -197.29  & 3.70    & 1751    &  & -197.29  & 3.69    & 1751    \\
bmilplib-110-6            & -148.25  & 19.75              & 9578     &  & -148.25  & 14.14   & 9392     &  & -148.25  & 14.88   & 8770    &  & -148.25  & 15.06   & 8770    \\
bmilplib-110-7            & -160.86  & 4.30               & 2291     &  & -160.86  & 4.01    & 2281     &  & -160.86  & 4.86    & 2189    &  & -160.86  & 4.94    & 2189    \\
bmilplib-110-8            & -155.00  & 25.37              & 12659    &  & -155.00  & 20.65   & 12498    &  & -155.00  & 20.78   & 11663   &  & -155.00  & 20.78   & 11663   \\
bmilplib-110-9            & -192.92  & 3.33               & 2177     &  & -192.92  & 3.28    & 2171     &  & -192.92  & 3.63    & 2151    &  & -192.92  & 3.66    & 2151    \\
bmilplib-160-10           & -189.82  & 40.10              & 9425     &  & -189.82  & 31.24   & 9344     &  & -189.82  & 33.76   & 9071    &  & -189.82  & 33.42   & 9071    \\
bmilplib-160-1            & -165.00  & 26.18              & 8157     &  & -165.00  & 24.18   & 8145     &  & -165.00  & 25.67   & 7901    &  & -165.00  & 25.70   & 7901    \\
bmilplib-160-2            & -178.24  & 33.86              & 8719     &  & -178.24  & 29.64   & 8710     &  & -178.24  & 30.82   & 8635    &  & -178.24  & 30.59   & 8635    \\
bmilplib-160-3            & -174.94  & 74.85              & 15456    &  & -174.94  & 54.62   & 15427    &  & -174.94  & 65.13   & 15002   &  & -174.94  & 63.37   & 15002   \\
bmilplib-160-4            & -135.83  & 69.61              & 15513    &  & -135.83  & 50.86   & 15354    &  & -135.83  & 51.56   & 14772   &  & -135.83  & 51.02   & 14772   \\
bmilplib-160-5            & -140.78  & 45.58              & 5362     &  & -140.78  & 19.35   & 5178     &  & -140.78  & 20.58   & 4948    &  & -140.78  & 20.52   & 4948    \\
bmilplib-160-6            & -111.00  & 27.27              & 8878     &  & -111.00  & 25.08   & 8864     &  & -111.00  & 24.82   & 8676    &  & -111.00  & 24.66   & 8676    \\
bmilplib-160-7            & -96.00   & 67.91              & 17668    &  & -96.00   & 50.97   & 17430    &  & -96.00   & 52.58   & 16560   &  & -96.00   & 51.20   & 16560   \\
bmilplib-160-8            & -181.40  & 9.50               & 3564     &  & -181.40  & 9.32    & 3540     &  & -181.40  & 10.45   & 3444    &  & -181.40  & 10.49   & 3444    \\
bmilplib-160-9            & -207.50  & 15.01              & 4754     &  & -207.50  & 14.90   & 4728     &  & -207.50  & 15.87   & 4607    &  & -207.50  & 15.85   & 4607    \\
bmilplib-210-10           & -130.59  & 66.32              & 12596    &  & -130.59  & 61.00   & 12497    &  & -130.59  & 61.65   & 12176   &  & -130.59  & 61.81   & 12176   \\
bmilplib-210-1            & -136.80  & 43.30              & 7608     &  & -136.80  & 40.04   & 7598     &  & -136.80  & 41.01   & 7429    &  & -136.80  & 41.96   & 7429    \\
bmilplib-210-2            & -117.80  & 132.27             & 23482    &  & -117.80  & 109.56  & 23096    &  & -117.80  & 118.80  & 22294   &  & -117.80  & 113.38  & 22294   \\
bmilplib-210-3            & -130.80  & 60.91              & 9404     &  & -130.80  & 47.44   & 9165     &  & -130.80  & 48.09   & 8848    &  & -130.80  & 48.19   & 8848    \\
bmilplib-210-4            & -162.20  & 27.29              & 4812     &  & -162.20  & 25.92   & 4806     &  & -162.20  & 26.63   & 4687    &  & -162.20  & 26.81   & 4687    \\
bmilplib-210-5            & -134.00  & 117.77             & 21646    &  & -134.00  & 105.03  & 21552    &  & -134.00  & 106.92  & 21058   &  & -134.00  & 108.22  & 21058   \\
bmilplib-210-6            & -125.43  & 261.84             & 40736    &  & -125.43  & 197.88  & 39941    &  & -125.43  & 213.31  & 38443   &  & -125.43  & 201.40  & 38443   \\
bmilplib-210-7            & -169.73  & 82.57              & 14351    &  & -169.73  & 77.62   & 14305    &  & -169.73  & 77.43   & 13960   &  & -169.73  & 77.84   & 13960   \\
bmilplib-210-8            & -101.46  & 67.82              & 11406    &  & -101.46  & 58.14   & 11347    &  & -101.46  & 56.85   & 11105   &  & -101.46  & 61.79   & 11105   \\
bmilplib-210-9            & -184.00  & 912.62             & 143788   &  & -184.00  & 859.46  & 143665   &  & -184.00  & 849.38  & 142294  &  & -184.00  & 879.61  & 142294  \\
bmilplib-260-10           & -151.73  & 559.37             & 63243    &  & -151.73  & 502.94  & 63026    &  & -151.73  & 514.35  & 61767   &  & -151.73  & 546.55  & 61767   \\
bmilplib-260-1            & -139.00  & 82.50              & 11269    &  & -139.00  & 81.74   & 11242    &  & -139.00  & 81.82   & 10980   &  & -139.00  & 85.81   & 10980   \\
bmilplib-260-2            & -82.62   & 125.39             & 15896    &  & -82.62   & 109.75  & 15712    &  & -82.62   & 107.89  & 15287   &  & -82.62   & 109.75  & 15287   \\
bmilplib-260-3            & -144.25  & 74.50              & 8865     &  & -144.25  & 69.26   & 8859     &  & -144.25  & 74.50   & 8769    &  & -144.25  & 71.54   & 8769    \\
bmilplib-260-4            & -117.33  & 284.35             & 34237    &  & -117.33  & 260.01  & 33931    &  & -117.33  & 272.56  & 32787   &  & -117.33  & 259.48  & 32787   \\
bmilplib-260-5            & -121.00  & 181.75             & 22295    &  & -121.00  & 173.32  & 22121    &  & -121.00  & 167.03  & 21542   &  & -121.00  & 166.03  & 21542   \\
bmilplib-260-6            & -124.00  & 240.41             & 26166    &  & -124.00  & 209.79  & 26023    &  & -124.00  & 195.61  & 25362   &  & -124.00  & 197.04  & 25362   \\
bmilplib-260-7            & -137.80  & 357.63             & 41519    &  & -137.80  & 318.76  & 41180    &  & -137.80  & 342.97  & 40274   &  & -137.80  & 312.12  & 40274   \\
bmilplib-260-8            & -119.89  & 87.09              & 10445    &  & -119.89  & 76.70   & 10332    &  & -119.89  & 75.72   & 10025   &  & -119.89  & 87.01   & 10025   \\
bmilplib-260-9            & -160.00  & 318.76             & 34545    &  & -160.00  & 271.18  & 34236    &  & -160.00  & 273.16  & 33468   &  & -160.00  & 273.66  & 33468   \\
bmilplib-310-10           & -141.86  & 119.62             & 10032    &  & -141.86  & 114.38  & 10033    &  & -141.86  & 110.92  & 9900    &  & -141.86  & 121.07  & 9900    \\
bmilplib-310-1            & -117.00  & 363.13             & 29350    &  & -117.00  & 329.80  & 29055    &  & -117.00  & 346.52  & 28330   &  & -117.00  & 329.71  & 28330   \\
bmilplib-310-2            & -105.00  & 519.72             & 45235    &  & -105.00  & 457.69  & 44841    &  & -105.00  & 519.36  & 43399   &  & -105.00  & 497.66  & 43399   \\
bmilplib-310-3            & -127.52  & 781.32             & 67649    &  & -127.52  & 783.89  & 67441    &  & -127.52  & 803.04  & 66410   &  & -127.52  & 777.74  & 66410   \\
bmilplib-310-4            & -147.78  & 584.37             & 49620    &  & -147.78  & 572.76  & 48950    &  & -147.78  & 564.98  & 47711   &  & -147.78  & 569.43  & 47711   \\
bmilplib-310-5            & -161.45  & 392.22             & 32631    &  & -161.45  & 382.13  & 32454    &  & -161.45  & 396.10  & 31782   &  & -161.45  & 366.90  & 31782   \\
bmilplib-310-6            & -141.18  & 1191.16            & 103385   &  & -141.18  & 1264.60 & 103214   &  & -141.18  & 1229.32 & 101904  &  & -141.18  & 1191.51 & 101904  \\
bmilplib-310-7            & -142.00  & 1132.52            & 104378   &  & -142.00  & 1199.47 & 104166   &  & -142.00  & 1142.79 & 102000  &  & -142.00  & 1129.24 & 102000  \\
bmilplib-310-8            & -115.34  & 129.19             & 11336    &  & -115.34  & 114.80  & 11293    &  & -115.34  & 137.89  & 11109   &  & -115.34  & 127.94  & 11109   \\
bmilplib-310-9            & -115.65  & 262.22             & 20893    &  & -115.65  & 248.24  & 20838    &  & -115.65  & 254.18  & 20490   &  & -115.65  & 255.27  & 20490   \\
bmilplib-360-10           & -108.59  & 254.46             & 14127    &  & -108.59  & 242.94  & 14064    &  & -108.59  & 261.28  & 13697   &  & -108.59  & 234.92  & 13697   \\
bmilplib-360-1            & -133.00  & 1390.85            & 77371    &  & -133.00  & 1297.32 & 77066    &  & -133.00  & 1380.33 & 75421   &  & -133.00  & 1353.62 & 75421   \\
bmilplib-360-2            & -138.44  & 980.62             & 54874    &  & -138.44  & 999.50  & 54354    &  & -138.44  & 953.84  & 52919   &  & -138.44  & 965.52  & 52919   \\
bmilplib-360-3            & -131.00  & 689.69             & 41459    &  & -131.00  & 624.78  & 41302    &  & -131.00  & 643.52  & 40487   &  & -131.00  & 832.88  & 40487   \\
bmilplib-360-4            & -119.00  & 334.15             & 18870    &  & -119.00  & 338.49  & 18813    &  & -119.00  & 331.26  & 18293   &  & -119.00  & 332.90  & 18293   \\
bmilplib-360-5            & -164.26  & 527.22             & 30465    &  & -164.26  & 484.66  & 30352    &  & -164.26  & 526.38  & 29947   &  & -164.26  & 618.86  & 29947   \\
bmilplib-360-6            & -110.12  & 1147.18            & 70479    &  & -110.12  & 1018.82 & 70283    &  & -110.12  & 1131.67 & 68863   &  & -110.12  & 1181.02 & 68863   \\
bmilplib-360-7            & -105.00  & 554.89             & 32224    &  & -105.00  & 517.46  & 31884    &  & -105.00  & 479.93  & 30900   &  & -105.00  & 634.17  & 30900   \\
bmilplib-360-8            & -98.25   & 438.62             & 23402    &  & -98.25   & 416.97  & 23337    &  & -98.25   & 423.75  & 22857   &  & -98.25   & 362.89  & 22857   \\
bmilplib-360-9            & -127.22  & 708.99             & 41321    &  & -127.22  & 746.66  & 41235    &  & -127.22  & 656.26  & 40329   &  & -127.22  & 736.16  & 40329   \\
bmilplib-410-10           & -153.37  & 2756.08            & 103891   &  & -153.37  & 2879.03 & 103400   &  & -153.37  & 2624.30 & 101447  &  & -153.37  & 2729.77 & 101447  \\
bmilplib-410-1            & -103.50  & 614.03             & 21120    &  & -103.50  & 589.62  & 21088    &  & -103.50  & 590.37  & 20634   &  & -103.50  & 553.76  & 20634   \\
bmilplib-410-2            & -108.59  & 846.60             & 32377    &  & -108.59  & 748.06  & 32251    &  & -108.59  & 746.96  & 31603   &  & -108.59  & 840.56  & 31603   \\
\hline
\end{tabular}
}
\end{table}
\addtocounter{table}{-1}
\begin{table}[h!]
\vskip -0.8in
\caption{Detailed results of Figure~\ref{fig:linkingSolutionPooParWithLinkParWithTimeMore5Sec} (continued)}
\label{tab:fig4bContinue}
\resizebox{\columnwidth}{!}{
\begin{tabular}{llllllllllllllll}
\hline
\multicolumn{1}{c}{}&
\multicolumn{3}{c}{\begin{tabular}[c]{@{}c@{}}\scriptsize{withoutPoolWhen}\\\scriptsize{XYInt-LFixed}\end{tabular}}&
\multicolumn{1}{c}{}&
\multicolumn{3}{c}{\begin{tabular}[c]{@{}c@{}}\scriptsize{withPoolWhen}\\\scriptsize{XYInt-LFixed}\end{tabular}}&
\multicolumn{1}{c}{}&
\multicolumn{3}{c}{\begin{tabular}[c]{@{}c@{}}\scriptsize{withoutPoolWhen}\\\scriptsize{XYIntOrLFixed-LFixed}\end{tabular}}&
\multicolumn{1}{c}{}&
\multicolumn{3}{c}{\begin{tabular}[c]{@{}c@{}}\scriptsize{withPoolWhen}\\\scriptsize{XYIntOrLFixed-LFixed}\end{tabular}} \\
\cline{2-4}
\cline{6-8}
\cline{10-12}
\cline{14-16}
Instance&   BestSol & Time(s) &Nodes  & &BestSol & Time(s) &Nodes & &BestSol & Time(s) &Nodes& &BestSol & Time(s) &Nodes\\
\hline
bmilplib-410-3            & -96.24   & 877.76             & 33590    &  & -96.24   & 777.04  & 33473    &  & -96.24   & 874.84  & 32882   &  & -96.24   & 871.58  & 32882   \\
bmilplib-410-4            & -119.50  & 957.38             & 39535    &  & -119.50  & 1041.97 & 39371    &  & -119.50  & 944.66  & 38489   &  & -119.50  & 1031.12 & 38489   \\
bmilplib-410-5            & -119.22  & 677.99             & 24188    &  & -119.22  & 681.99  & 24163    &  & -119.22  & 583.64  & 23852   &  & -119.22  & 678.49  & 23852   \\
bmilplib-410-6            & -151.31  & 291.60             & 11213    &  & -151.31  & 333.36  & 11193    &  & -151.31  & 300.03  & 11130   &  & -151.31  & 322.88  & 11130   \\
bmilplib-410-7            & -123.00  & 514.64             & 22684    &  & -123.00  & 601.78  & 22628    &  & -123.00  & 588.83  & 22146   &  & -123.00  & 560.34  & 22146   \\
bmilplib-410-8            & -125.78  & 1679.79            & 64013    &  & -125.78  & 1692.35 & 63421    &  & -125.78  & 1462.52 & 62218   &  & -125.78  & 1547.88 & 62218   \\
bmilplib-410-9            & -100.77  & 561.87             & 20968    &  & -100.77  & 607.44  & 20920    &  & -100.77  & 539.37  & 20400   &  & -100.77  & 531.88  & 20400   \\
bmilplib-460-10           & -102.51  & 1837.01            & 56744    &  & -102.51  & 1701.70 & 56265    &  & -102.51  & 1964.44 & 55024   &  & -102.51  & 1857.94 & 55024   \\
bmilplib-460-1            & -97.59   & 2829.00            & 87769    &  & -97.59   & 2938.02 & 87709    &  & -97.59   & 2843.15 & 86632   &  & -97.59   & 2822.97 & 86632   \\
bmilplib-460-2            & -139.00  & 652.85             & 16943    &  & -139.00  & 608.40  & 16863    &  & -139.00  & 651.97  & 16623   &  & -139.00  & 625.95  & 16623   \\
bmilplib-460-3            & -86.50   & 1976.55            & 58270    &  & -86.50   & 2102.77 & 58229    &  & -86.50   & 1941.05 & 57395   &  & -86.50   & 1995.92 & 57395   \\
bmilplib-460-4            & -107.03  & 3577.74            & 97665    &  & -107.03  & 3328.91 & 97160    &  & -107.03  & 3160.13 & 95035   &  & -107.03  & 3291.28 & 95035   \\
bmilplib-460-5            & -100.50  & 1519.85            & 41863    &  & -100.50  & 1619.91 & 41796    &  & -100.50  & 1517.44 & 41109   &  & -100.50  & 1424.54 & 41109   \\
bmilplib-460-6            & -107.00  & 1603.92            & 46833    &  & -107.00  & 1483.17 & 46732    &  & -107.00  & 1528.18 & 46076   &  & -107.00  & 1635.42 & 46076   \\
bmilplib-460-7            & -83.75   & 1813.99            & 49915    &  & -83.75   & 1728.92 & 49717    &  & -83.75   & 1826.46 & 48897   &  & -83.75   & 1601.34 & 48897   \\
bmilplib-460-8            & -115.39  & 983.08             & 28147    &  & -115.39  & 911.51  & 28075    &  & -115.39  & 901.08  & 27572   &  & -115.39  & 982.30  & 27572   \\
bmilplib-460-9            & -128.70  & 3006.84            & 88287    &  & -128.70  & 2715.98 & 87796    &  & -128.70  & 3043.95 & 85977   &  & -128.70  & 2830.59 & 85977   \\
bmilplib-60-10            & -186.21  & 7.00               & 6136     &  & -186.21  & 6.91    & 6134     &  & -186.21  & 6.70    & 5922    &  & -186.21  & 6.93    & 5922    \\
bmilplib-60-1             & -153.20  & 7.31               & 5060     &  & -153.20  & 6.50    & 5041     &  & -153.20  & 6.48    & 4877    &  & -153.20  & 6.71    & 4877    \\
bmilplib-60-5             & -116.40  & 13.21              & 15368    &  & -116.40  & 8.65    & 13984    &  & -116.40  & 7.98    & 11202   &  & -116.40  & 8.15    & 11202   \\
bmilplib-60-6             & -187.31  & 7.57               & 7310     &  & -187.31  & 7.34    & 7304     &  & -187.31  & 7.22    & 7016    &  & -187.31  & 7.62    & 7016    \\
bmilplib-60-8             & -232.12  & 3.66               & 3700     &  & -232.12  & 3.33    & 3683     &  & -232.12  & 3.16    & 3570    &  & -232.12  & 3.36    & 3570    \\
bmilplib-60-9             & -136.50  & 22.60              & 28947    &  & -136.50  & 20.22   & 28603    &  & -136.50  & 19.79   & 26792   &  & -136.50  & 19.61   & 26792   \\
miblp-20-15-50-0110-10-10 & -206.00  & 5.32               & 3150     &  & -206.00  & 0.97    & 1414     &  & -206.00  & 1.03    & 423     &  & -206.00  & 1.06    & 423     \\
miblp-20-15-50-0110-10-2  & -398.00  & 427.10             & 229000   &  & -398.00  & 11.12   & 27625    &  & -398.00  & 9.11    & 2450    &  & -398.00  & 9.09    & 2450    \\
miblp-20-15-50-0110-10-3  & -42.00   & 6.10               & 3151     &  & -42.00   & 0.71    & 1343     &  & -42.00   & 0.62    & 267     &  & -42.00   & 0.62    & 267     \\
miblp-20-15-50-0110-10-6  & -246.00  & 5.47               & 1183     &  & -246.00  & 2.19    & 853      &  & -246.00  & 4.47    & 340     &  & -246.00  & 4.56    & 340     \\
miblp-20-15-50-0110-10-9  & -635.00  & 16.24              & 7425     &  & -635.00  & 7.86    & 6457     &  & -635.00  & 14.46   & 2380    &  & -635.00  & 14.05   & 2380    \\
miblp-20-20-50-0110-10-10 & -441.00  & 3568.40            & 1159333  &  & -441.00  & 526.56  & 639146   &  & -441.00  & 717.56  & 134585  &  & -441.00  & 692.00  & 134585  \\
miblp-20-20-50-0110-10-1  & -353.00  & \textgreater{}3600 & 686972   &  & -359.00  & 260.31  & 423149   &  & -359.00  & 231.63  & 96141   &  & -359.00  & 228.67  & 96141   \\
miblp-20-20-50-0110-10-2  & -659.00  & 5.16               & 7729     &  & -659.00  & 2.78    & 7547     &  & -659.00  & 3.80    & 3191    &  & -659.00  & 3.69    & 3191    \\
miblp-20-20-50-0110-10-3  & -618.00  & 14.11              & 42906    &  & -618.00  & 10.74   & 38188    &  & -618.00  & 13.14   & 20788   &  & -618.00  & 13.18   & 20788   \\
miblp-20-20-50-0110-10-4  & -604.00  & \textgreater{}3600 & 1351975  &  & -604.00  & 3405.57 & 6452671  &  & -604.00  & 3089.99 & 830808  &  & -604.00  & 3145.16 & 830808  \\
miblp-20-20-50-0110-10-7  & -650.00  & \textgreater{}3600 & 7509639  &  & -683.00  & 3184.99 & 11502091 &  & -683.00  & 2009.54 & 3003967 &  & -683.00  & 1887.37 & 3003967 \\
miblp-20-20-50-0110-10-8  & -667.00  & 155.00             & 39744    &  & -667.00  & 40.78   & 30873    &  & -667.00  & 87.06   & 12857   &  & -667.00  & 80.45   & 12857   \\
miblp-20-20-50-0110-10-9  & -256.00  & 77.53              & 127374   &  & -256.00  & 23.97   & 76055    &  & -256.00  & 21.78   & 31245   &  & -256.00  & 20.99   & 31245   \\
miblp-20-20-50-0110-15-1  & -289.00  & \textgreater{}3600 & 474713   &  & -450.00  & 65.16   & 49137    &  & -450.00  & 65.21   & 3506    &  & -450.00  & 59.38   & 3506    \\
miblp-20-20-50-0110-15-2  & -645.00  & \textgreater{}3600 & 1897076  &  & -645.00  & 96.13   & 346065   &  & -645.00  & 49.73   & 17251   &  & -645.00  & 50.14   & 17251   \\
miblp-20-20-50-0110-15-3  & -593.00  & \textgreater{}3600 & 231893   &  & -593.00  & 65.25   & 42877    &  & -593.00  & 72.10   & 3081    &  & -593.00  & 71.15   & 3081    \\
miblp-20-20-50-0110-15-4  & -323.00  & \textgreater{}3600 & 205392   &  & -441.00  & 36.99   & 29904    &  & -441.00  & 43.47   & 1625    &  & -441.00  & 43.00   & 1625    \\
miblp-20-20-50-0110-15-5  & -75.00   & \textgreater{}3600 & 103173   &  & -379.00  & 615.16  & 205025   &  & -379.00  & 644.67  & 16715   &  & -379.00  & 651.28  & 16715   \\
miblp-20-20-50-0110-15-6  & -596.00  & \textgreater{}3600 & 317849   &  & -596.00  & 17.87   & 29923    &  & -596.00  & 18.98   & 1657    &  & -596.00  & 17.31   & 1657    \\
miblp-20-20-50-0110-15-7  & -471.00  & \textgreater{}3600 & 1084548  &  & -471.00  & 125.24  & 241285   &  & -471.00  & 126.10  & 13405   &  & -471.00  & 111.02  & 13405   \\
miblp-20-20-50-0110-15-8  & -290.00  & \textgreater{}3600 & 4898557  &  & -370.00  & 138.34  & 579309   &  & -370.00  & 39.01   & 21589   &  & -370.00  & 39.12   & 21589   \\
miblp-20-20-50-0110-15-9  & -584.00  & 20.00              & 12193    &  & -584.00  & 1.96    & 4072     &  & -584.00  & 2.07    & 582     &  & -584.00  & 2.00    & 582     \\
miblp-20-20-50-0110-5-13  & -519.00  & 2582.26            & 10718411 &  & -519.00  & 2307.90 & 10196998 &  & -519.00  & 2516.55 & 7128138 &  & -519.00  & 2464.25 & 7128138 \\
miblp-20-20-50-0110-5-15  & -617.00  & 2170.42            & 8462491  &  & -617.00  & 1219.33 & 6630921  &  & -617.00  & 1297.16 & 4018058 &  & -617.00  & 1310.67 & 4018058 \\
miblp-20-20-50-0110-5-16  & -833.00  & 6.04               & 20129    &  & -833.00  & 4.86    & 19013    &  & -833.00  & 7.36    & 17680   &  & -833.00  & 6.60    & 17680   \\
miblp-20-20-50-0110-5-17  & -944.00  & 3.71               & 20049    &  & -944.00  & 3.99    & 21541    &  & -944.00  & 5.21    & 17679   &  & -944.00  & 4.78    & 17679   \\
miblp-20-20-50-0110-5-19  & -431.00  & 52.75              & 197157   &  & -431.00  & 27.52   & 147268   &  & -431.00  & 32.98   & 77256   &  & -431.00  & 26.56   & 77256   \\
miblp-20-20-50-0110-5-1   & -548.00  & 16.54              & 69127    &  & -548.00  & 12.98   & 64067    &  & -548.00  & 17.63   & 42364   &  & -548.00  & 14.50   & 42364   \\
miblp-20-20-50-0110-5-20  & -438.00  & 17.24              & 80526    &  & -438.00  & 15.81   & 76091    &  & -438.00  & 18.83   & 60944   &  & -438.00  & 17.30   & 60944   \\
miblp-20-20-50-0110-5-6   & -1061.00 & 66.85              & 301416   &  & -1061.00 & 58.74   & 284550   &  & -1061.00 & 63.82   & 222566  &  & -1061.00 & 63.70   & 222566  \\
lseu-0.100000             & 1120.00  & 248.50             & 1071409  &  & 1120.00  & 248.54  & 1071409  &  & 1120.00  & 262.55  & 1003976 &  & 1120.00  & 270.78  & 1003976 \\
lseu-0.900000             & 5838.00  & \textgreater{}3600 & 1631763  &  & 5838.00  & 1063.02 & 4718749  &  & 5838.00  & 14.12   & 1023    &  & 5838.00  & 14.10   & 1023    \\
p0033-0.500000            & 3095.00  & 6.08               & 33614    &  & 3095.00  & 6.24    & 33614    &  & 3095.00  & 10.55   & 20695   &  & 3095.00  & 10.48   & 20695   \\
p0033-0.900000            & 4679.00  & 14.66              & 23699    &  & 4679.00  & 0.65    & 3455     &  & 4679.00  & 0.06    & 27      &  & 4679.00  & 0.06    & 27      \\
p0201-0.900000            & 15025.00 & \textgreater{}3600 & 1037538  &  & 15025.00 & 20.58   & 18801    &  & 15025.00 & 6.66    & 2481    &  & 15025.00 & 6.62    & 2481    \\
stein27-0.500000          & 19.00    & 7.94               & 22537    &  & 19.00    & 7.36    & 21515    &  & 19.00    & 5.82    & 12115   &  & 19.00    & 5.88    & 12115   \\
stein27-0.900000          & 24.00    & 29.12              & 36927    &  & 24.00    & 1.25    & 4445     &  & 24.00    & 0.02    & 15      &  & 24.00    & 0.02    & 15      \\
stein45-0.100000          & 30.00    & 49.86              & 89035    &  & 30.00    & 50.47   & 89035    &  & 30.00    & 50.33   & 89035   &  & 30.00    & 50.19   & 89035   \\
stein45-0.500000          & 32.00    & 640.82             & 963098   &  & 32.00    & 635.22  & 952123   &  & 32.00    & 516.60  & 640308  &  & 32.00    & 519.59  & 640308  \\
stein45-0.900000          & 40.00    & \textgreater{}3600 & 2651427  &  & 40.00    & 85.92   & 103661   &  & 40.00    & 0.16    & 63      &  & 40.00    & 0.14    & 63     \\
\hline
\end{tabular}
}
\end{table}
\begin{table}[h!]
\caption{Detailed results of Figure~\ref{fig:heuristicsParTimeMore5Sec}}
\label{tab:fig5}
\centering
\resizebox{\columnwidth}{!}{
\begin{tabular}{llllllllllllllll}
\hline
\multicolumn{1}{c}{}&
\multicolumn{3}{c}{noHeuristics}&
\multicolumn{1}{c}{}&
\multicolumn{3}{c}{impObjectiveCut}&
\multicolumn{1}{c}{}&
\multicolumn{3}{c}{secondLevelPriority}&
\multicolumn{1}{c}{}&
\multicolumn{3}{c}{weightedSums} \\
\cline{2-4}
\cline{6-8}
\cline{10-12}
\cline{14-16}
Instance&   BestSol & Time(s) &Nodes  & &BestSol & Time(s) &Nodes & &BestSol & Time(s) &Nodes& &BestSol & Time(s) &Nodes\\
\hline
bmilplib-110-10           & -177.67  & 116.32  & 55177   &  & -177.67  & 117.32  & 55177   &  & -177.67  & 146.32  & 55177   &  & -177.67  & 1024.90            & 55177   \\
bmilplib-110-1            & -181.67  & 0.93    & 306     &  & -181.67  & 0.96    & 306     &  & -181.67  & 2.90    & 306     &  & -181.67  & 67.04              & 306     \\
bmilplib-110-2            & -110.67  & 1.16    & 303     &  & -110.67  & 1.26    & 303     &  & -110.67  & 1.64    & 303     &  & -110.67  & 25.89              & 303     \\
bmilplib-110-3            & -215.16  & 0.89    & 360     &  & -215.16  & 1.50    & 361     &  & -215.16  & 1.93    & 360     &  & -215.16  & 30.52              & 360     \\
bmilplib-110-4            & -197.29  & 1.34    & 148     &  & -197.29  & 1.54    & 148     &  & -197.29  & 2.79    & 148     &  & -197.29  & 68.13              & 148     \\
bmilplib-110-6            & -148.25  & 3.84    & 1448    &  & -148.25  & 4.24    & 1448    &  & -148.25  & 4.84    & 1446    &  & -148.25  & 59.02              & 1446    \\
bmilplib-110-7            & -160.86  & 0.91    & 205     &  & -160.86  & 2.19    & 205     &  & -160.86  & 1.98    & 205     &  & -160.86  & 52.88              & 207     \\
bmilplib-110-8            & -155.00  & 7.66    & 2274    &  & -155.00  & 7.68    & 2274    &  & -155.00  & 10.06   & 2274    &  & -155.00  & 73.62              & 2274    \\
bmilplib-110-9            & -192.92  & 0.39    & 146     &  & -192.92  & 1.33    & 146     &  & -192.92  & 1.15    & 146     &  & -192.92  & 16.48              & 154     \\
bmilplib-160-10           & -189.82  & 7.73    & 728     &  & -189.82  & 7.89    & 728     &  & -189.82  & 11.28   & 728     &  & -189.82  & 65.95              & 728     \\
bmilplib-160-1            & -165.00  & 4.76    & 881     &  & -165.00  & 4.76    & 881     &  & -165.00  & 5.82    & 893     &  & -165.00  & 51.48              & 893     \\
bmilplib-160-2            & -178.24  & 5.95    & 507     &  & -178.24  & 6.64    & 507     &  & -178.24  & 6.96    & 507     &  & -178.24  & 53.77              & 507     \\
bmilplib-160-3            & -174.94  & 13.71   & 1102    &  & -174.94  & 13.57   & 1102    &  & -174.94  & 15.78   & 1102    &  & -174.94  & 70.46              & 1102    \\
bmilplib-160-4            & -135.83  & 19.13   & 2447    &  & -135.83  & 18.91   & 2463    &  & -135.83  & 24.00   & 2447    &  & -135.83  & 116.48             & 2422    \\
bmilplib-160-5            & -140.78  & 7.72    & 668     &  & -140.78  & 8.94    & 668     &  & -140.78  & 10.34   & 668     &  & -140.78  & 56.74              & 668     \\
bmilplib-160-6            & -111.00  & 5.02    & 627     &  & -111.00  & 5.03    & 627     &  & -111.00  & 5.40    & 627     &  & -111.00  & 19.14              & 627     \\
bmilplib-160-7            & -96.00   & 18.72   & 2855    &  & -96.00   & 17.83   & 2855    &  & -96.00   & 19.22   & 2855    &  & -96.00   & 74.44              & 2855    \\
bmilplib-160-8            & -181.40  & 1.74    & 311     &  & -181.40  & 2.12    & 311     &  & -181.40  & 2.44    & 311     &  & -181.40  & 28.91              & 311     \\
bmilplib-160-9            & -207.50  & 2.02    & 268     &  & -207.50  & 2.85    & 268     &  & -207.50  & 2.98    & 268     &  & -207.50  & 40.33              & 268     \\
bmilplib-210-10           & -130.59  & 8.59    & 1100    &  & -130.59  & 8.66    & 1100    &  & -130.59  & 10.02   & 1100    &  & -130.59  & 73.44              & 1100    \\
bmilplib-210-1            & -136.80  & 6.67    & 550     &  & -136.80  & 6.68    & 550     &  & -136.80  & 7.59    & 550     &  & -136.80  & 80.52              & 550     \\
bmilplib-210-2            & -117.80  & 19.78   & 2306    &  & -117.80  & 20.49   & 2306    &  & -117.80  & 22.29   & 2298    &  & -117.80  & 149.96             & 2308    \\
bmilplib-210-3            & -130.80  & 12.93   & 1380    &  & -130.80  & 13.04   & 1386    &  & -130.80  & 16.16   & 1380    &  & -130.80  & 218.18             & 1380    \\
bmilplib-210-4            & -162.20  & 3.64    & 309     &  & -162.20  & 5.41    & 309     &  & -162.20  & 4.63    & 309     &  & -162.20  & 41.06              & 309     \\
bmilplib-210-5            & -134.00  & 20.03   & 2079    &  & -134.00  & 20.03   & 2079    &  & -134.00  & 22.31   & 2079    &  & -134.00  & 161.23             & 2079    \\
bmilplib-210-6            & -125.43  & 45.28   & 4875    &  & -125.43  & 46.46   & 4875    &  & -125.43  & 52.83   & 4878    &  & -125.43  & 230.84             & 4875    \\
bmilplib-210-7            & -169.73  & 11.46   & 1181    &  & -169.73  & 12.35   & 1153    &  & -169.73  & 14.19   & 1181    &  & -169.73  & 148.58             & 1152    \\
bmilplib-210-8            & -101.46  & 9.60    & 942     &  & -101.46  & 12.36   & 976     &  & -101.46  & 12.71   & 942     &  & -101.46  & 255.24             & 942     \\
bmilplib-210-9            & -184.00  & 240.14  & 9466    &  & -184.00  & 242.65  & 9466    &  & -184.00  & 267.90  & 9466    &  & -184.00  & 1639.04            & 9466    \\
bmilplib-260-10           & -151.73  & 73.10   & 4716    &  & -151.73  & 75.91   & 4716    &  & -151.73  & 119.40  & 4716    &  & -151.73  & 783.50             & 4716    \\
bmilplib-260-1            & -139.00  & 10.10   & 887     &  & -139.00  & 11.58   & 887     &  & -139.00  & 11.79   & 887     &  & -139.00  & 135.30             & 887     \\
bmilplib-260-2            & -82.62   & 18.07   & 1607    &  & -82.62   & 18.48   & 1607    &  & -82.62   & 29.56   & 1607    &  & -82.62   & 195.56             & 1607    \\
bmilplib-260-3            & -144.25  & 8.70    & 518     &  & -144.25  & 9.34    & 518     &  & -144.25  & 9.60    & 518     &  & -144.25  & 105.89             & 518     \\
bmilplib-260-4            & -117.33  & 66.89   & 4426    &  & -117.33  & 62.45   & 4426    &  & -117.33  & 80.66   & 4426    &  & -117.33  & 671.57             & 4426    \\
bmilplib-260-5            & -121.00  & 29.19   & 2165    &  & -121.00  & 27.30   & 2165    &  & -121.00  & 30.22   & 2165    &  & -121.00  & 212.90             & 2165    \\
bmilplib-260-6            & -124.00  & 40.59   & 2420    &  & -124.00  & 39.48   & 2420    &  & -124.00  & 42.70   & 2420    &  & -124.00  & 426.31             & 2420    \\
bmilplib-260-7            & -137.80  & 44.45   & 3200    &  & -137.80  & 45.80   & 3200    &  & -137.80  & 48.73   & 3200    &  & -137.80  & 267.10             & 3200    \\
bmilplib-260-8            & -119.89  & 10.92   & 1025    &  & -119.89  & 10.46   & 1025    &  & -119.89  & 11.77   & 1025    &  & -119.89  & 102.82             & 1025    \\
bmilplib-260-9            & -160.00  & 36.36   & 2526    &  & -160.00  & 38.08   & 2526    &  & -160.00  & 39.54   & 2526    &  & -160.00  & 231.78             & 2526    \\
bmilplib-310-10           & -141.86  & 5.51    & 397     &  & -141.86  & 6.24    & 397     &  & -141.86  & 6.86    & 397     &  & -141.86  & 95.24              & 397     \\
bmilplib-310-1            & -117.00  & 44.79   & 2624    &  & -117.00  & 41.07   & 2624    &  & -117.00  & 44.15   & 2624    &  & -117.00  & 381.97             & 2624    \\
bmilplib-310-2            & -105.00  & 98.39   & 5372    &  & -105.00  & 94.00   & 5372    &  & -105.00  & 97.58   & 5372    &  & -105.00  & 385.78             & 5372    \\
bmilplib-310-3            & -127.52  & 149.07  & 7067    &  & -127.52  & 141.25  & 7067    &  & -127.52  & 151.24  & 7067    &  & -127.52  & 694.04             & 7067    \\
bmilplib-310-4            & -147.78  & 70.22   & 4767    &  & -147.78  & 73.75   & 4908    &  & -147.78  & 74.72   & 4863    &  & -147.78  & 510.45             & 4892    \\
bmilplib-310-5            & -161.45  & 34.22   & 1993    &  & -161.45  & 34.42   & 1993    &  & -161.45  & 35.32   & 1993    &  & -161.45  & 205.57             & 1993    \\
bmilplib-310-6            & -141.18  & 169.96  & 8300    &  & -141.18  & 172.88  & 8300    &  & -141.18  & 214.66  & 8300    &  & -141.18  & \textgreater{}3600 & 4459    \\
bmilplib-310-7            & -142.00  & 139.15  & 7263    &  & -142.00  & 140.50  & 7263    &  & -142.00  & 164.46  & 7263    &  & -142.00  & 1416.36            & 7263    \\
bmilplib-310-8            & -115.34  & 19.23   & 921     &  & -115.34  & 19.53   & 952     &  & -115.34  & 23.06   & 921     &  & -115.34  & 425.10             & 921     \\
bmilplib-310-9            & -115.65  & 32.31   & 1590    &  & -115.65  & 32.39   & 1590    &  & -115.65  & 34.56   & 1590    &  & -115.65  & 151.96             & 1590    \\
bmilplib-360-10           & -108.59  & 25.75   & 1106    &  & -108.59  & 23.23   & 1106    &  & -108.59  & 26.41   & 1102    &  & -108.59  & 156.13             & 1102    \\
bmilplib-360-1            & -133.00  & 158.38  & 4923    &  & -133.00  & 148.68  & 4920    &  & -133.00  & 188.86  & 4923    &  & -120.00  & \textgreater{}3600 & 1611    \\
bmilplib-360-2            & -138.44  & 148.90  & 4493    &  & -138.44  & 138.96  & 4493    &  & -138.44  & 160.91  & 4493    &  & -138.44  & 1961.79            & 4493    \\
bmilplib-360-3            & -131.00  & 65.41   & 2654    &  & -131.00  & 56.59   & 2654    &  & -131.00  & 66.60   & 2654    &  & -131.00  & 609.13             & 2654    \\
bmilplib-360-4            & -119.00  & 42.95   & 1564    &  & -119.00  & 44.65   & 1564    &  & -119.00  & 61.20   & 1564    &  & -119.00  & 825.58             & 1564    \\
bmilplib-360-5            & -164.26  & 44.45   & 1713    &  & -164.26  & 45.39   & 1713    &  & -164.26  & 41.95   & 1713    &  & -164.26  & 221.40             & 1713    \\
bmilplib-360-6            & -110.12  & 142.81  & 5520    &  & -110.12  & 127.35  & 5520    &  & -110.12  & 148.17  & 5520    &  & -110.12  & 2890.94            & 5556    \\
bmilplib-360-7            & -105.00  & 89.22   & 3346    &  & -105.00  & 88.94   & 3346    &  & -105.00  & 86.08   & 3346    &  & -105.00  & 357.79             & 3346    \\
bmilplib-360-8            & -98.25   & 44.85   & 1686    &  & -98.25   & 48.77   & 1686    &  & -98.25   & 72.01   & 1686    &  & -98.25   & 2774.87            & 1686    \\
bmilplib-360-9            & -127.22  & 50.24   & 2642    &  & -127.22  & 52.33   & 2642    &  & -127.22  & 58.90   & 2642    &  & -127.22  & 1191.63            & 2642    \\
bmilplib-410-10           & -153.37  & 258.89  & 7428    &  & -153.37  & 254.18  & 7428    &  & -153.37  & 278.87  & 7428    &  & -153.37  & 2497.78            & 7428    \\
bmilplib-410-1            & -103.50  & 87.94   & 1944    &  & -103.50  & 76.78   & 1944    &  & -103.50  & 88.08   & 1944    &  & -103.50  & 2256.70            & 1944    \\
\hline
\end{tabular}
}
\end{table}
\addtocounter{table}{-1}
\begin{table}[h!]
\vskip -0.65in
\caption{Detailed results of Figure~\ref{fig:heuristicsParTimeMore5Sec} (continued)}
\label{tab:fig5Continue}
\centering
\resizebox{\columnwidth}{!}{
\begin{tabular}{llllllllllllllll}
\hline
\multicolumn{1}{c}{}&
\multicolumn{3}{c}{noHeuristics}&
\multicolumn{1}{c}{}&
\multicolumn{3}{c}{impObjectiveCut}&
\multicolumn{1}{c}{}&
\multicolumn{3}{c}{secondLevelPriority}&
\multicolumn{1}{c}{}&
\multicolumn{3}{c}{weightedSums} \\
\cline{2-4}
\cline{6-8}
\cline{10-12}
\cline{14-16}
Instance&   BestSol & Time(s) &Nodes  & &BestSol & Time(s) &Nodes & &BestSol & Time(s) &Nodes& &BestSol & Time(s) &Nodes\\
\hline
bmilplib-410-2            & -108.59  & 103.57  & 2887    &  & -108.59  & 92.86   & 2887    &  & -108.59  & 97.35   & 2887    &  & -108.59  & 947.06             & 2887    \\
bmilplib-410-3            & -96.24   & 147.99  & 3781    &  & -96.24   & 157.30  & 3781    &  & -96.24   & 257.83  & 3781    &  & -79.49   & \textgreater{}3600 & 878     \\
bmilplib-410-4            & -119.50  & 100.21  & 2995    &  & -119.50  & 100.04  & 2995    &  & -119.50  & 100.85  & 2995    &  & -119.50  & 1042.52            & 2995    \\
bmilplib-410-5            & -119.22  & 71.23   & 1520    &  & -119.22  & 61.46   & 1527    &  & -119.22  & 107.73  & 1520    &  & -119.22  & 426.60             & 1527    \\
bmilplib-410-6            & -151.31  & 13.54   & 533     &  & -151.31  & 13.74   & 533     &  & -151.31  & 17.48   & 533     &  & -151.31  & 727.04             & 533     \\
bmilplib-410-7            & -123.00  & 35.87   & 1177    &  & -123.00  & 35.79   & 1175    &  & -123.00  & 41.41   & 1177    &  & -123.00  & 406.53             & 1177    \\
bmilplib-410-8            & -125.78  & 169.15  & 4480    &  & -125.78  & 171.56  & 4480    &  & -125.78  & 190.33  & 4480    &  & -125.78  & 1408.43            & 4488    \\
bmilplib-410-9            & -100.77  & 82.17   & 2071    &  & -100.77  & 79.34   & 2071    &  & -100.77  & 85.38   & 2071    &  & -100.77  & 478.13             & 2071    \\
bmilplib-460-10           & -102.51  & 228.12  & 4465    &  & -102.51  & 204.04  & 4465    &  & -102.51  & 238.28  & 4465    &  & -102.51  & 1557.08            & 4465    \\
bmilplib-460-1            & -97.59   & 569.22  & 10803   &  & -97.59   & 610.72  & 10803   &  & -97.59   & 666.13  & 10803   &  & -93.40   & \textgreater{}3600 & 1240    \\
bmilplib-460-2            & -139.00  & 43.65   & 964     &  & -139.00  & 46.56   & 964     &  & -139.00  & 45.27   & 964     &  & -139.00  & 557.54             & 964     \\
bmilplib-460-3            & -86.50   & 223.58  & 3882    &  & -86.50   & 219.35  & 3882    &  & -86.50   & 251.89  & 3882    &  & -82.83   & \textgreater{}3600 & 2966    \\
bmilplib-460-4            & -107.03  & 412.76  & 7856    &  & -107.03  & 425.15  & 7856    &  & -107.03  & 399.79  & 7794    &  & -102.61  & \textgreater{}3600 & 4784    \\
bmilplib-460-5            & -100.50  & 170.30  & 3025    &  & -100.50  & 189.63  & 3025    &  & -100.50  & 177.04  & 3025    &  & -100.50  & 1788.18            & 3025    \\
bmilplib-460-6            & -107.00  & 236.30  & 4143    &  & -107.00  & 205.48  & 4143    &  & -107.00  & 266.17  & 4143    &  & -107.00  & 2457.96            & 4143    \\
bmilplib-460-7            & -83.75   & 294.68  & 5252    &  & -83.75   & 282.83  & 5252    &  & -83.75   & 341.16  & 5252    &  & -83.75   & \textgreater{}3600 & 1538    \\
bmilplib-460-8            & -115.39  & 94.67   & 1903    &  & -115.39  & 92.26   & 1908    &  & -115.39  & 140.92  & 1903    &  & -103.50  & \textgreater{}3600 & 611     \\
bmilplib-460-9            & -128.70  & 327.68  & 6185    &  & -128.70  & 316.77  & 6227    &  & -128.70  & 318.04  & 6185    &  & -128.70  & 3166.24            & 6185    \\
bmilplib-60-10            & -186.21  & 4.29    & 2590    &  & -186.21  & 4.32    & 2590    &  & -186.21  & 4.68    & 2590    &  & -186.21  & 22.24              & 2590    \\
bmilplib-60-1             & -153.20  & 2.79    & 1094    &  & -153.20  & 3.01    & 1094    &  & -153.20  & 3.47    & 1094    &  & -153.20  & 18.65              & 1094    \\
bmilplib-60-5             & -116.40  & 10.08   & 9996    &  & -116.40  & 10.41   & 9996    &  & -116.40  & 12.63   & 9996    &  & -116.40  & 35.73              & 9996    \\
bmilplib-60-6             & -187.31  & 3.34    & 1383    &  & -187.31  & 3.68    & 1383    &  & -187.31  & 4.07    & 1383    &  & -187.31  & 22.35              & 1383    \\
bmilplib-60-8             & -232.12  & 1.15    & 572     &  & -232.12  & 1.22    & 572     &  & -232.12  & 1.61    & 572     &  & -232.12  & 9.46               & 572     \\
bmilplib-60-9             & -136.50  & 12.34   & 9888    &  & -136.50  & 12.45   & 9888    &  & -136.50  & 13.56   & 9888    &  & -136.50  & 47.50              & 9888    \\
miblp-20-15-50-0110-10-10 & -206.00  & 1.06    & 423     &  & -206.00  & 1.07    & 423     &  & -206.00  & 1.01    & 423     &  & -206.00  & 1.81               & 423     \\
miblp-20-15-50-0110-10-2  & -398.00  & 9.09    & 2450    &  & -398.00  & 9.43    & 2450    &  & -398.00  & 8.85    & 2450    &  & -398.00  & 15.52              & 2450    \\
miblp-20-15-50-0110-10-3  & -42.00   & 0.62    & 267     &  & -42.00   & 0.72    & 267     &  & -42.00   & 0.68    & 267     &  & -42.00   & 1.36               & 267     \\
miblp-20-15-50-0110-10-6  & -246.00  & 4.56    & 340     &  & -246.00  & 4.43    & 340     &  & -246.00  & 4.24    & 340     &  & -246.00  & 6.40               & 340     \\
miblp-20-15-50-0110-10-9  & -635.00  & 14.05   & 2380    &  & -635.00  & 14.97   & 2382    &  & -635.00  & 15.48   & 2380    &  & -635.00  & 19.90              & 2380    \\
miblp-20-20-50-0110-10-10 & -441.00  & 692.00  & 134585  &  & -441.00  & 758.37  & 134585  &  & -441.00  & 743.82  & 134585  &  & -441.00  & 1113.34            & 134585  \\
miblp-20-20-50-0110-10-1  & -359.00  & 228.67  & 96141   &  & -359.00  & 232.30  & 96141   &  & -359.00  & 242.52  & 96141   &  & -359.00  & 407.03             & 96141   \\
miblp-20-20-50-0110-10-2  & -659.00  & 3.69    & 3191    &  & -659.00  & 4.15    & 3191    &  & -659.00  & 3.94    & 3191    &  & -659.00  & 9.48               & 3191    \\
miblp-20-20-50-0110-10-3  & -618.00  & 13.18   & 20788   &  & -618.00  & 13.68   & 20788   &  & -618.00  & 14.77   & 20788   &  & -618.00  & 54.78              & 20715   \\
miblp-20-20-50-0110-10-4  & -604.00  & 3145.16 & 830808  &  & -604.00  & 3286.17 & 830808  &  & -604.00  & 3223.61 & 830808  &  & -604.00  & \textgreater{}3600 & 666302  \\
miblp-20-20-50-0110-10-7  & -683.00  & 1887.37 & 3003967 &  & -683.00  & 1960.64 & 3014456 &  & -683.00  & 2049.54 & 3004274 &  & -683.00  & \textgreater{}3600 & 2542388 \\
miblp-20-20-50-0110-10-8  & -667.00  & 80.45   & 12857   &  & -667.00  & 94.47   & 12857   &  & -667.00  & 87.72   & 12858   &  & -667.00  & 134.18             & 12859   \\
miblp-20-20-50-0110-10-9  & -256.00  & 20.99   & 31245   &  & -256.00  & 21.84   & 31245   &  & -256.00  & 22.38   & 31245   &  & -256.00  & 38.71              & 31245   \\
miblp-20-20-50-0110-15-1  & -450.00  & 59.38   & 3506    &  & -450.00  & 59.36   & 3506    &  & -450.00  & 62.04   & 3506    &  & -450.00  & 85.24              & 3506    \\
miblp-20-20-50-0110-15-2  & -645.00  & 50.14   & 17251   &  & -645.00  & 52.11   & 17251   &  & -645.00  & 61.00   & 17251   &  & -645.00  & 222.62             & 17251   \\
miblp-20-20-50-0110-15-3  & -593.00  & 71.15   & 3081    &  & -593.00  & 66.66   & 3081    &  & -593.00  & 67.67   & 3081    &  & -593.00  & 95.58              & 3083    \\
miblp-20-20-50-0110-15-4  & -441.00  & 43.00   & 1625    &  & -441.00  & 43.74   & 1625    &  & -441.00  & 39.84   & 1625    &  & -441.00  & 57.56              & 1625    \\
miblp-20-20-50-0110-15-5  & -379.00  & 651.28  & 16715   &  & -379.00  & 634.58  & 16715   &  & -379.00  & 639.80  & 16715   &  & -379.00  & 750.81             & 16715   \\
miblp-20-20-50-0110-15-6  & -596.00  & 17.31   & 1657    &  & -596.00  & 17.40   & 1657    &  & -596.00  & 18.50   & 1657    &  & -596.00  & 25.94              & 1657    \\
miblp-20-20-50-0110-15-7  & -471.00  & 111.02  & 13405   &  & -471.00  & 109.14  & 13405   &  & -471.00  & 113.83  & 13405   &  & -471.00  & 191.29             & 13405   \\
miblp-20-20-50-0110-15-8  & -370.00  & 39.12   & 21589   &  & -370.00  & 39.12   & 21589   &  & -370.00  & 44.93   & 21589   &  & -370.00  & 122.50             & 21589   \\
miblp-20-20-50-0110-15-9  & -584.00  & 2.00    & 582     &  & -584.00  & 2.24    & 582     &  & -584.00  & 1.93    & 582     &  & -584.00  & 3.44               & 584     \\
miblp-20-20-50-0110-5-13  & -519.00  & 2709.29 & 6515097 &  & -519.00  & 2756.40 & 6514987 &  & -519.00  & 2828.20 & 6521367 &  & -519.00  & 3465.54            & 6515355 \\
miblp-20-20-50-0110-5-15  & -617.00  & 1130.92 & 4312915 &  & -617.00  & 1123.27 & 4312915 &  & -617.00  & 1202.16 & 4312915 &  & -617.00  & 1826.00            & 4309751 \\
miblp-20-20-50-0110-5-16  & -833.00  & 1.80    & 5913    &  & -833.00  & 1.76    & 5913    &  & -833.00  & 2.22    & 5925    &  & -833.00  & 9.74               & 5925    \\
miblp-20-20-50-0110-5-17  & -944.00  & 1.53    & 4038    &  & -944.00  & 1.48    & 4037    &  & -944.00  & 1.84    & 4038    &  & -944.00  & 8.31               & 4212    \\
miblp-20-20-50-0110-5-19  & -431.00  & 25.50   & 116041  &  & -431.00  & 26.02   & 116041  &  & -431.00  & 27.33   & 116065  &  & -431.00  & 45.05              & 116285  \\
miblp-20-20-50-0110-5-1   & -548.00  & 8.45    & 31298   &  & -548.00  & 9.08    & 31298   &  & -548.00  & 9.13    & 31298   &  & -548.00  & 22.96              & 31298   \\
miblp-20-20-50-0110-5-20  & -438.00  & 10.82   & 33315   &  & -438.00  & 10.92   & 33315   &  & -438.00  & 11.25   & 33315   &  & -438.00  & 17.54              & 33130   \\
miblp-20-20-50-0110-5-6   & -1061.00 & 51.74   & 213928  &  & -1061.00 & 52.57   & 213928  &  & -1061.00 & 56.75   & 214449  &  & -1061.00 & 235.63             & 234647  \\
lseu-0.100000             & 1120.00  & 3.15    & 8603    &  & 1120.00  & 3.11    & 8603    &  & 1120.00  & 3.82    & 13352   &  & 1120.00  & 45.32              & 22411   \\
lseu-0.900000             & 5838.00  & 14.10   & 1023    &  & 5838.00  & 14.11   & 1023    &  & 5838.00  & 251.51  & 1023    &  & 5838.00  & 21.57              & 1023    \\
p0033-0.500000            & 3095.00  & 0.25    & 1467    &  & 3095.00  & 0.28    & 1467    &  & 3095.00  & 0.29    & 1471    &  & 3095.00  & 0.80               & 2239    \\
p0033-0.900000            & 4679.00  & 0.06    & 27      &  & 4679.00  & 0.06    & 27      &  & 4679.00  & 0.07    & 27      &  & 4679.00  & 0.12               & 27      \\
p0201-0.900000            & 15025.00 & 6.62    & 2481    &  & 15025.00 & 7.09    & 2481    &  & 15025.00 & 359.66  & 2481    &  & 15025.00 & 34.52              & 2481    \\
stein27-0.500000          & 19.00    & 6.51    & 17648   &  & 19.00    & 6.44    & 17648   &  & 19.00    & 6.76    & 16969   &  & 19.00    & 9.57               & 16969   \\
stein27-0.900000          & 24.00    & 0.02    & 15      &  & 24.00    & 0.02    & 15      &  & 24.00    & 0.02    & 15      &  & 24.00    & 1.62               & 15      \\
stein45-0.100000          & 30.00    & 64.27   & 90241   &  & 30.00    & 64.12   & 90241   &  & 30.00    & 83.80   & 90002   &  & 30.00    & 256.23             & 58729   \\
stein45-0.500000          & 32.00    & 471.57  & 753845  &  & 32.00    & 459.73  & 753845  &  & 32.00    & 511.05  & 862249  &  & 32.00    & 880.47             & 565867  \\
stein45-0.900000          & 40.00    & 0.14    & 63      &  & 40.00    & 0.16    & 63      &  & 40.00    & 0.16    & 63      &  & 40.00    & 72.23              & 63 \\  
\hline
\end{tabular}
}
\end{table}
}

\end{document}